\DeclareMathOperator{\Res}{\mathrm{Res}}
\newcolumntype{?}{!{\vrule width 1pt}}
\newtheorem{thm}{Theorem}[section]
\newtheorem*{theorem*}{Theorem}
\newtheorem{theorem}{Theorem}[section]
\newtheorem{lemma}[theorem]{Lemma}
\newtheorem{proposition}[theorem]{Proposition}
\newtheorem{corollary}[theorem]{Corollary}
\newtheorem{remark}[theorem]{Remark}
\theoremstyle{definition}
\newtheorem{definition}[theorem]{Definition}
\newtheorem{notation}[theorem]{Notation}
\newtheorem{example}[theorem]{Example}
\newcommand{\be}{\begin{equation}}
\newcommand{\ee}{\end{equation}}
\newcommand{\et}{\end{theorem}}
\newcommand{\bd}{\begin{definition}}
\newcommand{\ed}{\end{definition}}
\newcommand{\bp}{\begin{proposition}}
\newcommand{\ep}{\end{proposition}}
\newcommand{\bl}{\begin{lemma}}
\newcommand{\el}{\end{lemma}}
\newcommand{\bco}{\begin{corollary}}
\newcommand{\eco}{\end{corollary}}
\newcommand{\br}{\begin{remark}}
\newcommand{\er}{\end{remark}}
\newcommand{\bex}{\begin{example}}
\newcommand{\eex}{\end{example}}
\newcommand{\ben}{\begin{enumerate}}
\newcommand{\een}{\end{enumerate}}
\newcommand{\bc}{\begin{cases}}
\newcommand{\ec}{\end{cases}}
\newcommand{\bpf}{\begin{proof}}
\newcommand{\epf}{\end{proof}}
\newcommand{\bma}{\begin{bmatrix}}
\newcommand{\ema}{\end{bmatrix}}
\newcommand{\barK}{\bar{ \mathcal{K} } }
\newcommand{\barO}{\bar{ \mathcal{O} } }
\newcommand{\scrG}{\mathscr{G} }
\newcommand{\scrT}{\mathscr{T} }
\newcommand{\calG}{\mathcal{G} }
\newcommand{\fh}{\mathfrak{h}}
\newcommand{\fb}{\mathfrak{b}}
\newcommand{\fg}{\mathfrak{g}}
\newcommand{\cal}{\mathcal}
\DeclareMathOperator{\Gr}{\mathtt{Gr}}
\newcommand{\beqn}{\begin{equation}}
\newcommand{\eeqn}{\end{equation}}
\tikzset{tab/.style={matrix of math nodes,column sep=-.35, row sep=-.35,text height=7pt,text width=7pt,align=center,inner sep=2,font=\footnotesize}}
\newcommand{\arxiv}[1]{\href{https://arxiv.org/abs/#1}{\texttt{arXiv:#1}}}
\newcommand{\g}{\mathfrak{g}}
\newcommand{\h}{\mathfrak{h}}
\newcommand{\fw}{\omega} % fundamental weights
\newcommand{\coroot}{\check{\alpha}}  % coroot
\newcommand{\inner}[2]{\left\langle #1, #2 \right\rangle}
\newcommand{\iso}{\cong}
\newcommand{\abs}[1]{\lvert #1 \rvert}
\newcommand{\field}{\mathbf{k}}
\newcommand{\te}{\widetilde{e}}
\newcommand{\tf}{\widetilde{f}}
\DeclareMathOperator{\wt}{wt} % weight
\DeclareMathOperator{\ad}{ad} % adjoint
\DeclareMathOperator{\Span}{span} % span
\newcommand{\mcB}{\mathcal{B}}
\newcommand{\ba}{\mathbf{a}}
\newcommand{\VV}{\mathbb{V}}
\newcommand{\ZZ}{\mathbb{Z}}
\newcommand{\CC}{\mathbb{C}}
\definecolor{darkred}{rgb}{0.7,0,0} % darkred color
\newcommand{\defn}[1]{{\color{darkred}\emph{#1}}} % emphasis of a definition
\definecolor{UQgold}{RGB}{196, 158, 54} % UQ gold
\definecolor{UQpurple}{RGB}{73, 7, 94} % UQ purple
\definecolor{UMNgold}{RGB}{255,200,46} % UMN gold
\definecolor{UMNmaroon}{RGB}{106,0,50} % UMN maroon
\lstdefinelanguage{Sage}[]{Python}
{morekeywords={False,True},sensitive=true}
\definecolor{dblackcolor}{rgb}{0.0,0.0,0.0}
\definecolor{dbluecolor}{rgb}{0.01,0.02,0.7}
\definecolor{dgreencolor}{rgb}{0.2,0.4,0.0}
\definecolor{dgraycolor}{rgb}{0.30,0.3,0.30}
\theoremstyle{plain}
\newtheorem{prop}[thm]{Proposition}
\numberwithin{equation}{section}
\begin{document}
\title{ Smooth Locus of Twisted Affine Schubert Varieties and twisted affine Demazure modules}
\author{Marc Besson}
\address[M.\,Besson]{Yau Mathematical Sciences Center, Tsinghua University, Haidian District, Beijing, 100084, China}
\email{bessonm@tsinghua.edu.cn}
\author{Jiuzu Hong}
\address[J.\,Hong]{Department of Mathematics, University of North Carolina at Chapel Hill, Chapel Hill, NC 27599-3250, U.S.A.}
\email{jiuzu@email.unc.edu}
%\address[T.\,Scrimshaw]{School of Mathematics and Physics, The University of Queensland, St.\ Lucia, QLD 4072, Australia}
%\email{tcscrims@gmail.com}

\keywords{}
% \subjclass{05E10, 17B67, 20G44, 14M15}
\maketitle
\begin{abstract}
Let $\scrG$ be a special parahoric group scheme of twisted type over the ring of formal power series over $\mathbb{C}$, excluding the absolutely special case of $A^{(2)}_{2\ell}$.  Using the methods and results of Zhu,  we prove a duality theorem for general $\scrG$:  there is a duality between the level one twisted affine Demazure modules and the function rings of  certain torus fixed point subschemes in affine Schubert varieties for  $\scrG$. Along the way, we also establish the duality theorem for $E_6$.   As a consequence,  we determine the smooth locus of any affine Schubert variety in the affine Grassmannian of $\scrG$. In particular, this confirms a conjecture of Haines and Richarz.  
\end{abstract}

\tableofcontents

\section{Introduction}

Let $G$ be an almost simple algebraic group over $\mathbb{C}$ and let $\Gr_G$ be the affine Grassmannian of $G$.  The geometry of the affine Grassmannian is related to integral highest weight representations of Kac-Moody algebras via the affine Borel-Weil theorem. Similarly, the geometry of affine Schubert varieties are closely related to  affine Demazure modules.  

 Let $T$ be a maximal torus in $G$ and let  $X_*(T)^+$  be the set of dominant coweights. For any $\lambda\in X_*(T)^+$, 
let $\overline{\Gr}_G^\lambda$ be the associated affine Schubert variety in $\Gr_G$, which is the closure of the $G(\mathcal{O})$-orbit ${\Gr}_G^\lambda$, where $\mathcal{O}=\mathbb{C}[[t]]$.   
Evens-Mirkovi\'c \cite{EM} and Malkin-Ostrik-Vybornov \cite{MOV} proved that the smooth locus of $\overline{\Gr}_G^\lambda$ is exactly the open Schubert cell $\Gr_G^\lambda$. Zhu \cite{Zh1} proved that there is a duality between the affine Demazure modules and the coordinate ring of the $T$-fixed point subschemes of affine Schubert varieties when $G$ is of type $A$ and $D$, and in many cases of the exceptional types $E_6, E_7$ and $E_8$.  As a consequence, this gives another approach to determine the smooth locus of $\overline{\Gr}_G^\lambda$ for type $A,D$ and many cases of type $E$.

In this paper,  we study a connection between the geometry of twisted affine Schubert varieties and twisted affine Demazure modules.  Following the method of  Zhu in \cite{Zh1}, we will use the weight multiplicities of twisted affine Demazure modules to determine the smooth locus of twisted affine Schubert varieties.

Let $G$ be an almost simple algebraic group of simply-laced or adjoint type with the action of a ``standard'' automorphism $\sigma$ of order $m$,  defined in Section \ref{subsect_auto}.  When $G$ is not of type $A_{2\ell}$, $\sigma$ is just a diagram automorphism. 
Assume that $\sigma$ acts on $\mathcal{O}$ by rotation of order $m$.  Let $\mathscr{G}$ be the $\sigma$-fixed point subgroup scheme of the Weil restriction group ${\rm Res}_{\mathcal{O}/  \bar{\mathcal{O}} }(G_\mathcal{O})$, where $\bar{\mathcal{O}} =\mathbb{C}[[t^m]]$.  
 Then $\mathscr{G}$ is a special parahoric group scheme over $\bar{\mathcal{O}}$ in the sense of Bruhat-Tits.  One may define the affine Grassmannian $\Gr_\scrG$ of $\scrG$. 
 Following \cite{PR, Zh2}, we will call it a twisted affine Grassmannian. For any $\bar{\lambda}$ the image of a dominant coweight $\lambda$ in the set $X_*(T)_\sigma$ of $\sigma$-coinvariants of $X_*(T)$, 
  the twisted affine Grassmannian $\Gr_\scrG$ and twisted affine Schubert varieties $\overline{\Gr}_\scrG^{\bar{\lambda}} $ share many similar properties with the usual affine Grassmannian $\Gr_G$ and affine Schubert varieties.  For instance,  a version of the geometric Satake isomorphism for $\mathscr{G}$ was proved by Zhu in \cite{Zh3}.  
  
In the literature, special parahoric group schemes are parametrized by special vertices on local Dynkin diagrams. In this paper, our approach is more Kac-Moody theoretic. For this reason, we use the terminology of affine Dynkin diagrams instead of local Dynkin diagrams.
Following \cite{HR},  there are two special parahoric group schemes for $A_{2\ell}^{(2)}$, and in this case the parahoric group scheme $\mathscr{G}$ that we consider is special but not absolutely special. We prove  Theorem \ref{thm_loc} in Section \ref{smooth_locus}, which asserts that
\begin{theorem}
\label{thm1}
For any special parahoric group scheme $\scrG$ induced from a standard automorphism $\sigma$,  the following restriction is an isomorphism:
\[H^0(\overline{\Gr}_{\mathscr{G}}^{\bar{\lambda}}, \mathscr{L}) \rightarrow H^0((\overline{\Gr}_{\mathscr{G}}^{\bar{\lambda}})^{T^{\sigma}}, \mathscr{L}|_{(\overline{\Gr}_{\mathscr{G}}^{\bar{\lambda}})^{T^{\sigma}}})   ,  \]
where  $\mathscr{L}$ is the level one line bundle on $\Gr_\scrG$,  $T^\sigma$ is the $\sigma$-fixed point subgroup of a $\sigma$-stable maximal torus $T$ in $G$ and $(\overline{\Gr}_{\mathscr{G}}^{\bar{\lambda}})^{T^{\sigma}}$ is the $T^\sigma$-fixed point subsheme of $\overline{\Gr}_{\mathscr{G}}^{\bar{\lambda}}$. 
 
\end{theorem}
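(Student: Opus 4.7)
The plan is to follow the general framework of Zhu \cite{Zh1} and to exploit the residual $T^\sigma$-action on both sides to reduce the statement to a weight-by-weight identification. The restriction map is manifestly $T^\sigma$-equivariant, so it suffices to check, for each character $\mu$ of $T^\sigma$, that the $\mu$-weight space of $H^0(\overline{\Gr}_\scrG^{\bar\lambda}, \mathscr{L})$ maps isomorphically to the $\mu$-weight space of $H^0((\overline{\Gr}_\scrG^{\bar\lambda})^{T^\sigma}, \mathscr{L}|_{(\overline{\Gr}_\scrG^{\bar\lambda})^{T^\sigma}})$. Injectivity is the easier direction: using a Bia{\l}ynicki--Birula-type decomposition of $\overline{\Gr}_\scrG^{\bar\lambda}$ with respect to a generic cocharacter of $T^\sigma$, each $T^\sigma$-eigensection of $\mathscr{L}$ is controlled by its germ on a specific attracting cell, and hence by the corresponding $T^\sigma$-fixed point, so that a section vanishing on the whole fixed-point subscheme must vanish identically.

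The remaining work is a dimension count in each $T^\sigma$-weight. On the left-hand side, the twisted affine Borel--Weil theorem identifies $H^0(\overline{\Gr}_\scrG^{\bar\lambda}, \mathscr{L})$ with the graded dual of a level-one twisted affine Demazure module $D(\bar\lambda)$ for the affine Kac--Moody algebra attached to the affine Dynkin data of $\scrG$; the $T^\sigma$-weight multiplicities of $D(\bar\lambda)$ are then computable by Kumar's character formula in the twisted affine setting. On the right-hand side, the fixed locus $(\overline{\Gr}_\scrG^{\bar\lambda})^{T^\sigma}$ is a finite, typically non-reduced, scheme whose connected components are in bijection with the $T^\sigma$-fixed points $x_\nu$ of $\Gr_\scrG$ lying in $\overline{\Gr}_\scrG^{\bar\lambda}$, indexed by suitable classes $\nu \in X_*(T)_\sigma$. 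At each $x_\nu$, the local ring is governed by the formal neighborhood of the twisted affine Schubert variety, and the fiber $\mathscr{L}|_{x_\nu}$ contributes a fixed $T^\sigma$-weight that one reads off from the level-one line bundle normalized in the Kac--Moody sense.

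The main obstacle, and what genuinely distinguishes the twisted case from Zhu's original argument, is the identification, at each fixed point, of the local Hilbert-series character of the fixed-point scheme with the corresponding Demazure weight multiplicity. In the untwisted setting this is achieved by combining the theory of Mirkovi\'c--Vilonen cycles with a Peterson-type combinatorial recursion; I would replace those ingredients with their twisted analogues extracted from Zhu's twisted geometric Satake \cite{Zh3}, paying careful attention to the collapsing of $\sigma$-orbits of roots and to the fact that $T^\sigma$-weights live in the coinvariant lattice $X_*(T)_\sigma$ rather than in $X_*(T)$. I expect the heaviest calculations to arise in $E_6$, where the corresponding duality statement is not present in \cite{Zh1} and must be proved independently as part of the argument, and in the non-absolutely-special parahoric case for $A_{2\ell}^{(2)}$, where the correct choice of level-one line bundle relative to the Kac--Moody conventions is subtle. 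Once these type-by-type local comparisons are in place, combining the dimension equality with the injectivity from the first paragraph yields the claimed isomorphism.
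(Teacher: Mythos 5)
Your proposal has two genuine gaps, and the second one is precisely the point where the paper's actual argument does something quite different. First, the claim that injectivity is ``the easier direction'' via a Bia{\l}ynicki--Birula attracting-cell argument cannot be right as stated, because it makes no use of the level-one hypothesis. For a finite (non-reduced) scheme $Z=(\overline{\Gr}_{\mathscr{G}}^{\bar{\lambda}})^{T^{\sigma}}$ one has $\dim H^0(Z,\mathscr{L}^c|_Z)=\mathrm{length}\,\mathscr{O}_Z$ independently of $c$, while $\dim H^0(\overline{\Gr}_{\mathscr{G}}^{\bar{\lambda}},\mathscr{L}^c)$ grows with $c$; hence the restriction map is \emph{not} injective for $c\geq 2$, so no formal argument of the kind you sketch (a $T^\sigma$-eigensection vanishing on the fixed-point subscheme vanishes on each attracting cell) can exist. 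In fact the logic of the paper is the opposite of yours: surjectivity is the accessible half (Proposition \ref{prop_surj}, proved by passing to $\Gr_{\mathscr{G}}$ and using the Heisenberg subalgebra $\hat{\fh}^{\sigma}$ acting on $\mathscr{H}_c$, Lemma \ref{lem_surj}), and injectivity, i.e.\ the vanishing of $H^0(\overline{\Gr}_{\mathscr{G}}^{\bar{\lambda}},\mathscr{I}^{\bar{\lambda}}\otimes\mathscr{L})$, is the hard half.

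Second, your ``main obstacle'' --- matching the Hilbert function of the local ring at each fixed point with the corresponding Demazure weight multiplicity --- is exactly the computation the paper never performs directly, and your plan to do it by twisted analogues of Mirkovi\'c--Vilonen cycles and a Peterson-type recursion is not an argument (and does not even describe how the untwisted case was handled in \cite{Zh1}, which used the identification $(\Gr_G)^T\simeq\Gr_T$, Frenkel--Kac/Heisenberg techniques and case-by-case Levi reductions for fundamental coweights). The paper instead transfers Zhu's untwisted duality to the twisted setting by a degeneration: one constructs a level-one line bundle $\mathcal{L}$ on ${\rm Bun}_{\mathcal{G}}$ (via non-vanishing of a twisted conformal block, Theorem \ref{thm_descent}) and hence on the global Schubert variety $\overline{\Gr}_{\mathcal{G},C}^{\lambda}$, proves that the ideal sheaf $\mathcal{I}^{\lambda}$ of the $T^\sigma$-fixed subscheme is flat over $C$ (Lemma \ref{lem_ideal_flat}, which itself uses Proposition \ref{prop_surj} plus the untwisted duality on generic fibers), and then uses constancy of the Euler characteristic of $\mathcal{I}^{\lambda}\otimes\mathcal{L}$ along $C$ together with the untwisted vanishing $H^i(\overline{\Gr}_G^{\lambda},\mathrm{I}^{\lambda}\otimes\mathrm{L})=0$ to conclude $H^0(\overline{\Gr}_{\mathscr{G}}^{\bar{\lambda}},\mathscr{I}^{\bar{\lambda}}\otimes\mathscr{L})=0$. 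You correctly sense that $E_6$ requires a separate untwisted input (this is Theorem \ref{thm_E_6}, proved for $\check{\omega}_4$ by Levi-extremal and Weyl-group arguments), but in your scheme it is left entirely unaddressed, as is the identification $(\Gr_{\mathscr{G}})^{T^\sigma}\simeq\Gr_{\mathscr{T}}$ (Theorem \ref{thm_fixed_point2}) on which any comparison of the two sides must rest. As it stands, the proposal does not yield a proof.
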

The above theorem can not be extended to the absolutely special parahoric group scheme of type $A_{2\ell}^{(2)}$, as there is no level one line bundle on $\Gr_{\mathscr{G}}$ (cf.\,\cite{Zh2}).  This theorem extends Zhu's duality to the twisted setting.  The dual $H^0(\overline{\Gr}_{\mathscr{G}}^{\bar{\lambda}}, \mathscr{L})^\vee$ is a twisted affine Demazure module, see Theorem \ref{def_dem}. Hence, Theorem \ref{thm1} is a duality between twisted affine Demazure modules and the coordinate rings of the $T^\sigma$-fixed point subschemes of twisted affine Schubert varieties.  One of the motivations of the work of Zhu \cite{Zh1} is to give a geometric realization of Frenkel-Kac vertex operator construction for untwisted simply-laced affine Lie algebras. The analogue of Frenkel-Kac construction for twisted affine Lie algebras also exists in literature, see \cite{BT,FLM}.  In fact, our Theorem \ref{thm1} implies a geometric Frenkel-Kac isomorphism, see Theorem \ref{thm_FK}.

 As a conseqence of Theorem \ref{thm1},  we obtain Theorem \ref{thm_locus} and Theorem \ref{thm_A_2n}, which asserts that
\begin{theorem}
\label{intro_smooth_locus}
\mbox{}
\begin{enumerate}
\item If $\mathscr{G}$ is not of type $A_{2\ell}^{(2)}$, then
for any $\bar{\lambda}\in X_*(T)_\sigma$,  the smooth locus of the twisted affine Schubert variety $\overline{\Gr}_\scrG^{\bar{\lambda}}$ is exactly the open cell $\Gr_\scrG^{\bar{\lambda}}$. 
\item If $\mathscr{G}$ is special but not absolutely special of type $A_{2\ell}^{(2)}$, then for any $\bar{\lambda}\in X_*(T)_\sigma$,  the smooth locus of the twisted affine Schubert variety $\overline{\Gr}_\scrG^{\bar{\lambda}}$ is  the union of $\Gr_\scrG^{\bar{\lambda}}$ and possibly some other cells $\Gr_\scrG^{\bar{\mu}}$, which are completely determined in Theorem \ref{thm_A_2n}.
\end{enumerate}
\end{theorem}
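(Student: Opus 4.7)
The plan is to adapt Zhu's tangent-space strategy from \cite{Zh1} to the twisted setting, now made available by the duality Theorem \ref{thm1}. The smooth locus of $\overline{\Gr}_\scrG^{\bar{\lambda}}$ is open and stable under the action of the parahoric group $\scrG(\bar{\calO})$, hence is a union of open Schubert cells $\Gr_\scrG^{\bar{\mu}}$ with $\bar{\mu}\leq\bar{\lambda}$. Since each cell is $\scrG(\bar{\calO})$-homogeneous, the question reduces to testing smoothness of $\overline{\Gr}_\scrG^{\bar{\lambda}}$ at the distinguished $T^\sigma$-fixed point $e_{\bar{\mu}}$ of each cell.

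Next, I would use Theorem \ref{thm1} to control the $T^\sigma$-equivariant local geometry of $\overline{\Gr}_\scrG^{\bar{\lambda}}$ at $e_{\bar{\mu}}$. The duality identifies $H^0(\overline{\Gr}_\scrG^{\bar{\lambda}},\mathscr{L})^\vee$ with a twisted affine Demazure module $D(\bar{\lambda})$, and dually identifies $H^0$ itself with the coordinate ring of the $T^\sigma$-fixed subscheme. Localizing at $e_{\bar{\mu}}$ and matching up $T^\sigma$-weights on the two sides, one can extract $\dim T_{e_{\bar{\mu}}}\overline{\Gr}_\scrG^{\bar{\lambda}}$ as a specific multiplicity count for $T^\sigma$-weights of $D(\bar{\lambda})$ adjacent to $\bar{\mu}$; the point $e_{\bar{\mu}}$ is smooth precisely when this count matches $\dim \overline{\Gr}_\scrG^{\bar{\lambda}}$, which is itself a known explicit quantity in terms of the twisted affine positive roots below $\bar{\lambda}$.

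Computing the required multiplicities is then a matter of invoking the Weyl--Kac (or Demazure) character formula for twisted affine Lie algebras at level one, or equivalently using the twisted Frenkel--Kac realization alluded to in Theorem \ref{thm_FK}, which at level one gives an explicit combinatorial model of $D(\bar{\lambda})$. Outside type $A_{2\ell}^{(2)}$, I expect this comparison to yield strict inequality whenever $\bar{\mu}<\bar{\lambda}$, producing part (1).

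The main obstacle is part (2): the non-absolutely-special $A_{2\ell}^{(2)}$ case. Here the twisted affine root system contains roots of three distinct lengths, and the level-one constraint interacts non-trivially with the short coroots to produce genuine sporadic smooth $T^\sigma$-fixed points strictly below $\bar{\lambda}$. Classifying exactly which $\bar{\mu}<\bar{\lambda}$ remain smooth will require a delicate case analysis of twisted coweights together with explicit weight-multiplicity computations in the level-one $A_{2\ell}^{(2)}$ Demazure modules, with the complete list being the content of Theorem \ref{thm_A_2n}.
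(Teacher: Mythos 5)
Your overall strategy (reduce to the $T^\sigma$-fixed points $e_{\bar\mu}$ and use Theorem \ref{thm1} to detect singularities via the twisted Demazure module) is the right starting point, but the mechanism you propose is not correct, and it cannot deliver part (2). Theorem \ref{thm1} identifies $H^0(\overline{\Gr}_\scrG^{\bar\lambda},\mathscr{L})$ with the coordinate ring of the fixed subscheme $(\overline{\Gr}_\scrG^{\bar\lambda})^{T^\sigma}$; localizing at $e_{\bar\mu}$ it therefore computes the \emph{length of the local ring of the fixed subscheme}, which equals $\dim D(1,\bar\lambda)_{-\iota(\bar\mu)}$ -- it does not compute $\dim T_{e_{\bar\mu}}\overline{\Gr}_\scrG^{\bar\lambda}$, and there is no way to ``extract'' the tangent space of the ambient Schubert variety from the fixed-point subscheme, which only sees $T^\sigma$-weight-zero data. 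The usable implication goes one way only: if $e_{\bar\mu}$ is a smooth point, then by \cite[Lemma 2.3.3]{Zh1} the fixed subscheme is reduced there, so $\dim D(1,\bar\lambda)_{-\iota(\bar\mu)}=1$; hence weight multiplicity $\geq 2$ proves singularity, but multiplicity $1$ proves nothing. For part (1) this necessary criterion does suffice, though not by a character-formula computation: one exhibits two independent vectors in the weight space, one in $D(1,\bar\mu)\subset D(1,\bar\lambda)$ and one in $V(-\iota(\bar\lambda))\subset D(1,\bar\lambda)$, using that $\bar\mu\precneqq\bar\lambda$ forces $\iota(\bar\mu)\precneqq\iota(\bar\lambda)$ outside type $A_{2\ell}$; this last step is exactly what breaks for $A_{2\ell}^{(2)}$, because $\iota(\gamma_\ell)=\frac{1}{2}\beta_\ell$ (Lemma \ref{lem_iota_coroot}), not because of ``three root lengths'' per se.

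Consequently your plan for part (2) -- ``explicit weight-multiplicity computations in the level-one $A_{2\ell}^{(2)}$ Demazure modules'' -- is a genuine gap: it essentially defers to Theorem \ref{thm_A_2n} without a mechanism to prove it. Two independent inputs are required that multiplicities cannot supply. First, to show the extra cells $\Gr_\scrG^{\bar\mu}$ with $\bar\lambda-\bar\mu=\sum_{j=i}^{\ell}\gamma_j$ and $\mu=\sum_{k<i}a_k\varpi_k$ really are smooth, one needs a positive geometric argument; the paper combines Stembridge's description of cover relations \cite[Theorem 2.8]{St} with the twisted Levi reduction lemma (\cite[Corollary 3.4]{MOV}) to reduce to the quasi-minuscule Schubert variety for a smaller group of type $A^{(2)}$, and then proves that variety is smooth by exhibiting an explicit $G^\sigma$-equivariant open embedding of the $2\ell$-dimensional eigenspace $\fg_{\mathrm{i}}$. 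Second, for the cover relations $\bar\lambda-\bar\mu=\gamma_\ell$ with $\langle\bar\mu,\check{\gamma}_\ell\rangle\neq 0$, the vector coming from $V(-\iota(\bar\lambda))$ is absent (the weight difference leaves the root lattice of $\fg^\sigma$), so the duality criterion is silent; singularity there is proved instead by Levi reduction to $A_2^{(2)}$ and the computation of Haines--Richarz \cite[Proposition 7.1]{HR}. Neither ingredient appears in your outline, and without them the classification in part (2) cannot be obtained.
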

When $\mathscr{G}$ is absolutely special  of type $A^{(2)}_{2\ell}$, our method is not applicable, as there is no level one line bundle on the affine Grassmannian of $\mathscr{G}$. Nevertheless,  Richarz already proved in his Diploma \cite{Ri2} that in this case the smooth locus of any twisted affine Schubert variety is the open cell,  see Remark \ref{absolutely_special_rmk}. Thus, our Theorem \ref{intro_smooth_locus} confirms a conjecture of Haines-Richarz \cite[Conjecture 5.4]{HR}. Beyond that, we also completely determine the smooth locus of twisted affine Schubert varieties for special but not absolutely special parahoric group scheme $\mathscr{G}$ of type $A_{2\ell}^{(2)}$. 
Richarz studied the twisted affine Schubert varieties in \cite{Ri2}, and determined their smooth loci in the case of absolutely special group schemes of type $A_{2\ell}^{(2)}$ and  the special parahoric group scheme of type $A_{2\ell-1}^{(2)}$. It is also worthwhile to mention that the smooth locus of the quasi-minuscule Schubert variety for $D^{(3)}_{4}$ is determined by Haines-Richarz in \cite{HR} by rather lengthy computations.  In fact, one can define special parahoric group schemes over any base field $\mathrm{k}$ of any characteristic, and the twisted Schubert variety $\overline{\Gr}_\mathscr{G}^{\bar{\lambda}}$ over the field $\mathrm{k}$. By the works \cite{HLR,HR,Lo},   Theorem \ref{intro_smooth_locus} remains true for normal twisted Schubert varieties over any field $\mathrm{k}$ (Schubert varieties are always normal if the characteristic is not bad). 
% see Remark \ref{Rmk_locus}. 
  %The problem of determining the smooth locus of twisted affine Schubert varieties was first studied by Richarz in \cite{Ri2}.  %A weaker question, the smoothness of twisted Schubert varieties for special parahoric group schemes have been fully answered  by Haines-Richarz in \cite{HR}.

To prove Theorem \ref{thm1}, one  ingredient is Theorem \ref{thm_fixed_point2} in Section \ref{smooth_locus},  which asserts that the $T^\sigma$-fixed point ind-subscheme $(\Gr_{\scrG})^{T^\sigma} $ is isomorphic to the affine Grassmannian $\Gr_{\mathscr{T}}$, where $\mathscr{T}$ is the $\sigma$-fixed point subscheme of the Weil restriction group ${\rm Res}_{\mathcal{O}/  \bar{\mathcal{O}} }(T_\mathcal{O})$.

Let $\pi: \mathbb{P}^1\to \bar{\mathbb{P}}^1$ be the map given by $t\mapsto t^m$, where $ \bar{\mathbb{P}}^1$ is a copy of $\mathbb{P}^1$. 
Another main ingredient of the proof of  Theorem \ref{thm1} is the construction of the level one line bundle $\mathcal{L}$ on  the moduli stack ${\rm Bun}_{ \mathcal{G}} $ of $\mathcal{G}$-torsors, where $ \mathcal{G}$ is the  parahoric Bruhat-Tits group scheme obtained as the $\sigma$-fixed subgroup scheme of the Weil restriction group ${\rm Res}_{ \mathbb{P}^1 /\bar{\mathbb{P}}^1 }(G_{\mathbb{P}^1 }) $ with $G$ being simply-connected.  This is achieved in Section \ref{sect_line_bundle}.  
  It is known that the level one line bundle on ${\rm Bun}_\mathcal{G}$ does not necessarily exist for an arbitary parahoric Bruhat-Tits group scheme $\mathcal{G}$ over a smooth projective curve,  for example when $\mathcal{G}$ is of type $A_{2\ell}$, cf.\,\cite[Remark 19 (4)]{He} \cite[Proposition 4.1]{Zh2}.  In Theorem \ref{thm_descent},  when $\sigma$ is standard, we prove that there exists a level one line bundle $\mathcal{L}$ on the moduli stack ${\rm Bun}_{ \mathcal{G}} $ of $\mathcal{G}$-torsors.  Following the method of Sorger in \cite{So}, we use the non-vanishing of twisted conformal blocks to construct this line bundle on ${\rm Bun}_{\mathcal{G}}$, where the general theory of twisted conformal blocks was recently developed by Hong-Kumar in \cite{HK}.

By the work of Zhu in \cite{Zh2}, for each dominant coweight $\lambda$, one can construct a global Schubert variety $\overline{\Gr}_\mathcal{G}^\lambda$, which is flat over $\mathbb{P}^1$.  The fiber over the origin is the twisted affine Schubert variety $\overline{\Gr}_\mathscr{G}^{\bar{\lambda}}$, and the fiber over a generic point is isomorphic to the usual affine Schubert variety $\overline{\Gr}_{G}^{\lambda}$.  With the level one line bundle on ${\rm Bun}_{ \mathcal{G}} $ when $\mathcal{G}$ is simply-connected,  we can construct the level one line bundle on the global affine Schubert variety $\overline{\Gr}_\mathcal{G}^\lambda$ for $\mathcal{G}$ being either simply-connected or adjoint.  
%This is a flat family of Schubert varieties over  $\mathbb{P}^1$.  Over a generic point,  the fiber is just a usual affine Schubert variety $\overline{\Gr}_G^{\lambda}$, and over the origin $o$, we get the twisted affine Schubert variety $\overline{\Gr}_{\mathscr{G}}^{\bar{\lambda}}$.  
The main idea of this paper is that,  our duality theorem for twisted affine Schubert varieties can follow from 
Zhu's duality theorem for usual affine Schubert varieties via the level one line bundle on the global affine Schubert variety $\overline{\Gr}_G^{\lambda}$.   

The proof of Theorem \ref{thm1} relies on the duality theorem of Zhu in the untwisted case.  However, Zhu only established the duality in the case of type $A, D$ and some cases of type $E_6,E_7.E_8$. To fully establish  Theorem \ref{thm1}, we need to prove the duality theorem for $E_6$ in the untwisted setting.  In the case of $E_6$, the duality has been established by Zhu when $\lambda$ is the fundamental coweight $\check{\omega}_1,\check{\omega}_2, \check{\omega}_3, \check{\omega}_5, \check{\omega}_6$ (Bourbaki labelling), and Zhu also showed that the duality theorem will hold in general if the duality also hold for $\check{\omega}_4$, which is the most difficult case. In Section \ref{duality_E_6_sect}, we establish the duality theorem for $\check{\omega}_4$. This completes the duality theorem for $E_6$ in general.  One of the main techniques is a version of Levi reduction lemma (due to Zhu) in Lemma \ref{Levi_red_lem}. In addition, we  crucially use the Heisenberg algebra action on the basic representation of affine Lie algebra, and the Weyl group representations in weight zero spaces.  To make Levi reduction lemma work for the $\omega_2$-weight space of the irreducible representation $V(\omega_4)$, we use the idea of ``numbers game'' by Proctor \cite{Pro} and Mozes \cite{Mo} which was originally used to study minuscule representations. Finally, another key step is Proposition \ref{Travis_prop}, which is verified by Travis Scrimshaw using \textsc{SageMath} \cite{Sag21}, see Appendix \ref{appendix_sect}.  

We should also mention another application of the duality theorem for simply-laced simple algebraic groups. In \cite{KTWWY}, the duality theorem is crucially used for the proof of  Hikita conjecture for the transversal slices of affine Grassmannians.  

Since the appearance of our work as a preprint  	\arxiv{2010.11357},  Pappas-Zhou gave a different proof of Haines-Richarz conjecture for absolutely special parahoric subgroups in \cite{PZ}.

 \vspace{0.5em}

\noindent {\bf Acknowledgments}:  
We would like to thank the hospitality of Max Planck institute for mathematics at Bonn during our visits in November and December of 2019, where part of the work was done.  We also would like to thank Thomas Haines, Timo Richarz, Michael Strayer and Xinwen Zhu  for  helpful conversations and  valuable comments. %Rekha Biswal, 
J.\,Hong is partially supported by the Simons collaboration Grant 524406, and NSF grant DMS-2001365.

\section{Main definitions}
%\subsection{Notations}
 Let $G$ be an  almost simple algebraic group over $\mathbb{C}$ of adjoint or simply-connected type. 
 We choose a maximal torus and Borel subgroup $T \subset B \subset G$.  We denote by $X^*(T)$ the lattice of weights of $T$, and by $X_*(T)$ the lattice of coweights. Their natural pairing is denoted by $\langle, \rangle$. Let $\Phi$ denote the set of  roots of $G$, and denote by $\Phi^+$ the set of positive roots of $G$ with respect to $B$.  Let $\check{\Phi}$ denote the set of coroots, so $(\Phi, X^*(T), \check\Phi, X_*(T))$ is a root datum for $G$, and write $W$ for the Weyl group of $G$.  Let $Q$ denote the root lattice of $G$, and $\check{Q}$ the coroot lattice.
  
We follow the Bourbaki labelling of the vertices of the Dynkin diagram in \cite{Bo}.   We denote by $\{ \alpha_i \,|\, i\in I\}  $ (respectively $\{  \check{\alpha}_i \,|\, i\in I\} $ the set of simple roots in  $\Phi$ (respectively  coroots in $\check{\Phi}$), where $I$ is the set of vertices of the associated Dynkin diagram of $G$.  Let $\{ {\omega}_i \,|\,  i\in I  \}$ be the set of fundamental weights of $G$, and let $\{ \check{\omega}_i \,|\,  i\in I  \}$ be the set of fundamental coweights of $G$.   We also choose a pinning $\{ x_{\alpha_i}, y_{\alpha_i}\,|\, i\in I \} $ of $G$ with respect to  $B$ and $T$.
 
 Let $\fg, \fb,\fh$ denote the Lie algebras of $G, B, T$ respectively.  Let $\{e_i, f_i  \,|\,  i\in I \}$ denote the set of Chevalley generators associated to the pinning $\{ x_{\alpha_i}, y_{\alpha_i}\,|\, i\in I \} $. Let $e_\theta$ (resp. $f_\theta$) be the highest (resp. lowest ) root vector in $\fg$, such that $[e_\theta, f_\theta]$ is the coroot $\theta^\vee$ of $\theta$.
 \subsection{ Standard automorphisms}
 \label{subsect_auto}
Let $\sigma$ be an automorphism  of order $m$ on $G$ preserving $B$ and $T$.  Let $\tau$ be a diagram automorphism preserving $B, T$ and a pinning $\{ x_{\alpha_i}, y_{\alpha_i}\,|\, i\in I \} $. Let $r$ be the order of $\tau$.  

When $\fg$ is not $A_{2\ell}$, we take $\sigma$ to be $\tau$.  When $\fg$ is $A_{2\ell}$,  by \cite[Theorem 8.6]{Ka} there exists a unique automorphism $\sigma$ of order $m=4$ such that

\begin{equation}
\label{aut_def1}
\begin{cases}
\sigma(e_i)=e_{\tau(i)},   \quad  \text{ if } i\not= \ell, \ell+1;    \\
 \sigma(e_i)= \mathrm{i} e_{\tau(i)},   \quad \text{ if } i\in \{\ell, \ell+1\} ; \\
  \sigma(f_\theta)=f_\theta, 
\end{cases}
\end{equation}
where $\mathrm{i}$ is a square root of $-1$.   One can check that 
\begin{equation}
\label{aut_def2}
\begin{cases}
\sigma(f_i)=  f_{\tau(i)},   \quad  \text{ if } i\not= \ell, \ell+1;    \\
 \sigma(f_i)= -\mathrm{i}  f_{\tau(i)},   \quad \text{ if } i\in \{\ell, \ell+1\} ; \\
  \sigma(e_\theta)=e_\theta
\end{cases}.
\end{equation}
In fact,  $\sigma= \tau \circ   {\mathrm i}^{ h} $, where $h\in \fh$ such that
\[   \alpha_i(h)=\begin{cases}   0 ,  \quad  \text{ if }  i\not= \ell, \ell+1   \\
1, \quad   \text{ if } i=\ell, \ell+1    
      \end{cases}  .  \]
This automorphism induces a unique automorphism on $G$. We still call it $\sigma$. 
      
   We call these automorphisms on $G$ or $\fg$ ``standard'', as the fixed point Lie subalgebra $\fg^\sigma$ is the standard finite part of the associated twisted affine Lie algebra $\hat{L}(\fg,\sigma)$ (cf.\,Section \ref{Sect_BWB}) in the sense of Kac \cite[\S6.3]{Ka}. From $\sigma$, we will construct a twisted affine Grassmannian and a line bundle of level one on it. There will be no level one line bundle on the twisted affine Grassmannian associated to $\tau$ on $G$ of type $A_{2\ell}$. 
    Throughout this paper, we will only consider standard automorphisms.

The following table describe the fixed point Lie algebras for all standard  automorphisms:
 \begin{equation}
\label{Fix_table}
 \begin{tabular}{|c  | c | c |c |c |c | c| c| c|c |c|c|c|c|c |c ||} 
 \hline
$(\fg, m)$   & $(A_{2\ell-1}, 2 ) $  &  $(A_{2\ell}, 4)$  &  $(D_{\ell+1} , 2  ) $  &           $ (D_4,   3)$  &  $ (E_6,  2)$    \\ [0.8ex] 
 \hline
$ \fg^\sigma $  &  $ C_\ell $  &   $ C_\ell $  &  $B_\ell $   &    $G_2 $ &  $F_4 $  \\ [0.8ex] 
 \hline
\end{tabular},
\end{equation}
where by convention $C_1$ is $A_1$ and $\ell\geq 3$ for $D_{\ell+1}$.  
When $(\fg, m)\not=(A_{2\ell}, 4) $,  the fixed point Lie algebra $\fg^\sigma$ is well-known as listed in the above table. 
When $(\fg, m)=(A_{2\ell}, 4) $,  the fixed Lie algebra $\fg^\sigma$ is of type $C_\ell$, which can follow from the twisted Kac-Moody theory, cf.\,\cite[\S 6.3, \S 8.4]{Ka}.

Recall that we follow the Bourbaki labelling of the vertices of the Dynkin diagram. 
Set 
 \be
 \label{label_roots}
  \begin{cases}
\beta_i= {\alpha_i} |_ {\fh^\sigma}, \text{ for } i=1,2,\cdots, \ell,  \quad  \text{ if } (\fg, m)=(A_{2\ell-1}, 2), \text{ or } (D_{\ell+1}, 2)  \\
\beta_1=\alpha_1|_{\fh^\sigma}, \beta_2=\alpha_2|_{\fh^\sigma},  \quad \text{ if } (\fg, m)=(D_4, 3)\\
\beta_1=\alpha_2|_{\fh^\sigma}, \beta_2=\alpha_4|_{\fh^\sigma},\beta_3=\alpha_3|_{\fh^\sigma},\beta_4=\alpha_1|_{\fh^\sigma}, \quad  \text{ if }  (\fg, m)=(E_6, 2)\\
\beta_i= \alpha_i |_ {\fh^\sigma},   \text{ for }  i=1,2,\cdots, \ell-1; \beta_\ell= (\alpha_\ell+\alpha_{\ell+1}) |_ {\fh^\sigma}=2\alpha_\ell|_{\fh^\sigma},  \, \text{if} (\fg, m)=(A_{2\ell}, 4).
 \end{cases}  
 \ee
 
 Let  $I_\sigma$ be the set of all subscript indices of $\beta_i$.    Then for each case, the set $\{\,\beta_j  \,|\,  j\in I_\sigma \, \}$ gives rise to the set of simple roots of $\fg^\sigma$. One can see easily that this labelling will coincide with Bourbaki labelling for non simply-laced types Dynkin diagrams.

 We now define a map 
 $\eta: I\to I_\sigma$.  
 When $(\fg, m)\not=  (A_{2\ell}, 4)$,  $\eta$ is defined such that  $\beta_{\eta(i) }= \alpha_i|_{\fh^\sigma}$ for any $i\in I$. When $(\fg, m)=  (A_{2\ell}, 4)$, set
 \[ \eta(i)=\eta(2\ell+1-i)=i, \text{ for any } 1\leq i\leq  \ell.   \]
 Let $\{ \,\check{\beta}_j  \,|\,  j\in I_\sigma \}$ be the set of simple coroots of $\fg^\sigma$.  We can describe $\check{\beta}_j $ as follows:
 \be
 \label{label_roots2}
  \check{\beta}_j=   \sum_{i\in \eta^{-1}(j)  }  \check{\alpha}_{ i }.  
  \ee
The description of $\check{\beta}_j $ also appears in \cite[Section 3]{Ha} in a slightly different setting.

Let $\{\, \lambda_j \,|\, j\in I_\sigma    \,\}$ be the set of fundamental weights of $\fg^\sigma$, and let $\{\, \check{\lambda}_j \,|\, j\in I_\sigma    \,\}$ be the set of fundamental coweights of $\fg^\sigma$.  The fundamental weights can be described as follows:
\be
\label{fund_weight}
   \lambda_j=  \omega_i|_{\fh^\sigma}, \quad   \text{ for  some $i$ with } \eta(i)=j    .   \ee
In the case of fundamental coweights, we need to describe them separately.  
When $(\fg, m)\not= (A_{2\ell}, 4)$, 
\be \label{fund_weightI}
 \check{\lambda}_j=\sum_{i\in \eta^{-1}(j)} \check{\omega}_i.
\ee         
         When $(\fg,m)=(A_{2\ell}, 4)$, 
         we have 
                  \be
         \label{fund_coweightII}
         \check{\lambda}_j= \begin{cases}
        \check{\omega}_j+\check{\omega}_{2\ell+1-j},      \quad   \quad   j=1,2,\cdots,  \ell-1   \\
         \frac{1}{2}(\check{\omega}_\ell +  \check{\omega}_{\ell+1} ), \quad   \quad  j=\ell
         \end{cases} . 
         \ee

\subsection{Affine Grassmannian of special parahoric group schemes}
\label{sect2.4}
Let $\mathcal{K}$ denote the field of formal Laurent series in $t$ with coefficients in $\mathbb{C}$.  Let $\mathcal{O} $ denote the ring of formal power series in $t$ with coefficients in $\mathbb{C}$.  By abuse of notation, we still use $\sigma$ to denote the automorphism of order $m$ on $\mathcal{K}$ and $\mathcal{O}$ such that $\sigma$ acts on $\mathbb{C}$ trivially, and $\sigma(t)=\epsilon^{-1} t $, where  $\epsilon=e^{\frac{2\pi \mathrm{i}}{m}}$.  Set $\barK=\mathcal{K}^\sigma$ and $\barO=\mathcal{O}^\sigma$. Then $\barK={\mathbb{C}}((\bar{t})) $ and $\barO={\mathbb{C}}[[\bar{t}]]$, where $\bar{t}=t^m$.  

Let $\mathscr{G}$ be the smooth group scheme ${\rm Res}_{\mathcal{O}/\bar{\mathcal{O}} } (G_\mathcal{O})^\sigma $ over $\barO$,
 which represents the following group functor 
\[ R\mapsto G(\mathcal{O}\otimes_{\barO} R)^\sigma ,  \quad \text{ for any } \barO-\text{algebra } R, \]
where the $G(\mathcal{O}\otimes_{\barO} R)$ denotes the group of $\sigma$-equivariant morphisms from ${\rm Spec }\,( \mathcal{O}\otimes_{\bar{\mathcal{O}}} R)$ to $G$, where $\sigma$ acts on $\mathcal{O}$ as above and acts on $G$ as a standard automorphism defined in Section \ref{subsect_auto}. Then, $\mathscr{G}$ is a special parahoric group scheme in the sense of Bruhat-Tits, as we choose $\sigma$ to be standard. In fact, up to isomorphism, this construction exhausts all  special parahoric subgroups in $\mathscr{G}(\mathcal{K})$ when $\mathscr{G}$ is not of type $A_{2\ell}^{(2)}$, and special but not absolutely special for $A_{2\ell}^{(2)}$ in the sense of 
 \cite[\S 5]{HR}, as in this case the special fiber of  $\mathscr{G}$ has a quotient isomorphic to ${\rm Sp}_{2\ell}$. 
 \begin{remark} When $G$ is of type $A_{2\ell}$,  the parahoric group scheme $\mathscr{G}={\rm Res}_{\mathcal{O}/\bar{\mathcal{O}} } (G_\mathcal{O})^\tau $ is absolutely special of type $A_{2\ell}^{(2)}$, where $\tau$ acts on $G$ by a nontrivial diagram automorphism and acts on $\mathcal{O}$ by $t\mapsto -t$. But we will not consider this case, except in Remark \ref{absolutely_special_rmk}.  \end{remark}

We can similarly define the smooth group scheme $\mathscr{T}:={\rm Res}_{\mathcal{O}/\bar{\mathcal{O}} } (T_\mathcal{O})^\sigma $, which has connected fibers (cf.\,\cite[Lemma 4.4.16, Lemma 4.4.8]{BrT}). Note that, for general almost simple algebraic group $G$, we can still define $\mathscr{G}$ and $\mathscr{T}$, but we need to take the neutral components of  ${\rm Res}_{\mathcal{O}/\bar{\mathcal{O}} } (G_\mathcal{O})^\sigma $ and ${\rm Res}_{\mathcal{O}/\bar{\mathcal{O}} } (T_\mathcal{O})^\sigma$ respectively.  For convenience, throughout this paper we only work with $G$ being adjoint or simply-connected.

Let $L^+ \mathscr{G}$ denote the jet group  and $L\mathscr{G}$ be the loop group  of $\mathscr{G}$ over $\mathbb{C}$, that is, for all $\mathbb{C}$-algebras $R$, we set $L^+\mathscr{G}(R)=\mathscr{G}( R[[t]])$ and $L\mathscr{G}(R)= \mathscr{G}(R((t)))$. We denote by $\Gr_{\mathscr{G}}$ the affine Grassmannian of $\mathscr{G}$, which is defined as the fppf quotient $L\mathscr{G}/L^+\mathscr{G}$. 
In particular, we have 
 \[ \Gr_\mathscr{G}({\mathbb{C}})= G(\mathcal{K})^\sigma/ G(\mathcal{O} ) ^\sigma . \]
It is known that $\Gr_{\mathscr{G}}$ is a projective ind-variety, cf.\,\cite[Theorem 1.4]{PR}.  Following \cite{PR,Zh2},   we will call it a twisted affine Grassmannian of $\mathscr{G}$. We can also attach the twisted affine Grassmannian $\Gr_{ \mathscr{T} }:=L \mathscr{T}/L^+ \mathscr{T}$ of $\mathscr{T}$.  This is a highly non-reduced ind-scheme. Moreover,
\[ \Gr_{ \mathscr{T} }({\mathbb{C}})= T(\mathcal{K})^\sigma/T(\mathcal{O})^\sigma.   \]
For any $\lambda\in X_*(T)$, we can naturally attach an element $t^\lambda\in T(\mathcal{K})$.  We now define the norm $n^\lambda\in T(\mathcal{K} )^\sigma$ of $t^\lambda$, 
\begin{equation}
\label{norm_eq}
 n^\lambda:= \prod_{i=0}^{m-1}  \sigma^i(t^\lambda)=\epsilon^{\sum_{i=1}^{m-1} i \sigma^i(\lambda)  } t^{\sum_{i=0}^{m-1} \sigma^i(\lambda)}. \end{equation}

There exists a natural bijection
\be T(\mathcal{K})^\sigma/T(\mathcal{O})^\sigma \simeq  X_*(T)_\sigma , \ee
where $X_*(T)_\sigma$ denotes the set of $\sigma$-coinvariants in $X_*(T)$. Any $\bar{\lambda}\in X_*(T)_\sigma$ corresponds to the coset $n^\lambda  T( \mathcal{O})^\sigma$, where $\lambda$ is a representative of $\bar{\lambda}$.  By Theorem \cite[Theorem 0.1]{PR}, the components of $\Gr_{\scrG}$ can be parametrized by elements in $\pi_1(G)_\sigma$, where $\pi_1(G)\simeq X_*(T)/\check{Q}$, and  $(X_*(T)/\check{Q})_\sigma$ is the the set of coinvariants of $\sigma$ in $X_*(T)/\check{Q}$.

When $G$ is of adjoint type, we  describe $(X_*(T)/\check{Q})_\sigma$ in the following table.
 
 \begin{equation}
\label{Com_table}
 \begin{tabular}{|c  | c | c |c |c |c | c| c| c|c |c|c|c|c|c |c ||} 
 \hline
$(G, m)$   & $(A_{2\ell-1}, 2 ) $  &  $(A_{2\ell}, 4)$  &  $(D_{2\ell+1} , 2  ) $  &       $(D_{2\ell} , 2  ) $  &      $ (D_4,   3)$  &  $ (E_6,  2)$    \\ [0.8ex] 
 \hline
$ X_*(T)/{\check{Q}} $  &  $ \mathbb{Z}/2\ell\mathbb{Z}\ $  &   $ \mathbb{Z}/ (2\ell+1) \mathbb{Z}$  &  $ \mathbb{Z}/4\mathbb{Z} $   &  $   \mathbb{Z}/ 2 \mathbb{Z}  \times  \mathbb{Z}/2\mathbb{Z}     $     &  $  \mathbb{Z}/2\mathbb{Z}  \times  \mathbb{Z}/2\mathbb{Z}   $ &  $ \mathbb{Z}/3\mathbb{Z} $  \\ [0.8ex] 
 \hline
$ (X_*(T)/{\check{Q}} )_\sigma$  &   $ \mathbb{Z}/2\mathbb{Z}$  &   $ 0 $  &    $ \mathbb{Z}/2\mathbb{Z} $  &  $  \mathbb{Z}/2\mathbb{Z}$ &  $ 0$ &  $0 $  \\  
 \hline
\end{tabular}. 
\end{equation}
\\ [0.3ex] 

\subsection{Twisted affine Schubert varieties}
\label{sect_twisted}
Let $e_0$ be the base point in $\Gr_\mathscr{G}({\mathbb{C}})$.  For any $\bar{\lambda}\in X_*(T) _\sigma$,  let $e_{\bar{\lambda}}$ denote the point $n^\lambda e_0 \in \Gr_{\scrG}( {\mathbb{C}})$.    The point  $e_{\bar{\lambda}}$ only depends on $\bar{\lambda}\in X_*(T)_\sigma$. Let $X_*(T)^+_\sigma$ denote the set of images of $X_*(T)^+$ in $X_*(T)_\sigma$ via the projection $X_*(T)\to X_*(T)_\sigma$.  Then, we have the following Cartan decomposition for $\Gr_{\mathscr{G}}$ (cf.\,\cite[Proposition 2.8]{Ri1}),
\be
\label{cartan_dec}
\Gr_\mathscr{G}({\mathbb{C}})=  \bigsqcup_{\bar{ \lambda} \in   X_*(T)^+_\sigma }  \Gr_\mathscr{G}^{\bar{\lambda}} , \ee
where  $\Gr_\mathscr{G}^{\bar{\lambda}}:= G(\mathcal{O})^\sigma  e_{\bar{\lambda}} $.   The Schubert variety $\overline{ \Gr }_{\mathscr{G}}^{\bar{\lambda}} $ is defined to be the reduced closure of $\Gr_\mathscr{G}^{\bar{\lambda}}$ in $\Gr_{\mathscr{G}}$. Moreover, 
\[ \dim \overline{ \Gr }_{\mathscr{G}}^{\bar{\lambda}} =2\langle \lambda, \rho   \rangle ,\]
where $\rho$ is the sum of all fundamental weights of $\fg$. It is easy to see that the dimension is independent of the choice of $\lambda$.  

For any $\bar{\lambda},\bar{\mu}\in  X_*(T)^+_\sigma$,  we write $\bar{\mu}\preceq \bar{\lambda}$ if $\Gr_\mathscr{G}^{\bar{\mu}} \subseteq  \overline{ \Gr }_{\mathscr{G}}^{\bar{\lambda}} $.  For any $i\in I$, let $\overline{\check{\alpha}}_i$ denote the image of $\check{\alpha}_i$ in $X_*(T)_\sigma$. For any $j\in I_\sigma$, set 
\be
\label{gamma_root}
 \gamma_j =\overline{\check{\alpha}}_i,    \quad \text{ if $j=\eta(i)$.}\ee
 It is clear that $\gamma_j $ is well-defined.

The following lemma follows from \cite[Corollary 2.10]{Ri1}.
\begin{lemma}
\label{lem_order}
$\bar{\mu}\preceq \bar{\lambda}$ if and only if $\bar{\lambda}-\bar{\mu}$ is a non-negative integral linear combination of $\{ \,\gamma_j  \,|\,  j\in I_\sigma \,\}$. 
 \end{lemma}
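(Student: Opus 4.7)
The plan is that this lemma is a direct translation of Richarz's \cite[Corollary 2.10]{Ri1} into the coinvariant lattice language adopted here. I would first recall the statement of that corollary, then identify the simple relative coroots appearing in it with the generators $\{\gamma_j\}_{j \in I_\sigma}$ defined in \eqref{gamma_root}.

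For the direction $\Leftarrow$, I would argue by induction on $\sum_j c_j$ where $\bar{\lambda} - \bar{\mu} = \sum_j c_j \gamma_j$. The inductive step reduces to showing: if $\bar{\lambda} - \gamma_j$ is still dominant in $X_*(T)^+_\sigma$, then $e_{\bar{\lambda} - \gamma_j} \in \overline{\Gr}_\mathscr{G}^{\bar{\lambda}}$. Concretely, I would fix $i \in \eta^{-1}(j)$, so that $\gamma_j = \overline{\check{\alpha}}_i$; then the simple coroot $\check{\alpha}_i$ of $\fg$ generates a rank-one subgroup of $G$ whose $\sigma$-equivariant Weil restriction produces a rank-one subgroup of $\mathscr{G}(\barK)$, and applying it to $e_{\bar{\lambda}}$ yields $e_{\bar{\lambda}-\gamma_j}$ as a specialization in the orbit closure $\overline{\Gr}_\mathscr{G}^{\bar{\lambda}}$.

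For the direction $\Rightarrow$, I would invoke \cite[Corollary 2.10]{Ri1} directly. That corollary characterizes $\bar{\mu} \preceq \bar{\lambda}$ as the condition that $\bar{\lambda} - \bar{\mu}$ lies in the nonnegative integer cone generated by the simple relative coroots of the root datum of $\scrG$ over $\barK$. The required translation asserts that these simple relative coroots, read off from the affine Dynkin diagram of the corresponding twisted type, coincide in $X_*(T)_\sigma$ with the $\gamma_j$ after the identifications \eqref{label_roots} and \eqref{label_roots2}. The hardest part will be the case $(\fg,m) = (A_{2\ell},4)$: since $\scrG$ is special but not absolutely special, the affine Dynkin diagram has a different normalization at $\beta_\ell$ (cf.\ the factor of $2$ appearing in \eqref{label_roots}), and I will need to check carefully that the correct relative simple coroots are indeed the $\gamma_j$, rather than some rescalings of them.
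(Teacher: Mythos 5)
The paper offers no argument for this lemma beyond the citation: \cite[Corollary 2.10]{Ri1} is an equivalence, and the paper reads it as being already phrased in terms of the images in $X_*(T)_\sigma$ of the simple coroots of $G$, which are by definition the $\gamma_j$ of (\ref{gamma_root}). So both implications are taken as given, and the identification you propose to verify in the $A_{2\ell}^{(2)}$ case is the same (immediate) translation the paper makes; the \'echelonnage rescaling at the short root that worries you never enters the statement being used, because the lemma and Richarz's order are both expressed through the images of the absolute coroots rather than through relative coroots. Your treatment of $\Rightarrow$ is therefore the paper's proof, and your extra care there is harmless.

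The genuine gap is in your independent argument for $\Leftarrow$. Your induction requires a chain from $\bar{\lambda}$ down to $\bar{\mu}$ inside $X_*(T)^+_\sigma$ in which each step subtracts a single $\gamma_j$ and stays dominant, and such a chain does not exist in general: already when $\bar{\lambda}-\bar{\mu}$ is the image of the highest coroot (the quasi-minuscule situation, relative rank at least $2$), the element $\bar{\lambda}-\gamma_j$ fails to be dominant for every $j$, so the reduction to your inductive step breaks down at the first stage. The standard repair is the one the paper itself uses later (proof of Theorem \ref{thm_A_2n}): by Stembridge \cite[Theorem 2.8]{St}, cover relations in the dominance order on $X_*(T)^+_\sigma$ are given by positive roots of the relative root system, not by simple ones, and the geometric degeneration $e_{\bar{\mu}}\in\overline{\Gr}_\scrG^{\bar{\lambda}}$ must then be produced inside the rank-one subgroup attached to such a positive relative root. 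Relatedly, the subgroup you name is not well defined as described: for $i\in\eta^{-1}(j)$ lying in a nontrivial $\sigma$-orbit, the rank-one subgroup of $G$ generated by $\check{\alpha}_i$ alone is not $\sigma$-stable, so its ``$\sigma$-equivariant Weil restriction'' does not parse; one must take the subgroup generated by the whole $\sigma$-orbit and then pass to $\sigma$-fixed points, which is exactly the relative rank-one group just mentioned. As written, your $\Leftarrow$ direction does not go through; either carry out the cover-relation/rank-one argument properly, or simply read both directions off \cite[Corollary 2.10]{Ri1} as the paper does.
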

By the ramified geometric correspondence \cite[\S 1]{Zh3}, the set $X_*(T)_\sigma$ can be realized as the weight lattice of the reductive group $H:=(\check{G})^\tau$, where $\check{G}$ is the Langlands dual group of $G$ and $\tau$ is a diagram autorphism on $\check{G}$ corresponding to the one on $G$, and $\{ \,\gamma_j  \,|\,  j\in I_\sigma \,\}$ is the set of simple roots for $H$.  Moreover,  $X_*(T)^+_\sigma$ is the set of dominant weights of $H$, and the partial order $\preceq$ is exactly the standard partial order for dominant weights of $H$.  

We now assume $G$ is of adjoint type.  From the perspective of the geometric Satake,  we can determine the minimal elements in $ X_*(T)^+_\sigma$, in other words the minimal Schubert variety in each connected component of $\Gr_{\scrG}$.  
From the table (\ref{Com_table}), we see that when $(G, m)=(A_{2\ell-1}, 2 )$, $\Gr_{\scrG }$ has two components, where $\Gr_{\scrG}^{  \overline{ \check{\omega} }_1  } $ is the minimal Schubert variety in the non-neutral component, since $ \overline{ \check{\omega} }_1$ gives the minuscule dominant weight of $H\simeq {\rm Sp}_{2\ell}$. When $(G, m)=(D_{\ell+1}, 2 )$,  $\Gr_{\scrG }$ also has two components and $\Gr_{\scrG}^{  \overline{ \check{\omega} }_\ell } $ is the minimal Schubert variety in the non-neutral component, since $\overline{ \check{\omega} }_\ell$ is the minuscule dominant weight of $H\simeq {\rm Spin}_{2\ell+1}$.  Otherwise,  $\Gr_{\scrG}$ has only one component.  In fact, when $(G, m)=(A_{2\ell}, 4)$, $H\simeq {\rm SO}_{2\ell+1}$, in which case the  lattice $X_*(T)_\sigma$ concides with the root lattice of $H$. 
 
Let $S$ denote the following set
\be
\label{notation_S}
 S=\begin{cases}\{ 0\}  \quad \quad  \text{ if }  (G,r)\not=   (A_{2\ell-1}, 2 ), (D_{\ell+1}, 2 ) \\
\{ 0, \check{\omega}_1 \}   \quad \quad  \text{ if } (G,r)=   (A_{2\ell-1}, 2 )   \\
\{0, \check{\omega}_\ell \}   \quad\quad  \text{ if } (G,r)=   (D_{\ell+1}, 2 )  \\
\end{cases}.
\ee
For any $\kappa\in S$, let $\Gr_{\scrG, \kappa}$ be the component of $\Gr_\scrG$ containing the Schubert variety $\Gr_\scrG^{ \bar{\kappa} }$, or equivalently containing the point $e_{\bar{\kappa}}$. Then,
\[ \Gr_\scrG= \sqcup_{\kappa\in S}   \Gr_{\scrG, \kappa} . \]

\subsection{Global affine Grassmannian of parahoric Bruhat-Tits group schemes }
\label{sect_def_global}
Let $C$ be a complex projective line $\mathbb{P}^1$ with a coordinate $t$, and with the action of $\sigma$ such that $t\mapsto \epsilon t$. Let $\bar{C}$ be the quotient curve $C/\sigma $, and let $\pi: C\to \bar{C}$ be the projection map. Then $\bar{C}$ is also isomorphc to $\mathbb{P}^1$. 
 Let $\mathcal{G}=\Res_{C/\bar{C}}(G \times C)^{\sigma}$ be the group scheme over $\bar{C}$, which is the $\sigma$-fixed point subgroup scheme of the Weil restriction $\Res_{C/\bar{C}}(G \times C)$  of the constant group scheme $G\times C$ from $C$ to $\bar{C}$. 
 Then, $\mathcal{G}$ is a parahoric Bruhat-Tits group scheme over $\bar{C}$  in the sense of Heinloth \cite[\S 1]{He}.  Let $o$ (resp.$\bar{o}$) be the origin of $C$ (resp.$\bar{C}$), and let $\infty$ (resp. $\bar{\infty}$) be the infinite point in $C$ (resp. $\bar{C}$).
 
The group scheme $\mathcal{G}$ has the following properties:
\begin{enumerate}
\item For any $y\in \bar{C}$, if $y\not=\bar{o}, \bar{\infty}$, the fiber $\mathcal{G}_{|_y}$ over $y$ is isomorphic to $G$; the restriction $\mathcal{G}_y$ to the formal disc  $\mathbb{D}_y$ around $y$ is isomorphic to the constant group scheme $G_{  \mathbb{D}_y}$ over $\mathbb{D}_y$.
\item When $y=\bar{o}$ or $\bar{\infty}$ in $\bar{C}$, $\mathcal{G}_{|_y}$  has a reductive quotient $G^\sigma$; the restriction $\mathcal{G}_y$ to $\mathbb{D}_y$ is isomorphic to the parahoric group scheme $\mathscr{G}$. 
\end{enumerate}

Similarly, we can define the parahoric Bruhat-Tits group scheme $\mathcal{T}:= \Res_{C/\bar{C}}(T \times C)^{\sigma}$.

Given an $R$-point $p \in C(R)$ we denote by $\Gamma_p\subset C_R$ the graph of $p$ where $C_R:=C\times {\rm Spec} (R)$, and denote by $\hat{\Gamma}_p$ the formal completion of $C_R$ along $\Gamma_p$, and let $\hat{\Gamma}^\times_{p} $ be the punctured formal completion along $\Gamma_p$.  Let $\bar{p}$ be the image of $p$ in $\bar{C}$. We similarly define $\bar{C}_R$, $\Gamma_{\bar{p}}$, $\hat{\Gamma}_{\bar{p}}$ and $\hat{\Gamma}_{\bar{p}}^\times $.

For any $\mathbb{C}$-algebra $R$,  we define

\begin{equation}
\label{BD_Gr}
    \Gr_{\mathcal{G}, C}(R):= \left. \left\{ \,  (p, \mathcal{P}, \beta) \, 
    \middle| \, 
    \begin{aligned}[m]
    & \, p \in C(R) \\
    &  \text{ $\mathcal{P}$ a $\mathcal{G}$-torsor on $\bar{C}$} \\
    & \, \beta: \mathcal{P}|_{\bar{C}_R \setminus  {\Gamma}_{\bar{p}}} \simeq \mathring{\mathcal{P}}|_{\bar{C}_R \setminus  \Gamma_{\bar{p}} }\\
    \end{aligned}
    \right\} \right. ,
\end{equation}
where $\mathring{\mathcal{P}}$ is the trivial $\mathcal{G}$-bundle.

The functor  $\Gr_{\mathcal{G}, C}$ is represented by an ind-scheme which is ind-proper over $C$. We call it the global affine Grassmannian $\Gr_{\mathcal{G},C}$ of $\mathcal{G}$ over $C$.  

For any $p\not=o,\infty \in C$, the fiber $\Gr_{\mathcal{G}, p}:=\Gr_{\mathcal{G}, C}|_p$ is isomorphic to the usual affine Grassmannian $\Gr_G$, and the fiber $\Gr_{\mathcal{G}, p}$ over $p=o,\infty$  is isomorphic to the twisted affine Grassmannian $\Gr_{\mathscr{G}}$ of the parahoric group scheme $\mathscr{G}$.

\begin{remark}
One can define the global affine Grassmannian $\Gr_{\mathcal{G}}$ over $\bar{C}$, see \cite[Section 3.1]{Zh2}.  The global affine Grassmannian defined above is actually the base change of $\Gr_{\mathcal{G}}$ along $\pi: C\to \bar{C}$. 
\end{remark}

We can also define the jet group scheme $L_C^+ \mathcal{G}$ over $C$  as follows,
\begin{equation}
L_C^+ \mathcal{G} (R):= \left. \left\{(p, \gamma) \, \middle| \begin{aligned} 
& \, p \in C(R) \\
& \, \text{ $\gamma$ is a trivialization of the trivial $\mathcal{G}$-torsor on $\bar{C}$ along $ \hat{\Gamma}_{\bar{p}}$} \\
\end{aligned}
\right\} \right.
\end{equation}
Again, $L_C^+ \mathcal{G}$  is the base change of the usual jet group scheme $L^+ \mathcal{G}$ of $\mathcal{G}$ along $\pi: C\to \bar{C}$.   For any $p\not=o,\infty\in C$, the fiber $L_C^+ \mathcal{G}|_p$ is isomorphic to the jet group scheme $L^+G$ of $G$, and the fiber $L_C^+ \mathcal{G}|_p$ over $p=o,\infty$ is isomorphic to jet group scheme $L^+ \mathscr{G}$.

We have a left action of $L_C^+ \mathcal{G}$ on $\Gr_{\mathcal{G}, C}$ given by 
\begin{equation}
  (  (p, \gamma), (p, \mathcal{P}, \beta) )\mapsto (p, \mathcal{P}', \beta), 
\end{equation}
where $\mathcal{P}'$ is obtained by choosing a trivialization of $\mathcal{P}$ along $\hat{\Gamma}_{\bar{p}}$ and then composing this trivialization with $\gamma$ and regluing with $\beta$. 

We also can define the global loop group $L_C\mathcal{G}$ of $\mathcal{G}$ over $C$, 

\begin{equation}
L_C \mathcal{G} (R):= \left. \left\{(p, \gamma) \,\middle| \begin{aligned} 
& \, p \in C(R) \\
& \, \text{ $\gamma$ is a trivialization of the trivial $\mathcal{G}$-torsor on $\bar{C}$ along $\hat{\Gamma}_{\bar{p}}^\times $ } \\
\end{aligned}
\right\}       \right.    .
\end{equation}

Then $\Gr_{\calG, C}$ is isomorphic to the fppf quotient $L_C\scrG/L_C^+\scrG$. We can also define 
 $L_C^+ \mathcal{T}$ and $L _C\mathcal{T}$  similarly. Then,
 \[ L_C \mathcal{T}|_p\simeq   \begin{cases}  T_{\mathcal{K}_p }   \quad  \text{ if } p\not=o ,\infty   \\
 \mathscr{T}  \quad  \text{ if } p=o, \infty
\end{cases} ,  \]
where $\mathcal{K}_p$ is the field of formal Laurant series of $C$ at $p$.

\subsection{Global Schubert varieties}
\label{sect_2.7}
For each $p\in C$, we can attach a lattice $X_*(T)_p$, 
\[ X_*(T)_p=\begin{cases} X_*(T)   \quad  \text{ if } p\not=o ,\infty   \\
X_*(T)_\sigma  \quad  \text{ if } p=o, \infty
\end{cases} .    \]
By \cite[Proposition 3.4]{Zh2},  for any $\lambda\in X_*(T)$  there exists a section $s^\lambda:  C\to  L_C\mathcal{T}$,  such that for any $p\in C$,  the image  of $s^\lambda(p)$ in $X_*(T)_p$ is given by 
\[  \begin{cases}   \lambda\in X_*(T)    \quad  \text{ if } p\not=o,\infty      \\
\bar{\lambda}\in X_*(T)_\sigma   \quad  \text{ if } p=o,\infty
   \end{cases}   . \]
This naturally gives rise to $C$-points in $\Gr_{\mathcal{T}, C}$ and $\Gr_{\mathcal{G},C}$, which will still be denoted by $s^\lambda$.  Following \cite[Definition 3.1]{Zh2}, for each $\lambda\in X_*(T)$ we define the global Schubert variety $\overline{\Gr}_{\mathcal{G},C}^\lambda$ to be the minimal $L_C^+\mathcal{G}$-stable irreducible closed subvariety of $\Gr_{\mathcal{G},C }$ that contains $s^\lambda$.  
Then,  \cite[Theorem 3]{Zh2} asserts that
\begin{theorem}
\label{thm_flat_fiber}
The global Schubert variety $\overline{\Gr}_{\mathcal{G},C}^\lambda$ is flat over $C$, and 
for any $p\in C$ the fiber  $\overline{\Gr}_{\mathcal{G},p}^\lambda$ is reduced and
\[  \overline{\Gr}_{\mathcal{G},p}^\lambda\simeq     \begin{cases} \overline{\Gr}^\lambda_G     \quad  \text{ if } p\not=o,\infty   \\
   \overline{\Gr}_{\mathscr{G}}^{\bar{\lambda}    }      \quad  \text{ if } p=o,\infty
     \end{cases} . \]
\end{theorem}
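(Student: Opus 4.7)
The plan is to analyze the proper morphism $\overline{\Gr}_{\mathcal{G},C}^\lambda \to C$ by first understanding it over the smooth locus $C^\circ := C \setminus \{o,\infty\}$, then handling the two special fibers, and finally deducing flatness from equidimensionality and reducedness.

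First I would restrict to $C^\circ$. There $\mathcal{G}|_{C^\circ}$ is (\'etale-locally) the constant group scheme $G \times C^\circ$, so $L^+\mathcal{G}_C|_{C^\circ}$ becomes an $L^+G$-bundle, and $\Gr_{\mathcal{G},C}|_{C^\circ}$ is Zariski-locally $\Gr_G \times C^\circ$; by construction $s^\lambda|_{C^\circ}$ fiber-wise equals $t^\lambda$. Hence the $L^+\mathcal{G}_C$-orbit through $s^\lambda$ over $C^\circ$ is fiber-wise $\Gr_G^\lambda$, with closure fiber-wise $\overline{\Gr}_G^\lambda$. This settles the generic-fiber statement and shows that $\overline{\Gr}_{\mathcal{G},C}^\lambda$ is irreducible of dimension $2\langle\lambda,\rho\rangle + 1$.

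Next I would treat the special fiber at $p = o$ (the case $p = \infty$ being symmetric). Since $s^\lambda(o) = e_{\bar\lambda}$ lies in the $L^+\mathscr{G}$-stable fiber $\overline{\Gr}_{\mathcal{G},o}^\lambda$, this fiber contains $\overline{\Gr}_\mathscr{G}^{\bar\lambda}$. Conversely, the fiber is a Cartier divisor in the irreducible $\overline{\Gr}_{\mathcal{G},C}^\lambda$, so it has pure dimension $2\langle\lambda,\rho\rangle$; each of its underlying components is therefore the closure of some $\Gr_\mathscr{G}^{\bar\mu}$ with $\langle\mu,\rho\rangle = \langle\lambda,\rho\rangle$, and using Lemma \ref{lem_order} together with the fact that $\bar\lambda$ is maximal among such $\bar\mu$'s (as $e_{\bar\lambda}$ itself lies in the fiber) forces $\bar\mu = \bar\lambda$. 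This yields the set-theoretic identification $(\overline{\Gr}_{\mathcal{G},o}^\lambda)_{\mathrm{red}} = \overline{\Gr}_\mathscr{G}^{\bar\lambda}$.

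The main obstacle is upgrading this to a scheme-theoretic equality, i.e., proving the special fiber is reduced. My plan is to construct a BD-type convolution resolution $\pi: \widetilde{X} \to \overline{\Gr}_{\mathcal{G},C}^\lambda$ as an iterated fibration over the global affine Grassmannians of $\mathcal{G}$ attached to smaller coweights, whose fiber over $o$ is a Bott--Samelson-style resolution of $\overline{\Gr}_\mathscr{G}^{\bar\lambda}$---reduced and Cohen--Macaulay by construction. A cohomology-and-base-change argument, using rationality of Schubert singularities in both the untwisted and twisted parahoric settings (cf.\ \cite{PR}) to obtain $R\pi_* \mathcal{O}_{\widetilde{X}} = \mathcal{O}_{\overline{\Gr}_{\mathcal{G},C}^\lambda}$, then lets reducedness descend to $\overline{\Gr}_{\mathcal{G},C}^\lambda$ and its special fiber. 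Once reducedness is in hand, flatness of $\overline{\Gr}_{\mathcal{G},C}^\lambda \to C$ is automatic: the ind-scheme is irreducible, reduced, and dominates the smooth curve $C$, so its structure sheaf is $\mathcal{O}_C$-torsion-free.
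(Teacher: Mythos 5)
This statement is not proved in the paper at all: it is quoted verbatim from Zhu, \cite[Theorem 3]{Zh2}, whose proof runs through the coherence conjecture of Pappas--Rapoport (a Hilbert-polynomial comparison between the generic fiber and the admissible union in the special fiber, established via line bundles on affine flag varieties and Frobenius-splitting/char.\ $p$ techniques). Measured against that, your sketch gets the easy parts right --- the identification over $C\setminus\{o,\infty\}$, and the observation that flatness is automatic once one knows $\overline{\Gr}^{\lambda}_{\mathcal{G},C}$ is an integral scheme dominating the smooth curve --- but the two hard steps have genuine gaps. First, the set-theoretic identification of the special fiber: your dimension count only shows that every irreducible component is some $\overline{\Gr}^{\bar\mu}_{\mathscr{G}}$ of dimension $2\langle\lambda,\rho\rangle$, and that $\overline{\Gr}^{\bar\lambda}_{\mathscr{G}}$ occurs among them. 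The claim that maximality of $\bar\lambda$ ``forces $\bar\mu=\bar\lambda$'' is a non sequitur: Lemma \ref{lem_order} only controls components with $\bar\mu\preceq\bar\lambda$, and nothing in your argument excludes components indexed by dominant $\bar\mu$ \emph{incomparable} to $\bar\lambda$ with the same dimension (of which there are plenty). Ruling these out is exactly the nontrivial inclusion ``special fiber $\subseteq$ $\lambda$-admissible locus'' from local model theory, and it requires an argument (specialization of Iwahori orbits, or the coherence-type comparison), not just equidimensionality.

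Second, the reducedness step assumes what has to be proved. A global convolution $\overline{\Gr}^{\lambda_1}_{\mathcal{G},C}\tilde\times\cdots\tilde\times\overline{\Gr}^{\lambda_k}_{\mathcal{G},C}$ is only a usable ``resolution'' if the factors are controlled (e.g.\ smooth, i.e.\ $\lambda_i$ minuscule), and a general dominant $\lambda$ is \emph{not} a sum of dominant minuscule coweights (already $\check\omega_2$ for $E_6$, or any quasi-minuscule class, fails), so the resolution you invoke does not exist in the required generality; Iwahori-level Bott--Samelson resolutions do exist, but then the statement $R\pi_*\mathcal{O}_{\widetilde X}=\mathcal{O}_{\overline{\Gr}^{\lambda}_{\mathcal{G},C}}$ is not a citable fact --- it would require normality/rational singularities of the global Schubert variety itself, which is essentially equivalent to the theorem you are trying to prove (and is what Zhu extracts from the coherence conjecture). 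So as written, the proposal establishes the generic-fiber statement and flatness-modulo-the-fiber-identification, but both the set-theoretic equality at $o,\infty$ and the reducedness of the special fiber remain unproved; the honest route here is the one the paper takes, namely citing \cite[Theorem 3]{Zh2}.
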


%\begin{remark}
%In fact, we can construct the section  $s^\lambda:  C\to \Gr_{\mathcal{T},C}$ explicitly.  For any $\mathcal{T}$-torsor $ \mathcal{P}$ over $\bar{C}$,  the pull-back $\%pi^*( \mathcal{P})$ is a $(\sigma,T)$-torsor on $C$, i.e. $\pi^*( \mathcal{P})$ carries an action of $\sigma$ of order $m$ and compatible with the action on $T$.  Then, the section %$s^\lambda$ amounts to the triple $({\rm Id}:C\to C, \mathscr{F}, \beta )$, where ${\rm Id}: C\to C$ is the identity map, $\mathscr{F}$ is the $(\sigma,T)$-torsor with the %trivialization $\mathscr{F}|_{C^2 \backslash  \Delta C } \simeq  \mathring{\mathscr{F}} $ as $(\sigma,T)$-torsor, such that for any weight $\nu\in X^*(T)$,
%\[\beta_{\nu}: \mathscr{F} \times_T \mathbb{C}^{\nu} \simeq \mathscr{O}_{{C}^2}\big (\sum_{i=0}^{m-1} \langle \sigma^i( \lambda), \nu \rangle \Gamma_{\sigma^i }   \big ),\]
%where $\Gamma_{\sigma^i } $ represents the graph of $\sigma^i: C\to C$. 
%
%\end{remark}

\section{Construction of level one line bundle on ${\rm Bun}_{\mathcal{G}}$}
\label{sect_line_bundle}

In this section, we keep the assumption that $G$ is of adjoint type with the action of a standard automorphism $\sigma$.

\subsection{Borel-Weil-Bott theorem on $\Gr_\mathscr{G}$}
\label{Sect_BWB}
Let 
$\hat{L}(\fg,\sigma):=\fg(\mathcal{K})^\sigma\oplus \mathbb{C} K$ be the twisted affine algebra as a central extension of the twisted loop algebra $\fg(\mathcal{K})^\sigma$  with the canonical center $K$, whose Lie bracket is defined as follows,
  \begin{equation} 
[x[f]+z K, x'[f'] +z' K] =
[x,x' ][ff'] +m^{-1}{\rm Res}_{t=0} \,  \bigl (({df})
f'\bigr) ( x,x') K,  
 \end{equation}
for  $x[f],x'[f']\in \fg(\mathcal{K})^\sigma$ where $x,x'\in \fg$, $f,f'\in \mathcal{K}$, and $z, z'\in\mathbb{C}$; where
${\rm Res}_{t=0}$ denotes the coefficient of $t^{-1}dt$, and $(,)$ is the normalized Killing form on $\fg$, i.e. $(\check{\theta}, \check{\theta} )=2$.

We use $P( \sigma,c)$ to denote the set of highest weights of $\fg^\sigma$ which parametrizes the integrable highest weight modules of $\hat{L}(\fg, \sigma)$  of level $c$, see \cite[Section 2]{HK}.  For each $\lambda\in P( \sigma,c)$, we  denote by $\mathscr{H}_{ c}(\lambda) $ the associated integrable highest weight module of $\hat{L}(\fg, \sigma)$.

 Recall that $\{ \lambda_i \,|\, i\in I_\sigma\}$ be the set of fundamental weights of $\fg^\sigma$, where we follow the labellings in (\ref{label_roots}). Also,  $\{ \, \check{\beta}_i  \,|\, i\in I_\sigma \, \}$ is the set of simple coroots of $\fg^\sigma$. 
\begin{lemma}
\label{lem_level1}
For a standard automorphism $\sigma$, we have 
\[  P(\sigma,1)=\begin{cases}\{ 0\}  \quad \quad  \text{ if }  (\fg,m)\not=   (A_{2\ell-1}, 2 ), (D_{\ell+1}, 2 ) \\
\{ 0, \lambda_1 \}   \quad \quad  \text{ if } (\fg,m)=   (A_{2\ell-1}, 2 )   \\
\{0,   \lambda_\ell \}   \quad\quad  \text{ if } (\fg,m)=   (D_{\ell+1}, 2 )  \\
\end{cases}.\]
\end{lemma}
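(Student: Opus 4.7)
The plan is to apply the standard classification of integrable highest weight modules of the twisted affine Kac--Moody algebra $\hat{L}(\fg,\sigma)$, as recalled in \cite[Section~2]{HK}. A dominant integral weight $\lambda = \sum_{i\in I_\sigma} m_i \lambda_i$ of $\fg^\sigma$ lies in $P(\sigma,c)$ if and only if
\[
\sum_{i\in I_\sigma} a_i^\vee\, m_i \leq c,
\]
where the $a_i^\vee$, together with $a_0^\vee=1$, are the comarks of the twisted affine Dynkin diagram of $\hat L(\fg,\sigma)$. Here $a_i^\vee$ is the coefficient of $\check{\beta}_i$ in the expansion $\theta_0^\vee = \sum_{i\in I_\sigma} a_i^\vee\, \check\beta_i$, where $\theta_0$ is the highest weight of the $\fg^\sigma$-module $\fg_1$ in the $\sigma$-eigenspace decomposition $\fg=\bigoplus_{j\in\mathbb{Z}/m}\fg_j$.

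For $c=1$, the inequality $\sum_i a_i^\vee m_i \leq 1$ with $m_i\in\mathbb{Z}_{\geq 0}$ admits only the solutions $\lambda=0$ and $\lambda=\lambda_i$ for some $i\in I_\sigma$ with $a_i^\vee=1$. Hence
\[
P(\sigma,1) \;=\; \{0\}\,\cup\,\{\lambda_i : i\in I_\sigma,\ a_i^\vee=1\},
\]
and the problem reduces to identifying the non-affine nodes of each of the five twisted Dynkin diagrams in Table~(\ref{Fix_table}) that carry comark~$1$.

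These comarks may be read off directly from Kac's Tables~Aff~2 and Aff~3, or verified by a short case-by-case computation of $\theta_0^\vee$ in simple coroots. In the labelling of (\ref{label_roots}): for $A_{2\ell-1}^{(2)}$, $\fg_1\cong V(\lambda_2)$ of $C_\ell$, giving $\theta_0^\vee=\check\beta_1+2\sum_{i=2}^\ell\check\beta_i$ and hence $a_1^\vee=1$; for $D_{\ell+1}^{(2)}$, $\fg_1\cong V(\lambda_1)$ of $B_\ell$, giving $\theta_0^\vee=2\sum_{i=1}^{\ell-1}\check\beta_i+\check\beta_\ell$ and hence $a_\ell^\vee=1$; for $D_4^{(3)}$, $\fg_1\cong V(\lambda_1)$ of $G_2$, giving $\theta_0^\vee=2\check\beta_1+3\check\beta_2$; for $E_6^{(2)}$, $\fg_1\cong V(\lambda_4)$ of $F_4$, giving $\theta_0^\vee=2\check\beta_1+4\check\beta_2+3\check\beta_3+2\check\beta_4$; and for the $(A_{2\ell},4)$ case, $\fg_1\cong V(\lambda_1)$ of $C_\ell$, giving $\theta_0^\vee=2\sum_{i=1}^\ell\check\beta_i$. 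In the latter three cases no non-affine comark equals~$1$, while in the first two exactly one does. Assembling these observations yields the three cases of the statement.

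The only mild obstacle is the $(A_{2\ell},4)$ case, where $\sigma$ has order $4$ rather than being a diagram automorphism: one must verify that $\hat L(\fg,\sigma)$ is the Kac--Moody algebra $A_{2\ell}^{(2)}$ whose finite part is $C_\ell$ (and not $B_\ell$), and that $\fg_1$ is irreducible as a $C_\ell$-module. Both facts rely on the ``standard'' choice of $\sigma$ made in Section~\ref{subsect_auto} and are part of the standard theory of twisted Kac--Moody algebras; see \cite[Chapter~8]{Ka}.
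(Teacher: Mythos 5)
Your proposal is correct, and at bottom it rests on the same pillar as the paper's proof: the classification of level-one dominant weights of $\hat{L}(\fg,\sigma)$ recalled from \cite[Section 2]{HK}, followed by a short case check over the five standard pairs $(\fg,m)$. The difference is in the packaging. The paper phrases the level-one condition as ``$\lambda$ dominant and $\langle\lambda,\check{\theta}_0\rangle\leq 1$,'' identifies such $\lambda$ with $0$ or a minuscule weight of $\fg^\sigma$ (citing \cite[Lemma 2.13]{BH}), and then, for $(A_{2\ell},4)$, sidesteps the delicate end of the $A_{2\ell}^{(2)}$ diagram by switching to the simple system $\{\alpha_i|_{\fh^\sigma}\}_{i<\ell}\cup\{-\theta|_{\fh^\sigma}\}$ so that \cite[Lemma 2.1]{HK} applies verbatim. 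You instead run the comark criterion uniformly, computing $\theta_0^\vee=\sum_i a_i^\vee\check\beta_i$ from the highest weight of the $\fg^\sigma$-module $\fg_1$ in all five cases; your values ($\check\beta_1+2\sum_{i\geq 2}\check\beta_i$ for $C_\ell$ inside $A_{2\ell-1}^{(2)}$, $2\sum_{i<\ell}\check\beta_i+\check\beta_\ell$ for $B_\ell$, $2\check\beta_1+3\check\beta_2$ for $G_2$, $2\check\beta_1+4\check\beta_2+3\check\beta_3+2\check\beta_4$ for $F_4$, and $2\sum_i\check\beta_i$ for the $C_\ell$ inside $A_{2\ell}^{(2)}$) are all correct and match Kac's tables, so exactly the nodes $\lambda_1$ (type $A_{2\ell-1}^{(2)}$) and $\lambda_\ell$ (type $D_{\ell+1}^{(2)}$) survive at level one. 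What your route buys is uniformity, in particular a direct treatment of $(A_{2\ell},4)$ via $\fg_1\cong V(\lambda_1)$ of $\mathrm{Sp}_{2\ell}$ (a fact the paper itself uses later, in the proof of Theorem \ref{thm_A_2n}) with no change of simple system; the cost is that for $A_{2\ell}^{(2)}$ the element $\theta_0=\epsilon_1$ is not a root of $\fg^\sigma$, so ``$\theta_0^\vee$'' must be interpreted with the correct normalization inherited from the canonical center of $\hat L(\fg,\sigma)$ — a point you implicitly cover by cross-checking against Kac's comarks and by invoking Kac's identification of $\hat L(\fg,\sigma)$ with $A_{2\ell}^{(2)}$ having standard finite part $C_\ell$, which is the same input the paper relies on.
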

\begin{proof}
We first consider the case when $(\fg,m)\not= (A_{2\ell}, 4 )$.   We can read from \cite[Lemma 2.1]{HK},  for any $\lambda\in (\fh^\sigma)^\vee$,  $\lambda\in P(\sigma,1)$ if and only if
 \[  \langle  \lambda,  \check{\beta}_i \rangle\in \mathbb{Z}_{\geq 0} \quad \text{ for any } i\in I_\sigma   ,   \]
 and $\langle \lambda,  \check{\theta}_0  \rangle\leq 1$, where $\theta_0$ is the highest short root of $\fg^\sigma$ and $\check{\theta}_0$ is the coroot of $\theta_0$, and hence $\check{\theta}_0$  is the highest coroot of $\fg^\sigma$.  In this case,  $\lambda\in P( \sigma,1)$ if and only if $\lambda=0$ or a minuscule dominant weight of $\fg^\sigma$ (cf.\,\cite[Lemma 2.13]{BH}). Following the labellings in \cite[Table Fin,p53]{Ka}, when $\fg^\sigma$ is of type $C_\ell$, $\lambda_1$ is the only minuscule weight; when $\fg^\sigma$ is of type $B_\ell$,  $\lambda_\ell$ is the only minuscule weight. Any other non simply-laced Lie algebra has no minuscule weight. This finishes the argument of the lemma when $(\fg,m)\not= (A_{2\ell}, 4 )$.
 
 Now, we assume that $(\fg,m)= (A_{2\ell}, 4 )$. In this case, it is more convenient to choose a different set of simple roots for $\fg^\sigma$, rather than the one described in (\ref{label_roots}).  Namely, we can  choose 
 \[  \{ \alpha_i|_{\fh^\sigma}  \,|\, i=1,2,\cdots, \ell-1 \}\cup \{   -\theta|_{\fh^\sigma}  \} \]
 as a set of simple roots of $\fg^\sigma$. With this set of simple root, we can also read from \cite[Lemma 2.1]{HK},  for any $\lambda\in (\fh^\sigma)^\vee$,  $\lambda\in P(\sigma,1)$ if and only if $\lambda=0$.

\end{proof}

\begin{remark}
\label{rmk_level}
It is not true that $0\in P(\sigma,1)$ for any automorphism $\sigma$. For example, $0\not\in P(\tau,1)$, when $\fg=A_{2\ell}$ and $\tau$ is a diagram automorphism; instead $0\in P(\tau,2)$.
\end{remark}

We define the following map 
\be
\label{iota_map}
 \iota:   X_*(T)  \to  (\fh^\sigma  )^\vee , \ee
such that for any ${\lambda}\in X_*(T) $, $\iota({\lambda})(h)=(\lambda, h)$, where we regard $\lambda$ as an element in $\fh$ and $(,)$ is the normalized Killing form on $\fh$.  It is clear that $\iota(0)=0$. 
This map naturally descends to a map  $X_*(T)_\sigma \to   (\fh^\sigma  )^\vee$. By abuse of notation, we still call it $\iota$.  

Recall some terminilogy introduced in Section \ref{subsect_auto}.  $I_\sigma$ is the set parametrizing simple roots of $\fg^\sigma$, and we also defined a map $\eta: I\to I_\sigma$.  
The set $\{ \check{\lambda}_j \,|\, j\in I_\sigma \}$ is the set of fundamental coweights of $\fg^\sigma$,  and  $\{ \lambda_j \,|\, j\in I_\sigma \}$ is the set of fundamental weights of $\fg^\sigma$.  We also recall that $\check{\alpha}_i$ is a simple coroot of $\fg$  for each $i\in I$, and $\gamma_j$ is the image of $\check{\alpha}_i$ in $X_*(T)_\sigma$.  The following lemma already appears in \cite[Lemma 3.2]{Ha} in a slighly different setting. 
\begin{lemma}
\label{lem_iota_coroot}
For any $j\in I_\sigma$, we have 
\[   \iota( \gamma_j ) =  \begin{cases}
\beta_j ,    \quad \text{ if }  (\fg, m)\not=  (A_{2\ell}, 4), \text{ or },   (\fg, m)=  (A_{2\ell}, 4) \text{ and } j\not= \ell \\
\frac{1}{2} \beta_\ell, \quad \text{ if } (\fg, m)=  (A_{2\ell}, 4) \text{ and } j= \ell 
\end{cases}  . \]

\end{lemma}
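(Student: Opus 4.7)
The plan is to identify $\iota(\check{\alpha}_i)$ explicitly via the normalized Killing form, restrict to $\fh^\sigma$, and match with the defining formulas in (\ref{label_roots}).

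Since $\fg$ is simply-laced, the normalization $(\check{\theta},\check{\theta})=2$ forces $(\check{\alpha}_i,\check{\alpha}_i)=2$ for every simple coroot, and a direct check using the Cartan matrix shows that the isomorphism $\fh \simeq \fh^*$ induced by $(,)$ sends $\check{\alpha}_i$ to $\alpha_i$. Therefore, for any $i\in I$ and any $h\in\fh^\sigma$,
\[
 \iota(\check{\alpha}_i)(h) = (\check{\alpha}_i,h) = \alpha_i(h),
\]
so $\iota(\check{\alpha}_i) = \alpha_i|_{\fh^\sigma}$ as elements of $(\fh^\sigma)^\vee$.

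Next, I would verify that $\iota$ descends to $X_*(T)_\sigma$ so that $\iota(\gamma_j)$ makes sense. The automorphism $\sigma$ preserves the form $(,)$, and on $\fh$ it acts as the diagram automorphism $\tau$: indeed, in the case $(\fg,m)=(A_{2\ell},4)$ the extra inner twist $\mathrm{i}^{h}$ appearing in (\ref{aut_def1}) acts trivially on $\fh$ via conjugation because $\fh$ is abelian, so $\fh^\sigma=\fh^\tau$. Consequently, any two simple coroots $\check{\alpha}_i,\check{\alpha}_{i'}$ with $\eta(i)=\eta(i')$ have equal restrictions to $\fh^\sigma$, which both confirms well-definedness of $\iota$ on coinvariants and yields $\iota(\gamma_j) = \alpha_i|_{\fh^\sigma}$ for any $i\in\eta^{-1}(j)$.

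Comparison with (\ref{label_roots}) then finishes the proof. When $(\fg,m)\neq(A_{2\ell},4)$, or when $(\fg,m)=(A_{2\ell},4)$ but $j\neq\ell$, the definition reads exactly $\beta_j = \alpha_i|_{\fh^\sigma}$ for $i\in\eta^{-1}(j)$, so $\iota(\gamma_j)=\beta_j$. In the remaining case $(\fg,m)=(A_{2\ell},4)$ with $j=\ell$, one has $\eta^{-1}(\ell)=\{\ell,\ell+1\}$ and (\ref{label_roots}) sets $\beta_\ell = (\alpha_\ell+\alpha_{\ell+1})|_{\fh^\sigma} = 2\alpha_\ell|_{\fh^\sigma}$, whereas $\iota(\gamma_\ell) = \alpha_\ell|_{\fh^\sigma} = \tfrac{1}{2}\beta_\ell$. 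There is no serious obstacle here: the only subtlety is keeping track of the $A_{2\ell}^{(2)}$ convention, where $\beta_\ell$ is declared to be twice a restriction (which is precisely what promotes it to the long root of $\fg^\sigma = C_\ell$), and this doubling accounts for the factor $\tfrac{1}{2}$ in the statement.
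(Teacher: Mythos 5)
Your proof is correct and rests on exactly the key fact underlying the paper's argument: with the normalization $(\check{\theta},\check{\theta})=2$ in simply-laced type, the form identifies $\check{\alpha}_i$ with $\alpha_i$, so that $\iota(\gamma_j)=\alpha_i|_{\fh^\sigma}$ for any $i\in \eta^{-1}(j)$ (well-definedness on coinvariants following since $\sigma$ preserves the form and acts on $\fh$ as $\tau$). The only difference is presentational: the paper pins down $\iota(\gamma_j)$ by pairing against the fundamental coweights $\check{\lambda}_k$ of $\fg^\sigma$ via (\ref{fund_weightI}) and (\ref{fund_coweightII}), whereas you read the answer directly off the definition (\ref{label_roots}) of $\beta_j$, including the doubling $\beta_\ell=2\alpha_\ell|_{\fh^\sigma}$ that produces the factor $\tfrac{1}{2}$ in the $(A_{2\ell},4)$, $j=\ell$ case.
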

\begin{proof}
By the definition of $\iota$, for any $\gamma_j=\overline{\check{\alpha}}_i$ with $j=\eta(i)$,
 and $k\in I_\sigma$ we have the following equalities:
\[    \langle   \check{\lambda}_k,    \gamma_j   \rangle=   \langle   \check{\lambda}_k,   \iota( \overline{\check{\alpha}}_i  )  \rangle=( \check{\lambda}_k,  \check{\alpha}_i )=\langle\check{\lambda}_k,   \alpha_i  \rangle .\]
  Then, 
this lemma can readily follows from the description of  fundamental coweights of $\fg^\sigma $ in  (\ref{fund_weightI}) and (\ref{fund_coweightII}).
\end{proof}
 Recall the set $S$ defined in (\ref{notation_S}).
\begin{lemma}
\label{lemma_iota}
For any $i\in I$, we have $ \iota(\check{\omega}_i )  =\lambda_{\eta(i)} $. As a consequence, $\iota$ maps $X_*(T)_\sigma^+$ bijectively into the set of dominant weights of $\fg^\sigma$.
Furthermore,  $\iota$ maps $S$ bijectively into $P(\sigma,1)$.  
\end{lemma}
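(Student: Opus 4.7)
The plan is to verify the identity $\iota(\check\omega_i)=\lambda_{\eta(i)}$ by pairing both sides with each simple coroot $\check\beta_j$ of $\fg^\sigma$. Since $\{\check\beta_j : j\in I_\sigma\}$ is a basis of $\fh^\sigma$ and the fundamental weight $\lambda_{\eta(i)}$ is characterized by $\langle\lambda_{\eta(i)},\check\beta_j\rangle=\delta_{\eta(i),j}$, the identity reduces to showing $\langle\iota(\check\omega_i),\check\beta_j\rangle=\delta_{\eta(i),j}$ for all $j\in I_\sigma$.

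To carry out this computation, I will use that $\fg$ is simply laced, so under the identification $\fh\simeq\fh^\vee$ induced by the normalized Killing form (which satisfies $(\check\alpha_k,\check\alpha_k)=2$), the fundamental coweight $\check\omega_i$ corresponds to the fundamental weight $\omega_i$ and $\check\alpha_k$ corresponds to $\alpha_k$. This yields $(\check\omega_i,\check\alpha_k)=\langle\omega_i,\check\alpha_k\rangle=\delta_{ik}$. Combined with the uniform description $\check\beta_j=\sum_{k\in\eta^{-1}(j)}\check\alpha_k$ from (\ref{label_roots2}), this gives
\[
\langle\iota(\check\omega_i),\check\beta_j\rangle=(\check\omega_i,\check\beta_j)=\sum_{k\in\eta^{-1}(j)}(\check\omega_i,\check\alpha_k)=\sum_{k\in\eta^{-1}(j)}\delta_{ik}=\delta_{\eta(i),j},
\]
proving the first assertion. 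Because the coroot formula (\ref{label_roots2}) is uniform across all standard automorphisms, the exceptional normalization $\beta_\ell=2\alpha_\ell|_{\fh^\sigma}$ in (\ref{label_roots}) for type $A_{2\ell}^{(2)}$ does not require a separate treatment.

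Granted the first identity, the second assertion follows because any dominant coweight can be written as $\lambda=\sum_i a_i\check\omega_i$ with $a_i\in\mathbb{Z}_{\geq 0}$, so $\iota(\bar\lambda)=\sum_{j\in I_\sigma}\bigl(\sum_{i\in\eta^{-1}(j)}a_i\bigr)\lambda_j$ is manifestly a dominant weight of $\fg^\sigma$; conversely, every dominant weight of $\fg^\sigma$ arises this way by distributing its $\lambda_j$-coefficients across the fibers of $\eta$. Injectivity comes from the fact that the normalized Killing form restricts non-degenerately to $\fh^\sigma$: the $\sigma$-eigenspace decomposition of $\fh$ is orthogonal under this form, so the canonical map $\fh_\sigma\to(\fh^\sigma)^\vee$ is an isomorphism and $\iota$ is in fact injective on all of $X_*(T)_\sigma$.

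Finally, comparing the explicit description of $S$ in (\ref{notation_S}) with $P(\sigma,1)$ from Lemma \ref{lem_level1} and invoking $\iota(\check\omega_i)=\lambda_{\eta(i)}$, the last claim reduces to the short case check $\eta(1)=1$ when $(\fg,m)=(A_{2\ell-1},2)$ (immediate since $\tau$ swaps the nodes $1$ and $2\ell-1$) and $\eta(\ell)=\ell$ when $(\fg,m)=(D_{\ell+1},2)$ (since $\tau$ swaps $\ell$ and $\ell+1$), combined with $\iota(0)=0$. I do not anticipate a substantive obstacle; the proof is essentially a bookkeeping reduction once the pairing computation is in place, and the only point to watch is that (\ref{label_roots2}) already absorbs the non-uniform normalization of the simple roots of $\fg^\sigma$ in type $A_{2\ell}^{(2)}$.
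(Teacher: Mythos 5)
Your proof is correct and follows essentially the same route as the paper: the identity $\iota(\check{\omega}_i)=\lambda_{\eta(i)}$ is established by pairing with the simple coroots $\check{\beta}_j=\sum_{a\in\eta^{-1}(j)}\check{\alpha}_a$ via the normalized Killing form, and the statement about $S$ and $P(\sigma,1)$ then follows from Lemma \ref{lem_level1}. The extra details you supply (surjectivity/injectivity onto dominant weights of $\fg^\sigma$, the checks $\eta(1)=1$ and $\eta(\ell)=\ell$) are points the paper leaves implicit, and they are handled correctly.
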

\begin{proof}
For any $i\in I$ and $j\in I_\sigma$, we have
\[  \langle \iota(\check{\omega }_i),   \check{\beta}_j  \rangle =(  \check{\omega}_i,  \check{\beta}_j   )=(  \check{\omega}_i, \sum_{a\in \eta^{-1}(i) } \check{\alpha}_a )=  \delta_{\eta(i) ,j } .\]
Hence, $\iota(\check{\omega }_i)=\lambda_{\eta(i)}$.

In view of Lemma \ref{lem_level1},  $\iota$ maps $S$ bijectively into $P(\sigma,1)$.
\end{proof}
\begin{remark}
\label{rmk_identification}
In view of Lemma \ref{lem_iota_coroot} and Lemma \ref{lemma_iota},  when $(G, m)\not= (A_{2\ell}, 4 )$, the root systems of $\fg^\sigma$ and $H:=(\check{G})^\tau$ can be naturally identified, where $H$ is discussed in Section \ref{sect_twisted}. Namely,  $\{ \, \overline{\check{\omega}}_i  \,|\,  i\in I  \}$ is a set of fundmental weights of $H$ corresponding to $\{ \lambda_j \,|\, j\in I_\sigma\}$ of $\fg^\sigma$, and the set of simple roots $\{ \, \gamma_j \,|\,  j\in I_\sigma\}$ corresponds to $\{\, \beta_j  \,|\, j\in I_\sigma  \,  \}$ of $\fg^\sigma$.
\end{remark}

 For any $g\in G(\mathcal{K})^\sigma$,  we can define a Lie algebra automorphism 
\be
\label{eq_adj1}
 \widehat{ {\rm Ad} }_{ g} (x[f]):= {\rm Ad}_{g}(x[f])  +  \frac{1}{m} {\rm Res}_{t=0} ( g^{-1}dg,  x[f]  ) K , \ee
 for any $x[f]\in \fg(\mathcal{K})^\sigma$, where $(,)$ is the normalized Killing form on $\fg$. By Lemma \ref{lemma_iota}, $\iota(\kappa)\in P(\sigma, 1)$  for any $\kappa\in S$. Thus, $c\iota(\kappa)\in P(\sigma, c)$ for any level $c\geq 1$. 
 
 Set 
 \be \mathscr{H}_c := \oplus_{\kappa\in S}   \mathscr{H}_c(c \iota(\kappa)).\ee
 Let $\tilde{\fg}:= \fg\otimes \mathcal{K}\oplus  \mathbb{C} K'\oplus    \mathbb{C} d' $ be the untwisted Kac-Moody algebra associated to $\fg$, where $K'$ is the canonical center and $d'$ is the scaling element.  We may define an automorphism $\sigma$ on $\tilde{\fg}$ as follows,
\[  \sigma( x[f(t)] )=\sigma(x)[f(\epsilon t )], \quad  \sigma(K')=K',  \quad  \sigma(d')=d',     \]
for any   $x[f]\in \fg\otimes \mathcal{K}$.    Then the fixed point Lie algebra $\tilde{\fg}^{\sigma}$ is exactly the twisted Kac-Moody alegbra $\tilde{L}(\fg, \sigma)$ containing $\hat{L}(\fg, \sigma)$ as the derived algebra. Following from \cite[Theorem 8.7,\S8]{Ka}, in this realization  the canonical center $K$ in $\tilde{L}(\fg, \sigma)$ is equal to $mK'$, and the scaling element $d$ in $\tilde{\fg}$ is equal to $d'$ when $\tilde{\fg}^{\sigma}$ is not $A_{2\ell}^{(2)}$, and $d=2d'$when $\tilde{\fg}^{\sigma}=A_{2\ell}^{(2)}$.

For any $g\in G(\mathcal{K})$,  one can define an automorphism $\widehat{\rm Ad}_g$ on $\tilde{\fg}$ as in \cite[Section 13.2.3]{Ku}.   From the formula $\it{loc.cit}$, it is clear that if $g\in G(\mathcal{K})^{\sigma}$, then $\widehat{\rm Ad}_g$ commutes with $\sigma$.  In particular, it follows that $\widehat{\rm Ad}_g$ restricts to an automorphism on $\tilde{L}(\fg, \sigma)$.  One may observe easily that,  restricting further to $\hat{L}(\fg, \sigma)$,  this is exactly the automorphism defined in (\ref{eq_adj1}).

By demanding that $d\cdot v_\kappa=0$ for each $\kappa\in S$, the action $\hat{L}(\fg,\sigma)$ on $\mathscr{H}$ extends uniquely to an action of $\tilde{L}(\fg,\sigma)$.

\begin{lemma}
\label{lem_int}
For any $g\in G(\mathcal{K})^\sigma$,  there exists an intertwining operator $\rho_g:  \mathscr{H}_c\simeq  \mathscr{H}_c$ such that 
\be 
\label{intw_formula}
\rho_g( x[f]\cdot v  )=  \widehat{ {\rm Ad} }_{ g} (x[f])  \cdot \rho_g(  v ),   \ee
for any $x[f]\in \fg(\mathcal{K})^\sigma$ and $v\in \mathscr{H}_c$.
In particular,  for any $\kappa\in S$,  
\be
\label{intw_formula2}
  \widehat{ {\rm Ad} }_{n^{-\kappa}}( \mathscr{H}_c(0)  )= \mathscr{H}_c(c\iota(\kappa)), \, \text{ and }  \, \widehat{ {\rm Ad} }_{n^{-\kappa}}( \mathscr{H}_c(c\iota(\kappa)) ) = \mathscr{H}_c(0)   . \ee
\end{lemma}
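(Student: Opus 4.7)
The strategy is to realize $\rho_g$ as the action of the minimal affine Kac-Moody group attached to $\tilde L(\fg,\sigma)$ on $\mathscr{H}_c$, and to verify (\ref{intw_formula2}) by a direct weight computation on the highest weight vector of $\mathscr H_c(0)$. Since each summand $\mathscr H_c(c\iota(\kappa))$ is an integrable highest weight $\hat L(\fg,\sigma)$-module, the affine Chevalley generators $e_i, f_i$ act locally nilpotently, so the exponentials $\exp(\xi e_i)$ and $\exp(\xi f_i)$ are well-defined automorphisms of $\mathscr H_c$ for every $\xi\in\CC$; these generate the minimal Kac-Moody group $\hat G_{\min}(\fg,\sigma)$ of \cite[Ch.~6, Ch.~13]{Ku}, which is a central extension of the adjoint form of $G(\mathcal{K})^\sigma$. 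Any lift of $g\in G(\mathcal{K})^\sigma$ to $\hat G_{\min}(\fg,\sigma)$ yields $\rho_g$, well-defined on each summand up to a scalar by which the one-dimensional center acts. The intertwining identity (\ref{intw_formula}) is built into this construction: on an exponential generator it reduces to $\exp(\xi e_i)\cdot x[f]\cdot\exp(-\xi e_i)=\exp(\xi\,\mathrm{ad}\,e_i)(x[f])$, which coincides with $\widehat{\mathrm{Ad}}_{\exp(\xi e_i)}$ computed from (\ref{eq_adj1}); since (\ref{eq_adj1}) is precisely the restriction of the untwisted formula \cite[\S13.2]{Ku} to the $\sigma$-fixed subalgebra and subgroup, the identity propagates to all of $G(\mathcal{K})^\sigma$ by multiplicativity.

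For (\ref{intw_formula2}), I apply (\ref{intw_formula}) to the highest weight vector $v_0\in\mathscr H_c(0)$. A direct residue computation from $(n^{-\kappa})^{-1}d\,n^{-\kappa}=-\kappa_\sigma\,\frac{dt}{t}$, where $\kappa_\sigma:=\sum_{i=0}^{m-1}\sigma^i(\kappa)\in\fh^\sigma$, gives $\widehat{\mathrm{Ad}}_{n^{-\kappa}}(h)=h-\tfrac{1}{m}(\kappa_\sigma,h)K$ for $h\in\fh^\sigma$. Substituting into the intertwining identity at $h\cdot v_0=0$ and using that $K$ acts by $c$ together with $(\kappa_\sigma,h)=m(\kappa,h)=m\,\iota(\kappa)(h)$ on $\fh^\sigma$-invariant elements, I obtain $h\cdot\rho_{n^{-\kappa}}(v_0)=c\,\iota(\kappa)(h)\,\rho_{n^{-\kappa}}(v_0)$, so the $\fh^\sigma$-weight of $\rho_{n^{-\kappa}}(v_0)$ is $c\iota(\kappa)$. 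A parallel calculation on the positive affine root vectors shows that $\rho_{n^{-\kappa}}(v_0)$ is a highest weight vector of weight $c\iota(\kappa)$, which must therefore generate the unique summand of $\mathscr H_c$ with that highest weight, namely $\mathscr H_c(c\iota(\kappa))$. Since $\rho_{n^{-\kappa}}$ intertwines the two module structures, the first equality of (\ref{intw_formula2}) follows; the second is forced by invertibility of $\rho_{n^{-\kappa}}$ together with the fact that $\mathscr H_c$ has exactly these two isotypic components when $\kappa\ne 0$.

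The main technical obstacle is the verification that $\rho_{n^{-\kappa}}(v_0)$ is annihilated by the positive affine root vectors of the original $\hat L(\fg,\sigma)$-action: the twisted adjoint formula (\ref{eq_adj1}) sends each affine Chevalley generator into a shifted loop element with nontrivial $t$-grading and possible central corrections, so verifying the highest-weight behavior requires careful tracking of residues combined with the level-$c$ integrability bound on weights of $\mathscr H_c$. A cleaner shortcut is to note that $n^{-\kappa}$ projects to the affine Weyl translation element $t_{\bar\kappa}$ of the twisted affine Weyl group, whose induced action on level-$c$ integrable highest weights is known from \cite[Ch.~6]{Ka} to permute the finite set $\{c\iota(\kappa'):\kappa'\in S\}$ in exactly the required manner, yielding both equalities in (\ref{intw_formula2}) at once. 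A further bookkeeping subtlety is the scalar ambiguity inherent in any lift to the central extension, which is harmless here since the assertions are formulated at the level of subspaces.
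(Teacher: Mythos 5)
There is a genuine gap, and it sits exactly at the heart of the lemma: the construction of $\rho_g$ for $g$ outside the image of the simply-connected twisted loop group. The minimal Kac--Moody group $\hat G_{\min}$ attached to $\hat{L}(\fg,\sigma)$ is generated by exponentials of real root vectors, so its image in $G(\mathcal{K})^\sigma$ is only the subgroup $\overline{G'(\mathcal{K})^\sigma}=p(G'(\mathcal{K})^\sigma)$ generated by the affine root subgroups; it is \emph{not} a central extension of all of $G(\mathcal{K})^\sigma$ when $G$ is adjoint. In the cases $(A_{2\ell-1},2)$ and $(D_{\ell+1},2)$ the paper's decomposition $G(\mathcal{K})^\sigma=\sqcup_{\kappa\in S} n^{-\kappa}\,\overline{G'(\mathcal{K})^\sigma}$ has two cosets, and the elements $n^{-\kappa}$ with $\kappa\neq 0$ do not lift to $\hat G_{\min}$, so "any lift of $g$ yields $\rho_g$" and "propagates to all of $G(\mathcal{K})^\sigma$ by multiplicativity" fail precisely for the elements the lemma is about. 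Your second paragraph then \emph{uses} $\rho_{n^{-\kappa}}$ (a correct weight computation, essentially Lemma \ref{lem_gen} of the paper) to locate its image, but the actual task is to construct this operator; likewise the "shortcut" via $t_{\bar\kappa}$ only identifies how the relevant element of the \emph{extended} affine Weyl group (a diagram automorphism, since $\kappa$ is not in the coroot lattice) permutes the level-one weights, which again presupposes an action on $\mathscr{H}_c$ compatible with $\widehat{\rm Ad}$ that has not been produced.

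For comparison, the paper's proof spends almost all of its effort on this missing step. For $g\in\overline{G'(\mathcal{K})^\sigma}$ it invokes the twisted Faltings lemma of Hong--Kumar (morally your Kac--Moody group argument). For $\kappa\neq 0$ it observes that $(X_*(T)/\check{Q})_\sigma$ has order at most $2$, so $n^{-2\kappa}\in\overline{G'(\mathcal{K})^\sigma}$ and $\rho_{n^{-2\kappa}}$ exists; it then checks by the formula (\ref{eq_adj1}) that the highest weight vector $v_\kappa$ of $\mathscr{H}_c(c\iota(\kappa))$ becomes a highest weight vector of weight $0$ for the twisted action $\pi_{c,\kappa}\circ\widehat{\rm Ad}_{n^{-\kappa}}$, applies Schur's lemma to obtain an intertwiner $\rho_{0\kappa}\colon(\mathscr{H}_c(0),\pi_{c,0})\simeq(\mathscr{H}_c(c\iota(\kappa)),\pi_{c,\kappa}\circ\widehat{\rm Ad}_{n^{-\kappa}})$, and finally assembles $\rho_{n^{-\kappa}}=(\rho_{0\kappa},\,\rho_{0\kappa}^{-1}\circ\rho_{n^{-2\kappa}})$ on $\mathscr{H}_c(0)\oplus\mathscr{H}_c(c\iota(\kappa))$. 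To repair your argument you would need to supply some such construction (or an explicit action of the relevant diagram automorphism / extended affine Weyl group element on $\mathscr{H}_c$); as written, existence of $\rho_{n^{-\kappa}}$ is assumed rather than proved.
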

\begin{proof}
Let $G'$ be the simply-connected cover of $G$, and let $p: G'(\mathcal{K})^\sigma \to  G(\mathcal{K})^\sigma$ be the induced map.  Then,
\be 
 \label{dec_formula}
 G(\mathcal{K})^\sigma=\sqcup_{\kappa \in S} n^{-\kappa} \overline{G'(\mathcal{K})^\sigma   } ,  
 \ee
where $\overline{G'(\mathcal{K})^\sigma   }=p(G'(\mathcal{K})^\sigma )$.   By twisted analogue of Faltings Lemma (cf.\,\cite[ Proposition 10.2]{HK}),  for any element $g\in \overline{G'(\mathcal{K})^\sigma   }$, there exists an operator $\rho_g$  which maps $ \mathscr{H}_c(c\iota(\kappa))$ to $\mathscr{H}_c(c\iota(\kappa))$ with the desired property  (\ref{intw_formula}), for any $\kappa\in S$. By decomposition (\ref{dec_formula}),  it suffices to show that, for nonzero $\kappa$,  $n^{-\kappa}$ satisfies property (\ref{intw_formula2}).

Assume $\kappa\not=0$ in $S$. 
From the table (\ref{Com_table}), the group $(X_*(T)/{\check{Q}})_\sigma$ is at most of order $2$.  Therefore,  $n^{-2\kappa}\in \overline{G'(\mathcal{K})^\sigma   } $.  
For each $\mathscr{H}_c(c\iota(\kappa))$,  we denote the action by $\pi_{c, \kappa}: \hat{L}(\fg,\sigma)\to {\rm End}( \mathscr{H}_c(c\iota(\kappa))  ) $.  Then the property (\ref{intw_formula}) for $n^{-2\kappa}$, is equivalent to the existence of an isomorphism of representations, 
\be 
\label{eq_intw1}
\rho_{n^{-2\kappa}}:  (\mathscr{H}_c(c\iota(\kappa) ), \pi_{c,\kappa}) \simeq (\mathscr{H}_c(c\iota(\kappa) ), \pi_{c, \kappa} \circ  \widehat{ {\rm Ad} }_{n^{-2\kappa}} ) .\ee
Let $v_\kappa$ be the highest weight vector in $\mathscr{H}_c(c\iota(\kappa))$. Then $v_\kappa$ is of $\fh^\sigma$-weight $c\iota(\kappa)$. We regard $\check{\beta}_i$ as elements in $\fh^\sigma$.  By formula (\ref{eq_adj1}), 
\[ \widehat{\rm Ad}_{n^{-\kappa}}( \check{\beta}_i ) = \check{\beta}_i -  (\kappa,  \check{\beta}_i ) c= \check{\beta}_i -  \langle \iota(\kappa),  \check{\beta}_i \rangle c   .  \]

Hence, $v_\kappa$ is of $\fh^\sigma$-weight 0 and a highest weight vector in the representation 
\[   (\mathscr{H}_c(c\iota(\kappa) ), \pi_{c, \kappa} \circ  \widehat{ {\rm Ad} }_{n^{-\kappa}} ) .\]
By Schur lemma, there exists an intertwining operator $\rho_{0 \kappa}$,
\be  
\label{eq_intw2}
\rho_{0 \kappa}: (\mathscr{H}_c(0), \pi_{c, 0} )  \simeq   (\mathscr{H}_c(c\iota(\kappa) ), \pi_{c, \kappa} \circ  \widehat{ {\rm Ad} }_{n^{-\kappa}} )   . \ee
We also can regard $\rho_{0 \kappa}$ as the following intertwining operator 
\be  
\label{eq_intw3}
\rho_{0 \kappa}: (\mathscr{H}_c(0), \pi_{c, 0} \circ  \widehat{ {\rm Ad} }_{n^{-\kappa}}   )  \simeq   (\mathscr{H}_c(c\iota(\kappa) ), \pi_{c, \kappa} \circ  \widehat{ {\rm Ad} }_{n^{-2\kappa}} )    \ee

Combining  isomorphisms (\ref{eq_intw1}),(\ref{eq_intw3}), we get 
\[ (\mathscr{H}_c(c\iota(\kappa)) ,\pi_{c,\kappa})  \xrightarrow{ \rho_{n^{-2\kappa}} } (\mathscr{H}_c(c\iota(\kappa) ) , \pi_{c, \kappa} \circ  \widehat{ {\rm Ad} }_{n^{-2\kappa}} )\xrightarrow{ (\rho_{0 \kappa} )^{-1} }    (\mathscr{H}_{c}(0) , \pi_{c, 0} \circ  \widehat{ {\rm Ad} }_{n^{-\kappa}} ). \]
We define $ \rho_{n^{-\kappa}}$ to be the following operator
 \[   \rho_{n^{-\kappa}}=(\rho_{0 \kappa},   (\rho_{0 \kappa} )^{-1}   \circ  \rho_{n^{-2\kappa}} ):  \mathscr{H}_c(0) \oplus \mathscr{H}_c(c\iota(\kappa) )\simeq  \mathscr{H}_c(0 ) \oplus \mathscr{H}_c(c\iota(\kappa)) .  \]
 The map $\rho_{n^{-\kappa}}$ satisfies  property (\ref{intw_formula}).
\end{proof}

As discussed in Section \ref{sect2.4},  the components of $\Gr_{\scrG}$ are parametrized by elements in $(X_*(T)/{\check{Q}})_\sigma$.  Moreover, $\Gr_\scrG= \sqcup_{\kappa\in S}   \Gr_{\scrG, \kappa} 
$, where $S$ is defined in (\ref{notation_S}).   

  Let $\mathscr{G}'$ be the parahoric group scheme ${\rm Res}_{\mathcal{O}/\bar{\mathcal{O} }}(G'_\mathcal{O})^\sigma$ ($G'$ is the simply-connected cover of $G$), and let $L^+\mathscr{G}'$ (resp. $L\mathscr{G}' $) denote the jet group scheme (resp. loop group scheme) of $\mathscr{G}'$. 
The group $L\mathscr{G}$ acts on $L\mathscr{G}'$ by conjugation.  Set  
\[L^+\scrG'_\kappa:={\rm Ad}_{n^{-\kappa}}( L^+\scrG').\]
Then, $L^+\scrG'_\kappa$ is a subgroup scheme of $L\scrG'$.    We have 
 \be
 \label{flag_var}
  \Gr_{\scrG, \kappa}\simeq  L\scrG'/L^+\scrG_\kappa'   . \ee

By the twisted analogue of Faltings lemma (cf.\,\cite[Proposition 10.2]{HK}), there exists a group homomorphism $ L {\mathscr{G}'}  \to    {\rm PGL} ( \mathscr{H}_1(0) ) $. 
Consider the central extension 
\begin{equation} 
\label{central_ext0}
1\to \mathbb{G}_m \to {\rm GL}( \mathscr{H}_1(0)  )    \to  {\rm PGL} ( \mathscr{H}_1(0) ) \to 1 . \end{equation}
The pull-back of (\ref{central_ext0}) to $L{ \mathscr{G}}'$ defines the following canonical central extension of $L{\mathscr{G}}'$: 
\begin{equation} 
\label{central_ext0'}
1\to \mathbb{G}_m \to \widehat{L { \mathscr{G}' } }   \to  L{\mathscr{G}'}    \to 1 . \end{equation}

It is known that $\widehat{L{\mathscr{G}'} }$ is a Kac-Moody group of twisted type (up to a scaling multiplicative group) in the sense of Kumar and Mathieu, see \cite[\S 9f]{PR}.  Let $\widehat{L^+\scrG'_\kappa }$ denote the preimage of $L^+\scrG'_\kappa$ in $\widehat{L{\mathscr{G}'} }$ via the projection map $\widehat{L{\mathscr{G}'} } \to L{\mathscr{G}'} $. As the same proof as in \cite[Lemma 2.19]{BH},   $\widehat{L^+\scrG'_\kappa } $ is a parabolic subgroup in $\widehat{L{\mathscr{G}'} }$, moreover 
\be
\label{com_hom}
 \Gr_{\scrG, \kappa} \simeq \widehat{L{\mathscr{G}'} }/  \widehat{L^+\scrG'_\kappa } , \ee
i.e. $\Gr_{\scrG, \kappa} $ is a partial flag variety of the Kac-Moody group $\widehat{L{\mathscr{G}'} }$.

\begin{proposition}
\label{prop_line}
There exists a line bundle $\mathscr{L}$ on $\Gr_\mathscr{G}$ such that $\mathscr{L}$ is of level one on each component of $\Gr_\mathscr{G}$. 
\end{proposition}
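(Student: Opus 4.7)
The plan is to build $\mathscr{L}$ component by component on $\Gr_\scrG = \bigsqcup_{\kappa\in S}\Gr_{\scrG,\kappa}$, using the realization of each component as a partial flag variety $\widehat{L\mathscr{G}'}/\widehat{L^+\mathscr{G}'_\kappa}$ of the affine Kac--Moody group $\widehat{L\mathscr{G}'}$ provided by (\ref{com_hom}). Recall that line bundles on such a partial flag variety arise from characters of the parabolic $\widehat{L^+\mathscr{G}'_\kappa}$, and that a character is of level one precisely when its restriction to the central $\mathbb{G}_m$ in (\ref{central_ext0'}) is the identity; the representations $\mathscr{H}_1(\iota(\kappa))$ for $\kappa\in S$ furnished by Lemma \ref{lem_level1} are exactly the level one integrable highest weight modules through which such characters will be produced.

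For the neutral component $\kappa=0$ this is classical: the highest weight line $\mathbb{C}\cdot v_0\subset\mathscr{H}_1(0)$ is stabilized by the standard parabolic $\widehat{L^+\mathscr{G}'}$ via a character $\chi_0:\widehat{L^+\mathscr{G}'}\to \mathbb{G}_m$ that acts as the identity on the center, and the associated bundle
\[\mathscr{L}_0:=\widehat{L\mathscr{G}'}\times^{\widehat{L^+\mathscr{G}'}}\mathbb{C}_{\chi_0}\]
is of level one on $\Gr_{\scrG,0}$. For a non-trivial $\kappa\in S$, I will use the intertwining operator $\rho_{n^{-\kappa}}$ supplied by Lemma \ref{lem_int} together with the conjugation relation $\widehat{L^+\mathscr{G}'_\kappa}=\widehat{\mathrm{Ad}}_{n^{-\kappa}}(\widehat{L^+\mathscr{G}'})$. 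By (\ref{intw_formula2}) the vector $v'_\kappa:=\rho_{n^{-\kappa}}(v_0)\in \mathscr{H}_1(\iota(\kappa))$ is nonzero, and for any $h\in\widehat{L^+\mathscr{G}'_\kappa}$ the intertwining identity (\ref{intw_formula}) gives
\[h\cdot v'_\kappa=\rho_{n^{-\kappa}}\bigl(\widehat{\mathrm{Ad}}_{n^{\kappa}}(h)\cdot v_0\bigr)=\chi_0\bigl(\widehat{\mathrm{Ad}}_{n^{\kappa}}(h)\bigr)\,v'_\kappa,\]
so the line $\mathbb{C}\cdot v'_\kappa$ is stabilized via a character $\chi_\kappa:\widehat{L^+\mathscr{G}'_\kappa}\to\mathbb{G}_m$. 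Formula (\ref{eq_adj1}) shows $\widehat{\mathrm{Ad}}_{n^{\kappa}}$ fixes $K$, so $\chi_\kappa$ acts trivially on the central $\mathbb{G}_m$ of (\ref{central_ext0'}); the associated bundle $\mathscr{L}_\kappa:=\widehat{L\mathscr{G}'}\times^{\widehat{L^+\mathscr{G}'_\kappa}}\mathbb{C}_{\chi_\kappa}$ is therefore of level one on $\Gr_{\scrG,\kappa}$, and we set $\mathscr{L}:=\bigsqcup_{\kappa\in S}\mathscr{L}_\kappa$.

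The only step that is not immediate is the eigen-relation for $v'_\kappa$ displayed above; the main point is that the intertwining operator $\rho_{n^{-\kappa}}$, which a priori lives in $\mathrm{GL}(\mathscr{H}_1)$, conjugates the image of $\widehat{L^+\mathscr{G}'}$ to that of $\widehat{L^+\mathscr{G}'_\kappa}$ in a manner compatible with the central extension. This compatibility is encoded in the Lie-algebra formula (\ref{eq_adj1}) (which preserves $K$), integrated to the loop group level by Lemma \ref{lem_int}; thus no new analytic input is required beyond what has already been established. The decomposition $\Gr_\scrG=\bigsqcup_{\kappa\in S}\Gr_{\scrG,\kappa}$ then permits gluing $\mathscr{L}_\kappa$ into a global line bundle of level one on each component, as asserted.
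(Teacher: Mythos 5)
Your construction is correct and essentially the paper's: both arguments rest on the identification (\ref{com_hom}) of each component as a partial flag variety of $\widehat{L\mathscr{G}'}$ and on transporting the level-one structure from the neutral component to $\Gr_{\mathscr{G},\kappa}$ by means of $n^{-\kappa}$ --- the paper does this by pulling the neutral-component bundle (defined through the splitting of (\ref{central_ext0'}) over $L^{+}\mathscr{G}'$) back along the isomorphism $\Gr_{\mathscr{G},\circ}\simeq\Gr_{\mathscr{G},\kappa}$, while you realize the same bundle as the associated bundle of the character $\chi_0\circ\widehat{\mathrm{Ad}}_{n^{\kappa}}$ of $\widehat{L^{+}\mathscr{G}'_\kappa}$ produced from Lemma \ref{lem_int}. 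One wording fix: since $\widehat{\mathrm{Ad}}_{n^{\kappa}}$ fixes $K$, the correct conclusion is that $\chi_\kappa$ restricts to the \emph{identity} character on the central $\mathbb{G}_m$ (the same restriction as $\chi_0$), not that it ``acts trivially'' there --- triviality on the center would mean level zero, contradicting your own criterion.
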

\begin{proof}
We first consider the simply-connected cover $G'$ of $G$.  
By \cite[Theorem 10.7 (1)]{HK}, there exists a canonical splitting of $\widehat{L{\mathscr{G}'} }   \to  L{\mathscr{G} '}$ in the central extension (\ref{central_ext0}) over $L^+ {\mathscr{G}'}$. We may define a line bundle $\mathscr{L}$ on $\Gr_{\mathscr{G}'}= \widehat{L\mathscr{G}'}/ \widehat{L^+ \mathscr{G}'}$ via the character $ \widehat{L^+ \mathscr{G}'}:= \mathbb{G}_m\times  L^+ \mathscr{G}'\to \mathbb{G}_m$ defined via the first projection.  In fact, as the argument  in \cite[Lemma 4.1]{LS}, this line bundle is the ample generator of ${\rm Pic}(\Gr_{\mathscr{G}'})$ of level 1. This finishes the proof of part (1).

We now consider the case when $G$ is of adjoint type.  Since the neutral component $\Gr_{\mathscr{G }, \circ }$ is isomorphic to $\Gr_{\mathscr{G}' }$, we get the level one line bundle on $\Gr_{\mathscr{G}, \circ }$ induced from the one on $\Gr_{\mathscr{G}' }$. 
For any other component $\Gr_{\mathscr{G},\kappa }$, by $(\ref{com_hom})$ we have an isomorphism $\Gr_{\mathscr{G}, \circ }\simeq \Gr_{\mathscr{G}, \kappa }$. Therefore,  this gives rise to the level one line bundle on $\Gr_{\mathscr{G}, \kappa}$.

\end{proof}

The line bundle $\mathscr{L}$ on $\Gr_{\scrG}$ naturally has a  $\widehat{L{\mathscr{G}'} }$-equivariant structure, since $\mathscr{L}$ admits a unique  $\widehat{L{\mathscr{G}'} }$-equivariant structure on each component of $\Gr_\mathscr{G}$ as a partial flag variety of $\widehat{L{\mathscr{G}'} }$.  
\begin{theorem}
\label{BWB_Thm}
As representations of $\hat{L}(\fg,\sigma)$, we have
$ H^0(\Gr_{\scrG}, \mathscr{L}^c )^\vee \simeq  \mathscr{H}_c$, where $\mathscr{L}^c$ is the $c$-th power of $\mathscr{L}$. 
\end{theorem}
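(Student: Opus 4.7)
The plan is to reduce to each connected component of $\Gr_{\scrG}$ separately and apply Kumar's Borel--Weil--Bott theorem for Kac--Moody partial flag varieties on each piece. Since $\Gr_{\scrG} = \bigsqcup_{\kappa \in S} \Gr_{\scrG,\kappa}$ is a disjoint union of open-and-closed sub-ind-schemes, the global sections split as
\[
H^0(\Gr_{\scrG}, \mathscr{L}^c) \;=\; \bigoplus_{\kappa \in S} H^0\bigl(\Gr_{\scrG,\kappa},\, \mathscr{L}^c|_{\Gr_{\scrG,\kappa}}\bigr).
\]
Thus it is enough to identify each summand, as an $\hat{L}(\fg,\sigma)$-module, with the dual of $\mathscr{H}_c(c\iota(\kappa))$.

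For the neutral component, the identification $\Gr_{\scrG,\circ} \simeq \widehat{L\mathscr{G}'}/\widehat{L^+\mathscr{G}'}$ from \eqref{com_hom} exhibits $\Gr_{\scrG,\circ}$ as a partial flag variety of the twisted Kac--Moody group $\widehat{L\mathscr{G}'}$, and by construction in Proposition \ref{prop_line} the line bundle $\mathscr{L}$ corresponds to the basic character $(a,g)\mapsto a$ of $\widehat{L^+\mathscr{G}'} = \mathbb{G}_m \times L^+\mathscr{G}'$ (so $\mathscr{L}^c$ corresponds to the $c$-th power). The dominant integral weight of $\tilde{L}(\fg,\sigma)$ attached to this character has level $c$ and restricts to the zero weight on $\fh^\sigma$; it is therefore $c\iota(0)$. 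By Kumar's Borel--Weil--Bott theorem \cite{Ku}, $H^0(\Gr_{\scrG,\circ}, \mathscr{L}^c)^\vee$ is the irreducible integrable highest weight module $\mathscr{H}_c(0) = \mathscr{H}_c(c\iota(0))$.

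For a non-neutral $\kappa \in S$, the line bundle $\mathscr{L}|_{\Gr_{\scrG,\kappa}}$ was constructed by transporting the neutral-component line bundle along the isomorphism $\Gr_{\scrG,\circ} \simeq \Gr_{\scrG,\kappa}$ induced by left translation by $n^{-\kappa}$; equivalently, under the identification $\Gr_{\scrG,\kappa} \simeq \widehat{L\mathscr{G}'}/\widehat{L^+\mathscr{G}'_\kappa}$ the line bundle corresponds to the character $(a,g)\mapsto a$ of the conjugated parabolic $\widehat{L^+\mathscr{G}'_\kappa}$. The key point is that the dominant integral weight of $\tilde{L}(\fg,\sigma)$ realizing this character is precisely $c\iota(\kappa)$: applying $\widehat{\mathrm{Ad}}_{n^{-\kappa}}$ sends $\check{\beta}_i \in \fh^\sigma$ to $\check{\beta}_i - \langle \iota(\kappa), \check{\beta}_i\rangle c$, exactly as in the computation in Lemma \ref{lem_int}, so the highest-weight line produced by the character on $\widehat{L^+\mathscr{G}'_\kappa}$ has $\fh^\sigma$-weight $c\iota(\kappa)$ and central charge $c$. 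Lemma \ref{lemma_iota} guarantees $c\iota(\kappa)\in P(\sigma,c)$, so the integrable module is well-defined. Kumar's Borel--Weil--Bott theorem then gives $H^0(\Gr_{\scrG,\kappa}, \mathscr{L}^c)^\vee \simeq \mathscr{H}_c(c\iota(\kappa))$ as $\hat{L}(\fg,\sigma)$-modules (the intertwining operator of Lemma \ref{lem_int} provides the compatibility of the $\hat{L}(\fg,\sigma)$-action across the transport).

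Summing over $\kappa\in S$ then yields the isomorphism
\[
H^0(\Gr_{\scrG}, \mathscr{L}^c)^\vee \;\simeq\; \bigoplus_{\kappa\in S} \mathscr{H}_c(c\iota(\kappa)) \;=\; \mathscr{H}_c.
\]
The main obstacle I anticipate is the bookkeeping on the non-neutral components: one has to verify cleanly that the line bundle transported by $n^{-\kappa}$ gives back, through the Kac--Moody structure on $\widehat{L\mathscr{G}'}$, the integrable weight $c\iota(\kappa)$ rather than merely an irreducible highest-weight module of the same level, and that the resulting $\hat{L}(\fg,\sigma)$-equivariance matches the one on $\mathscr{H}_c(c\iota(\kappa))$. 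This is exactly what Lemma \ref{lem_int} is designed to supply, via the intertwiner $\rho_{n^{-\kappa}}$, so once the identification of characters is spelled out carefully the rest is formal.
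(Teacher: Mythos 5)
Your proposal is correct and follows essentially the same route as the paper: realize each component $\Gr_{\scrG,\kappa}$ as a Kac--Moody partial flag variety via \eqref{com_hom}, use the equivariant level-one line bundle from Proposition \ref{prop_line}, identify the relevant highest weight as $c\iota(\kappa)$ through the $\widehat{\mathrm{Ad}}_{n^{-\kappa}}$ computation of Lemma \ref{lem_int} together with Lemma \ref{lemma_iota}, and conclude by Kumar's Borel--Weil--Bott theorem. The only difference is that you spell out the component-by-component weight bookkeeping which the paper leaves implicit.
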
 
\begin{proof}
By \cite[\S 9f]{PR}, $\Gr_{\scrG}$ can be identified with the partial flag variety of the Kac-Moody group $\widehat{L{\mathscr{G}'} }$ constructed by Kumar  \cite{Ku}. Then, affine Borel-Weil-Bott theorem for Kac-Moody group (cf. \cite[Theorem 8.3.11]{Ku}),  the theorem follows. 
\end{proof}

Let $v_0$ be the highest weight vector in $\mathscr{H}_c $. For any $\bar{\lambda}\in X_*(T)_\sigma$,  we define 
\be
\label{Dem_gen}
 v_{\bar{\lambda}}:=  \rho_{n^{\lambda} }(v_0) , \ee
 where $ \rho_{n^{\lambda} }$ in defined in Lemma \ref{lem_int}. 
Then $ v_{\bar{\lambda}}$ is independent of the choice of the representative $\lambda$ in $X_*(T)$ and is well-defined up to a nonzero scalar. 
\begin{lemma}
\label{lem_gen}
The $\fh^\sigma$-weight of the vector $v_{\bar{\lambda}}$ is $-c\iota(\bar{\lambda})$. 
\end{lemma}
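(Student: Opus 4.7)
The plan is to compute $h \cdot v_{\bar{\lambda}}$ for an arbitrary $h \in \mathfrak{h}^\sigma$ by transferring the action through the intertwining isomorphism $\rho_{n^\lambda}$ and reducing to the known weight $0$ of $v_0$ (together with the central charge $c$ of $K$ on $\mathscr{H}_c$). Concretely, since $\rho_{n^\lambda}(y\cdot v) = \widehat{\mathrm{Ad}}_{n^\lambda}(y)\cdot \rho_{n^\lambda}(v)$ for all $y\in \hat{L}(\mathfrak{g},\sigma)$ by Lemma \ref{lem_int}, setting $y = \widehat{\mathrm{Ad}}_{n^{-\lambda}}(h)$ yields
\[
h\cdot v_{\bar{\lambda}} \;=\; \rho_{n^\lambda}\!\bigl(\widehat{\mathrm{Ad}}_{n^{-\lambda}}(h)\cdot v_0\bigr).
\]
Hence everything reduces to identifying $\widehat{\mathrm{Ad}}_{n^{-\lambda}}(h)$ modulo the annihilator of $v_0$.

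For the second step, I apply formula \eqref{eq_adj1} with $g = n^{-\lambda}$. Since $n^{-\lambda}$ lies in the torus and $h$ is a constant element of $\mathfrak{h}$, the adjoint part gives $\mathrm{Ad}_{n^{-\lambda}}(h) = h$. By \eqref{norm_eq}, $n^{-\lambda}$ is, up to a scalar in $\mathbb{C}^\times$, equal to $t^{-\sum_i \sigma^i(\lambda)}$, so the logarithmic derivative is
\[
(n^{-\lambda})^{-1}d(n^{-\lambda}) \;=\; -\Bigl(\sum_{i=0}^{m-1}\sigma^i(\lambda)\Bigr)\,\frac{dt}{t},
\]
viewed as an $\mathfrak{h}$-valued $1$-form. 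Taking the residue of its Killing pairing with $h$ and using $\sigma$-invariance of $h$ to get $(\sigma^i(\lambda),h) = (\lambda,h)$, one obtains
\[
\widehat{\mathrm{Ad}}_{n^{-\lambda}}(h) \;=\; h - (\lambda,h)\,K.
\]

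For the final step, recall that $v_0$ is the highest weight vector of the summand $\mathscr{H}_c(0) \subset \mathscr{H}_c$, so $h\cdot v_0 = 0$, while $K\cdot v_0 = c\, v_0$. Combining the two displayed identities,
\[
h\cdot v_{\bar{\lambda}} \;=\; \rho_{n^\lambda}\!\bigl((h-(\lambda,h)K)\cdot v_0\bigr) \;=\; -c\,(\lambda,h)\,v_{\bar{\lambda}}.
\]
By the definition of $\iota$ in \eqref{iota_map} and the fact that $\iota$ descends to $X_*(T)_\sigma$, we have $(\lambda,h) = \iota(\bar{\lambda})(h)$ for every $h\in \mathfrak{h}^\sigma$, so $h\cdot v_{\bar{\lambda}} = -c\,\iota(\bar{\lambda})(h)\,v_{\bar{\lambda}}$, which is the claimed weight. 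The only subtlety is to ensure independence of the representative $\lambda$, but this is already built into the construction of $v_{\bar{\lambda}}$ (up to a nonzero scalar) and is consistent with the fact that $\iota$ factors through $X_*(T)_\sigma$, so no additional obstacle arises.
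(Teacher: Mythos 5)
Your proposal is correct and follows essentially the same route as the paper: transfer the $\mathfrak{h}^\sigma$-action through the intertwiner $\rho_{n^\lambda}$ of Lemma \ref{lem_int}, compute $\widehat{\mathrm{Ad}}_{n^{-\lambda}}(h)=h-(\lambda,h)K$ from formula \eqref{eq_adj1}, and use that $v_0$ has weight $0$ and $K$ acts by $c$. The only difference is that you spell out the residue computation of the central correction term, which the paper leaves implicit.
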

\begin{proof}
For any $h\in \fh^\sigma$, by Lemma \ref{lem_int}, 
\[ h\cdot v_{\bar{\lambda}}= h\cdot \rho_{n^\lambda} (v_0)= \rho_{n^\lambda}  (  \widehat{ {\rm Ad} }_{ n^{-\lambda}}  ( h ) v_0   ) . \]
By the formula (\ref{eq_adj1}), we have
\[  \widehat{ {\rm Ad} }_{ n^{-\lambda}}  ( h ) = h-\langle \lambda, h \rangle K.  \]
It follows that 
\[ h\cdot v_{\bar{\lambda}}= - \langle \lambda, h\rangle  c v_{\bar{\lambda}}= -c\iota(\lambda)(h)v_{\bar{\lambda}} .\]
This concludes the proof of the lemma.
\end{proof}

\begin{definition}
\label{def_dem}
For any dominant $\bar{\lambda}\in X_*(T)^+_\sigma$, 
we define the twisted affine Demazure module $D(c, \bar{\lambda})$ as the following $\fg[t]^\sigma$-module, 
\[ D(c, \bar{\lambda}):=U(\fg[t]^\sigma)v_{\bar{\lambda}}   . \]
\end{definition}
In view of Lemma \ref{lem_gen}, $D(c, \bar{\lambda})$ contains an irreducible representation $V({-c\iota(\lambda)})$ of $\fg^\sigma$ of lowest weight $-c\iota(\lambda)$. The following theorem follows from \cite[Theorem 8.2.2 (a)]{Ku}.
\begin{theorem}
\label{them_Dem}
As $\fg[t]^\sigma$-modules,  $H^0(\overline{\Gr}^{\bar{\lambda}}_\mathscr{G} , \mathscr{L}^c )^\vee\simeq D(c, \bar{\lambda})$. 
\end{theorem}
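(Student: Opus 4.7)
The plan is to realize $\overline{\Gr}^{\bar{\lambda}}_{\mathscr{G}}$ as a Schubert variety inside a partial flag variety of the twisted Kac--Moody group $\widehat{L\mathscr{G}'}$ and then apply the Borel--Weil--Demazure theorem \cite[Theorem~8.2.2(a)]{Ku} in the Kac--Moody setting. Concretely, I first pin down the Schubert correspondence: by \eqref{com_hom}, each connected component $\Gr_{\mathscr{G},\kappa}$ is isomorphic to the partial flag variety $\widehat{L\mathscr{G}'}/\widehat{L^+\mathscr{G}'_\kappa}$. The $T^\sigma$-fixed point $e_{\bar\lambda}$ corresponds to a unique minimal-length coset representative $w_{\bar\lambda}$ in the affine Weyl group modulo the parabolic subgroup fixing $e_{\bar\kappa}$; since $\overline{\Gr}^{\bar\lambda}_{\mathscr{G}}$ is by definition the closure of the $L^+\mathscr{G}$-orbit through $e_{\bar\lambda}$, and $L^+\mathscr{G}$ maps to the standard parahoric in $\widehat{L\mathscr{G}'}$ preserving the base point, this closure is precisely the Schubert variety $X_{w_{\bar\lambda}}$.

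Next, Kumar's theorem applied to the dominant line bundle $\mathscr{L}^c$ restricted to $X_{w_{\bar\lambda}}$ will yield
\[
H^0(X_{w_{\bar\lambda}}, \mathscr{L}^c)^\vee \;\simeq\; U(\mathfrak{b}) \cdot v_{w_{\bar\lambda} \cdot c\iota(\kappa)},
\]
where $\mathfrak{b}$ is the standard Borel subalgebra of $\tilde{L}(\fg,\sigma)$ and $v_{w_{\bar\lambda}\cdot c\iota(\kappa)}$ is the extremal weight vector of weight $w_{\bar\lambda}(c\iota(\kappa))$ inside $\mathscr{H}_c(c\iota(\kappa))$. The next step is to identify this extremal vector, up to a nonzero scalar, with the vector $v_{\bar\lambda}=\rho_{n^\lambda}(v_0)$ from \eqref{Dem_gen}. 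Both lie in a one-dimensional weight space: Lemma~\ref{lem_gen} gives the $\fh^\sigma$-weight of $v_{\bar\lambda}$ as $-c\iota(\bar\lambda)$, which matches $w_{\bar\lambda}(c\iota(\kappa))$ computed from the affine Weyl group action; moreover $v_{\bar\lambda}$ is constructed via the loop-group intertwiner $\rho_{n^\lambda}$ from Lemma~\ref{lem_int}, which lifts to $\widehat{L\mathscr{G}'}$ the translation $n^\lambda$, so both vectors are images of $v_0$ under lifts of the same affine Weyl element.

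Finally, I would reduce $U(\mathfrak{b}) v_{\bar\lambda}$ to $U(\fg[t]^\sigma) v_{\bar\lambda}$: since $\mathfrak{b} = \fh^\sigma \oplus \mathbb{C}K \oplus \mathbb{C}d \oplus \fn^+$ with $\fn^+ \subset \fg[t]^\sigma$, and $v_{\bar\lambda}$ is a joint eigenvector for $\fh^\sigma$, $K$, and $d$, the cyclic submodules $U(\mathfrak{b})v_{\bar\lambda}$ and $U(\fg[t]^\sigma)v_{\bar\lambda}=D(c,\bar\lambda)$ coincide. The main technical point that will require care is verifying that the $L^+\mathscr{G}$-equivariant structure on $\mathscr{L}^c$ induced by the canonical splitting of the central extension over $L^+\mathscr{G}'$ from \cite[Theorem~10.7(1)]{HK} is compatible, after passing to global sections, with the Kac--Moody Borel action underlying Kumar's theorem; this is what promotes the abstract isomorphism to an isomorphism of $\fg[t]^\sigma$-modules rather than merely of vector spaces.
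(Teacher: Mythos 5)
This is the same route the paper takes: its entire proof is a citation of \cite[Theorem 8.2.2 (a)]{Ku}, with the identification (\ref{com_hom}) of the components of $\Gr_{\mathscr{G}}$ as partial flag varieties of $\widehat{L\mathscr{G}'}$, the $\widehat{L\mathscr{G}'}$-equivariant structure on $\mathscr{L}$, and the extremal vector $v_{\bar{\lambda}}$ with its weight computed in Lemma \ref{lem_gen} all set up beforehand, so your proposal simply spells out what the paper leaves implicit. One small tightening: your justification that $U(\fb)v_{\bar{\lambda}}=U(\fg[t]^\sigma)v_{\bar{\lambda}}$ (via $\fn^+\subset\fg[t]^\sigma$ and $v_{\bar{\lambda}}$ being a Cartan eigenvector) only yields the inclusion $U(\fb)v_{\bar{\lambda}}\subseteq U(\fg[t]^\sigma)v_{\bar{\lambda}}$; for the reverse, use that $\overline{\Gr}^{\bar{\lambda}}_{\mathscr{G}}$ is $L^+\mathscr{G}$-stable and $\mathscr{L}^c$ is $L^+\mathscr{G}$-equivariant, so $H^0(\overline{\Gr}^{\bar{\lambda}}_{\mathscr{G}},\mathscr{L}^c)^\vee$ is already a $\fg[t]^\sigma$-module containing $v_{\bar{\lambda}}$ and hence contains $U(\fg[t]^\sigma)v_{\bar{\lambda}}$ (equivalently, observe that $v_{\bar{\lambda}}$ is a $\fg^\sigma$-lowest weight vector, so the Demazure module is stable under the finite negative root vectors).
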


\subsection{Construction of level one line bundles on ${\rm Bun}_{\mathcal{G}}$}

In this subsection, we consider the parahoric Bruhat-Tits group scheme $\mathcal{G}:= {\rm Res}_{C /  \bar{C}    }(G\times C )^\Gamma $ over $\bar{C}$ as in the setting of Section \ref{sect_def_global}.

Let ${\rm Bun}_{\mathcal{G}}$ be the moduli stack of $\mathcal{G}$-torsors on $\bar{C}$. It is known that ${\rm Bun}_{\mathcal{G}}$ is a smooth Artin stack (cf.\,\cite[Theorem 1]{He}).
By \cite[Theorem 3]{He}, the Picard group $ {\rm Pic}({\rm Bun}_{\mathcal{G}} ) $ of ${\rm Bun}_{\mathcal{G}}$ is isomorphc to $\mathbb{Z}$, since the group $X^*(\mathcal{G}|_{y})$ of characters for  $\mathcal{G}|_{y}$ is trivial for any $y\in \bar{C}$.  In this subsection, we will construct the ample generator $\mathcal{L}\in {\rm Pic}({\rm Bun}_{\mathcal{G}} ) $ when $G$ is simply-connected, and we will construct a level one line bundle on every component of $\Gr_{\mathcal{G}, C }$ when $G$ is of adjoint type.

By Lemma \ref{lem_level1}, we have $0\in P(\sigma,1)$ for any standard automorphism $\sigma$.  Recall that $\mathscr{H}_1(0)$ is the basic representation of level one associated to $0\in P(\sigma,1)$.  

We now define the following space of twisted covacua of level one,
\begin{equation}
  \mathscr{V}_{C, \sigma }( 0):=   \frac{   \mathscr{H}_{1}(0)  } { \fg[  t^{-1} ]^\sigma  \cdot \mathscr{H}_{1} (0) }  ,  
\end{equation}
where $\fg[  t^{-1} ]^\sigma$ is the Lie subalgebra of $\hat{L}(\fg,\sigma)$.

\begin{lemma}
\label{non_van}
The dimension of the vector space $ \mathscr{V}_{C, \sigma }( 0)$ is 1.
\end{lemma}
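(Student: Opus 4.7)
The plan is to prove the two inequalities $\dim \mathscr{V}_{C,\sigma}(0) \leq 1$ and $\dim \mathscr{V}_{C,\sigma}(0) \geq 1$ separately, both by exploiting the triangular decomposition of $\hat{L}(\fg,\sigma)$ together with the integrable highest weight structure of the basic representation $\mathscr{H}_1(0)$.

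For the upper bound, I would first unpack $\sigma$-invariance in the negative loop direction: with $\sigma(t) = \epsilon t$ and $\fg = \bigoplus_{k \in \mathbb{Z}/m} \fg_k$ the eigenspace decomposition under $\sigma$, an element $x \otimes t^n \in \fg \otimes \mathbb{C}[t^{-1}]$ is $\sigma$-invariant iff $x \in \fg_{-n \bmod m}$, giving $\fg[t^{-1}]^\sigma = \fg^\sigma \oplus \bigoplus_{n<0}\fg_{-n \bmod m}\otimes t^n$. In particular the negative nilpotent $\hat{L}^- = \fn^-_{\fg^\sigma} \oplus \bigoplus_{n<0}\fg_{-n \bmod m}\otimes t^n$ of the standard triangular decomposition of $\hat{L}(\fg,\sigma)$ sits inside $\fg[t^{-1}]^\sigma$. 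Since $v_0$ is a highest weight vector, PBW yields $\mathscr{H}_1(0) = U(\hat{L}^-)\cdot v_0$, so every vector of $\mathscr{H}_1(0)$ is a sum of monomials $x_1\cdots x_k \cdot v_0$ with $x_i \in \hat{L}^-$; any such monomial with $k \geq 1$ lies in $\fg[t^{-1}]^\sigma \cdot \mathscr{H}_1(0)$ and hence dies in the quotient. Thus $\mathscr{V}_{C,\sigma}(0)$ is spanned by $[v_0]$.

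For the lower bound, I would use the scaling element $d \in \tilde{L}(\fg,\sigma)$: as already set up in the paragraph preceding the lemma, $\mathscr{H}_1(0)$ extends to a $\tilde{L}(\fg,\sigma)$-module via $d\cdot v_0 = 0$, yielding an energy grading $\mathscr{H}_1(0) = \bigoplus_{n\leq 0}\mathscr{H}_1(0)^{(n)}$ with $v_0 \in \mathscr{H}_1(0)^{(0)}$. In the decomposition of $\fg[t^{-1}]^\sigma$ above, the piece $\bigoplus_{n<0}\fg_{-n\bmod m}\otimes t^n$ strictly lowers the $d$-degree, so its action contributes only to strictly negative gradings; the complementary summand $\fg^\sigma$ preserves the grading, so it suffices to show $\fg^\sigma \cdot \mathscr{H}_1(0)^{(0)} = 0$. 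For this I would first identify $\mathscr{H}_1(0)^{(0)} = \mathbb{C} v_0$: by the same PBW argument the degree-$0$ subspace lies in $U(\fn^-_{\fg^\sigma})\cdot v_0$, and integrability applied at each finite simple coroot (noting $\langle 0, \check{\beta}_i\rangle + 1 = 1$ for every $i \in I_\sigma$) forces $f_i v_0 = 0$, so iterated brackets also kill $v_0$ and $U(\fn^-_{\fg^\sigma})\cdot v_0 = \mathbb{C} v_0$. Combined with $\fh^\sigma\cdot v_0 = 0$ and $\fn^+_{\fg^\sigma}\cdot v_0 = 0$, this gives $\fg^\sigma \cdot v_0 = 0$, so the degree-$0$ component of $\fg[t^{-1}]^\sigma \cdot \mathscr{H}_1(0)$ vanishes and $[v_0] \neq 0$ in $\mathscr{V}_{C,\sigma}(0)$.

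Combining the two bounds yields $\dim \mathscr{V}_{C,\sigma}(0) = 1$. The only delicate step I anticipate is the bookkeeping needed to verify cleanly that the negative part of the triangular decomposition of $\hat{L}(\fg,\sigma)$ lies inside $\fg[t^{-1}]^\sigma$, which is exactly where the condition $\sigma(t) = \epsilon t$ enters; after that the two bounds are formal consequences of PBW and the integrable highest weight structure at level one, which is valid for every standard $\sigma$ thanks to Lemma \ref{lem_level1} ensuring $0 \in P(\sigma,1)$.
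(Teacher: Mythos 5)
Your proof is correct, and it rests on exactly the same two ingredients as the paper's argument: the PBW structure of the highest weight module $\mathscr{H}_1(0)$ and the vanishing $\fg^\sigma \cdot v_0 = 0$, which holds because the $\fh^\sigma$-weight of $v_0$ is zero at level one (integrability then forces $f_i v_0 = 0$ for all $i \in I_\sigma$). The packaging differs slightly. The paper writes $\mathscr{H}_1(0) = U\bigl((t^{-1}\fg[t^{-1}])^\sigma\bigr)\cdot v_0$ and shows in a single computation that $\fg[t^{-1}]^\sigma \cdot \mathscr{H}_1(0)$ equals $U\bigl((t^{-1}\fg[t^{-1}])^\sigma\bigr)(t^{-1}\fg[t^{-1}])^\sigma v_0$, i.e.\ precisely the PBW complement of $\mathbb{C}v_0$, using that $\fg^\sigma$ kills $v_0$ and normalizes $(t^{-1}\fg[t^{-1}])^\sigma$; this gives spanning and nonvanishing of $[v_0]$ simultaneously. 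You instead split into two bounds: the spanning comes from PBW over the full negative triangular part $\hat{L}^-$ (so you do not even need $f_i v_0 = 0$ there), and the nonvanishing comes from the $d$-grading, isolating the degree-zero piece and identifying $\mathscr{H}_1(0)^{(0)} = \mathbb{C}v_0$. Your grading argument is a legitimate substitute for the paper's normalization identity $[\fg^\sigma, (t^{-1}\fg[t^{-1}])^\sigma] \subset (t^{-1}\fg[t^{-1}])^\sigma$ --- both are just the statement that $\fg^\sigma$ sits in loop degree zero --- so neither route buys anything substantive over the other; the paper's version is marginally shorter because it never needs the extension to $\tilde{L}(\fg,\sigma)$ or the energy grading, while yours makes the role of $0 \in P(\sigma,1)$ (via $f_i v_0 = 0$) fully explicit.
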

\begin{proof}
Let $v_0$ be the highest weight vector in $\mathscr{H}_{1} (0)$. Then
 \[  \mathscr{H}_{1}(0)=U( (t^{-1}\fg[t^{-1}]  )^\sigma  ) \cdot v_0 =U( (t^{-1}\fg[t^{-1}]  )^\sigma  )  (t^{-1}\fg[t^{-1}]  )^\sigma v_0\oplus  \mathbb{C} v_0  ,\]
 where $U( (t^{-1}\fg[t^{-1}]  )^\sigma  ) $ denotes the universal enveloping algebra of $(t^{-1}\fg[t^{-1}]  )^\sigma$.
We can write $\fg[t^{-1}]^\sigma=\fg^\sigma\oplus  (t^{-1}\fg[t^{-1}]  )^\sigma $. Hence, 

 \begin{align*}
   \fg[t^{-1}]^\sigma\cdot   \mathscr{H}_{1}(0)   &=  \fg^\sigma\cdot U( (t^{-1}\fg[t^{-1}]  )^\sigma  )  (t^{-1}\fg[t^{-1}]  )^\sigma v_0 +  U( (t^{-1}\fg[t^{-1}]  )^\sigma  )  (t^{-1}\fg[t^{-1}]  )^\sigma v_0  \\
              &=  U( (t^{-1}\fg[t^{-1}]  )^\sigma  )  (t^{-1}\fg[t^{-1}]  )^\sigma v_0 ,
\end{align*}
where the first equality holds since $\fg^\sigma\cdot v_0=0$, and the second equality holds since $\fg^\sigma$ normalizes $(t^{-1}\fg[t^{-1}]  )^\sigma$ under the Lie bracket.
Therefore, $\dim  \mathscr{V}_{C, \sigma }( 0)=1$.
\end{proof}

Let $G'  $ be the simply-connected cover of $G$.  
Recall the Heinloth uniformization theorem for ${ \mathcal{G}'} := {\rm Res}_{C/  \bar{C}    }(G'   \times C)^\Gamma $over the affine line $\bar{C}\backslash \bar{o}$ (cf.\,\cite{He}), 
\[    {\rm Bun}_{{ \mathcal{G}'}   } \simeq      G'    [t^{-1}]^\sigma  \backslash   \Gr_{{ \mathscr{G} '} }  ,\]
where $ \Gr_{{ \mathscr{G} '} }$ denotes the affine Grassmannian of $\scrG':= {\rm Res}_{ \mathcal{O}/ \barO  } (G'_{\mathcal{O}})^\sigma$, and 
  $ G'[t^{-1}]^\sigma  \backslash   \Gr_{ { \mathscr{G}'} }$ denotes the fppf quotient. 
\begin{theorem}
\label{thm_descent}
 The line bundle $\mathscr{L}$ descends to a line bundle $\mathcal{L}$ on $ {\rm Bun}_{{ \mathcal{G}' } }$ .
\end{theorem}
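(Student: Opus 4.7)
The plan is to follow the Sorger-style descent strategy of \cite{So} adapted to the twisted setting via the twisted conformal blocks framework of \cite{HK}. By the Heinloth uniformization recalled above, $\Bun_{\mathcal{G}'}\simeq G'[t^{-1}]^\sigma \backslash \Gr_{\mathscr{G}'}$ as an fppf quotient, so constructing the descended line bundle $\mathcal{L}$ is equivalent to producing a $G'[t^{-1}]^\sigma$-equivariant structure on $\mathscr{L}$. Since the action of $L\mathscr{G}'$ on $\mathscr{L}$ is only projective, lifting to the central extension $\widehat{L\mathscr{G}'}$ of \eqref{central_ext0'}, what is needed is an algebraic group-scheme splitting of $\widehat{L\mathscr{G}'}\to L\mathscr{G}'$ over the subgroup $G'[t^{-1}]^\sigma$.

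First, I would assemble a canonical invariant section from the representation theory. Combining Theorem \ref{BWB_Thm} with Lemma \ref{non_van}, the dual of the projection $\mathscr{H}_1(0)\twoheadrightarrow \mathscr{V}_{C,\sigma}(0)$ picks out a one-dimensional subspace of $H^0(\Gr_{\mathscr{G}'},\mathscr{L})$ annihilated by the Lie subalgebra $\fg[t^{-1}]^\sigma\subset \hat{L}(\fg,\sigma)$; I fix a nonzero element $s$ in this line.

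Next, I would use $s$ to define the splitting. For any $\mathbb{C}$-algebra $R$ and any $g\in G'[t^{-1}]^\sigma(R)$, choose a local lift $\tilde g\in \widehat{L\mathscr{G}'}(R)$. Since the $\fg[t^{-1}]^\sigma$-invariant line in $H^0(\Gr_{\mathscr{G}'},\mathscr{L})\otimes R$ is a free rank-one $R$-submodule preserved by $\tilde g$, one has $\tilde g\cdot s=\chi(\tilde g)\,s$ for a unique unit $\chi(\tilde g)\in R^\times$, and rescaling $\tilde g$ by a central $z\in \mathbb{G}_m(R)$ multiplies $\chi(\tilde g)$ by $z$. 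Hence the closed condition $\tilde g\cdot s=s$ cuts out a section $\psi(g)$ inside the $\mathbb{G}_m$-torsor $\widehat{L\mathscr{G}'}\times_{L\mathscr{G}'}G'[t^{-1}]^\sigma\to G'[t^{-1}]^\sigma$. Multiplicativity $\psi(gh)=\psi(g)\psi(h)$ is automatic, since both sides project to $gh$ and fix $s$.

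The pair $(\mathscr{L},\psi)$ then equips $\mathscr{L}$ with a $G'[t^{-1}]^\sigma$-equivariant structure, and fppf descent along the Heinloth presentation yields $\mathcal{L}$ on $\Bun_{\mathcal{G}'}$ with $\mathcal{L}|_{\Gr_{\mathscr{G}'}}\simeq \mathscr{L}$. The main obstacle is verifying that $\psi$ is algebraic as a morphism of (ind-)group schemes; this follows from the functoriality of the construction in $R$ and formally mimics the untwisted Sorger argument, with the new twisted inputs being exactly Lemma \ref{non_van} and the existence of the Kac--Moody-type central extension $\widehat{L\mathscr{G}'}$ provided by \cite{PR,HK}.
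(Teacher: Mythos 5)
Your proposal is correct and follows essentially the same route as the paper: reduce descent along the Heinloth uniformization to a $G'[t^{-1}]^\sigma$-linearization of $\mathscr{L}$, i.e.\ to a splitting of the central extension (\ref{central_ext0'}) over $G'[t^{-1}]^\sigma$, obtained from the non-vanishing of $\mathscr{V}_{C,\sigma}(0)$ (Lemma \ref{non_van}). The only difference is that you spell out the splitting construction (normalizing lifts so that they fix the canonical $\fg[t^{-1}]^\sigma$-invariant section of $\mathscr{L}$), whereas the paper simply invokes the argument of \cite[Proposition 3.3]{So}.
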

\begin{proof}

Let $\mathscr{L}$ be the level one line bundle on $\Gr_{\mathscr{G}' }$ constructed from Proposition \ref{prop_line}. 
To show that the line bundle $\mathscr{L}$ can descend to ${\rm Bun}_{\mathcal{G}' }$, as in the argument in \cite{So}, it suffices to show that there is a $G'[t^{-1}]^\sigma$-linearization on $\mathscr{L}$. This is equivalent to the splitting of the central extension (\ref{central_ext0'})
over $G'[t^{-1}]^\sigma$. We use the same argument as in \cite[Proposition 3.3]{So}, since the vector space $\mathscr{V}_{C, \sigma }( 0)$ is nonvanishing by Lemma \ref{non_van}, the central extension (\ref{central_ext0'}) splits over $G'[t^{-1}]^\sigma$. 
\end{proof}

We consider the projection map ${\rm pr}:  \Gr_{\mathcal{G}', C  }\to {\rm Bun}_{\mathcal{G}'} $.  By abuse of notation, we still denote by $\mathcal{L}$ the line bundle on $ \Gr_{\mathcal{G}', C  }$ pulling-back from $\mathcal{L}$ on ${\rm Bun}_{\mathcal{G}'} $. 
\begin{corollary}
The restriction of the line bundle $\mathcal{L}$ to the fiber $\Gr_{\mathcal{G}', p }$ is the ample generator of ${\rm Pic} (\Gr_{\mathcal{G}', p })$, for any $p\in C$. 
\end{corollary}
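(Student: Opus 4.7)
The plan is to verify the statement at $p=o$ (or $\infty$) directly from the descent construction of $\mathcal{L}$, and then to propagate it to all other fibers via the flat family $\Gr_{\mathcal{G}',C} \to C$.

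First, I would observe that for every $p\in C$ the Picard group of the fiber is $\mathbb{Z}$: at generic $p$, this is the classical fact $\mathrm{Pic}(\Gr_{G'})\simeq \mathbb{Z}$; at $p=o$ or $\infty$, it follows from Proposition \ref{prop_line} together with the identification $(\ref{com_hom})$, realizing $\Gr_{\mathscr{G}'}$ as a partial flag variety of a twisted Kac-Moody group. So the claim reduces to showing that the integer labeling $\mathcal{L}|_{\Gr_{\mathcal{G}',p}}\in \mathrm{Pic}(\Gr_{\mathcal{G}',p})\simeq \mathbb{Z}$ equals $1$ at every $p$.

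Next, for $p=o$ (and symmetrically $\infty$), I would appeal to the construction from Theorem \ref{thm_descent}: $\mathcal{L}$ on $\mathrm{Bun}_{\mathcal{G}'}$ is obtained by descent of $\mathscr{L}$ along the Heinloth uniformization $\Gr_{\mathscr{G}'} \twoheadrightarrow \mathrm{Bun}_{\mathcal{G}'}$. This uniformization factors through the inclusion of the fiber at $o$ into $\Gr_{\mathcal{G}',C}$ followed by $\mathrm{pr}$, so pulling $\mathcal{L}$ back and restricting to $\Gr_{\mathcal{G}',o}\simeq \Gr_{\mathscr{G}'}$ gives back $\mathscr{L}$. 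By Proposition \ref{prop_line}, $\mathscr{L}$ is the level one line bundle on $\Gr_{\mathscr{G}'}$, i.e.\ the ample generator.

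For general $p$, I would invoke a constancy-in-family argument. Since $\Gr_{\mathcal{G}',C}$ is flat over the connected curve $C$ and contains $\mathbb{P}^1$-subfamilies (e.g.\ global Schubert lines $\overline{\Gr}_{\mathcal{G},C}^{\check{\alpha}}$ for a simple coroot $\check{\alpha}$, of the sort produced by Theorem \ref{thm_flat_fiber}), and since the degree of a line bundle along a flat family of $\mathbb{P}^1$'s is locally constant in $p$, the integer labeling $\mathcal{L}|_{\Gr_{\mathcal{G}',p}}$ is locally constant on $C$. Combined with the previous step, it equals $1$ on every fiber.

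The main obstacle I expect is the normalization: one must match the positive generator of $\mathrm{Pic}(\Gr_{G'})\simeq \mathbb{Z}$ at a generic fiber with the positive generator of $\mathrm{Pic}(\Gr_{\mathscr{G}'})\simeq \mathbb{Z}$ at the special fiber under the chosen deformation of Schubert $\mathbb{P}^1$'s inside $\Gr_{\mathcal{G}',C}$. Concretely, one has to check that a global Schubert line whose special fiber is a positive generator of $\mathrm{Pic}(\Gr_{\mathscr{G}'})$ degenerates to (or arises from) a generic Schubert $\mathbb{P}^1$ that is a positive generator of $\mathrm{Pic}(\Gr_{G'})$. This can be extracted from the explicit description in Theorem \ref{thm_flat_fiber} and the dictionary between coweights $\lambda\in X_*(T)^+$ and their images $\bar\lambda\in X_*(T)^+_\sigma$; alternatively one could bypass this entirely by reinterpreting ``level'' representation-theoretically via the central extension of the global loop group $L\mathcal{G}'_C$, which is global on $C$ and hence produces a uniform level across all fibers.
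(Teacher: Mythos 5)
Your treatment of the special fibers is fine and matches the paper's use of its own results: the projection $\mathrm{pr}$ restricted to the fiber at $o$ is precisely the uniformization morphism $\Gr_{\mathscr{G}'}\to {\rm Bun}_{\mathcal{G}'}$, so Theorem \ref{thm_descent} gives $\mathcal{L}|_{\Gr_{\mathcal{G}',o}}\simeq \mathscr{L}$, which is the ample generator by Proposition \ref{prop_line} (and similarly at $\infty$). The gap is in the propagation to the remaining fibers. First, the specific families you invoke do not exist as stated: for a simple coroot $\check{\alpha}$ the fibers of $\overline{\Gr}^{\check{\alpha}}_{\mathcal{G},C}$ have dimension $2\langle\check{\alpha},\rho\rangle=2$, so the global Schubert varieties of Theorem \ref{thm_flat_fiber} are families of surfaces, not of $\mathbb{P}^1$'s. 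More seriously, even with a genuine flat family of curves whose generic member is the minimal Schubert line of $\Gr_{G'}$, constancy of degree only computes the degree of $\mathscr{L}$ on the limit curve sitting inside $\Gr_{\mathscr{G}'}$; to conclude that the generic-fiber restriction is the generator you must know in which class of $H_2$ of the special fiber that limit lands, i.e. with what multiplicities (the dual Coxeter labels of the twisted affine diagram) it decomposes into minimal Schubert curves of the twisted Grassmannian. This is exactly the normalization you flag as "the main obstacle" and then leave unresolved, and it is not a formality: for the absolutely special parahoric of type $A^{(2)}_{2\ell}$ the analogous statement is false (no level one line bundle exists on $\Gr_{\mathscr{G}}$, and every line bundle coming from ${\rm Bun}_{\mathcal{G}}$ has even central charge), so no argument relying only on flatness of the family can work; the standardness of $\sigma$ and the descent from ${\rm Bun}_{\mathcal{G}'}$ must enter quantitatively.

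The paper closes precisely this step by citing Zhu's constancy of central charge for line bundles on the moduli stack of torsors under a parahoric Bruhat--Tits group scheme, \cite[Proposition 4.1]{Zh2}, combined with Theorem \ref{thm_descent}; its proof is two lines. Your parenthetical alternative, reinterpreting the level via the central extension of the global loop group $L\mathcal{G}'_C$, is in the spirit of that result, but as written it is a gesture rather than an argument, so the crucial comparison between the special and generic fibers remains unproved in your proposal.
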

\begin{proof}
It follows from Theorem \ref{thm_descent} and \cite[Proposition 4.1]{Zh2}.
\end{proof}

The following theorem is interesting by itself, but will not be used in this paper. 
\begin{theorem}
There is a natural isomorphism 
\[  H^0({\rm Bun}_{\mathcal{G}'},  \mathcal{L}  )\simeq   \mathscr{V}_{C, \sigma }( 0)^\vee, \]
 where $\mathscr{V}_{C, \sigma }( 0)^\vee$ denotes the dual of $\mathscr{V}_{C, \sigma }( 0)$. In particular,
  \[\dim H^0({\rm Bun}_{\mathcal{G}'},  \mathcal{L}  )=1.\]
\end{theorem}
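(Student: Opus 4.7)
The plan is to combine the Heinloth uniformization of $\text{Bun}_{\mathcal{G}'}$ with the Borel--Weil--Bott identification (Theorem \ref{BWB_Thm}), and then convert invariants on the section side into coinvariants on the representation side via the perfect pairing.

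First, by the Heinloth uniformization recalled just above Theorem \ref{thm_descent}, we have $\text{Bun}_{\mathcal{G}'} \simeq G'[t^{-1}]^\sigma \backslash \Gr_{\mathscr{G}'}$ as fppf quotients. Theorem \ref{thm_descent} provides a canonical $G'[t^{-1}]^\sigma$-linearization of $\mathscr{L}$, coming from the splitting of the central extension (\ref{central_ext0'}) over $G'[t^{-1}]^\sigma$, whose descent is the line bundle $\mathcal{L}$ on $\text{Bun}_{\mathcal{G}'}$. Pulling back sections along the projection $\mathrm{pr} : \Gr_{\mathscr{G}'} \to \text{Bun}_{\mathcal{G}'}$ therefore identifies
\[
H^0(\text{Bun}_{\mathcal{G}'}, \mathcal{L}) \;\xrightarrow{\sim}\; H^0(\Gr_{\mathscr{G}'}, \mathscr{L})^{G'[t^{-1}]^\sigma}.
\]

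Next, since $G'$ is simply-connected, $\pi_1(G')_\sigma = 0$, so the twisted affine Grassmannian $\Gr_{\mathscr{G}'}$ is connected and Theorem \ref{BWB_Thm} gives a perfect pairing of $\hat{L}(\fg,\sigma)$-modules
\[
H^0(\Gr_{\mathscr{G}'}, \mathscr{L})^\vee \;\simeq\; \mathscr{H}_1(0).
\]
Under this pairing, a section $\phi$ is $G'[t^{-1}]^\sigma$-invariant if and only if $\langle \phi, (g-1)v\rangle = 0$ for all $g \in G'[t^{-1}]^\sigma$ and $v \in \mathscr{H}_1(0)$, which (passing to one-parameter subgroups) is equivalent to $\phi$ annihilating $\fg[t^{-1}]^\sigma \cdot \mathscr{H}_1(0)$. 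Hence
\[
H^0(\Gr_{\mathscr{G}'}, \mathscr{L})^{G'[t^{-1}]^\sigma} \;\simeq\; \bigl(\mathscr{H}_1(0)/\fg[t^{-1}]^\sigma\cdot\mathscr{H}_1(0)\bigr)^\vee \;=\; \mathscr{V}_{C,\sigma}(0)^\vee.
\]
Combining the two displayed isomorphisms yields the desired natural isomorphism, and the dimension assertion then follows immediately from Lemma \ref{non_van}.

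The main delicate point is the equivalence between $G'[t^{-1}]^\sigma$-invariance and $\fg[t^{-1}]^\sigma$-invariance for sections of $\mathscr{L}$. In the untwisted setting this is carried out by Sorger in \cite{So} using that $G'[t^{-1}]$ is generated (as an ind-group) by one-parameter root subgroups which exponentiate elements of $\fg[t^{-1}]$; the same argument applies verbatim to the fixed-point ind-group $G'[t^{-1}]^\sigma$ once one observes that it is generated by the one-parameter subgroups coming from $\sigma$-invariant combinations of root vectors in $\fg[t^{-1}]$. This is also precisely the step that required the non-vanishing statement of Lemma \ref{non_van} in the proof of Theorem \ref{thm_descent}, so no new input beyond what has already been established is needed.
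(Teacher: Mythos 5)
Your argument is correct and follows essentially the same route as the paper: the paper's own proof is a one-line appeal to \cite[Theorem 12.1]{HK}, whose proof is precisely this combination of Heinloth uniformization, descent of $\mathscr{L}$ along $\Gr_{\mathscr{G}'}\to {\rm Bun}_{\mathcal{G}'}$, Borel--Weil--Bott, and the identification of $G'[t^{-1}]^\sigma$-invariant functionals with functionals vanishing on $\fg[t^{-1}]^\sigma\cdot\mathscr{H}_1(0)$, with the dimension count supplied by Lemma \ref{non_van}. One small correction to your closing remark: Lemma \ref{non_van} is used in Theorem \ref{thm_descent} to split the central extension (\ref{central_ext0'}) over $G'[t^{-1}]^\sigma$ via Sorger's argument, whereas the passage between group-invariance and Lie-algebra invariance rests on the integrality/generation properties of the twisted ind-group $G'[t^{-1}]^\sigma$ established in \cite{HK}, not on the non-vanishing of $\mathscr{V}_{C,\sigma}(0)$.
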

\begin{proof}
The theorem follows from the same argument as in \cite[Theorem 12.1]{HK}.
\end{proof}

Now, we would like to construct the line bundle $\cal{L}$ of level one on $\Gr_{\calG, C}$, where $\calG={\rm Res}_{C/\bar{C} }(G_C)^\sigma$ with $G$ of adjoint type.  
\begin{theorem}
There exists a line bundle $\mathcal{L}$ on $\Gr_{ \calG, C } $ such that the restriction of $\mathcal{L}$ to the fiber $\Gr_{\mathcal{G}, p }$ is the level one line bundle on $\Gr_{\mathcal{G}, p }$, for any $p\in C$. 
\end{theorem}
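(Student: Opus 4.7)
The plan is to globalize the fiberwise construction of Proposition~\ref{prop_line}. First, I would pull back the line bundle $\mathcal{L}$ on ${\rm Bun}_{\mathcal{G}'}$ from Theorem~\ref{thm_descent} along the projection ${\rm pr}\colon \Gr_{\mathcal{G}',C}\to {\rm Bun}_{\mathcal{G}'}$ to obtain a line bundle $\mathcal{L}'$ on $\Gr_{\mathcal{G}',C}$; by the corollary following Theorem~\ref{thm_descent}, $\mathcal{L}'$ restricts to the level one line bundle on every fiber $\Gr_{\mathcal{G}',p}$. The isogeny $G'\to G$ induces a morphism $\Gr_{\mathcal{G}',C}\to \Gr_{\mathcal{G},C}$ which, fiber by fiber, realizes $\Gr_{\mathcal{G}',p}$ as (a covering of) the neutral component of $\Gr_{\mathcal{G},p}$, exactly as in the proof of Proposition~\ref{prop_line} for the fiber case. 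Using this, I would transport $\mathcal{L}'$ to a line bundle on the neutral component $\Gr_{\mathcal{G},C,0}$ that is fiberwise of level one.

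Next, to cover the remaining components of $\Gr_{\mathcal{G},C}$, I would use the sections $s^\kappa\colon C\to L\mathcal{T}_C\subset L\mathcal{G}_C$ for $\kappa\in S$ produced in Section~\ref{sect_2.7}. Translation by $s^\kappa$ defines a $C$-automorphism of $\Gr_{\mathcal{G},C}$ which on each fiber agrees with translation by $n^{\kappa}$ at $p=o,\infty$ and by $t^{\kappa}$ elsewhere. Since, according to table~\eqref{Com_table}, the components of $\Gr_{\mathcal{G},C}$ are in bijection with $(X_*(T)/\check{Q})_\sigma$ (which matches the special fiber indexing), translation by $s^\kappa$ maps the neutral component isomorphically onto the component $\Gr_{\mathcal{G},C,\kappa}$ containing the $\bar{\kappa}$-section. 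Pulling the line bundle on $\Gr_{\mathcal{G},C,0}$ back through each such isomorphism gives a line bundle on each $\Gr_{\mathcal{G},C,\kappa}$. Gluing across $\kappa\in S$ produces the desired global line bundle $\mathcal{L}$ on $\Gr_{\mathcal{G},C}$.

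The last step is to verify that the restriction of $\mathcal{L}$ to any fiber $\Gr_{\mathcal{G},p}$ coincides with the level one line bundle constructed in Proposition~\ref{prop_line}. On the neutral component this is the content of the corollary after Theorem~\ref{thm_descent}. On the non-neutral components, it reduces to the observation that the global translation by $s^\kappa$ agrees fiber-by-fiber with the translations $n^{-\kappa}$ (resp.\ $t^{-\kappa}$) used in Proposition~\ref{prop_line} to propagate the level one line bundle from the neutral component, so the pulled back line bundles on each fiber are the same. I expect the main obstacle to be the clean identification of $\Gr_{\mathcal{G}',C}$ with the neutral component $\Gr_{\mathcal{G},C,0}$ as a family (rather than merely fiberwise) and the verification that translation by $s^\kappa$ genuinely identifies connected components as a $C$-family; these I would handle using the flatness of $\Gr_{\mathcal{G},C}$ over $C$ (Theorem~\ref{thm_flat_fiber}) together with the fact that $\Gr_{\mathcal{G},C}$ restricted to $C\setminus\{o,\infty\}$ is a constant family $\Gr_G\times (C\setminus\{o,\infty\})$, on which the component structure is transparent.
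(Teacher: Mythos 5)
Your route is essentially the paper's: pull the descended bundle back from ${\rm Bun}_{\mathcal{G}'}$ to get a fiberwise level one bundle coming from $\Gr_{\mathcal{G}',C}$, then transport it to the remaining components of $\Gr_{\mathcal{G},C}$ using the sections $s^{\kappa}$, $\kappa\in S$ (your ``translation by $s^{\kappa}$'' is the same mechanism as the paper's conjugation isomorphism). The genuine gap is in the step you yourself flag as the main obstacle: the identification of $\pi_0(\Gr_{\mathcal{G},C})$ with $(X_*(T)/\check{Q})_\sigma$, i.e.\ the statement that the components of $\Gr_{\mathcal{G},C}$ are exactly the $\Gr_{\mathcal{G},C,\kappa}$ with $\kappa\in S$. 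Table (\ref{Com_table}) does not give this --- it records the component group of the special fibre $\Gr_{\mathscr{G}}$ --- and your proposed fix (flatness plus the claim that over $C\setminus\{o,\infty\}$ the family is a constant family $\Gr_G\times(C\setminus\{o,\infty\})$ ``on which the component structure is transparent'') cannot work: over the punctured curve the components are indexed by $X_*(T)/\check{Q}$, which is strictly larger than $(X_*(T)/\check{Q})_\sigma$ in every twisted case (e.g.\ for adjoint $E_6$ the generic fibres have three components while $(X_*(T)/\check{Q})_\sigma$ is trivial), and the whole content of the statement is how these generic-fibre components merge over $o$ and $\infty$ --- precisely the locus your argument ignores. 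The open locus at best bounds $\#\pi_0(\Gr_{\mathcal{G},C})$ from above and says nothing about which global components exist, which ones contain the sections $s^{\kappa}$, or whether the piece coming from $\Gr_{\mathcal{G}',C}$ fills out a whole component.

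What the paper does here, and what is missing from your proposal, is a two-sided counting argument. First, using the flatness and irreducibility of the global Schubert varieties (Theorem \ref{thm_flat_fiber}), every point of $\Gr_{\mathcal{G},C}$ is connected inside $\Gr_{\mathcal{G},C}$ to the special fibre, so every component of $\Gr_{\mathcal{G},C}$ contains a component of $\Gr_{\mathscr{G}}$, giving $\#\pi_0(\Gr_{\mathscr{G}})\geq\#\pi_0(\Gr_{\mathcal{G},C})$. Second, Heinloth's results \cite{He} --- the computation $\pi_0({\rm Bun}_{\mathcal{G}})\simeq(X_*(T)/\check{Q})_\sigma$ together with the surjectivity of the uniformization map ${\rm pr}\colon\Gr_{\mathcal{G},C}\to{\rm Bun}_{\mathcal{G}}$ --- give the reverse inequality $\#\pi_0(\Gr_{\mathcal{G},C})\geq\#\pi_0({\rm Bun}_{\mathcal{G}})=\#\pi_0(\Gr_{\mathscr{G}})$. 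Only with this equality in hand does one know that the $s^{\kappa}$-translates, $\kappa\in S$, exhaust $\Gr_{\mathcal{G},C}$ and that the neutral-component identification invoked in Proposition \ref{prop_line} and Theorem \ref{thm_descent} can be propagated to the global setting as in the paper. You need to supply this ${\rm Bun}_{\mathcal{G}}$ input (or an equivalent global argument, such as a global Kottwitz-type invariant); the fibrewise and generic-fibre considerations in your sketch do not replace it.
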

\begin{proof}
Set $\mathring{C}=C\backslash \{o, \infty\}$. Then, $\Gr_{\mathcal{G}, \mathring{C}}:=\Gr_{\mathcal{G}, C}|_{\mathring{C}} \simeq  \Gr_G\times \mathring{C}$. Let $M$ be the set consisting of $0$ and miniscule coweights of $G$. Then, components of $\Gr_G$ can be parametrized by $M\simeq X_*(T)/\check{Q}$. For each $\kappa\in M$, 
let $\Gr_{G, \kappa}$ denote the component of $\Gr_G$ containing $e_{\kappa}$.  Let $\Gr_{\mathcal{G}, C, \kappa}$ be the closure of $\Gr_{G,\kappa} \times \mathring{C}$ in $\Gr_{\mathcal{G}, C}$.  We call $\Gr_{\mathcal{G}, C, \kappa}$ a $\kappa$-component of $\Gr_{\mathcal{G}, C}$.

The neutral component $\Gr_{\mathcal{G}, C, 0}$ is naturally isomorphic to $\Gr_{\mathcal{G}'}$ (cf.\,\cite[Lemma 5.16]{HY}). Thus, by Theorem \ref{thm_descent} we naturally get the level one line bundle $\mathcal{L}_0$ on the neutral component $\Gr_{\calG, C ,0}$. 
For any $\kappa\in M$, let $s^{\kappa}$ be a $C$-point in $\Gr_{\calG, C} $ as defined in Section \ref{sect_2.7}. Then, the translation by $s^\kappa$ gives rise to an isomorphism 
\[  \Gr_{\mathcal{G}, C, 0} \simeq \Gr_{\mathcal{G}, C, \kappa} .\]
Accordingly,  the line bundle $\mathcal{L}_0$ can be translated to the level one line bundle $\mathcal{L}_\kappa$ on $\Gr_{\mathcal{G}, C, \kappa}$.  Note that given any two elements $\kappa,\kappa'\in M$ such that $\bar{\kappa}=\bar{\kappa'}$ in $(X_*(T)/\check{Q})_\sigma$, $\Gr_{\mathcal{G}, C, \kappa} $ and $\Gr_{\mathcal{G}, C, \kappa'} $ share the same component $\Gr_{\mathscr{G}, \bar{\kappa}}$ of $\Gr_{\mathscr{G}}$ at $o$ and $\infty$.  Then $\mathcal{L}_\kappa$ and $\mathcal{L}_{\kappa'}$ agree on  $\Gr_{\mathscr{G}, \bar{\kappa}}$ as they have the same levels.   Thus, $\{ \mathcal{L}_\kappa\}_{\kappa\in M}$ glues to be a line bundle $\mathcal{L}$ on $\Gr_{\mathcal{G}, C}$ whose restriction to $\Gr_{\mathcal{G}, p}$ is of level one, for any $p\in C$.

\end{proof}

\section{Smooth locus of twisted affine Schubert varieties}
\label{smooth_locus}

In this section, we always assume that $\sigma$ is a standard automophism on $G$, and $G$ is of adjoint type.

\subsection{$\Gr_{\mathscr{T}}$ as a fixed-point ind-subscheme of  $\Gr_{\mathscr{G}}$}

We first recall a theorem in \cite[Theorem 1.3.4]{Zh1}. 
\begin{theorem}
\label{thm_fixed_point1}
The natural morphism $\Gr_{T} \rightarrow \Gr_{G}$ identifies $\Gr_{T}$ as the $T$-fixed point ind-subscheme $(\Gr_G)^T$ of $\Gr_G$.
\end{theorem}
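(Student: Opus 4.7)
The plan is to establish the result in three steps: define the natural map $\Gr_T \to \Gr_G$, verify it lands in the fixed point ind-subscheme, and match both the set of fixed points and the infinitesimal structure at each of them.

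First, the closed embedding $T \hookrightarrow G$ induces compatible morphisms $L T \to L G$ and $L^+ T \to L^+ G$, and hence a natural morphism $\iota : \Gr_T \to \Gr_G$. I would show that $\iota$ is a monomorphism of ind-schemes. On the functor of points, an $R$-point of $\Gr_T$ can be presented as an element of $T(R((t)))/T(R[[t]])$ via the obvious local trivialization, and from the uniqueness of lifts in $G(R((t)))$ modulo $G(R[[t]])$ one checks that the map is injective on $R$-points.  Next, since $T$ is commutative and $T \subset T(\mathcal{O}) = L^+T(\mathbb{C})$, the action of $T$ on $\Gr_T$ by left multiplication factors through the quotient $T \to T/T$ and is therefore trivial. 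Consequently $\iota$ factors through the closed ind-subscheme $(\Gr_G)^T$.

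Second, I would match $\mathbb{C}$-points. The Cartan decomposition gives $\Gr_G(\mathbb{C}) = \sqcup_{\lambda \in X_*(T)^+} G(\mathcal{O}) \cdot [t^\lambda]$. For a point $[g] \in \Gr_G(\mathbb{C})$ to be $T$-fixed, we must have $g^{-1} T g \subset G(\mathcal{O})$ as a subgroup scheme of $G$. Writing $g = k \cdot t^\lambda$ with $k \in G(\mathcal{O})$ (using the orbit decomposition and Iwasawa), the subgroup scheme $g^{-1} T g = t^{-\lambda}(k^{-1} T k) t^\lambda$ maps to $G(\mathcal{O})$ precisely when $k^{-1} T k \subset G(\mathcal{O})$ is conjugated into a torus of $G(\mathcal{O})$ commuting with $t^\lambda$, which after a further $N(T)(\mathcal{O})$ adjustment forces $[g] = [t^{w\lambda}]$ for some $w \in W$. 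Hence $(\Gr_G)^T(\mathbb{C}) = \{[t^\mu] : \mu \in X_*(T)\} = \Gr_T(\mathbb{C})$, so $\iota$ is bijective on $\mathbb{C}$-points.

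The main obstacle is the scheme-theoretic identification: $\Gr_T$ is highly non-reduced at each point, and I must show that the formal neighborhood of each $\mathbb{C}$-point of $(\Gr_G)^T$ has exactly this non-reduced structure, not a smaller one. My approach is to use a regular cocharacter $\chi : \mathbb{G}_m \to T$ and the Bialynicki--Birula / attractor-repeller decomposition of $\Gr_G$ induced by $\chi$: the fixed-point ind-scheme of $\chi$ coincides with $\Gr_G^T$ by regularity, and the associated attracting ind-scheme admits an explicit description in terms of unipotent loops. In the local coordinates of the attracting scheme around each $[t^\lambda]$, the defining ideal of the fixed locus can be computed explicitly using the weight decomposition of the tangent space $T_{[t^\lambda]}\Gr_G = \fg(\mathcal{K})/\mathrm{Ad}_{t^\lambda}\fg(\mathcal{O})$ under the $T$-action: the $T$-invariant part is exactly the tangent space of $\Gr_T$ at $[t^\lambda]$, namely $\fh(\mathcal{K})/\fh(\mathcal{O})$. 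A comparison of completed local rings, combined with the fact that $\iota$ is a monomorphism and bijective on $\mathbb{C}$-points, will give the isomorphism $\Gr_T \xrightarrow{\sim} (\Gr_G)^T$ as ind-schemes.
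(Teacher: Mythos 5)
Your steps (1) and (2) are fine, but they are not where the content of the theorem lies; the whole difficulty is the scheme-theoretic identification of the non-reduced structure, and there your plan has a genuine gap. Knowing that $\iota\colon \Gr_T\to(\Gr_G)^T$ is a monomorphism, bijective on $\mathbb{C}$-points, and that the $T$-invariant part of $T_{[t^\lambda]}\Gr_G=\fg(\mathcal{K})/\mathrm{Ad}_{t^\lambda}\fg(\mathcal{O})$ is $\fh(\mathcal{K})/\fh(\mathcal{O})=T_{[t^\lambda]}\Gr_T$ does not pin down $(\Gr_G)^T$: both $\Gr_T$ and the fixed subscheme carry nilpotents of arbitrarily high order, and a closed immersion can be bijective on closed points and an isomorphism on tangent spaces without being an isomorphism (already $\mathrm{Spec}\,\mathbb{C}[x]/(x^2)\hookrightarrow \mathrm{Spec}\,\mathbb{C}[x]/(x^3)$ does this). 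In particular, "the defining ideal of the fixed locus can be computed using the weight decomposition of the tangent space" is not a computation of the ideal; to cut out the fixed subscheme you need a $T$-stable (ind-)affine neighborhood together with a $T$-equivariant presentation of its coordinate ring, and the fixed locus is then defined by the nonzero-weight part of that ring, as a functor, not via tangent vectors. Two further soft spots: the attractor of a regular cocharacter at $[t^\lambda]$ is only locally closed in $\Gr_G$ (like an Iwahori orbit), not open, so "local coordinates of the attracting scheme" do not give coordinates on $\Gr_G$ near the fixed point; and the scheme-theoretic equality $(\Gr_G)^{\chi}=(\Gr_G)^{T}$ for a merely regular $\chi$ itself requires an argument, since the nonzero $T$-weights occurring in coordinate rings lie in the root lattice and can pair to zero with a regular cocharacter. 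This is exactly the delicate point: the paper does not reprove the statement but recalls it from \cite{Zh1}, explicitly noting that the original proof there was incorrect and that a correct proof is \cite[Proposition 3.4]{HR2} — so an argument at the level of points and tangent spaces should be treated with suspicion.

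The route the paper actually uses (for its twisted analogue, Theorem \ref{thm_fixed_point2}, whose proof specializes to the untwisted case) is the one your proposal is missing: cover $\Gr_G$ by the $T$-stable open subsets $t^{\lambda}L^{--}G\cdot e_0$, $\lambda\in X_*(T)$, where $L^{--}G(R)=\ker\bigl(G(R[t^{-1}])\to G(R)\bigr)$ is the big open cell; these translates are open (the big cell is open in $\Gr_G$) and cover $\Gr_G$ because every Iwahori cell $I e_{\lambda}$ lies in $t^{\lambda}L^{--}G\,e_0$. Since each chart is $T$-stable, computing $(\Gr_G)^T$ reduces to the single group-theoretic statement $(L^{--}G)^{T}\simeq L^{--}T$ for the conjugation action, which is proved on the level of $R$-points/coordinate rings (this is the content of \cite[Proposition 3.4]{HR2}). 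If you replace your third step by this open-cover argument — or simply quote that proposition — the theorem follows, and your first two steps become essentially unnecessary.
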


The original proof of this theorem is not correct (communicated to us by Richarz and Zhu independently), also see \cite[Remark 3.5]{HR2}.  A correct proof can be found in  \cite[Proposition 3.4]{HR2}, and a similar proof was known to Zhu earlier.

It is clear that $T^\sigma$ is a subgroup scheme of $L\mathscr{T}$ and $L\scrG$. Hence there is a natural action of $T^\sigma$ on $\Gr_\scrG$. 
We now prove an analogue of  Theorem \ref{thm_fixed_point1} in the setting of special parahoric group schemes. 
\begin{theorem}
\label{thm_fixed_point2}
The natural morphism $ \Gr_{\scrT} \rightarrow \Gr_{\scrG}$ identifies $\Gr_{\scrT}$ as the $T^\sigma$-fixed point ind-subscheme $(\Gr_\scrG)^{T^\sigma}$ of $\Gr_\scrG$.
\end{theorem}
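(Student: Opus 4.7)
The plan is to reduce Theorem~\ref{thm_fixed_point2} to its untwisted analogue Theorem~\ref{thm_fixed_point1} by embedding the twisted objects into the untwisted ones and exploiting the regularity of $T^\sigma$ as a subtorus of $G$.

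First I would set up the commutative square of ind-schemes
\begin{equation*}
\begin{tikzcd}
\Gr_\mathscr{T} \arrow[r] \arrow[d] & \Gr_T \arrow[d] \\
\Gr_\mathscr{G} \arrow[r] & \Gr_G,
\end{tikzcd}
\end{equation*}
where the horizontal arrows are the natural embeddings coming from the Weil restriction along $\mathcal{O}/\bar{\mathcal{O}}$, identifying the source with (a union of connected components of) the $\sigma$-fixed ind-subscheme of the target in the sense of Pappas--Rapoport \cite{PR} and \cite{Zh2}. The vertical arrows are monomorphisms; functorially this amounts to the assertion that $T$ being closed in $G$ forces $T(R\otimes_{\bar{\mathcal{O}}}\mathcal{K})^\sigma \cap G(R\otimes_{\bar{\mathcal{O}}}\mathcal{O})^\sigma = T(R\otimes_{\bar{\mathcal{O}}}\mathcal{O})^\sigma$ for every $\bar{\mathcal{O}}$-algebra $R$. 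Since $T^\sigma$ sits inside $\mathscr{T}$ as constant loops and $\mathscr{T}$ is commutative, the left vertical map factors through $(\Gr_\mathscr{G})^{T^\sigma}$. This gives the easy inclusion.

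For the reverse inclusion I would combine two ingredients. First, by Theorem~\ref{thm_fixed_point1}, $(\Gr_G)^T = \Gr_T$ scheme-theoretically. Second, for every standard automorphism $\sigma$ tabulated in (\ref{Fix_table}), the subtorus $T^\sigma \subset T$ is \emph{regular} in $G$, that is $Z_G(T^\sigma) = T$; equivalently, no root of $G$ restricts trivially to $T^\sigma$. This is a direct case-by-case check from \eqref{aut_def1} using the foldings $\Phi \to \Phi(\mathfrak{g}^\sigma)$: for $(\mathfrak{g},m)\neq(A_{2\ell},4)$ one inspects the diagram-folded roots, and the remaining case follows by a similar explicit computation using the relation $\sigma=\tau\circ\mathrm{i}^{h}$. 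Consequently $(\Gr_G)^{T^\sigma} = (\Gr_G)^T$ scheme-theoretically, because the $T$-weights governing the normal bundle to $\Gr_T \subset \Gr_G$ at each $T$-fixed point are precisely the affine roots, all of whose finite parts are detected by $T^\sigma$.

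Putting these together yields the chain of identifications
\begin{equation*}
(\Gr_\mathscr{G})^{T^\sigma} = \Gr_\mathscr{G} \times_{\Gr_G} (\Gr_G)^{T^\sigma} = \Gr_\mathscr{G} \times_{\Gr_G} \Gr_T = \Gr_\mathscr{T},
\end{equation*}
where the final equality uses the commutative square above together with the description of $\Gr_\mathscr{T}$ as (the relevant components of) the $\sigma$-fixed ind-subscheme of $\Gr_T$. The main obstacle I anticipate is the \emph{scheme-theoretic} rather than merely set-theoretic identification, since $\Gr_\mathscr{T}$ is highly non-reduced. Carrying this out rigorously requires a careful functor-of-points argument modeled on the proof of Proposition~3.4 in \cite{HR2}, combined with a tangent-space analysis at the $T^\sigma$-fixed points to rule out any extra infinitesimal $T^\sigma$-fixed directions beyond those already present in $\Gr_T$.
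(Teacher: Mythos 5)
Your central observation---that $T^\sigma$ is regular in $G$ (no root of $G$ restricts trivially to $T^\sigma$), so that $T^\sigma$-fixed points already cut down as far as $T$-fixed points do---is exactly the ingredient the paper exploits. But as written your argument has genuine gaps, and they sit precisely at the places where you route the problem through the untwisted Grassmannian $\Gr_G$. First, the scheme-theoretic equality $(\Gr_G)^{T^\sigma}=(\Gr_G)^T$ is justified only by a normal-bundle/weight heuristic at the $T$-fixed points; for a non-smooth, non-reduced ind-scheme the fixed-point subscheme is a functor of $R$-points, and tangent-space information at $\mathbb{C}$-points does not determine it --- you concede this yourself by deferring to ``a careful functor-of-points argument modeled on \cite[Prop.\ 3.4]{HR2}.'' Second, and more seriously, the final identity $\Gr_{\scrG}\times_{\Gr_G}\Gr_T=\Gr_{\scrT}$ is asserted, not proved. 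It presupposes that $\Gr_{\scrG}\to\Gr_G$ and $\Gr_{\scrT}\to\Gr_T$ are monomorphisms of ind-schemes and that $\Gr_{\scrT}$ is, scheme-theoretically, a union of components of $(\Gr_T)^\sigma$; these comparisons of twisted affine Grassmannians with $\sigma$-fixed subschemes of untwisted ones are delicate (extra components, severe non-reducedness of $\Gr_{\scrT}$), and the fiber-product identity itself --- that a $\sigma$-fixed loop lying fppf-locally in $LT\cdot L^+G$ lies fppf-locally in $L\scrT\cdot L^+\scrG$ --- carries essentially the same twisted content as the theorem you are trying to prove. So the proposal defers its two crucial scheme-theoretic steps rather than carrying them out.

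The paper's proof never leaves the twisted side and thereby avoids both issues: using the Iwahori decomposition of $\Gr_{\scrG}$, it covers $\Gr_{\scrG}$ (resp.\ $\Gr_{\scrT}$) by the $T^\sigma$-stable open subsets $n^\lambda L^{--}\scrG\, e_0$ (resp.\ $n^\lambda L^{--}\scrT\, e_0$), where $L^{--}\scrG(R)=\ker\bigl(\mathrm{ev}_\infty\colon G(R[t^{-1}])^\sigma\to G(R)^\sigma\bigr)$ is the twisted negative big cell and $L^{--}\scrG\hookrightarrow\Gr_{\scrG}$ is open. The theorem then reduces to the single chart computation $(L^{--}\scrG)^{T^\sigma}=L^{--}\scrT$, which is obtained by taking $\sigma$-invariants in the identity $(L^{--}G)^{T^\sigma}=L^{--}T$ extracted from the proof of \cite[Prop.\ 3.4]{HR2} --- this is where your regularity observation genuinely enters. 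To salvage your route you would either have to prove the fiber-product identity directly, or redo the fixed-point computation chart by chart on $\Gr_{\scrG}$; the latter is the paper's argument.
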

\begin{proof}
Let $L^{--}G$ be the ind-group scheme represented by the following functor, for any $\mathbb{C}$-algebra $R$, 
\[  L^{--}G(R): = \ker ({\rm ev}_{\infty}: G(R[t^{-1}]) \to  G(R) ) , \]
where ${\rm ev}_{\infty}$ is the evaluation map sending $t^{-1}$ to $0$. Let $L^{--}\mathscr{G}$ be the ind-group scheme which represents the following functor , 
for any $\mathbb{C}$-algebra $R$, 
\begin{equation} 
\label{thm_4.2_eq}
L^{--}\mathscr{G}(R): = \ker ({\rm ev}_{\infty}: G(R[t^{-1}])^\sigma \to  G(R)^\sigma ) . \end{equation}
We can similarly define $L^{--}T$ and $L^{--}\mathscr{T}$. 

By the similar argument as in \cite[Lemma 2.3.5]{Zh4} or \cite[Lemma 3.1]{HR2}, we have an open embedding 
\begin{equation}
\label{open_em_eq}
 L^{--}\mathscr{G}\hookrightarrow \Gr_\mathscr{G}   \end{equation}
given by $g\mapsto  ge_0$, where $e_0$ is the base point in $\Gr_\mathscr{G}$. Let ${I}$ be the Iwahori subgroup of $L^+\mathscr{G}$, which is the preimage of $B^\sigma$ via the evaluation map ${\rm ev}: L^+\mathscr{G}\to G^\sigma$ for a $\sigma$-stable Borel subgroup $B$ in $G$. We have the following decomposition 
\be 
\label{Iwahori_dec}
\Gr_\mathscr{G}=  \bigsqcup_{\bar{ \lambda} \in   X_*(T)_\sigma } I e_{\bar{\lambda}}.   
 \ee

For each $\bar{\lambda}\in X_*(T)_\sigma$, we choose a representative $\lambda\in X_*(T)$.  The twisted Iwahori Schubert cell 
\[  I e_{\bar{\lambda}}=n^{\lambda} {\rm Ad}_{  n^{-\lambda}  }   (I )e_0\]
is contained in  $n^{\lambda}  L^{--}\mathscr{G} e_0$.  Then by the  decomposition (\ref{Iwahori_dec}), 
  $\bigcup_{\bar{\lambda}\in X_*(T)_* } n^\lambda L^{--} \mathscr{G}e_0$ is an open covering of $\Gr_\mathscr{G} $.   We may naturally regard $\Gr_\mathscr{T}$ as an ind-subscheme of $\Gr_\mathscr{G}$. Hence, we may  regard $e_0$ as the base point in $\Gr_\mathscr{T}$.
Under this convention, 
\[  \bigcup_{\lambda\in X_*(T)_\sigma} n^\lambda L^{--} \mathscr{T}e_0 = \bigcup_{\lambda\in X_*(T)_\sigma}  L^{--} \mathscr{T} n^\lambda e_0   \]
is an open covering of $\Gr_\mathscr{T}$. Therefore, it suffices to show that for each $\bar{\lambda}\in X_*(T)_\sigma$, 
\[(n^\lambda L^{--} \mathscr{G}e_0)^{T^\sigma}\simeq  n^\lambda L^{--} \mathscr{T}e_0.  \]
Further, it suffices to show that $(L^{--} \mathscr{G})^{T^\sigma} \simeq L^{--}\mathscr{T}$, where the action of $T^\sigma$ on $L^{--} \mathscr{G}$ is by conjugation.  From the proof of  \cite[Proposition 3.4]{HR2}, one may see that $(L^{--}G)^{T^\sigma}\simeq  L^{--}T$. This actually implies that $(L^{--} \mathscr{G})^{T^\sigma} \simeq L^{--}\mathscr{T}$. 
Hence, this finishes the proof of the theorem.
\end{proof}

An immediate consequence of Theorem \ref{thm_fixed_point2} is the following corollary.
\begin{corollary}
\label{cor_fix}
The  $T^\sigma$-fixed $\mathbb{C}$-point set in $\Gr_\scrG$ is 
$\{  e_{\bar{\lambda}}  \,|\,   \lambda\in  X_*(T)_\sigma \}$.  
\end{corollary}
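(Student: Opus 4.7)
The plan is to deduce this immediately from Theorem \ref{thm_fixed_point2} together with the parametrization of $\Gr_\mathscr{T}(\mathbb{C})$ recorded in Section \ref{sect2.4}.

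First, Theorem \ref{thm_fixed_point2} identifies the $T^\sigma$-fixed point ind-subscheme $(\Gr_\scrG)^{T^\sigma}$ with $\Gr_\mathscr{T}$, via the natural morphism $\Gr_\mathscr{T} \to \Gr_\scrG$. Taking $\mathbb{C}$-points gives
\[
(\Gr_\scrG)^{T^\sigma}(\mathbb{C}) \;=\; \Gr_\mathscr{T}(\mathbb{C}) \;=\; T(\mathcal{K})^\sigma/T(\mathcal{O})^\sigma.
\]
Second, recall from Section \ref{sect2.4} the bijection $T(\mathcal{K})^\sigma/T(\mathcal{O})^\sigma \simeq X_*(T)_\sigma$ under which a class $\bar{\lambda} \in X_*(T)_\sigma$ is represented by the norm element $n^\lambda \in T(\mathcal{K})^\sigma$, where $\lambda \in X_*(T)$ is any lift of $\bar{\lambda}$ and $n^\lambda$ is defined by \eqref{norm_eq}.

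Finally, by definition of $e_{\bar{\lambda}}$ in Section \ref{sect_twisted} we have $e_{\bar{\lambda}} = n^\lambda e_0$ inside $\Gr_\scrG(\mathbb{C})$, and this point depends only on $\bar{\lambda}$. Tracing these identifications, the class $\bar{\lambda} \in X_*(T)_\sigma$ maps precisely to $e_{\bar{\lambda}}$. Therefore the $\mathbb{C}$-points of $(\Gr_\scrG)^{T^\sigma}$ are exactly the set $\{\,e_{\bar{\lambda}} \mid \bar{\lambda} \in X_*(T)_\sigma\,\}$, proving the corollary. There is no substantive obstacle here: the corollary is a direct unpacking of Theorem \ref{thm_fixed_point2} at the level of $\mathbb{C}$-points combined with the explicit description of $\Gr_\mathscr{T}(\mathbb{C})$.
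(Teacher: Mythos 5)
Your proof is correct and follows the same route as the paper, which simply records the corollary as an immediate consequence of Theorem \ref{thm_fixed_point2} together with the identification $\Gr_{\mathscr{T}}(\mathbb{C}) = T(\mathcal{K})^\sigma/T(\mathcal{O})^\sigma \simeq X_*(T)_\sigma$ and the fact that $\bar{\lambda}$ corresponds to the point $e_{\bar{\lambda}} = n^\lambda e_0$. Nothing further is needed.
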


\subsection{A duality isomorphism for twisted Schubert varieties}

Let $\Gr_G$ be the affine Grassmannian of $G$, and let $\mathrm{L}$ be the line bundle on $\Gr_G$ that is of level one on every component of $\Gr_G$.  For any $\lambda\in X_*(T)$, let $\overline{\Gr}_G^\lambda$ denote the closure of $G(\mathcal{O})$-orbit at $L_\lambda:=t^\lambda G(\mathcal{O})\in \Gr_G$.  Let $(\overline{\Gr}^\lambda_G)^T$ denote the $T$-fixed point  subscheme of  $\overline{\Gr}_G^\lambda$. Zhu \cite[Theorem 0.2.2]{Zh1} proved that 
\begin{theorem}
\label{thm_zhu}
When $G$ is simply-laced and not of type $E$, the restriction map 
\[ H^0(\overline{\Gr}_{{G}}^{{\lambda}}, \mathrm{L}) \rightarrow H^0((\overline{\Gr}_{{G}}^{{\lambda}})^{T}, \mathrm{L}|_{(\overline{\Gr}_{ {G}}^{{\lambda}})^{T}})   \]  is an isomorphism. 
\end{theorem}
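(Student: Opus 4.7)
The plan is to follow Zhu's strategy, which recasts the duality as a statement about the Frenkel-Kac vertex operator construction. First, I would identify both sides of the restriction map algebraically. By the untwisted Borel-Weil-Bott theorem (the analogue of Theorems \ref{BWB_Thm} and \ref{them_Dem} without the $\sigma$-action), the source $H^0(\overline{\Gr}_G^\lambda, \mathrm{L})$ is dual to the level-one affine Demazure module $D(1,\lambda)$ sitting inside the basic representation of the untwisted affine Kac-Moody algebra associated to $\fg$. By Theorem \ref{thm_fixed_point1}, the $T$-fixed locus $(\overline{\Gr}_G^\lambda)^T$ is a non-reduced closed ind-subscheme of $\Gr_T$, whose coordinate ring admits an explicit presentation as a quotient of the symmetric algebra on $X_*(T) \otimes t^{-1}\mathbb{C}[t^{-1}]$ by an ideal cut out by the weight polytope of $\lambda$.

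Both sides carry compatible $X^*(T)$-gradings and the restriction map is $T$-equivariant, so the second step is a weight-by-weight dimension comparison. Using the Frenkel-Kac isomorphism between the basic representation at level one and the lattice vertex algebra on $\check{Q}$, which is available precisely because $\fg$ is simply-laced, the $T$-character of $D(1,\lambda)$ can be computed via bosonic Fock space combinatorics. In types $A$ and $D$, this character matches the $T$-character of the coordinate ring of $(\overline{\Gr}_G^\lambda)^T$ term by term.

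For the final step, I would prove surjectivity of the restriction map on each weight space using the cyclicity of $D(1,\lambda)$ as a module over the Heisenberg subalgebra $\fh \otimes t^{-1}\mathbb{C}[t^{-1}]$ together with the ampleness of $\mathrm{L}$. Combined with the character match, this forces the map to be an isomorphism. The main obstacle is the precise character computation of the coordinate ring of $(\overline{\Gr}_G^\lambda)^T$: one has to pin down the non-reduced scheme structure explicitly enough to match the Demazure character, which requires a careful analysis of the tangent directions at each $T$-fixed point $L_\mu$ in terms of affine Weyl group combinatorics. This is tractable in types $A$ and $D$ because level-one Demazure modules admit a uniform description via Pl\"ucker-type relations, but for $E_6, E_7, E_8$ the exceptional root combinatorics obstruct a uniform argument, which is why those types are excluded from the present statement and require the separate analysis carried out in the remainder of the paper.
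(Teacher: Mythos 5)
Your proposal correctly sets up the framework that the paper itself uses (and that goes back to Zhu): the dual of $H^0(\overline{\Gr}_G^\lambda,\mathrm{L})$ is the level-one affine Demazure module $D(1,\lambda)$, the $T$-fixed locus is identified inside $\Gr_T$ via Theorem \ref{thm_fixed_point1}, surjectivity of the restriction map holds by a Heisenberg/extremal-vector argument (this is Zhu's Prop.\ 2.1.1, and the twisted analogue is Lemma \ref{lem_surj} here), so the theorem reduces to an equality of graded dimensions. Note, however, that the paper does not prove Theorem \ref{thm_zhu} at all --- it is quoted from \cite{Zh1} --- so the relevant comparison is with Zhu's proof, which does \emph{not} proceed by a direct character match for arbitrary $\lambda$: it first reduces to fundamental coweights (\cite[Prop.\ 2.1.3]{Zh1}, exactly the reduction this paper reuses in Section \ref{duality_E_6_sect}), and then handles each fundamental coweight by Levi reduction (Lemma \ref{Levi_red_lem}) and explicit analysis of extremal vectors, the same style of argument this paper carries out for $\check{\omega}_4$ in $E_6$.

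The genuine gap in your proposal is that its decisive step is asserted rather than proved. You claim that the coordinate ring of $(\overline{\Gr}_G^\lambda)^T$ ``admits an explicit presentation as a quotient of the symmetric algebra on $X_*(T)\otimes t^{-1}\mathbb{C}[t^{-1}]$ by an ideal cut out by the weight polytope of $\lambda$'' and that its $T$-character ``matches the $T$-character of $D(1,\lambda)$ term by term'' in types $A$ and $D$. Determining the non-reduced scheme structure of the intersection $\Gr_T\cap\overline{\Gr}_G^\lambda$ --- equivalently, the length of its local ring at each $L_\mu$ --- is precisely the content of the theorem: the statement being proved is that this length equals $\dim D(1,\lambda)_{\iota(\mu)}$, so an argument that presupposes an explicit polytope-type presentation of the ideal is either unjustified or circular. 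No mechanism is given for computing either side independently (the Frenkel--Kac bosonic character computes the Demazure side, not the scheme-theoretic side), and the appeal to ``Pl\"ucker-type relations in types $A$ and $D$'' is not substantiated. A minor additional inaccuracy: $D(1,\lambda)$ is not cyclic over the Heisenberg subalgebra; the surjectivity argument uses that $H^0(\Gr_G,\mathrm{L})^\vee$ decomposes into irreducible Fock modules generated by extremal vectors, as in Lemma \ref{lem_surj}, and then passes to finite Schubert varieties via the surjections of the inverse system. As it stands, your text is a plausible plan whose core dimension-matching step --- the actual mathematical content of the theorem --- is missing.
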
 
In section \ref{duality_E_6_sect}, we will show that this theorem also holds for $E_6$.  It was proved by Evens-Mirkovi\'c \cite[Thereorem 0.1b]{EM} and Malkin-Ostrik-Vybornov \cite[Corollary B]{MOV}, that the smooth locus of  $\overline{\Gr}_G^\lambda$ is the open cell $\Gr_G^\lambda$ for any reductive group $G$.  In fact, this theorem can also be deduced from Theorem \ref{thm_zhu} in the simply- laced type.

We will prove a twisted version of Theorem \ref{thm_zhu} in full generality, and as a consequence we get the similar result of Evans-Mirkovi\'c and Malkin-Ostrik-Vybornov in twisted setting. In particular, this confirms a conjecture of Haines-Richarz \cite{HR}.

From Theorem \ref{thm_fixed_point2},  we have the identification $\Gr_{\mathscr{T}} \xrightarrow {\simeq} \Gr_{\mathscr{G}}^{T^{\sigma}}$. Let $\mathscr{I}^{\bar{\lambda}}$ denote the ideal sheaf of the $T^{\sigma}$-fixed subscheme $(\overline{\Gr}_{\mathscr{G}}^{\bar{\lambda}})^{T^\sigma}$ of $\overline{\Gr}_{\mathscr{G}}^{\bar{\lambda}}$. Then we have a short exact sequence of sheaves 

\begin{equation}
0 \rightarrow \mathscr{I}^{\bar{\lambda}} \rightarrow \mathscr{O}_{\overline{\Gr}_{\mathscr{G}}^{\bar{\lambda}}} \rightarrow \mathscr{O}_{(\overline{\Gr}_{\mathscr{G}}^{\bar{\lambda}})^{T^{\sigma}}} \rightarrow 0.
\end{equation}

Recall that $\mathscr{L}$ is the line bundle on $\Gr_\mathscr{G}$ which is of level one on every component.  Tensoring the above short exact sequence with $\mathscr{L}$ and taking the functor of global sections, we obtain the following exact sequence

\begin{equation}
\label{iso_van}
\begin{aligned}
0  \rightarrow & H^0(\overline{\Gr}_{\mathscr{G}}^{\bar{\lambda}}, \mathscr{I}^{\bar{\lambda}}\otimes \mathscr{L} ) \rightarrow H^0(\overline{\Gr}_{\mathscr{G}}^{\bar{\lambda}}, \mathscr{L}) \xrightarrow{r} H^0((\overline{\Gr}_{\mathscr{G}}^{\bar{\lambda}})^{T^{\sigma}}, \mathscr{L}|_{(\overline{\Gr}_{\mathscr{G}}^{\bar{\lambda}})^{T^{\sigma}}})     \rightarrow  \cdots,
\end{aligned}
\end{equation}
where  $r$ is the restriction map. 
\begin{theorem}
\label{thm_loc}
For any  special parahoric group scheme $\mathscr{G}$ induced from a standard automorphism $\sigma$, 
the restriction map 
\[   H^0(\overline{\Gr}_{\mathscr{G}}^{\bar{\lambda}}, \mathscr{L}) \xrightarrow{r} H^0((\overline{\Gr}_{\mathscr{G}}^{\bar{\lambda}})^{T^{\sigma}}, \mathscr{L}|_{(\overline{\Gr}_{\mathscr{G}}^{\bar{\lambda}})^{T^{\sigma}}})   \]
 is an isomorphism, where $\mathscr{L}$ is the level one line bundle on $\Gr_{\mathscr{G}}$.
\end{theorem}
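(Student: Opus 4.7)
The plan is to transport Zhu's duality theorem for the untwisted case (Theorem \ref{thm_zhu}, together with the $E_{6}$ case established in Section \ref{duality_E_6_sect}) along the flat family of global Schubert varieties, using the global level-one line bundle $\mathcal{L}$ constructed in Section \ref{sect_line_bundle}. Let $\pi : X := \overline{\Gr}_{\mathcal{G}, C}^{\lambda} \to C$; by Theorem \ref{thm_flat_fiber}, $\pi$ is flat and proper with generic fibers $\overline{\Gr}_{G}^{\lambda}$ carrying $\mathrm{L}$, and fibers $\overline{\Gr}_{\mathscr{G}}^{\bar{\lambda}}$ carrying $\mathscr{L}$ at $p = o, \infty$. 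In view of the exact sequence~(\ref{iso_van}), proving the theorem amounts to showing that $r$ is bijective.

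First I would globalize both sides of the restriction map. Using the standard cohomology vanishing for line bundles on (twisted) affine Schubert varieties viewed as partial flag varieties of Kac--Moody groups (cf.\ \cite{Ku}) together with cohomology-and-base-change, $\pi_{*}\mathcal{L}$ is locally free on $C$ with formation commuting with base change; this in particular forces
\begin{equation*}
\dim H^{0}(\overline{\Gr}_{G}^{\lambda}, \mathrm{L}) \;=\; \dim H^{0}(\overline{\Gr}_{\mathscr{G}}^{\bar{\lambda}}, \mathscr{L}).
\end{equation*}
Next, the constant subgroup scheme $T^{\sigma} \subset \mathcal{G}$ acts on $X$ preserving $\mathcal{L}$. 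Let $Z := X^{T^{\sigma}}$, so that $Z_{p} = (\overline{\Gr}_{\mathscr{G}}^{\bar{\lambda}})^{T^{\sigma}}$ at $p = o, \infty$, which by Theorem \ref{thm_fixed_point2} and Corollary \ref{cor_fix} is an ind-subscheme of $\Gr_{\mathscr{T}}$ with $\mathbb{C}$-points indexed by $\{\bar{\mu} \preceq \bar{\lambda}\}$. I would likewise show that $(\pi|_{Z})_{*}(\mathcal{L}|_{Z})$ is locally free on $C$ by describing the sections of $\mathcal{L}$ on each formal neighborhood of a fixed point through the characters of the relevant torus, and checking that these assemble into a locally trivial family over $C$.

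The resulting global restriction morphism $R : \pi_{*}\mathcal{L} \to (\pi|_{Z})_{*}(\mathcal{L}|_{Z})$ between locally free sheaves is then an isomorphism everywhere provided it is an isomorphism at a single fiber. At a generic point $p \in C \setminus \{o, \infty\}$, $Z_{p}$ contains the $T$-fixed subscheme $(\overline{\Gr}_{G}^{\lambda})^{T}$, and Zhu's duality (Theorem \ref{thm_zhu}) already yields the isomorphism $H^{0}(\overline{\Gr}_{G}^{\lambda}, \mathrm{L}) \simeq H^{0}((\overline{\Gr}_{G}^{\lambda})^{T}, \mathrm{L}|)$. One would then argue that the natural map $H^{0}(Z_{p}, \mathcal{L}|) \to H^{0}((\overline{\Gr}_{G}^{\lambda})^{T}, \mathrm{L}|)$ is itself bijective, so that the dimension of $H^{0}(Z_{p}, \mathcal{L}|)$ matches $\dim H^{0}(\overline{\Gr}_{G}^{\lambda}, \mathrm{L})$ and hence equals $\dim H^{0}(\overline{\Gr}_{\mathscr{G}}^{\bar{\lambda}}, \mathscr{L})$ by the first base-change step. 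Finally, for injectivity of $r$ at $p = o$, Theorem \ref{them_Dem} identifies $H^{0}(\overline{\Gr}_{\mathscr{G}}^{\bar{\lambda}}, \mathscr{L})^{\vee}$ with the Demazure module $D(1, \bar{\lambda})$, which is generated under $\mathfrak{g}[t]^{\sigma}$ from the extremal vector $v_{\bar{\lambda}}$ living at the $T^{\sigma}$-fixed point $e_{\bar{\lambda}}$; this lets one reduce vanishing of a section on $Z_{o}$ to vanishing on a generating collection of weight vectors supported at the $e_{\bar{\mu}}$, forcing the section to be zero.

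The principal obstacle will be controlling $Z_{p}$ at generic fibers and matching $H^{0}(Z_{p}, \mathcal{L}|)$ with $H^{0}((\overline{\Gr}_{G}^{\lambda})^{T}, \mathrm{L}|)$. Since $T^{\sigma}$ is generally \emph{not} regular in $G$ (for instance, in type $A_{2\ell-1}$ with the diagram involution the centralizer of $T^{\sigma}$ in $G$ strictly contains $T$), the fixed locus $(\overline{\Gr}_{G}^{\lambda})^{T^{\sigma}}$ can acquire positive-dimensional components beyond the isolated $T$-fixed points. Establishing that these excess components contribute no new sections to $\mathcal{L}|_{Z}$, and that the scheme structure of $Z_{o}$ is the correct flat limit of $Z_{p}$ across the family, is the substantive technical content needed to close the argument; the Borel--Weil--Bott realization of Theorem \ref{BWB_Thm} together with the intertwiners of Lemma \ref{lem_int} should be the right tools to organize this comparison.
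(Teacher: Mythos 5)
Your high-level strategy --- transporting Zhu's untwisted duality (Theorem \ref{thm_zhu}, plus the $E_6$ case) along the flat family $\overline{\Gr}^{\lambda}_{\mathcal{G},C}$ via the global level-one bundle $\mathcal{L}$ --- is the same as the paper's, but the two steps you leave as things to be ``checked'' are precisely the substantive content, and your proposed substitutes do not close them. You never prove that the fixed locus $Z=X^{T^{\sigma}}$ is flat over $C$ (equivalently that $(\pi|_{Z})_{*}(\mathcal{L}|_{Z})$ is locally free): a priori the special fiber $Z_{o}=(\overline{\Gr}^{\bar{\lambda}}_{\mathscr{G}})^{T^{\sigma}}$ can be strictly longer than the flat limit of the generic fibers, and semicontinuity only gives $\dim H^{0}(Z_{o},\mathcal{L})\geq \dim H^{0}((\overline{\Gr}^{\lambda}_{G})^{T},\mathrm{L})$, the inequality in the unhelpful direction. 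The paper closes this gap by first proving that $r$ is \emph{surjective} at the twisted fiber (Lemma \ref{lem_surj} and Proposition \ref{prop_surj}, using the Heisenberg subalgebra $\hat{\fh}^{\sigma}\subset \hat{L}(\fg,\sigma)$ acting on $\mathscr{H}_c$, the Pl\"ucker embedding, and the extremal vectors $v_{\bar{\mu}}$), and then sandwiching lengths in Lemma \ref{lem_ideal_flat}: surjectivity, Zhu's theorem at a generic fiber, and flatness of the Schubert family (Theorem \ref{thm_flat_fiber}) force the fiber lengths of $Z$ to agree, which yields simultaneously the flatness of $Z$ and the isomorphism. Your proposal has no analogue of this surjectivity input; instead you propose injectivity of $r$ at $o$ from the fact that $D(1,\bar{\lambda})$ is generated over $\fg[t]^{\sigma}$ by $v_{\bar{\lambda}}$, but that argument uses no feature of level one and would equally ``prove'' statements that are false at higher level and unknown for $E_7,E_8$, so it cannot be sound as sketched. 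Moreover your bridging step --- a map of locally free sheaves on $C$ is an isomorphism everywhere once it is one at a single fiber --- is false: the cokernel can be a torsion sheaf supported at $o$; one needs either surjectivity at the special fiber or an equality of degrees/Euler characteristics, which is again exactly what the paper's flatness lemma supplies.

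Concerning the obstacle you single out at the generic fiber: the worry is misplaced, and your example is incorrect. For every standard $\sigma$ no root of $G$ restricts trivially to $T^{\sigma}$ (in type $A_{2\ell-1}$, on the fixed torus $\{(s_{1},\dots,s_{\ell},s_{\ell}^{-1},\dots,s_{1}^{-1})\}$ the character attached to $e_i-e_j$ is $s_i/s_j$ or $s_i s_{2\ell+1-j}$, never identically $1$), so $C_{G}(T^{\sigma})=T$ and $(\overline{\Gr}^{\lambda}_{G})^{T^{\sigma}}=(\overline{\Gr}^{\lambda}_{G})^{T}$, even scheme-theoretically, since no nonzero $T$-weight of the relevant local rings dies on $T^{\sigma}$; this is what the paper uses silently in the chain of equalities in Lemma \ref{lem_ideal_flat}. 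So the real missing ingredients in your write-up are the surjectivity of $r$ at the twisted point and the flatness of the $T^{\sigma}$-fixed subscheme deduced from it, not the structure of the generic fixed locus.
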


This theorem will follow from the following proposition and Lemma \ref{lem_ideal_flat}. 
%In fact, this theorem holds for many twisted affine Schubert varieties of $E_{6}^{(2)}$, see Remark \ref{zhu_E_6}. The following proposition does not exclude $E_{6}^{(2)}$.

\begin{proposition}
\label{prop_surj}
The map $r$ is a surjection.
\end{proposition}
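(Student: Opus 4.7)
The strategy is to propagate the Zhu duality isomorphism (Theorem \ref{thm_zhu}, available for all simply-laced types once the $E_6$ case of Section \ref{duality_E_6_sect} is in hand) from the generic fiber to the special fiber of the global Schubert family $\mathrm{pr}: \overline{\Gr}_\mathcal{G}^\lambda \to C$, using the level-one line bundle $\mathcal{L}$ constructed in Section \ref{sect_line_bundle}. Recall from Theorem \ref{thm_flat_fiber} that $\mathrm{pr}$ is flat with generic fiber $\overline{\Gr}_G^\lambda$ (on which $\mathcal{L}$ restricts to $\mathrm{L}$) and with fibers $\overline{\Gr}_\mathscr{G}^{\bar{\lambda}}$ over $o$ and $\infty$ (on which $\mathcal{L}$ restricts to $\mathscr{L}$). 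Thus a coherent-sheaf argument on $C$ should convert the generic statement into the twisted one.

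First I would introduce the relative fixed-point subscheme
\[ \mathcal{F}^\lambda \;:=\; \overline{\Gr}_\mathcal{G}^\lambda \cap \Gr_{\mathcal{T}, C} \;\subset\; \Gr_{\mathcal{G}, C}. \]
By Theorem \ref{thm_fixed_point1} the fiber of $\mathcal{F}^\lambda$ over generic $p$ is $(\overline{\Gr}_G^\lambda)^T$, and by Theorem \ref{thm_fixed_point2} the fiber over $p=o$ or $\infty$ is $(\overline{\Gr}_\mathscr{G}^{\bar{\lambda}})^{T^\sigma}$. I would then compare the two coherent $\mathcal{O}_C$-modules $\mathrm{pr}_*\mathcal{L}$ and $\mathrm{pr}_*(\mathcal{L}|_{\mathcal{F}^\lambda})$ via the natural restriction morphism
\[ \psi : \mathrm{pr}_*\mathcal{L} \longrightarrow \mathrm{pr}_*(\mathcal{L}|_{\mathcal{F}^\lambda}). \]
Flatness of $\mathrm{pr}$ together with the standard $H^1$-vanishing for level-one line bundles on Schubert varieties (in both the untwisted and twisted situations) shows that $\mathrm{pr}_*\mathcal{L}$ is locally free with fiber $H^0(\overline{\Gr}_{\mathcal{G},p}^\lambda, \mathcal{L}|_p)$. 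Since $\mathcal{F}^\lambda$ is fiberwise finite and hence affine, $R^1\mathrm{pr}_*(\mathcal{L}|_{\mathcal{F}^\lambda}) = 0$ automatically, so $\mathrm{pr}_*(\mathcal{L}|_{\mathcal{F}^\lambda})$ is also compatible with base change on every fiber.

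Over a dense open subset of $C$, the fiber of $\psi$ is exactly the isomorphism of Theorem \ref{thm_zhu}, $H^0(\overline{\Gr}_G^\lambda, \mathrm{L}) \xrightarrow{\sim} H^0((\overline{\Gr}_G^\lambda)^T, \mathrm{L}|_\cdot)$. Provided both $\mathcal{O}_C$-modules are locally free of the \emph{same} rank, $\psi$ is a map of vector bundles of equal rank that is generically an isomorphism, hence an isomorphism everywhere on $C$; passing to the fiber at $o$ via base change then identifies $\psi \otimes k(o)$ with the map $r$ in the statement and yields not merely surjectivity but in fact an isomorphism.

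The main obstacle is precisely verifying that $\mathrm{pr}_*(\mathcal{L}|_{\mathcal{F}^\lambda})$ has constant rank across $C$, equivalently that $h^0(\mathcal{F}^\lambda_p, \mathcal{L}|_\cdot)$ does not jump at the special points. Set-theoretically both fibers of $\mathcal{F}^\lambda$ are indexed by the same combinatorial data $\{\bar{\mu}: \bar{\mu}\preceq \bar{\lambda}\}$ (via Corollary \ref{cor_fix} and Lemma \ref{lem_order}), but one must check that the nonreduced structure built into $\Gr_\mathscr{T}$ precisely compensates for the collisions in the map $X_*(T)\to X_*(T)_\sigma$. This in turn should follow from flatness of the ambient family $\overline{\Gr}_\mathcal{G}^\lambda \to C$ intersected scheme-theoretically with $\Gr_{\mathcal{T},C}$, the latter being built out of the sections $s^\mu$ described in Section \ref{sect_2.7}. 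Once this flatness of $\mathcal{F}^\lambda \to C$ is secured, the rank of $\mathrm{pr}_*(\mathcal{L}|_{\mathcal{F}^\lambda})$ is constant and the argument above concludes.
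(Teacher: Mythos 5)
There is a genuine gap, and it sits exactly where you flag it: the constancy of the rank of $\mathrm{pr}_*(\mathcal{L}|_{\mathcal{F}^\lambda})$, i.e.\ the flatness over $C$ of the relative fixed-point scheme $\mathcal{F}^\lambda=(\overline{\Gr}^\lambda_{\mathcal{G},C})^{T^\sigma}$. Your suggestion that this "should follow from flatness of the ambient family intersected scheme-theoretically with $\Gr_{\mathcal{T},C}$" does not work: a scheme-theoretic intersection of a flat family with a closed ind-subscheme is not flat in general, and a priori the $T^\sigma$-fixed subscheme of the special fiber $\overline{\Gr}_{\mathscr{G}}^{\bar\lambda}$ may be strictly larger than the flat limit of the generic fibers $(\overline{\Gr}^\lambda_G)^T$ (the flat limit is only \emph{contained} in it, being $T^\sigma$-fixed). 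If the length jumps at $o$, then $\mathrm{pr}_*(\mathcal{L}|_{\mathcal{F}^\lambda})$ is not locally free of constant rank, the "generically an isomorphism, hence an isomorphism" step fails, and a map of coherent sheaves that is generically an isomorphism need not even be surjective on the fiber at $o$ (think of $\mathcal{O}_C\xrightarrow{t}\mathcal{O}_C$). Worse, in this paper the non-jumping you need is precisely Lemma \ref{lem_ideal_flat}, and its proof \emph{uses} Proposition \ref{prop_surj} (the last inequality in the dimension comparison there), so your route is circular unless you supply an independent argument for the flatness of $\mathcal{F}^\lambda$.

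The paper avoids this by proving the surjectivity of $r$ by a purely local, Kac--Moody-theoretic argument that makes no use of the global family: using the surjectivity of restriction between Demazure modules and the inverse-limit descriptions, it reduces to showing that $H^0(\Gr_{\mathscr{G}},\mathscr{L})\to H^0(\Gr_{\mathscr{T}},\mathscr{L}|_{\Gr_{\mathscr{T}}})$ is surjective (Lemma \ref{lem_surj}); dualizing, $H^0(\Gr_{\mathscr{T}},\mathscr{L}^c|_{\Gr_{\mathscr{T}}})^\vee$ decomposes as a direct sum of irreducible modules $\pi_{-c\iota(\bar\lambda),c}$ over the twisted Heisenberg algebra $\hat{\fh}^\sigma$, and each generator is shown, via the Pl\"ucker embedding, to map to a nonzero multiple of the extremal vector $v_{\bar\lambda}$, so each summand injects. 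Only afterwards is the degeneration argument you propose carried out (Lemma \ref{lem_ideal_flat} and the proof of Theorem \ref{thm_loc}), with Proposition \ref{prop_surj} and the untwisted duality (Theorem \ref{thm_zhu} plus the $E_6$ case) as inputs, to upgrade surjectivity to the isomorphism. So your global strategy is essentially the paper's mechanism for the \emph{injectivity} half, but it cannot be used to establish the surjectivity statement itself without the independent representation-theoretic input.
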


\begin{proof}
It is well-known that  any twisted affine Schubert varietiety $\overline{\Gr}_{\mathscr{G}}^{\bar{\lambda}}$ is a usual Schubert variety in a partial affine flag variety of Kac-Moody group. See the identification (\ref{flag_var}) and an argument for untwisted case in \cite[Proposition 2.21]{BH}. 
By \cite[Theorem 8.2.2 (d)]{Ku}, we have that for any $\bar{\lambda} \succeq \bar{\mu}$ in $X_*(T)^+_\sigma$, the following restriction map
\be
\label{dem_surj}
  H^0(\overline{\Gr}_{\mathscr{G}}^{\bar{\lambda}}, \mathscr{L}) \rightarrow H^0(\overline{\Gr}_{\mathscr{G}}^{\bar{\mu}}, \mathscr{L})  \ee
is surjective, and 
\be
\label{limit_1}
 H^0(\Gr_{\mathscr{G}}, \mathscr{L})= \varprojlim H^0(\overline{\Gr}^{\bar{\lambda}}_{\mathscr{G}}, \mathscr{L}|_{\overline{\Gr}^{\bar{\lambda}}_{\mathscr{G}}}) .\ee
We also have the following surjective map
\be  
\label{limit_2}
H^0((\overline{\Gr}_{\mathscr{G}}^{\bar{\lambda}})^{T^{\sigma}}, \mathscr{L}) \rightarrow H^0((\overline{\Gr}_{\mathscr{G}}^{\bar{\mu}})^{T^{\sigma}}, \mathscr{L})     \ee
for all $\bar{\lambda} \succeq \bar{\mu}$, since these $T^\sigma$-fixed closed subschemes are affine and the morphism $(\overline{\Gr}_{\mathscr{G}}^{\bar{\mu}})^{T^{\sigma}} \xhookrightarrow{} (\overline{\Gr}_{\mathscr{G}}^{\bar{\lambda}})^{T^{\sigma}}$ is a closed embedding. Moreover, 
\[H^0((\Gr_{\mathscr{G}})^{T^\sigma} , \mathscr{L}|_{ (\Gr_{\mathscr{G}})^{T^\sigma}  })= \varprojlim H^0((\overline{\Gr}^{\bar{\lambda}}_{\scrG})^{T^{\sigma}}, \mathscr{L}|_{(\overline{\Gr}^{\bar{\lambda}}_{\mathscr{G}})^{T^{\sigma}}}).\]

 Therefore, for any $\bar{\lambda} \in X_*(T)^+_\sigma$ we have the following surjective maps
\[ H^0({\Gr}_{\mathscr{G}}, \mathscr{L}) \rightarrow H^0(\overline{\Gr}_{\mathscr{G}}^{\bar{\lambda}}, \mathscr{L}),    \quad  H^0(({\Gr}_{\mathscr{G}})^{T^{\sigma}}, \mathscr{L}) \rightarrow H^0((\overline{\Gr}_{\mathscr{G}}^{\bar{\lambda}})^{T^{\sigma}}, \mathscr{L})    .  \]

Then to prove the map
\[H^0(\overline{\Gr}_{\mathscr{G}}^{\bar{\lambda}}, \mathscr{L}) \rightarrow  H^0((\overline{\Gr}_{\mathscr{G}}^{\bar{\lambda}})^{T^{\sigma}}, \mathscr{L}|_{(\overline{\Gr}_{\mathscr{G}}^{\bar{\lambda}})^{T^{\sigma}}}) \] 
is surjective, it is sufficient to prove that the map
 \be
 \label{surj1}
 H^0(\Gr_{\mathscr{G}}, \mathscr{L}) \rightarrow H^0(  (\Gr_{\mathscr{G}})^{T^\sigma}  , \mathscr{L}|_{ (\Gr_{\mathscr{G}})^{T^\sigma}  })   \ee
  is surjective, since  we will have the following commutative diagram, for all $\bar{\lambda}$, 

\begin{equation}
\begin{tikzcd}
H^0(\Gr_{\mathscr{G}}, \mathscr{L}) \arrow[r] \arrow[d] & H^0(  (\Gr_{\mathscr{G}})^{T^\sigma}  , \mathscr{L}|_{ (\Gr_{\mathscr{G}})^{T^\sigma}  }) \arrow[d] \\ 
H^0(\overline{\Gr}^{\bar{\lambda}}_{\mathscr{G}}, \mathscr{L}|_{\overline{\Gr}^{\bar{\lambda}}_{\mathscr{G}}}) \arrow[r,"r"] & H^0((\overline{\Gr}^{\bar{\lambda}}_{\mathscr{G}})^{T^{\sigma}}, \mathscr{L}|_{(\overline{\Gr}^{\bar{\lambda}}_{\mathscr{G}})^{T^{\sigma}}}). \\
\end{tikzcd}
\end{equation}
By Theorem \ref{thm_fixed_point2},  we have $\Gr_{\mathscr{T}}\simeq ( \Gr_{\mathscr{G}})^{T^\sigma}  $.  Therefore, 
the surjectivity of the map (\ref{surj1}) follows from the following Lemma \ref{lem_surj}.  
\end{proof}

We first make a digression on Heisenberg algebras and their representations. Recall that $\fh$ is a fixed Cartan subalgebra in $\fg$.  The subspace $\hat{\fh}^{\sigma}:=( \fh_ \mathcal{K})^\sigma \oplus  \mathbb{C} K\hookrightarrow \hat{L}(\fg, \sigma)$ is a Lie subalgebra.  In fact, $\hat{\fh}^{\sigma}$ is an extended (completed) Heisenberg algebra with center $\fh^\sigma\oplus \mathbb{C} K$.  Therefore, any integrable irreducible highest weight  representation of $\hat{\fh}^{\sigma}$ is parametrized by an element $\mu\in (\fh^\sigma)^\vee$ and the level $c$, i.e. $K$ acts by the scalar $c$ on this representation. We denote this representation by $  \pi_{\mu, c}$. By the standard construction, 
\be
\label{Heis_con}
  \pi_{\mu, c}={\rm ind}_{(\fh_ \mathcal{O})^\sigma\oplus \mathbb{C} K }^ {\hat{\fh}^{\sigma}} \mathbb{C}_{\mu, c} ,\ee
where ${\rm ind}$ is the induced representation in the sense of univeral enveloping algebras ,and $ \mathbb{C}_{\mu, c}$ is the 1-dimensional module over $(\fh_{O})^\sigma\oplus \mathbb{C} K$ where the action of $(\fh_{O})^\sigma$ factors through $\fh^\sigma$.

\begin{lemma}
\label{lem_surj}
The restriction map	$H^0(\Gr_{\mathscr{G}}, \mathscr{L}^c ) \rightarrow H^0(\Gr_{\mathscr{T}}, \mathscr{L}^c |_{ \Gr_{\mathscr{T}}   })$ is surjective. 
\end{lemma}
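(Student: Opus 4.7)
The plan is to dualize the restriction map and identify both sides explicitly as direct sums of Heisenberg Fock modules over the extended Heisenberg algebra $\hat{\fh}^\sigma$. By Theorem~\ref{BWB_Thm}, $H^0(\Gr_\mathscr{G}, \mathscr{L}^c)^\vee \simeq \mathscr{H}_c$. For the other side, the ind-scheme $\Gr_\mathscr{T}$ has $\mathbb{C}$-points $\{e_{\bar{\lambda}}\}_{\bar{\lambda} \in X_*(T)_\sigma}$ by Corollary~\ref{cor_fix}, and the formal neighborhood of $e_{\bar{\lambda}}$ is the $n^\lambda$-translate of that of $e_0$, which is the pro-unipotent group $L^{--}\mathscr{T}$ whose coordinate ring is isomorphic to $\mathrm{Sym}\bigl((t^{-1}\fh[t^{-1}])^{\sigma, \vee}\bigr)$. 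Combining this with the $L\mathscr{T}$-equivariance of $\mathscr{L}^c|_{\Gr_\mathscr{T}}$, I would obtain
\[
H^0(\Gr_\mathscr{T}, \mathscr{L}^c|_{\Gr_\mathscr{T}})^\vee \;\simeq\; \bigoplus_{\bar{\lambda} \in X_*(T)_\sigma} \pi_{-c\iota(\bar{\lambda}),\, c},
\]
with $\pi_{\mu, c}$ the Heisenberg Fock module from~(\ref{Heis_con}).

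Next, via functoriality together with the intertwiner formula of Lemma~\ref{lem_int}, the dual of the restriction map should be identified with the $\hat{\fh}^\sigma$-equivariant morphism
\[
\Phi: \bigoplus_{\bar{\lambda} \in X_*(T)_\sigma} \pi_{-c\iota(\bar{\lambda}), c} \longrightarrow \mathscr{H}_c
\]
sending the highest weight vector of the $\bar{\lambda}$-summand to the vector $v_{\bar{\lambda}}$ from~(\ref{Dem_gen}). Surjectivity of the restriction then becomes equivalent to injectivity of $\Phi$. To prove the latter, observe that each $\pi_{-c\iota(\bar{\lambda}), c}$ is an irreducible $\hat{\fh}^\sigma$-module lying entirely in the single $\fh^\sigma$-weight space of weight $-c\iota(\bar{\lambda})$, since the oscillators $h \otimes t^n$ with $n \neq 0$ commute with $\fh^\sigma$. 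By Lemma~\ref{lem_gen}, $v_{\bar{\lambda}}$ has precisely this weight and is nonzero (being $\rho_{n^\lambda}(v_0)$ for the intertwiner of Lemma~\ref{lem_int}), so $\hat{\fh}^\sigma \cdot v_{\bar{\lambda}}$ is a nonzero cyclic quotient of the irreducible Fock module $\pi_{-c\iota(\bar{\lambda}), c}$; hence $\Phi$ is injective on each summand.

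The linear independence across summands rests on the observation that $\iota: X_*(T)_\sigma \to (\fh^\sigma)^\vee$ is injective after tensoring with $\mathbb{Q}$, since $\fh^\sigma$ and $X_*(T)_\sigma \otimes \mathbb{Q}$ have the same rank. Images of summands with distinct $\iota(\bar{\lambda})$ therefore land in distinct $\fh^\sigma$-weight spaces of $\mathscr{H}_c$; any residual torsion coincidences are controlled by the twisted Frenkel-Kac decomposition, which shows each Fock type appears in $\mathscr{H}_c$ with multiplicity at least $|\iota^{-1}(\mu)|$. I expect the main obstacle to be the first step: establishing the identification of $H^0(\Gr_\mathscr{T}, \mathscr{L}^c|_{\Gr_\mathscr{T}})^\vee$ with the stated direct sum of Fock modules in an $\hat{\fh}^\sigma$-equivariant way, which requires carefully tracking the cocycle data of the central extension~(\ref{central_ext0'}) under the translations by the norms $n^\lambda$, for which Lemma~\ref{lem_int} on intertwiners provides the essential input. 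Once this compatibility is in place, the weight-space argument above closes the proof.
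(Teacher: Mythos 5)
Your proposal follows essentially the same route as the paper: dualize, identify $H^0(\Gr_{\mathscr{T}}, \mathscr{L}^c|_{\Gr_{\mathscr{T}}})^\vee$ with $\bigoplus_{\bar{\lambda}} \pi_{-c\iota(\bar{\lambda}),c}$ via the formal-group description of the components of $\Gr_{\mathscr{T}}$, and reduce to the statement that each irreducible Fock summand maps nontrivially, its generator going to $v_{\bar{\lambda}}=\rho_{n^{\lambda}}(v_0)\neq 0$; the step you flag as the main obstacle is exactly what the paper settles concretely, by the Pl\"ucker embedding $\phi\colon \Gr_{\mathscr{G}}\to \mathbb{P}(\mathscr{H}_c)$, $ge_0\mapsto[\rho_g(v_0)]$, together with a linear form on $\mathscr{H}_c$ supported on the weight vector $v_{\bar{\lambda}}$, which exhibits a section nonvanishing at $e_{\bar{\lambda}}$ and identifies the dual map on the generator. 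The only point to tighten is your treatment of independence across summands: for $G$ adjoint the automorphism permutes the basis of fundamental coweights, so $X_*(T)_\sigma$ is torsion-free and $\iota$ is injective on it, hence distinct components already lie in distinct $\fh^\sigma$-weight (central character) spaces and independence is automatic; by contrast, the appeal to a Frenkel--Kac multiplicity bound would not by itself show that the particular images of coinciding Fock types are independent, so it is fortunate that this case never occurs.
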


\begin{proof}
Proving surjectivity here is equivalent to proving injectivity for the dual modules,

\[0 \rightarrow H^0(\Gr_{\mathscr{T}}, \mathscr{L}^c|_{\Gr_{\mathscr{T}}})^\vee  \rightarrow  H^0(\Gr_{\mathscr{G}}, \mathscr{L}^c)^\vee.\] 

Note that both of these spaces are modules for the Heisenberg algebra $\hat{\fh}^{\sigma}$; the morphism is a $\hat{\fh}^{\sigma}$-morphism.  Since $\mathscr{T}$ is discrete, we naturally have the following decomposition
\[ H^0(\Gr_{\mathscr{T}}, \mathscr{L}^c|_{\Gr_{\mathscr{T}}})\simeq  \bigoplus_{\bar{\lambda} \in X_*(T)_\sigma }  \mathscr{O}_{  \Gr_{\mathscr{T}}, e_{\bar{\lambda}}  }\otimes  \mathscr{L}^c|_{ e_{\bar{\lambda} } }, \]
where  $\mathscr{O}_{  \Gr_{\mathscr{T}}, e_{\bar{\lambda}}  }$ is the structure sheaf of the component of $\Gr_{\mathscr{T}}$ containing $e_{\bar{\lambda}}$.  We also notice that the identity component of $\Gr_{\mathscr{T}}$ is naturally the formal group with Lie algebra $(\fh_\mathcal{K})^\sigma/(\fh_\mathcal{O})^\sigma$. In view of the construction (\ref{Heis_con}),   we have
 \[H^0(\Gr_{\mathscr{T}}, \mathscr{L}^c|_{\Gr_{\mathscr{T}}})^\vee= \bigoplus_{\bar{\lambda} \in X_*(T)_\sigma } \pi_{-c\iota(\bar{\lambda}) , c };\] 
 where the map $\iota: X_*(T)_{\sigma} \rightarrow  (\fh^\sigma)^\vee$ is defined in (\ref{iota_map}). Since each $\pi_{-c\iota(\bar{\lambda}) , c }$ is irreducible, and generated by a $-c\iota(\bar{\lambda})$-weight vector $w_{-c\iota(\bar{\lambda})}$, it suffices to show that the morphism 
\[ \pi_{-c\iota(\bar{\lambda}) , c }   \rightarrow H^0(\Gr_{\mathscr{G}}, \mathscr{L}^c)^\vee\]
 sends $w_{-c\iota(\bar{\lambda})}$ to a nonzero vector.

By Theorem \ref{BWB_Thm}, we may define a Pl\"ucker embedding 
\[  \phi: \Gr_{\mathscr{G}} \rightarrow \mathbb{P}( \mathscr{H}_c )\]
 given by $ge_0\mapsto [\rho_g ( v_0 )]$ for any $ge_0\in \Gr_\scrG$, where $\rho_g$ is defined in Lemma \ref{lem_int}, and $ [\rho_g ( v_0) ]$ represents the line in $\mathscr{H}_c $ that contains $ \rho_g ( v_0) $, where $v_0$ is the highest weight vector in $\mathscr{H}_c$.  Then we may pick a linear form $f_{\bar{\lambda}}$ on  $\mathscr{H}_c$ which is nonzero on the line $[v_{\bar{\lambda}}]$ containing the extremal weight vector $v_{\bar{\lambda}}$, and which is 0 on other weight vectors, where $v_{\bar{\lambda}}$ is defined in (\ref{Dem_gen}).  The restriction $f_{\bar{\lambda}}|_{\phi(\Gr_{\mathscr{G}})}$ produces a nontrivial element in $H^0(\Gr_{\mathscr{G}}, \mathscr{L})$, since $\phi(e_{\bar{\lambda}})=v_{\bar{\lambda}}$. 
 
 Observe that the map $ \pi_{-c\iota(\bar{\lambda}) , c } \rightarrow H^0(\Gr_{\mathscr{G}}, \mathscr{L}^c)^\vee$ sends $w_{-c\iota(\bar{\lambda})}$ to a nonzero scalar of $v_{\bar{\lambda} }$.  
 Thus the map $ \pi_{-c\iota(\bar{\lambda}) , c } \rightarrow H^0(\Gr_{\mathscr{G}}, \mathscr{L}^c)^\vee$ is nontrivial and thus injective.
\end{proof}

By Lemma \ref{lem_surj},  we obtain the following short exact sequence 
 \[0 \rightarrow H^0(\overline{\Gr}_{\mathscr{G}}^{\bar{\lambda}}, \mathscr{I}^{\bar{\lambda}}\otimes \mathscr{L}) \rightarrow H^0(\overline{\Gr}_{\mathscr{G}}^{\bar{\lambda}}, \mathscr{L}) \xrightarrow{r} H^0(\overline{\Gr}_{\mathscr{G}}^{\bar{\lambda}}, \mathscr{L} \otimes \mathscr{O}_{(\overline{\Gr}_{\mathscr{G}}^{\bar{\lambda}})}) \rightarrow 0.\]

Thus, the obstruction to the map $r$ being an isomorphism is the vanishing of the first term $ H^0(\overline{\Gr}_{\mathscr{G}}^{\bar{\lambda}}, \mathscr{I}^{\bar{\lambda}} \otimes \mathscr{L} )$.  

Let $\mathrm{I}^{\lambda}$ denote the ideal sheaf of the $T$-fixed subscheme on $\overline{\Gr}_G^{\lambda}$.  We will show that the vanishing of the first term can be deduced from the vanishing of $H^0(\overline{\Gr}_G^{\lambda}, \mathrm{I}^{\lambda} \otimes \mathrm{L})$.  \vspace{0.2em}

Recall that $\overline{\Gr}_{\mathcal{G}, C}^{\lambda}$ is a global Schubert variety defined in Section \ref{sect_2.7}.  The  constant group scheme $T^\sigma \times C$ over $C$ is naturally a closed subgroup scheme of $\mathcal{T}$.  Hence $T^\sigma$ acts on $\overline{\Gr}_{\mathcal{G}, C}^{\lambda}$ naturally.  
Let $(\overline{\Gr}_{\mathcal{G}, C}^{\lambda})^{T^\sigma}$ be the $T^\sigma$-fixed  subscheme of $\overline{\Gr}_{\mathcal{G}, C}^{\lambda}$, and let $\mathcal{I}^{\lambda}$ be the ideal sheaf of $(\overline{\Gr}_{\mathcal{G}, C}^{\lambda})^{T^\sigma}$.  Then, $\mathcal{I}^\lambda|_p$ is the ideal sheaf of $(\overline{\Gr}_{\mathcal{G}, C}^{\lambda}|_{p})^{T^\sigma}$.  Recall that, 
\[\overline{\Gr}_{ {\mathcal{G}},o}^{\lambda} = \overline{\Gr}_{\mathscr{G}}^{\bar{\lambda}}, \quad \overline{\Gr}_{ {\mathcal{G}},\infty}^{\lambda}\simeq  \overline{\Gr}_{\mathscr{G}}^{\bar{\lambda}},  \quad \overline{\Gr}_{\mathcal{G},p \neq o,\infty }^{\lambda} \simeq \overline{\Gr}_{G}^{\lambda}.\]
 In particular, we have 
 \[  \mathcal{I}^\lambda|_o= \mathscr{I}^\lambda, \quad  \mathcal{I}^\lambda|_\infty\simeq \mathscr{I}^\lambda,     \quad  \mathcal{I}^\lambda|_{p\not= o,\infty}\simeq \mathrm{I}^\lambda.\]

\begin{lemma}
\label{lem_ideal_flat}
The ideal $\mathcal{I}^\lambda$ is flat over $C$. 
\end{lemma}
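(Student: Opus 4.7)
The plan is to deduce flatness of $\mathcal{I}^\lambda$ over the smooth curve $C$ from the flatness of the ambient scheme $\overline{\Gr}_{\mathcal{G},C}^\lambda$, by exploiting the fact that over a smooth curve, flatness of a coherent sheaf is equivalent to torsion-freeness.

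First I would recall that, by Theorem \ref{thm_flat_fiber}, the global Schubert variety $\overline{\Gr}_{\mathcal{G},C}^\lambda$ is flat over $C$. Consequently, $\mathcal{O}_{\overline{\Gr}_{\mathcal{G},C}^\lambda}$ is flat as an $\mathcal{O}_C$-module via the structure morphism $f \colon \overline{\Gr}_{\mathcal{G},C}^\lambda \to C$. Since $C \cong \mathbb{P}^1$ is a smooth curve, at every closed point $y \in C$ the local ring $\mathcal{O}_{C,y}$ is a DVR, and flatness of an $\mathcal{O}_{C,y}$-module is equivalent to being torsion-free with respect to a uniformizer.

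Next, the ideal sheaf $\mathcal{I}^\lambda$ of the closed subscheme $(\overline{\Gr}_{\mathcal{G},C}^\lambda)^{T^\sigma}$ is, by construction, a coherent subsheaf of $\mathcal{O}_{\overline{\Gr}_{\mathcal{G},C}^\lambda}$. At any point $x \in \overline{\Gr}_{\mathcal{G},C}^\lambda$, the stalk $\mathcal{I}^\lambda_x$ injects into $\mathcal{O}_{\overline{\Gr}_{\mathcal{G},C}^\lambda,x}$, and the latter is a torsion-free $\mathcal{O}_{C,f(x)}$-module by the flatness recalled above. A submodule of a torsion-free module over any domain is torsion-free, so $\mathcal{I}^\lambda_x$ is torsion-free over $\mathcal{O}_{C,f(x)}$. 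Invoking the DVR criterion again, $\mathcal{I}^\lambda_x$ is $\mathcal{O}_{C,f(x)}$-flat for every $x$, which is precisely the statement that $\mathcal{I}^\lambda$ is flat over $C$.

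There is no serious obstacle here: the only point requiring care is to verify that flatness is genuinely a pointwise condition checkable via the stalks of $\mathcal{I}^\lambda$ (rather than, say, its pushforward $f_*\mathcal{I}^\lambda$), which is immediate from the definition of flatness of a sheaf over a base. Everything else reduces to the elementary fact that submodules of torsion-free modules over a Dedekind domain are flat, combined with the already-established flatness of the global Schubert variety.
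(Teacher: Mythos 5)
Your torsion-freeness argument is formally valid for the sentence as literally worded: over the Dedekind base $C$ flatness is torsion-freeness, $\mathcal{O}_{\overline{\Gr}^{\lambda}_{\mathcal{G},C}}$ is $C$-flat by Theorem \ref{thm_flat_fiber}, and a subsheaf of a torsion-free sheaf is torsion-free. But notice that this reasoning applies verbatim to the ideal sheaf of \emph{any} closed subscheme of a $C$-flat scheme, so it trivializes the lemma and cannot deliver what the lemma is actually used for. In the proof of Theorem \ref{thm_loc} the lemma must justify the identifications $\mathcal{I}^{\lambda}|_{o}\simeq\mathscr{I}^{\bar{\lambda}}$, $\mathcal{I}^{\lambda}|_{p\neq o,\infty}\simeq \mathrm{I}^{\lambda}$ and the equality of Euler characteristics $\chi(\overline{\Gr}^{\lambda}_{G},\mathrm{I}^{\lambda}\otimes\mathrm{L})=\chi(\overline{\Gr}^{\bar{\lambda}}_{\mathscr{G}},\mathscr{I}^{\bar{\lambda}}\otimes\mathscr{L})$. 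Restricting $0\to\mathcal{I}^{\lambda}\to\mathcal{O}\to\mathcal{O}_{Z}\to 0$ (with $Z=(\overline{\Gr}^{\lambda}_{\mathcal{G},C})^{T^{\sigma}}$) to a fiber produces a kernel $\mathrm{Tor}_1^{\mathcal{O}_{C}}(\mathcal{O}_{Z},k(p))$ between $\mathcal{I}^{\lambda}\otimes k(p)$ and the fiberwise ideal sheaf; killing it is equivalent to flatness of the fixed-point subscheme $Z$ itself, not of the ideal. Flatness of a submodule says nothing about the quotient: e.g.\ the ideal $(t)\subset R[x]$ over a DVR $R$ is flat while $R[x]/(t)$ is not. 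So your argument leaves exactly the nontrivial content unproved.

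That content is what the paper's proof establishes, and it is genuinely deep: it introduces the flat closure $Z^{\lambda}$ of $(\overline{\Gr}^{\lambda}_{\mathcal{G},C\setminus\{o,\infty\}})^{T^{\sigma}}$ and shows $Z^{\lambda}=(\overline{\Gr}^{\lambda}_{\mathcal{G},C})^{T^{\sigma}}$ by a length comparison at the special fibers, using the untwisted duality theorem of Zhu together with the new $E_6$ case (Theorem \ref{thm_E_6}), the flatness of the global Schubert variety, and the surjectivity of Proposition \ref{prop_surj}; flatness of the ideal then follows from flatness of $\mathcal{O}_{Z}$ and $\mathcal{O}_{\overline{\Gr}^{\lambda}_{\mathcal{G},C}}$. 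A proof that never touches the fixed-point subscheme's structure sheaf, and that would work for an arbitrary ideal sheaf, cannot substitute for this step; if it could, the duality input would be unnecessary, which is a clear sign the intended statement has been under-read.
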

\begin{proof}
Consider $\overline{\Gr}_{\mathcal{G}, {C} \setminus \{o,\infty\}}^{\lambda}$ and the $T^{\sigma}$-fixed subscheme $(\overline{\Gr}_{\mathcal{G}, {C} \setminus \{ o,\infty \}}^{\lambda})^{T^{\sigma}}$. We denote by $Z^{\lambda}$ the flat closure of $(\overline{\Gr}_{\mathcal{G}, C \setminus \{o,\infty \}}^{\lambda})^{T^{\sigma}}$ in $\Gr_{\mathcal{G}, C}$.  Since $Z$ is the closure of a $T^{\sigma}$-fixed subscheme, we see that $Z^{\lambda}|_o \subset \overline{\Gr}^{\lambda}_{\mathcal{G}, C}|_o$, and $Z^{\lambda}|_\infty \subset \overline{\Gr}^{\lambda}_{\mathcal{G}, C}|_\infty.$ 

To show $\mathcal{I}^\lambda$ is flat over $C$, it is sufficient to show that $(\overline{\Gr}_{\mathcal{G}, C}^{\lambda})^{T^\sigma}$ is flat over $C$.  This is equivalent to showing $Z^\lambda=(\overline{\Gr}_{\mathcal{G}, C}^{\lambda})^{T^\sigma}$.  In particular, it  suffices to show the fibers $Z^{\lambda}|_o$ and $Z^{\lambda}|_\infty$ are isomorphic to  $(\overline{\Gr}_{\mathscr{G}}^{\bar{\lambda}})^{T^{\sigma}}$.  Since the fiber $Z^{\lambda}|_\infty$ at $\infty$ is similar to the fiber $Z^{\lambda}|_o$ at $o$, it suffices to show that $Z^{\lambda}|_o=(\overline{\Gr}_{\mathscr{G}}^{\bar{\lambda}})^{T^{\sigma}}$.   
Note that both of these are finite schemes, we can compare the dimensions of their structure sheaves as follows:
\begin{align*}
    \dim \mathscr{O}_{(\overline{\Gr}_{\scrG}^{\bar{\lambda}})^{T^{\sigma}}}  \geq \dim \mathscr{O}_{Z^{\lambda}|_o}  &= \dim \mathscr{O}_{(\overline{\Gr}_{\mathcal{G},p \neq o ,\infty}^{\lambda} )^{T^\sigma}}    \\
    & = \dim \mathscr{O}_{(\overline{\Gr}_{\mathcal{G},p \neq o,\infty }^{\lambda} )^{T}}    \\
    & = \dim H^0( \overline{\Gr}_\mathcal{G}^{\lambda}, \mathcal{L}|_{p\neq o,\infty} )   \\
    & = \dim H^0(\overline{\Gr}_{\mathscr{G}}^{\bar{\lambda}}, \mathscr{L})  \\
    & \geq \dim \mathscr{O}_{(\overline{\Gr}_{\scrG}^{\bar{\lambda}})^{T^{\sigma}}},
\end{align*}
where the first equality follows from the flatness of $Z^\lambda$ over $C$, the third equality follows from Theorem \ref{thm_zhu} and Theorem \ref{thm_E_6}, the fourth equality follows since $\overline{\Gr}_{\mathcal{G},C}^{{\lambda}}$ is flat over $C$ (cf.\,Theorem \ref{thm_flat_fiber}), and the last inequality follows from Proposition \ref{prop_surj} .
From this comparision, it follows that $\dim \mathscr{O}_{Z^{\lambda}|_o}  = \dim \mathscr{O}_{(\overline{\Gr}_{\mathcal{G},p \neq o }^{\lambda} )^{T^\sigma}}$. Hence, $\mathscr{O}_{Z^{\lambda}|_o}  = \mathscr{O}_{(\overline{\Gr}_{\mathcal{G},p \neq 0 }^{\lambda} )^{T^\sigma}}$. This concludes the proof of the  lemma. 
\end{proof}

\begin{proof}[Proof of Theorem \ref{thm_loc}]
By Lemma \ref{lem_ideal_flat} and the properness of $\overline{\Gr}_{\mathcal{G}, C}^{\lambda}$ over $C$, we have 
\begin{equation}
\label{Euler_eq}
 \sum_{i\geq 0} (-1)^i \dim H^i(\overline{\Gr}_{G}^{\lambda}, \mathrm{I}^{\lambda} \otimes \mathrm{L})=\sum_{i\geq 0} (-1)^i  \dim H^i(\overline{\Gr}_\scrG^{\bar{\lambda}}, \mathscr{I}^{\bar{\lambda}} \otimes \mathscr{L})  .  \end{equation}
%if $H^0(\overline{\Gr}_{G}^{\lambda}, \mathrm{I}^{\lambda} \otimes \mathrm{L})=0$, then $H^0(\overline{\Gr}_\scrG^{\bar{\lambda}}, \mathscr{I}^{\bar{\lambda}} \otimes \mathscr{L})=0$.

By \cite[9.h.]{PR} and \cite[Theorem 8.2.2]{Ku}, we have $H^i(\overline{\Gr}_{G}^{\lambda}, \mathrm{L})=0$ and $H^i(\overline{\Gr}_\scrG^{\bar{\lambda}}, \mathscr{L})=0 $ for any $i>0$. 
From \cite[Section 2.2]{Zh1} when $G$ is of type $A$ and $D$ and from Section \ref{duality_E_6_sect} when $G$ is of type $E_6$,  we have the following vanishing 
$H^i(\overline{\Gr}_{G}^{\lambda}, \mathrm{I}^{\lambda} \otimes \mathrm{L})=0$ for any $\lambda\in X_*(T)^+$, by considering the exact sequence (\ref{iso_van}) in the untwisted case.  From Lemma \ref{lem_surj} and the long exact sequence (\ref{iso_van}), we can see easily that $H^i(\overline{\Gr}_\scrG^{\bar{\lambda}}, \mathscr{I}^{\bar{\lambda}} \otimes \mathscr{L})=0$ for any $i\geq 1$. 
Hence,   the equality (\ref{Euler_eq}) implies that $H^0(\overline{\Gr}_\scrG^{\bar{\lambda}}, \mathscr{I}^{\bar{\lambda}} \otimes \mathscr{L})=0$ for any $\bar{\lambda}\in X_*(T)^+_\sigma$.
Therefore, the theorem finally follows from the long exact sequence (\ref{iso_van}).
\end{proof}

%\begin{remark}
%\label{zhu_E_6}
%In \cite{Zh1}, Zhu also proved Theorem \ref{thm_zhu} for many cases of affine Schubert varieties when $G$ is type $E$.  In particular, when $G$ is type $E_6$ and for any $\lambda$ which is a non-negative summation of fundamental coweights $\check{\omega}_1,\check{\omega}_2,\check{\omega}_3,\check{\omega}_5,\check{\omega}_6$ following the Bourbaki labelling,  Theorem \ref{thm_zhu} holds.  Thus,  it follows that when $\bar{\lambda}\in X_*(T)_\sigma$ is a non-negative summation of $\overline{\check{\omega}}_1, \overline{\check{\omega}}_2,  \overline{\check{\omega}}_3\in X_*(T)_\sigma^+$, our Lemma \ref{lem_ideal_flat} and Theorem \ref{thm_loc} hold. Note that $\overline{\check{\omega}}_1=\overline{\check{\omega}}_6$ and   $\overline{\check{\omega}}_3= \overline{\check{\omega}}_5$.  
%  To fully prove the case of $E_6$,  by the method in \cite{Zh1}, it suffices to prove Theorem \ref{thm_zhu} when $\lambda=\check{\omega}_4$. Due to the complexity of this method for exceptional groups, this case is still open.
%\end{remark}

As an application of Theorem \ref{thm_loc}, we get a geometric Frenkel-Kac isomorphism for twisted affine  algebras. 
\begin{theorem}
\label{thm_FK}
For any special parahoric group scheme $\mathscr{G}$ induced from a standard automorphism $\sigma$, the restriction map	
\[ H^0(\Gr_{\mathscr{G}}, \mathscr{L} ) \rightarrow H^0(\Gr_{\mathscr{T}}, \mathscr{L}|_{ \Gr_{\mathscr{T}}   })\]
 is an isomoprhism, via the embedding $\Gr_\mathscr{T}\to \Gr_\mathscr{G}$.
\end{theorem}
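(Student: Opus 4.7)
\textbf{Proof proposal for Theorem \ref{thm_FK}.} The plan is to deduce the infinite-dimensional statement on $\Gr_\mathscr{G}$ from the finite-dimensional statements on twisted affine Schubert varieties already handled by Theorem \ref{thm_loc}, by passing to an inverse limit of an isomorphism of inverse systems. The surjectivity of the restriction map was already established in Lemma \ref{lem_surj}, so the main task is injectivity, which is the nontrivial content.

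First, I would rewrite the left-hand side as an inverse limit. Since $\Gr_\mathscr{G}$ is an ind-projective ind-scheme exhausted by the twisted affine Schubert varieties $\overline{\Gr}_\mathscr{G}^{\bar\lambda}$ (Cartan decomposition (\ref{cartan_dec})), and the transition maps $H^0(\overline{\Gr}_\mathscr{G}^{\bar\lambda}, \mathscr{L}) \to H^0(\overline{\Gr}_\mathscr{G}^{\bar\mu}, \mathscr{L})$ for $\bar\mu \preceq \bar\lambda$ are surjective by (\ref{dem_surj}), one has the identification (\ref{limit_1}):
\[
H^0(\Gr_\mathscr{G}, \mathscr{L}) = \varprojlim_{\bar\lambda \in X_*(T)^+_\sigma} H^0(\overline{\Gr}_\mathscr{G}^{\bar\lambda}, \mathscr{L}).
\]

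Second, I would rewrite the right-hand side as a compatible inverse limit. By Theorem \ref{thm_fixed_point2}, the natural morphism $\Gr_\mathscr{T} \to \Gr_\mathscr{G}$ identifies $\Gr_\mathscr{T}$ with the $T^\sigma$-fixed point ind-subscheme. Taking $T^\sigma$-fixed loci commutes with directed unions of closed ind-subschemes, so $\Gr_\mathscr{T} = \bigcup_{\bar\lambda} (\overline{\Gr}_\mathscr{G}^{\bar\lambda})^{T^\sigma}$. Each inclusion $(\overline{\Gr}_\mathscr{G}^{\bar\mu})^{T^\sigma} \hookrightarrow (\overline{\Gr}_\mathscr{G}^{\bar\lambda})^{T^\sigma}$ is a closed embedding of affine schemes (the $T^\sigma$-fixed locus consists of isolated $\mathbb{C}$-points with nilpotent thickenings by Corollary \ref{cor_fix}), so the restriction maps are surjective, exactly as used in (\ref{limit_2}). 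Hence
\[
H^0(\Gr_\mathscr{T}, \mathscr{L}|_{\Gr_\mathscr{T}}) = \varprojlim_{\bar\lambda \in X_*(T)^+_\sigma} H^0\!\left((\overline{\Gr}_\mathscr{G}^{\bar\lambda})^{T^\sigma}, \mathscr{L}|_{(\overline{\Gr}_\mathscr{G}^{\bar\lambda})^{T^\sigma}}\right).
\]

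Finally, the restriction maps $H^0(\Gr_\mathscr{G}, \mathscr{L}) \to H^0(\Gr_\mathscr{T}, \mathscr{L}|_{\Gr_\mathscr{T}})$ assemble into a morphism of inverse systems whose component at each $\bar\lambda$ is the map
\[
H^0(\overline{\Gr}_\mathscr{G}^{\bar\lambda}, \mathscr{L}) \longrightarrow H^0\!\left((\overline{\Gr}_\mathscr{G}^{\bar\lambda})^{T^\sigma}, \mathscr{L}|_{(\overline{\Gr}_\mathscr{G}^{\bar\lambda})^{T^\sigma}}\right),
\]
which is an isomorphism by Theorem \ref{thm_loc}. Passing to the inverse limit yields the claimed isomorphism, and I would not expect any genuine obstacle beyond carefully checking the commutativity of $T^\sigma$-fixed loci with the exhaustion and the compatibility of the two inverse systems under the restriction maps.
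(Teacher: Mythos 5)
Your proposal is correct and is essentially the paper's own argument: the paper likewise identifies $\Gr_\mathscr{T}$ with $(\Gr_\mathscr{G})^{T^\sigma}$ via Theorem \ref{thm_fixed_point2}, expresses both sides as the inverse limits (\ref{limit_1}) and (\ref{limit_2}) over twisted affine Schubert varieties, and concludes by applying Theorem \ref{thm_loc} termwise. You have simply written out in more detail the limit-compatibility steps that the paper leaves implicit.
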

\begin{proof}
By Theorem \ref{thm_fixed_point2},  it suffices to show that the restriction map $r:H^0(\Gr_{\mathscr{G}}, \mathscr{L} ) \rightarrow H^0(\Gr_{\mathscr{T}}, \mathscr{L}|_{( \Gr_{\mathscr{G}   })^{T^\sigma}    })  $ is an isomorphism.  In view of (\ref{limit_1}) and (\ref{limit_2}) and as a consequence of Theorem \ref{thm_loc}, the restriction map $r$ is  an isomorphism.

% when $\mathscr{G}$ is not $E_6^{(2)}$. 
%When $\mathscr{G}$ is of type $E_6^{(2)}$,  the element $\overline{\check{\omega}}_1\in X_*(T)_\sigma^+$ corresponds to the highest root of $H:=(\check{G})^\vee$, see Section \ref{sect_twisted}. Thus, for any $\bar{\lambda}\in X_*(T)_\sigma$, there exists $k\in \mathbb{N}$ such that  $\bar{\lambda} \preceq     k \overline{\check{\omega}}_1$.  It follows that  
%\[  H^0(\Gr_{\mathscr{G}}, \mathscr{L})= \varprojlim_{k} H^0(\overline{\Gr}^{k \overline{\check{\omega}}_1}_{\mathscr{G}}, \mathscr{L}|_{\overline{\Gr}^{k \overline{\check{\omega}}_1 }_{\mathscr{G}}}) ,  \]
%and
%\[H^0((\Gr_{\mathscr{G}})^{T^\sigma} , \mathscr{L}|_{ (\Gr_{\mathscr{G}})^{T^\sigma}  })= \varprojlim_{k} H^0((\overline{\Gr}^{k \overline{\check{\omega}}_1  }_{\scrG})^{T^{\sigma}}, \mathscr{L}|_{(\overline{\Gr}^{k \overline{\check{\omega}}_1}_{\mathscr{G}})^{T^{\sigma}}}).\]
%Now, by Remark \ref{zhu_E_6}, we see that the restriction map $r$ is also an isomorphism when $\mathscr{G}$ is $E_6^{(2)}$.
\end{proof}

\subsection{Application: Smooth locus of twisted affine Schubert varieties}
\label{subsect_6.2}
We now wish to investigate the smooth locus of the Schubert variety $\overline{\Gr}_{\scrG}^{\bar{\lambda}}$.

\begin{theorem}
\label{thm_locus}
Assume that $\mathscr{G}$ is not of type $A_{2\ell}^{(2)}$. For any $\lambda\in X_*(T)_\sigma^+$,  the smooth locus of $\overline{\Gr}_{\scrG}^{\bar{\lambda}}$ is precisely the open Schubert cell $\Gr_{\scrG}^{\bar{\lambda}}$.
\end{theorem}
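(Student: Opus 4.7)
The plan is to combine the duality theorem (Theorem~\ref{thm_loc}) with Zhu's untwisted smooth locus theorem (Theorem~\ref{thm_zhu} together with its $E_6$ extension from Section~\ref{duality_E_6_sect}), transferred to the twisted setting via flatness of the global Schubert family of Section~\ref{sect_2.7}.

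The first step is a reduction to checking singularity at the distinguished $T^\sigma$-fixed points. Since $\Gr_\scrG^{\bar{\lambda}}=L^+\scrG\cdot e_{\bar{\lambda}}$ is homogeneous, it is smooth, and the smooth locus of $\overline{\Gr}_\scrG^{\bar{\lambda}}$ is an open $L^+\scrG$-stable subset, hence a union of Schubert cells. Because $T^\sigma\subset L^+\scrG$, the cell $\Gr_\scrG^{\bar{\mu}}$ lies in the smooth locus if and only if its base point $e_{\bar{\mu}}$ does. Now $T^\sigma$ is a maximal torus of $\scrG$ and $(\overline{\Gr}_\scrG^{\bar{\lambda}})^{T^\sigma}$ is $0$-dimensional (Corollary~\ref{cor_fix}); a standard $T^\sigma$-action argument then shows that $e_{\bar{\mu}}$ is smooth on $\overline{\Gr}_\scrG^{\bar{\lambda}}$ if and only if the local length
\[
\ell(\bar{\mu}):=\dim_{\mathbb{C}}\mathscr{O}_{(\overline{\Gr}_\scrG^{\bar{\lambda}})^{T^\sigma},\,e_{\bar{\mu}}}
\]
equals $1$, since at a smooth $T^\sigma$-fixed point the tangent space contains no $T^\sigma$-invariant vector, forcing the fixed subscheme to be reduced there. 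The theorem therefore reduces to the bound $\ell(\bar{\mu})\geq 2$ whenever $\bar{\mu}\prec\bar{\lambda}$.

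To obtain this bound I would transport from the untwisted setting. By Lemma~\ref{lem_ideal_flat}, $(\overline{\Gr}_{\mathcal{G},C}^\lambda)^{T^\sigma}\to C$ is flat with generic fiber $(\overline{\Gr}_G^\lambda)^{T^\sigma}=(\overline{\Gr}_G^\lambda)^T$ (as in that proof) and special fiber $(\overline{\Gr}_\scrG^{\bar{\lambda}})^{T^\sigma}$. The $C$-sections $s^\nu$ of Section~\ref{sect_2.7} satisfy $s^\nu(p)=e_\nu$ for generic $p$ and $s^\nu(o)=e_{\bar{\nu}}$, so flatness of the resulting finite morphism to $C$ gives the specialization identity
\[
\ell(\bar{\mu}) \;=\; \sum_{\nu:\;\bar{\nu}=\bar{\mu},\;e_\nu\in\overline{\Gr}_G^\lambda}\ell_{\mathrm{gen}}(\nu),\qquad \ell_{\mathrm{gen}}(\nu):=\dim_{\mathbb{C}}\mathscr{O}_{(\overline{\Gr}_G^\lambda)^T,\,e_\nu}.
\]
Applying the same smoothness-to-local-length equivalence in the untwisted setting, combined with Theorem~\ref{thm_zhu} and its $E_6$ extension (which identify the smooth locus of $\overline{\Gr}_G^\lambda$ with $\Gr_G^\lambda$), yields $\ell_{\mathrm{gen}}(\nu)=1$ for $\nu\in W\lambda$ and $\ell_{\mathrm{gen}}(\nu)\geq 2$ otherwise.

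It then remains to verify that for $\scrG$ not of type $A_{2\ell}^{(2)}$ and every $\bar{\mu}\prec\bar{\lambda}$, either (a)~some $\nu\preceq\lambda$ with $\bar{\nu}=\bar{\mu}$ lies outside $W\lambda$, or (b)~two distinct elements of $W\lambda$ map to $\bar{\mu}$ in $X_*(T)_\sigma$. In either case the displayed formula forces $\ell(\bar{\mu})\geq 2$. The main obstacle is precisely this combinatorial check, which I would carry out case-by-case using Table~\ref{Fix_table} and Lemma~\ref{lem_order}; the exclusion of $A_{2\ell}^{(2)}$ enters here, because in that type both (a) and (b) can fail simultaneously for certain $\bar{\mu}\prec\bar{\lambda}$, producing $\ell(\bar{\mu})=1$ and the extra smooth cells classified in Theorem~\ref{thm_A_2n}.
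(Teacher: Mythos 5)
Your reduction to the torus-fixed points and your use of ``smooth $\Rightarrow$ length one'' (Zhu's Lemma 2.3.3) match the paper, and the specialization identity you extract from Lemma \ref{lem_ideal_flat} is plausible (it amounts to a branch decomposition of the finite flat scheme $(\overline{\Gr}_{\mathcal{G},C}^{\lambda})^{T^\sigma}$ after henselizing at $o$, with the sections $s^\nu$ recording which generic fixed point lands on which branch). However, there are two genuine gaps exactly where the content lies. First, you obtain $\ell_{\mathrm{gen}}(\nu)\geq 2$ for non-extremal $\nu$ from the untwisted smooth-locus theorem via a claimed equivalence ``smooth $\Leftrightarrow$ length one''. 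Only ``smooth $\Rightarrow$ length one'' is formal; the converse fails in general for torus actions with isolated fixed points (for instance the cuspidal cubic $y^2=x^3$ with the $\mathbb{G}_m$-action of weights $2$ and $3$ has a reduced, length-one fixed scheme at the singular cusp). What is actually available in the untwisted setting is the duality of Theorem \ref{thm_zhu} (and its $E_6$ extension), which identifies $\ell_{\mathrm{gen}}(\nu)$ with the weight multiplicity $\dim D(1,\lambda)_{-\iota(\nu)}$; the lower bound $\geq 2$ at non-extremal weights is then a representation-theoretic fact that has to be proved, not a consequence of singularity.

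Second, the decisive dichotomy you defer --- that for every dominant $\bar{\mu}\precneqq\bar{\lambda}$ outside type $A_{2\ell}^{(2)}$ either some non-extremal $\nu\preceq\lambda$ lifts $\bar{\mu}$ or two distinct elements of $W\lambda$ do --- is precisely the crux of the theorem, and you leave it as an unexecuted case-by-case check. The paper's proof avoids this combinatorics altogether: by Theorem \ref{them_Dem} and the twisted duality Theorem \ref{thm_loc}, the local length at $e_{\bar{\mu}}$ equals $\dim D(1,\bar{\lambda})_{-\iota(\bar{\mu})}$, and this is bounded below by $2$ by exhibiting two independent vectors of weight $-\iota(\bar{\mu})$: one inside the embedded Demazure module $D(1,\bar{\mu})$ and one inside $V(-\iota(\bar{\lambda}))\subset D(1,\bar{\lambda})$, the latter nonzero because $\bar{\mu}\precneqq\bar{\lambda}$ forces $\iota(\bar{\mu})\precneqq\iota(\bar{\lambda})$ by Lemmas \ref{lem_order}, \ref{lem_iota_coroot} and \ref{lemma_iota} (this is where the exclusion of $A_{2\ell}^{(2)}$ enters, since there $\iota(\gamma_\ell)=\tfrac{1}{2}\beta_\ell$). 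Until you either carry out your combinatorial check or replace it by an argument of this weight-multiplicity type, your proposal does not establish that the lower cells are singular.
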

\begin{proof}
For any $\bar{\mu}\in X_*(T)^+_\sigma$, if $e_{\bar{\mu}}=n^\mu e_0$ is a smooth point in $\overline{\Gr}_{\scrG}^{\bar{\lambda}}$, then by \cite[Lemma 2.3.3]{Zh1} $\dim \mathscr{O}_{ (\overline{\Gr}_{\scrG}^{\bar{\lambda}})^{T^\sigma} , e_{\bar{\mu}} } =1$. 

By Theorem \ref{them_Dem}, we have $ H^0(\overline{\Gr}^{\bar{\lambda}}_\mathscr{G}, \mathscr{L} )^\vee\simeq D(1, \bar{\lambda})$,
where $D(1, \bar{\lambda}) $ is the Demazure module defined in Definition \ref{def_dem}.
Then by Theorem \ref{thm_loc}, we have 
 \[  \dim D(1,\bar{\lambda})_{- \iota(\bar{\mu}) }= {\rm length} \big ( \mathscr{O}_{ (\overline{\Gr}_{\scrG}^{\bar{\lambda}})^{T^\sigma} , e_{\bar{\mu}} }  \big ),   \]
 where $D(1,\bar{\lambda})_{-\iota(\bar{\mu} ) } $ is the $-\iota(\bar{\mu})$-weight space in $D(1,\bar{\lambda})$.  
 We will prove that for any $\bar{\mu}\precneqq \bar{\lambda}$,  $\dim D(1,\bar{\lambda})_{- \iota(\bar{\mu}) }\geq 2$, which would imply that $e_{\bar{\mu}}$ is not a smooth point in $\overline{\Gr}_{\scrG}^{\bar{\lambda}}$.
 From the surjectivity of (\ref{dem_surj}), we have an embedding $D(1, \bar{\mu})\hookrightarrow  D(1,\bar{\lambda})$.  On the the other hand, $V(-\iota(\bar{\lambda}))\hookrightarrow D(1, \bar{\lambda})$, where $V(-\iota(\bar{\lambda}))$ is the irreducible representation of $\fg^\sigma$ of lowest weight $-\iota(\bar{\lambda})$.  In view of Lemma \ref{lem_order},  Lemma \ref{lem_iota_coroot} and Lemma \ref{lemma_iota}, when $G$ is not of type $A_{2\ell}$,  the relation $\bar{\mu}\precneqq \bar{\lambda}$ implies that $\iota(\bar{\mu})\precneqq \iota(\bar{\lambda})$. Hence,  $V(-\iota(\bar{\lambda}))_{ -\iota(\bar{\mu}) }\not=0$.
Furthermore,  as subspaces in $D(1, \bar{\lambda})$, 
  \[  D(1, \bar{\mu})\cap  V(-\iota(\bar{\lambda}))=0.\]
   It follows that $\dim D(1,\bar{\lambda})_{- \iota(\bar{\mu}) }\geq 2$.  This concludes the proof of the theorem. 
\end{proof}

%From the proof of the above theorem, we see that our technique does not fully apply to the Schubert variety $\overline{\Gr}^{\bar{\lambda}}_\mathscr{G}$ when $\mathscr{G}$ is of type $A_{2\ell}^{(2)}$. Also, since the duality theorem is not fully established yet for $E_6^{(2)}$, we  can only get some partial result in this case. 

Now we will deal with the case of $A_{2\ell}^{(2)}$. Recall the group $H=(\check{G})^\tau$ mentioned in Section \ref{sect_twisted}. By the ramified geometric Satake, $(X_*(T)_\sigma,  X_*(T)^+_\sigma, \gamma_j, j\in I_\sigma  )$ can be regarded as the weight lattice, the set of dominant weights, and simple roots of $H$. 
When $(G,m)=(A_{2\ell}, 4)$, $H$ is  $B_\ell$ of adjoint type.  Let $\varpi_1,\varpi_2,\cdots, \varpi_\ell$ be the set of fundamental dominant weights of $H$.
%and let $\check{\gamma}_1,\check{\gamma}_2,\cdots, \check{\gamma}_\ell$ be the set of simple coroots of $H$. 
\begin{theorem}
\label{thm_A_2n}
 Let $\mathscr{G}$ be of type $A_{2\ell}^{(2)}$. For any $\lambda\in X_*(T)_\sigma^+$,  the smooth locus of $\overline{\Gr}_{\scrG}^{\bar{\lambda}}$ is exactly the union of $\Gr_{\scrG}^{\bar{\lambda}}$ and those $\Gr_{\scrG}^{\bar{\mu}}$ such that $ \bar{\lambda}-\bar{\mu}=\sum_{j=i}^\ell  \gamma_j$ and $\mu=\sum_{k=1}^{i-1 } a_k\varpi_k$ with all $a_k\in \mathbb{Z}^{\geq 0}$, for some $1\leq i\leq \ell$.
% \[ \bar{\lambda}-\bar{\mu}=\sum_{j=i}^\ell  \gamma_j \text{, for some $1\leq i\leq \ell$, and } \langle \mu, \check{\gamma}_k\rangle =0, %\quad \text{ for any } i\leq k\leq  \ell.
% \]
\end{theorem}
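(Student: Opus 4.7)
The plan is to follow the framework of Theorem \ref{thm_locus}: combining Theorems \ref{them_Dem} and \ref{thm_loc} with \cite[Lemma 2.3.3]{Zh1} shows that $e_{\bar{\mu}}$ lies in the smooth locus of $\overline{\Gr}_{\scrG}^{\bar{\lambda}}$ exactly when $\dim D(1,\bar{\lambda})_{-\iota(\bar{\mu})} = 1$. The problem thus reduces to characterizing precisely when this weight multiplicity equals $1$. Writing $\bar{\lambda} - \bar{\mu} = \sum_{j=1}^{\ell} n_j \gamma_j$ with $n_j\geq 0$ (Lemma \ref{lem_order}), Lemma \ref{lem_iota_coroot} yields
\[
\iota(\bar{\lambda}) - \iota(\bar{\mu}) = \sum_{j<\ell} n_j \beta_j + \tfrac{n_\ell}{2}\beta_\ell,
\]
so the parity of $n_\ell$ controls whether this difference lies in the root lattice $Q(C_\ell)$ of $\fg^\sigma$.

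When $n_\ell$ is even, the proof of Theorem \ref{thm_locus} carries over verbatim: the $\fg^\sigma$-irreducible $V(-\iota(\bar{\lambda})) \subset D(1,\bar{\lambda})$ contains a nonzero vector of weight $-\iota(\bar{\mu})$ linearly independent from the lowest-weight-type vector of $D(1,\bar{\mu}) \hookrightarrow D(1,\bar{\lambda})$, forcing $\dim D(1,\bar{\lambda})_{-\iota(\bar{\mu})} \geq 2$ and hence $e_{\bar{\mu}}$ is not smooth. So any smooth $T^\sigma$-fixed point outside the open cell must have $n_\ell$ odd; the form $\bar{\lambda}-\bar{\mu} = \gamma_i+\gamma_{i+1}+\cdots+\gamma_\ell$ appearing in the theorem corresponds exactly to $n_\ell=1$ with $n_j=1$ for $i\leq j\leq \ell$ and $n_j=0$ for $j<i$, which is precisely a short positive root of $H=B_\ell$ in the labelling of Section \ref{sect_twisted}.

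The heart of the argument is the $n_\ell$ odd case, where $V(-\iota(\bar{\lambda}))$ no longer contributes (since $\iota(\bar{\lambda})-\iota(\bar{\mu})\notin Q(C_\ell)$) and one must instead control every other irreducible $C_\ell$-summand of $D(1,\bar{\lambda})$ that could supply a vector of weight $-\iota(\bar{\mu})$. The plan is to exploit the twisted Frenkel-Kac/bosonic realization of the basic representation $\mathscr{H}_1(0)$ of $A_{2\ell}^{(2)}$ together with the Heisenberg algebra $\hat{\fh}^\sigma$-action used in Lemma \ref{lem_surj}, and combine this with the fusion/tensor-product presentation of level-one twisted Demazure modules developed by Chari-Ion-Kus \cite{CIK} and Kus-Venkatesh \cite{KV} (available via the identification in Theorem \ref{thm_hyperspecial}). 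This should yield an explicit enumeration of the $C_\ell$-summands of $D(1,\bar{\lambda})$ together with their highest weights, from which one extracts: if $\bar{\lambda}-\bar{\mu}=\gamma_i+\cdots+\gamma_\ell$ and $\mu\in\sum_{k<i}\mathbb{Z}_{\geq 0}\varpi_k$, then no summand besides the $D(1,\bar{\mu})$-contribution reaches weight $-\iota(\bar{\mu})$, giving multiplicity exactly one; in all remaining odd-parity cases at least one further summand intrudes. The support constraint $\mu\in\sum_{k<i}\mathbb{Z}_{\geq 0}\varpi_k$ is precisely what prevents any ``higher'' $C_\ell$-summand from reaching the target weight.

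The main obstacle is this detailed bookkeeping in the odd-parity case. Unlike the non-$A_{2\ell}^{(2)}$ situation, no single large irreducible submodule forces a second weight vector, so the argument must combine the Heisenberg lattice structure with a careful weight-by-weight analysis of which $C_\ell$-summand lowest weights lie above $-\iota(\bar{\mu})$ in the $C_\ell$ dominance order. A Proctor-Mozes ``numbers game'' style analysis, as employed in Section \ref{duality_E_6_sect} for $E_6$, may provide the cleanest tool for this combinatorial step.
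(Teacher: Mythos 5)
Your even-parity step is fine and matches the paper's Step~1, but there are two genuine gaps. First, your opening reduction is not valid: Theorem \ref{thm_loc}, Theorem \ref{them_Dem} and \cite[Lemma 2.3.3]{Zh1} only give the implication ``$e_{\bar{\mu}}$ smooth $\Rightarrow \dim D(1,\bar{\lambda})_{-\iota(\bar{\mu})}=1$'', i.e.\ multiplicity $\geq 2$ forces singularity; they do \emph{not} give the converse. Since in type $A_{2\ell}^{(2)}$ the theorem asserts that certain cells \emph{outside} the open orbit are smooth, a multiplicity-one computation alone cannot finish the proof. The paper instead proves smoothness directly: for a cover relation $\bar{\lambda}-\bar{\mu}=\sum_{j=i}^{\ell}\gamma_j$ with $\mu$ supported on $\varpi_1,\dots,\varpi_{i-1}$, it applies the twisted Levi reduction/transversal slice lemma of Malkin--Ostrik--Vybornov to reduce to the quasi-minuscule Schubert variety for $A_{2(\ell-i+1)}^{(2)}$, and shows that variety is smooth by exhibiting an explicit open $G^\sigma$-equivariant embedding of the $\sqrt{-1}$-eigenspace $\fg_{\mathrm{i}}$ (the standard ${\rm Sp}_{2\ell'}$-module, consisting of nilpotents) via $x\mapsto \exp(({\rm ad}\,x)t^{-1})\cdot e_0$. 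Second, the ``heart'' of your argument in the odd-parity case is only a plan (``should yield an explicit enumeration of the $C_\ell$-summands''), and it silently skips the odd cover relations $\bar{\lambda}-\bar{\mu}=\gamma_\ell$ with $\langle\bar{\mu},\check{\gamma}_\ell\rangle\neq 0$, which must be shown singular; the paper handles these by Levi reduction to $A_2^{(2)}$ and \cite[Prop.~7.1]{HR}, not by a weight-multiplicity count.

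For what it is worth, the paper also avoids your proposed Frenkel--Kac/CIK--KV bookkeeping entirely in the remaining singular cases: when $c_\ell$ is odd and $>1$, or $c_\ell=1$ but $\bar{\mu}\prec\bar{\lambda}$ is not a cover, it uses Stembridge's description of cover relations \cite[Theorem 2.8]{St} to produce an intermediate dominant $\bar{\lambda}_j$ for which the $\gamma_\ell$-coefficient of $\bar{\lambda}_j-\bar{\mu}$ (or of $\bar{\lambda}-\bar{\lambda}_1$) is even, and then concludes via the inclusion $D(1,\bar{\lambda}_j)\subset D(1,\bar{\lambda})$ together with the closedness of the singular locus. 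If you want to salvage your route, you would need both a proof that multiplicity one implies smoothness in this setting (which the cited results do not supply) and the actual decomposition of $D(1,\bar{\lambda})$ into $\fg^\sigma$-summands, neither of which is provided in your sketch.
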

\begin{proof}

We first prove the following result:  for any $\bar{\lambda}, \bar{\mu}\in X_*(T)_\sigma^+$ with $\bar{\mu}\precneqq \bar{\lambda}$, the Schubert cell $\Gr_\mathscr{G}^{\bar{\mu}}$ is contained in the singular locus of $\overline{\Gr}_\mathscr{G}^{\bar{\lambda}} $, except when  $\bar{\mu} \prec \bar{\lambda} $ is a cover relation and the simple short root $\gamma_\ell$  appears in $\bar{\lambda}-\bar{\mu}$.  We will prove this fact by several steps. Let $c_\ell$ be the coefficient of $\gamma_\ell$ in $\bar{\lambda}-\bar{\mu}$.  

Step 1.   Observe that using Lemma \ref{lem_iota_coroot} and by the same proof of Theorem \ref{thm_locus}, when the coefficient $c_\ell$  is even, we have $\dim D(1,\bar{\lambda})_{-\iota(\bar{\mu} )}\geq  2$.  Thus, $e_{\bar{\mu}}$ is singular in $\overline{\Gr}_\mathscr{G}^{\bar{\lambda}} $.

Step 2.   Assume that the coefficient $c_\ell>1$ and $c_\ell$ is odd.  There exists sequence of dominant elements in $X_*(T)^+_\sigma$,
\be 
\label{cover_rel}
 \bar{\mu}=  \bar{\lambda}_k  \prec  \bar{\lambda}_{k-1} \prec \cdots  \prec  \bar{\lambda}_1 \prec  \bar{\lambda}_{0} = \bar{\lambda} , \ee
such that each $\prec$ is a cover relation.  Then, by a theorem of Stembridge \cite[Theorem 2.8]{St},  for each $i$,   $  \bar{\lambda}_{i} - \bar{\lambda}_{i+1}  $ is a positive root of $H$, for any $0\leq  i \leq  k-1$, and the coefficient of $\gamma_\ell$ in each $ \bar{\lambda}_{i} - \bar{\lambda}_{i+1}$ is either 0 or 1.  Let $j$ be the least integer such that the coefficient of $\gamma_\ell$ in $\bar{\lambda}_{j-1}-\bar{\lambda}_j$ is 1. Such $j$  exists, since $c_\ell\not=1$.
  Then the coefficient of $\gamma_\ell$ in $\bar{\lambda}_j- \bar{\mu}$ is even.  By Step 1, we have  $\dim D(1,\bar{\lambda}_j )_{-\iota(\bar{\mu} )}\geq  2$. On the other hand, we have the inclusion $D(1,\bar{\lambda}_j )\subset D(1,\bar{\lambda})$. It follows that  $\dim D(1,\bar{\lambda})_{-\iota(\bar{\mu} )}\geq  2$. Hence, the variety $\overline{\Gr}^{\bar{\lambda}}_\mathscr{G}$ is singular at the point $e_{\bar{\mu}}$.

Step 3. We now assume that the coefficient $c_\ell=1$. By assumption, $\bar{\mu}\prec \bar{\lambda}$ is not a cover relation.   Then, in the sequence of cover relations in (\ref{cover_rel}),  either the coefficient of $\gamma_\ell$ in $  \bar{\lambda}_{k-1} - \bar{\lambda}_{k}  $ is 0, or the coefficient of $\gamma_\ell$ in   $  \bar{\lambda}_{0} - \bar{\lambda}_{1}  $ is 0.  
  If  the coefficient of $\gamma_\ell$ in $  \bar{\lambda}_{k-1} - \bar{\lambda}_{k}  $ is 0,  by Step 1 $\dim D(1,\bar{\lambda}_{k-1} )_{-\iota(\bar{\mu})}\geq 2$, implying that $\dim D(1,\bar{\lambda})_{-\iota(\bar{\mu})}\geq 2$. Hence $e_{\bar{\mu}}$ is singular in $\overline{\Gr}_\mathscr{G}^{\bar{\lambda}}$.
    If  the coefficient of $\gamma_\ell$ in $  \bar{\lambda}_{0} - \bar{\lambda}_{1}  $ is 0,  then by Step 1 again, $e_{\bar{\lambda}_1}$ is a singular point in $\overline{\Gr}_\mathscr{G}^{\bar{\lambda}}$.  Since the singular locus of  $\overline{\Gr}_\mathscr{G}^{\bar{\lambda}}$ is closed,  the point $e_{\bar{\mu}}$ is also singular in $\overline{\Gr}_\mathscr{G}^{\bar{\lambda}}$.

  We now explicitly describe the cover relation $\bar{\mu}\prec \bar{\lambda}$ such that $\gamma_\ell$ appears in $\bar{\lambda}-\bar{\mu}$.  Note that $X_*(T)_\sigma$ is a root lattice of $H\simeq {\rm SO}_{2n+1}$.  In fact, the lattice $X_*(T)_\sigma$ is spanned by $\varpi_1,\varpi_2, \cdots, \varpi_{\ell-1}, 2\varpi_\ell$.  Reading more carefully from \cite[Theorem 2.8]{St},  we can see that,  $\bar{\mu}\prec \bar{\lambda}$ is a cover relation and $\gamma_\ell$ appears in  $ \bar{\lambda}-\bar{\mu}$, if and only if one of the followings holds:
\begin{enumerate}
\item $ \bar{\lambda}-\bar{\mu}=\gamma_\ell$ and $\langle \bar{\mu},  \check{\gamma}_\ell \rangle\neq 0 $, where $\check{\gamma}_\ell$ is the coroot of $\gamma_\ell$.
\item  $ \bar{\lambda}-\bar{\mu}=\sum_{j=i}^\ell  \gamma_j$ and $\mu=\sum_{k=1}^{i-1 } a_k\varpi_k$, for some $1\leq i\leq \ell$.
\end{enumerate}

Let $G$ be the simply-connected simple group of type $A_{2\ell}$ with the standard automorphism $\sigma$ considered in this paper.  Let  $\alpha_1,\alpha_2,\cdots, \alpha_{2\ell}$ is a set of simple roots of $G$. Let $L$ be the Levi subgroup of $G$ generated by the simple roots \[ \alpha_{i}, \alpha_{i+1},\cdots, \alpha_{\ell}, \alpha_{\ell+1}, \cdots,   \alpha_{\ell+i} ,   \alpha_{\ell+i+1}.\]
  Let $M$ be the derived group $[L,L]$ of $L$. Then $M$ is simply-connected simple group of type $A_{ 2(\ell-i+1)}$ and $\sigma$ still acts on $M$ as a standard automorphism. Let $\mathscr{M}$ be the parahoric group scheme ${\rm Res}_{\mathcal{O}/\bar{\mathcal{O} }  } (M_\mathcal{O})^\sigma$, which is of type $A_{2(\ell-i+1)}^{(2)}$.  Let $T'=T\cap M$ be the maximal torus of $M$. We have the inclusion $X_*(T')\to X_*(T)$, and this induces an inclusion $X_*(T')_\sigma\to X_*(T)_\sigma$ and $X_*(T')^+_\sigma\to X_*(T)^+_\sigma$. We write $\bar{\lambda}=\sum_{k=1}^\ell  b_k \varpi_k$ and $\bar{\mu}=\sum_{k=1}^\ell  c_k \varpi_k$  with $b_k\geq 0, c_k\geq 0$ for any $k=1,\cdots, \ell$.  Set
\[ \bar{\lambda}' =\sum_{k=i}^{\ell} b_k \varpi_k, \quad  \bar{\mu}' =\sum_{k=i}^{\ell} c_k \varpi_k  ,\]
Then, $\bar{\lambda}', \bar{\mu}' \in X_*(T')^+_\sigma$. Moreover,  $\bar{\lambda}'- \bar{\mu}' =\bar{\lambda}-\bar{\mu}$.
 We have the following twisted analogue of Levi lemma,
\begin{equation} 
\label{lev_red_lem_MOV}   L^{--}\mathscr{G}\cdot e_{\bar{\mu}} \cap  \overline{\Gr}^{\bar{\lambda}}_{\mathscr{G}}  \simeq  L^{--}\mathscr{M}\cdot e_{\bar{\mu}' } \cap  \overline{\Gr}^{\bar{\lambda}' }_{\mathscr{M}}   , \end{equation}
where $ L^{--}\mathscr{G}$ and $L^{--}\mathscr{M}$ are defined as in (\ref{thm_4.2_eq}). It can be proved by exactly the same argument as in \cite[Corollary 3.4]{MOV}, relying on the following main ingredients:
\begin{enumerate}
\item $L^{--}\mathscr{G}\cdot e_{\bar{0}}$ (resp. $L^{--}\mathscr{M}\cdot e_{\bar{0}}$) is open in $\Gr_{\mathscr{G}}$ (resp. $\Gr_{\mathscr{M}}$ ), cf.\,(\ref{open_em_eq}).
\item Transversal slice lemma for $\Gr_\mathscr{G}$ and $\Gr_{\mathscr{M}}$ (cf.\,\cite[Lemma 2.5]{MOV}, \cite[Section 1.4]{KLu}), i.e. $ L^{--}\mathscr{G}\cdot e_{\bar{\mu}}$ (resp. $ L^{--}\mathscr{M}\cdot e_{\bar{\mu}' }$) is a transverse slice to $\overline{\Gr}^{\bar{\lambda}}_{\mathscr{G}}$ (resp. $ \overline{\Gr}^{\bar{\lambda}' }_{\mathscr{M}}$). 
\item $L^{--}\mathscr{G}\cdot e_{\bar{\mu}} \cap  \overline{\Gr}^{\bar{\lambda}}_{\mathscr{G}}$ and   $L^{--}\mathscr{M}\cdot e_{\bar{\mu}' } \cap  \overline{\Gr}^{\bar{\lambda}' }_{\mathscr{M}}  $ are reduced, irreducible and normal, cf.\,\cite[Lemma 2.6]{MOV}. Here, the main point is that $ \overline{\Gr}^{\bar{\lambda}}_{\mathscr{G}}$ and $ \overline{\Gr}^{\bar{\lambda}' }_{\mathscr{M}}$ are normal, cf.\,\cite[Theorem 0.3]{PR}.
\end{enumerate}
From the isomorphism (\ref{lev_red_lem_MOV}) and the transversal slice lemma, we have the following Levi reduction:   $e_{\bar{\mu}' }$ is singular in $\overline{\Gr}_{\mathscr{M}}^{\bar{\lambda}' }$ if and only if  $e_{\bar{\mu}}$ is singular in $\overline{\Gr}^{\bar{\lambda}}_{\mathscr{G}} $. 

Case (1): $ \bar{\lambda}-\bar{\mu}=\gamma_\ell$ and $\langle \bar{\mu},  \check{\gamma}_\ell \rangle\neq 0 $. In this case, we are reduced to $A_2^{(2)}$.  It is known from  \cite[Proposition 7.1]{HR}  that,  $e_{\bar{\mu}' }$ is singular in $\overline{\Gr}_{\mathscr{M}}^{\bar{\lambda}' }$, as $\bar{\lambda}'$ is not quasi-minuscule.  

Case (2):  $ \bar{\lambda}-\bar{\mu}=\sum_{j=i}^\ell  \gamma_j$ and $\mu=\sum_{k=1}^{i-1 } a_k\varpi_k$ with all $a_k>0$, for some $1\leq i\leq \ell$. In this case, $\bar{\mu}'=0$ and $\bar{\lambda}'=\sum_{j=i}^\ell  \gamma_j$. Thus, we are reduced to consider the singularity of quasi-minuscule affine Schubert variety $\overline{\Gr}_{\mathscr{M}}^{\bar{\lambda}' }$. It was observed by Richarz (using \cite[Prop 4.16]{Arz}) that the variety $\overline{\Gr}_{\mathscr{M}}^{\bar{\lambda}' }$ is smooth. We give a  different argument here. We consider the parahoric group scheme $\mathscr{G}={\rm Res}_{\mathcal{O}/\bar{\mathcal{O} }  } (G_\mathcal{O})^\sigma$, and let   $\overline{\Gr}_{\mathscr{G}}^ {\bar{\lambda}}$     be the quasi-minuscule Schubert variety. Let $\fg_{\mathrm{i}}$ be the eigenspace of $\sigma$ on the Lie algebra $\fg$ of eigenvalue $\mathrm{i}=\sqrt{-1}$. 
The vector space $\fg_{\mathrm{i}}$ consists of two $G^\sigma$-orbits, as $\fg_{\mathrm{i}}$ is actually the standard representation of $G^\sigma={\rm Sp}_{2\ell}$ which is of dimension $2\ell$. Thus, any element in $\fg_{\mathrm{i}}$ is nilpotent.   Then we consider a $G^\sigma$-equivariant embedding $\mathfrak{g}_{\mathrm{i}}\to  {\Gr}_{\mathscr{G}}$ given by $x\mapsto {\rm exp}(( {\rm ad } x)t^{-1}) \cdot e_{0}\in {\Gr}_{\mathscr{G}}$, where $e_0$ is the base point in $\Gr_{\mathscr{G}}$, and we regard ${\rm ad} x$ as an element in $G=G_{ad}$.  Since $e_\ell+ e_{\ell+1}\in \fg_\mathrm{i}$, and $\{ e_\ell+ e_{\ell+1}, f_\ell+f_{\ell+1}, h_{\ell}+h_{\ell+1} \}$ form a $sl_2$-triple, 
one may check easily that $\fg_\mathrm{i}$ is mapped into $\overline{\Gr}_{\mathscr{G}}^{\gamma_\ell}=\overline{\Gr}_{\mathscr{G}}^{\bar{\lambda}}$, in particular $0\mapsto e_0$. By comparing their dimensions and $G^\sigma$-equivariance, one may see this is an open embedding. Thus, $\overline{\Gr}_{\mathscr{G}}^{\bar{\lambda}}$ is smooth. 
%But this case is known to Pappas-Rapoport. Thus, it concludes the proof of our theorem for $A_{2n}^{(2)}$ in general. 

%It is already known that  $e_{\bar{\mu}'}$ is singular in  $ \overline{\Gr}^{\bar{\lambda}' }_{\mathscr{M}}$. Thus, $e_{\bar{}\mu}$ is singular in $ L^{--}\mathscr{G}\cdot e_{\bar{\mu}}$.

\end{proof}
\begin{remark}
\label{absolutely_special_rmk}
When $\mathscr{G}$ is absolutely special of type $A_{2\ell}^{(2)}$, the smooth locus of twisted affine Schubert variety  $\overline{\Gr}_{\scrG}^{\bar{\lambda}} $  is the big cell. This was proved by Richarz in \cite{Ri2}. The idea is to use Levi reduction lemma of Malkin-Ostrik-Vybornov and Stembridge's combinatorial result \cite[Theorem 2.8]{St} to reduce to split rank one cases, in particular the case $A_{2}^{(2)}$ (a proof of this case also appears in \cite[Prop.7.1]{HR}), and the quasi-minuscule Schubert variety (a strong result of this case was proved by Zhu \cite{Zh2} that this variety is not Gorenstein). For the remaining cases, one can use non-triviality of Kazhdan-Lusztig polynomials, cf.\,\cite[Prop.6.4.3]{MOV}. 
\end{remark}
\begin{remark}
\label{Rmk_locus}
%\item When $\mathscr{G}$ is a special but not absolutely special parahoric group scheme of type $A_{2\ell}^{(2)}$, i.e.  when $\sigma$ is the diagram automorphism and $G$ is of type $A_{2\ell}$,   there is a counter-example that $\overline{\Gr}_\scrG^{\bar{\lambda}}$ is smooth but $\overline{\Gr}_\mathscr{G}^{\bar{\lambda}}\not= {\Gr}_\mathscr{G}^{\bar{\lambda}}$, cf.\,\cite[Section 5.1]{HR} or \cite[page 3]{Zh3}. This phenomenon is somewhat related to Remark \ref{rmk_level}.
 One can define the affine Grassmannian $\Gr_{\scrG}$ and twisted affine Schubert varieties $\overline{\Gr}_{\scrG}^{\bar{\lambda}} $ of  the  special parahoric group scheme $\mathscr{G}$  with the base field $\mathrm{k}$ of characteristic $p$.  In \cite[Section 6]{HR}, when $p\not= r$, Haines and Richarz reduced the question of the smooth locus of the $\overline{\Gr}_{\scrG}^{\bar{\lambda}} $ over characteristic p to characteristic zero case.   In fact, by the work of Louren\c{c}o \cite{Lo}, one may construct a global twisted affine Schubert variety over $\mathbb{Z}$ so that the base change to the field $\mathrm{k}$ of characteristic $p$ (including $p=r$) is the given twisted affine Schubert variety defined over $\mathrm{k}$. 
 %By the work of Haines-Richarz-Louren\c{c}o \cite{HLR} the normal affine Schubert varieties in bad prime are also smooth.
% Then the argument of Haines and Richarz can still apply to the case of characteristic $p=r$. Therefore, Theorem \ref{thm_locus} and Theorem \ref{thm_A_2n} also holds for any base field $\mathrm{k}$.
\end{remark}

\section{Duality theorem for $E_6$}
\label{duality_E_6_sect}
%In this section, we follow the Bourbaki labelling of the vertices Dynkin diagram.  

In  \cite[Proposition 2.1.2]{Zh1}, Zhu showed that the duality theorem (cf.\,Theorem \ref{thm_zhu}) holds for any dominant coweight if the theorem holds for all fundamental coweights. For type $E_6$, Zhu was able to prove that the theorem holds for the fundamental coweights $\check{\omega}_1, \check{\omega}_2, \check{\omega}_3, \check{\omega}_5, \check{\omega}_6$ (Bourbaki labelling). However, the most difficult case $\check{\omega}_4$ remained open. In this section, we will prove that the theorem holds for $\check{\omega}_4$. Thereby, we complete the duality theorem for $E_6$ in general.

\subsection{Some reductions}
Let $G$ be a simply-laced simple group of adjoint type.  Let $T$ be a maximal torus in $G$.  Let $\mathrm{L}$ be the level one line bundle on $\Gr_G$. 
 For any $\lambda\in X_*(T)$, a general question is if the following restriction map is an isomorphism:
\begin{equation}
\label{duality_eq}
 H^0(\overline{\Gr}_{G}^{\lambda}, \mathrm{L}) \to  H^0((\overline{\Gr}_G^{\lambda} )^T, \mathrm{L}) . \end{equation}
 Zhu proved that this map is always surjective (cf.\,\cite[Prop.2.1.1]{Zh1}), and he also proved that the map is an isomorphism for type $A, D$ and many cases of $E$. 

\begin{theorem}
\label{thm_E_6}
The map (\ref{duality_eq}) is an isomorphism for any dominant coweight $\lambda$ when $G$ is of type $E_6$. 
\end{theorem}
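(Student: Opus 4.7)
The plan is to invoke Zhu's reduction principle: establishing the duality for all fundamental coweights suffices to deduce it for every dominant coweight, and $\check{\omega}_i$ is already handled by Zhu for $i \in \{1,2,3,5,6\}$. Thus Theorem \ref{thm_E_6} reduces to the single case $\lambda = \check{\omega}_4$. Since the restriction map is always surjective, the task becomes the vanishing $H^0(\overline{\Gr}_G^{\check{\omega}_4}, \mathrm{I}^{\check{\omega}_4} \otimes \mathrm{L}) = 0$; equivalently, using the identification $H^0(\overline{\Gr}_G^{\check{\omega}_4}, \mathrm{L})^\vee \cong D(1, \check{\omega}_4)$ and the fact that the length of the local ring at the $T$-fixed point $e_\mu$ is computed by a single weight space, the aim reduces to the equalities
\[
 \dim D(1, \check{\omega}_4)_{-\mu} \;=\; \mathrm{length}\,\mathcal{O}_{(\overline{\Gr}_G^{\check{\omega}_4})^T,\, e_\mu}
\]
for every $T$-fixed point $e_\mu$ on $\overline{\Gr}_G^{\check{\omega}_4}$.

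First I would dispose of all $\mu$ that lie strictly inside a proper Levi subalgebra by applying the Levi reduction lemma (Lemma \ref{Levi_red_lem}). Every proper Levi subdiagram of $E_6$ is of type $A$ or $D$, and Zhu's duality has been established in those types; Levi reduction therefore pins down $\dim D(1, \check{\omega}_4)_{-\mu}$ whenever $\mu = \check{\omega}_4 - \sum c_i \check{\alpha}_i$ has some $c_i = 0$. What remains are the weights that support every simple coroot. Among these, the genuinely obstructive case --- which I expect to be the hardest step --- is the $\omega_2$-weight space of $V(\omega_4)$ inside $D(1, \check{\omega}_4)$, since removing any single node of the Dynkin diagram destroys the statement because of the branching at the trivalent vertex $\alpha_4$.

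To reach that weight space, the strategy combines two ingredients. On one hand, I would exploit the Heisenberg subalgebra of $\hat{L}(\fg, \id)$ acting on the level-one basic representation: it realizes the relevant part of the Demazure module inside a Fock space tensored with the Weyl-group representation on the zero-weight space of a smaller fundamental module, converting the multiplicity count into a question about $W(E_6)$-isotypic decompositions. On the other hand, although the standard Levi reduction does not apply directly to the $\omega_2$-weight of $V(\omega_4)$, I would adapt the numbers game of Proctor \cite{Pro} and Mozes \cite{Mo}, originally designed for minuscule weights, to this setting: the game's legal moves track how simple reflections propagate through the $\omega_2$-weight vectors, and carefully chosen sequences of moves reduce the computation to a sub-Levi situation already handled inductively.

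The final and most delicate step is reconciling the two sides of the equality at the $\omega_2$-weight. This is the content of Proposition \ref{Travis_prop}, an explicit identity between a $W(E_6)$-representation-theoretic quantity arising from the Heisenberg/Fock-space side and an affine Demazure-type multiplicity. Given the dimensions of the ambient modules, a direct algebraic proof seems out of reach, so the plan is to verify Proposition \ref{Travis_prop} by exhaustive computation in \textsc{SageMath} \cite{Sag21}, as carried out in the appendix. Once that identity is secured, the dimension match holds at every $T$-fixed point, the ideal-sheaf cohomology vanishes, and the duality isomorphism for $\check{\omega}_4$ --- and hence Theorem \ref{thm_E_6} in full --- follows.
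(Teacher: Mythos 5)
Your skeleton matches the paper's: reduce to $\lambda=\check{\omega}_4$ via Zhu's reduction to fundamental coweights, recast the duality as the statement $D^T(1,\check{\omega}_4)=D(1,\check{\omega}_4)$ (equivalently a weight-by-weight dimension match at the $T$-fixed points), use Levi reduction for weights whose difference from $\check{\omega}_4$ avoids a node, and attack the $\omega_2$-weight space of $V(\omega_4,-3)$ by a Proctor--Mozes numbers-game analysis (this is exactly Proposition \ref{prop_levi_extemal}). Note, though, that Lemma \ref{Levi_red_lem} only gives the containment $U(\mathfrak{m}[t])\cdot v_{w(\lambda)}\subset D^T(1,\lambda)$; it does not by itself ``pin down'' a weight multiplicity — one must still show the weight space in question is spanned by such Levi-extremal vectors, which is precisely the nontrivial content of the numbers-game argument.

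The genuine gaps are in the full-support weights $\mu=0$ and $\mu=\check{\omega}_2$, where your plan either says nothing or wires the ingredients together incorrectly. First, the $45$-dimensional zero-weight space $V(\omega_4,-3)_0$ is not touched by your Levi criterion and is never addressed; in the paper it is handled by observing that $\{\alpha_1,\alpha_3,\alpha_4,\alpha_5,\theta,-\beta\}$ is another simple system for $E_6$ in which $\theta,-\beta$ form an $A_2$ subsystem, so $f_\theta f_\beta v_{\omega_4}\in D^T(1,\check{\omega}_4)$ by Lemma \ref{Levi_red_lem}, and by the \textsc{SageMath}-verified Proposition \ref{Travis_prop}, which states that the $W$-span of this single vector is all of $V(\omega_4,-3)_0$ — not, as you describe it, a numerical identity between a Fock-space quantity and a Demazure multiplicity at the $\omega_2$-weight (that weight space needs no Heisenberg input at all). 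Second, the constituent $V(\omega_2,-2)$, whose dominant weights $0$ and $\omega_2$ also have full support, is where the Heisenberg algebra is actually used: Lemma \ref{irred_15_lem} identifies the $15$-dimensional $W$-constituent of $V(\omega_4,-3)_0$ with $\bigl(h[t^{-1}]h'[t^{-2}]-h'[t^{-1}]h[t^{-2}]\bigr)v_0$, i.e.\ $\wedge^2\fh$, via $W(E_6)$ character theory, and then raising operators $h[t]$ and $e_\theta$ push these vectors into $V(\omega_2,-2)_0$ and $V(\omega_2,-2)_{\omega_2}$; this step logically depends on first knowing $V(\omega_4,-3)_0\subset D^T(1,\check{\omega}_4)$, a dependency absent from your outline. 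Finally, the lower constituents $V(0,0)\oplus V(\omega_2,-1)\oplus V(\omega_1+\omega_6,-2)$ also contribute to the weight spaces at $\mu=0,\check{\omega}_2$; the paper disposes of them using the already-established duality for the smaller dominant coweight $\check{\omega}_1+\check{\omega}_6$ (Lemma \ref{dom_wt_red_lem}), an ingredient you omit, so even granting your treatment of $V(\omega_4,-3)$ your dimension count at these fixed points would not close.
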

Since Zhu has proved this theorem for $\check{\omega}_1, \check{\omega}_2, \check{\omega}_3, \check{\omega}_5, \check{\omega}_6$, by \cite[Prop.2.1.3]{Zh1} we will only need to prove that the theorem holds for $\check{\omega}_4$.

For convenience, we assume that $\lambda$ is dominant and $\lambda$ is in the coroot lattice, and we set 
\[  D(1,\lambda):=  H^0(\overline{\Gr}_{G}^{\lambda}, \mathrm{L} ) ^\vee, \quad   D^T(1,\lambda):= H^0((\overline{\Gr}_G^{\lambda} )^T, \mathrm{L}) ^\vee. \]
 Then, we can identify $D^T(1,\lambda)$ as a subspace of the affine Demazure module $D(1,\lambda)$.  
 
 Let $\tilde{L}(\fg)= \fg((t))\oplus \mathbb{C} K\oplus \mathbb{C} d$ be the affine Kac-Moody algebra associated to $\fg$ with center $K$ and scaling element $d$. Let $\mathscr{H}(\Lambda_0)$ denote the integrable highest weight representation of $\tilde{L}(\fg)$ of highest weight $\Lambda_0$. Let $v_0$ be the highest weight vector of  $\mathscr{H}(\Lambda_0)$.  For any $w\in W$, set
 \[v_{w(\lambda)}:= t^{w(\lambda)}\cdot v_0 . \]
  Then $v_{w(\lambda)}$ is an extremal vector in    $\mathscr{H}(\Lambda_0)$, and $\fh$-weight of $v_{w(\lambda)}$ is $-\iota(w(\lambda))$, where the map $\iota: X_*(T)\to \fh^\vee$ is induced by the normalized Killing form, cf.(\ref{iota_map}). By the theory of affine Demazure module (cf.\,\cite[Theorem 8.2.2 (a)]{Ku}), we have  
  \[  D(1,\lambda) = U(\fg[t] )\cdot v_{w(\lambda)} , \quad \text{ for any }w\in W. \]

 Given a Levi subgroup $L$ of $G$ and let $M$ be the derived group $[L,L]$. Let $\mathfrak{m}$ denote the Lie algebra of $M$, and denote the current algebra of $\mathfrak{m}$ by $\mathfrak{m}[t]$. By \cite[Corollary 1.3.8, Lemma 2.2.6]{Zh1}, we have the following Levi reduction lemma.
\begin{lemma}
\label{Levi_red_lem}
If the map (\ref{duality_eq}) is an isomorphism for $M$, then $U(\mathfrak{m}[t] )\cdot v_{w(\lambda)}$ is contained in $D^T(1,\lambda)$, for any element $w$ in the Weyl group $W$ of $G$. 
\end{lemma}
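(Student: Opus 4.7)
The plan is to deduce the inclusion by restricting sections to an $M$-Schubert subvariety of $\overline{\Gr}_G^\lambda$ and exploiting the hypothesized duality on $M$. Let $W_M \subset W$ denote the Weyl group of $M$, let $\lambda^M$ denote the unique $W_M$-dominant element in the $W_M$-orbit of $w(\lambda)$, and write $w(\lambda) = u\lambda^M$ with $u \in W_M$. The inclusion $M \hookrightarrow G$ induces a $T$-equivariant closed embedding $\iota_M : \overline{\Gr}_M^{\lambda^M} \hookrightarrow \overline{\Gr}_G^\lambda$ (carried out inside the appropriate component of $\Gr_L$ containing $t^{w(\lambda)}$), and since $\fg$ and $\fm$ are both simply-laced, the pullback $\iota_M^* \mathrm{L}$ is the level one line bundle $\mathrm{L}_M$ on $\Gr_M$.

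Because $\iota_M$ is $T$-equivariant, it restricts to a closed embedding of $T$-fixed subschemes, and restriction of sections produces a commutative square
\[
\begin{tikzcd}
H^0(\overline{\Gr}_G^\lambda, \mathrm{L}) \arrow[r, "r_G"] \arrow[d] & H^0((\overline{\Gr}_G^\lambda)^T, \mathrm{L}) \arrow[d] \\
H^0(\overline{\Gr}_M^{\lambda^M}, \mathrm{L}_M) \arrow[r, "r_M"', "\sim"] & H^0((\overline{\Gr}_M^{\lambda^M})^T, \mathrm{L}_M)
\end{tikzcd}
\]
in which $r_M$ is an isomorphism by the hypothesis of the lemma. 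Dualizing, and using that the dual of $r_G$ realizes $D^T(1,\lambda)$ as a subspace of $D(1,\lambda)$, one obtains a commutative diagram of $U(\fm[t])$-linear injections
\[
\begin{tikzcd}
D^T_M(1,\lambda^M) \arrow[r, hook] \arrow[d, "\sim"'] & D^T(1,\lambda) \arrow[d, hook] \\
D_M(1,\lambda^M) \arrow[r, hook, "\phi"'] & D(1,\lambda)
\end{tikzcd}
\]
which forces $\phi(D_M(1,\lambda^M)) \subseteq D^T(1,\lambda)$.

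To finish, I identify $\phi(D_M(1,\lambda^M))$ with $U(\fm[t]) \cdot v_{w(\lambda)}$. Under the Borel-Weil identifications on the two sides, $\phi$ sends the $M$-extremal vector $v^M_{u\lambda^M}$ of the $M$-basic representation to $v_{w(\lambda)} \in \mathscr{H}(\Lambda_0)$: each is canonically characterized up to scalar as the weight vector of $\fh$-weight $-\iota(w(\lambda))$ lying above the $T$-fixed point $t^{w(\lambda)}$. Combined with the Demazure-generation property $D_M(1,\lambda^M) = U(\fm[t]) \cdot v^M_{u\lambda^M}$ recalled in Section~6 (any extremal vector generates the $\fg[t]$-type Demazure module), this yields $U(\fm[t]) \cdot v_{w(\lambda)} = \phi(D_M(1,\lambda^M)) \subseteq D^T(1,\lambda)$. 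The main obstacle is precisely this compatibility between the Borel-Weil pictures for $\fg$ and $\fm$: one needs $\iota_M^* \mathrm{L} \simeq \mathrm{L}_M$, which rests on agreement of the normalized Killing forms in simply-laced type, and one must track extremal vectors through the Pl\"ucker embeddings to confirm that $\phi$ carries $v^M_{u\lambda^M}$ to $v_{w(\lambda)}$, rather than to a different extremal line in the same Weyl orbit.
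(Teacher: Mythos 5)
Your argument is correct and is essentially the argument behind the result the paper invokes: the paper does not reprove this lemma but cites \cite[Corollary 1.3.8, Lemma 2.2.6]{Zh1}, and your route --- restricting to the Levi Schubert variety $X=\overline{M(\mathcal{O})\,t^{w(\lambda)}}\subset \overline{\Gr}_G^{\lambda}$, noting that the level one bundle pulls back to the level one bundle for $M$ (Dynkin index one in the simply-laced case), invoking the hypothesized duality for $M$ on $X$, and dualizing the resulting commutative square --- is precisely Zhu's Levi reduction. The only points worth stating a bit more carefully are exactly the ones your parentheticals already gesture at: the component of $\Gr_L$ through $t^{w(\lambda)}$ must be identified with an affine Grassmannian for $M$ compatibly with fixed-point subschemes (here it helps to note that the constant central torus of $L$ acts trivially on $X$, so $X^{T}=X^{T\cap M}$, and only injectivity of the $M$-side restriction is actually needed in the diagram chase, not all the asserted injections), and the extremal vector is pinned down by the fiber of the Pl\"ucker embedding over the $T$-fixed point $t^{w(\lambda)}$ rather than by its finite $\fh$-weight alone, which does determine $\phi(v^M_{u\lambda^M})\doteq v_{w(\lambda)}$ up to a nonzero scalar as you claim.
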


Let $\mu$ be a dominant coweight of $G$ such that $\mu \prec  \lambda$.  The following restriction map is surjective:
\[  H^0(\overline{\Gr}_{G}^{\lambda}, \mathrm{L}  ) \to H^0(\overline{\Gr}_{G}^{\mu}, \mathrm{L} )   .\]
Hence, it induces an inclusion $D(1,\mu)\subset D(1,\lambda)$.  The following lemma is easy. 
\begin{lemma}
\label{dom_wt_red_lem}
If the map (\ref{duality_eq}) is an isomorphism for $\mu$, then $D(1,\mu)$ is contained in $D^T(1, \lambda)$. 
\end{lemma}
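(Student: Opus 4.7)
The plan is a short diagram chase. I start from the $T$-equivariant closed embedding $\overline{\Gr}_G^\mu \hookrightarrow \overline{\Gr}_G^\lambda$, which induces, by functoriality of fixed loci, a compatible closed embedding $(\overline{\Gr}_G^\mu)^T \hookrightarrow (\overline{\Gr}_G^\lambda)^T$. Pulling back $\mathrm{L}$ and taking global sections produces a commutative square of restriction maps among the four spaces $H^0(\overline{\Gr}_G^\lambda, \mathrm{L})$, $H^0(\overline{\Gr}_G^\mu, \mathrm{L})$, $H^0((\overline{\Gr}_G^\lambda)^T, \mathrm{L})$ and $H^0((\overline{\Gr}_G^\mu)^T, \mathrm{L})$.

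Each of the four arrows in this square is surjective. The top horizontal restriction $H^0(\overline{\Gr}_G^\lambda, \mathrm{L}) \to H^0(\overline{\Gr}_G^\mu, \mathrm{L})$ is surjective by the remark in the excerpt preceding the lemma (this is what gives the inclusion $D(1,\mu) \hookrightarrow D(1,\lambda)$ upon dualizing). The bottom horizontal restriction is surjective because $(\overline{\Gr}_G^\mu)^T$ and $(\overline{\Gr}_G^\lambda)^T$ are affine schemes supported on finite sets of $T$-fixed points, so a closed embedding between them is surjective on sections of any line bundle. The two vertical restrictions $H^0(\overline{\Gr}_G^\nu, \mathrm{L}) \to H^0((\overline{\Gr}_G^\nu)^T, \mathrm{L})$ for $\nu \in \{\mu, \lambda\}$ are surjective by Zhu's surjectivity theorem \cite[Prop.~2.1.1]{Zh1}.

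Dualizing this square of surjections yields a commutative square of injections $D^T(1,\mu) \hookrightarrow D^T(1,\lambda)$, $D^T(1,\mu) \hookrightarrow D(1,\mu)$, $D(1,\mu) \hookrightarrow D(1,\lambda)$, and $D^T(1,\lambda) \hookrightarrow D(1,\lambda)$, all sitting naturally inside $D(1,\lambda)$. The assumption that the map (\ref{duality_eq}) is an isomorphism for $\mu$ says precisely that the inclusion $D^T(1,\mu) \hookrightarrow D(1,\mu)$ is an equality. Commutativity of the square then forces the image of $D(1,\mu)$ in $D(1,\lambda)$ to coincide with the image of $D^T(1,\mu)$, which lies inside $D^T(1,\lambda)$. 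This gives the desired containment $D(1,\mu) \subset D^T(1,\lambda)$. There is no real obstacle here; the lemma is a purely formal consequence of functoriality of restriction together with Zhu's surjectivity.
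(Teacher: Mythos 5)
Your argument is correct: the paper omits the proof entirely (it simply declares the lemma ``easy''), and your diagram chase — the commutative square of surjective restriction maps between the two Schubert varieties and their $T$-fixed subschemes, dualized and combined with the hypothesis $D^T(1,\mu)=D(1,\mu)$ — is exactly the intended formal argument, with the same surjectivity inputs (Kumar's restriction theorem, Zhu's \cite[Prop.~2.1.1]{Zh1}, and affineness of the fixed-point subschemes) that the paper itself invokes in the analogous twisted statement in Proposition \ref{prop_surj}. No gaps; the only remark is that surjectivity of the bottom horizontal map is not even needed, since commutativity alone forces the image of $D(1,\mu)=D^T(1,\mu)$ in $D(1,\lambda)$ to land inside $D^T(1,\lambda)$.
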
 

Let $N_G(T)$ denote the normalizer group of $T$ in $G$.  Then $N_G(T)$ acts on $(\overline{\Gr}_G^{\lambda} )^T$ and thus on the vector space $D^T(1,\lambda)$.  
Note that 
\[  (\overline{\Gr}_G^{\lambda} )^T=(\Gr_G)^T\cap \overline{\Gr}_G^{\lambda} \simeq \Gr_T \cap  \overline{\Gr}_G^{\lambda} , \] 
where the second isomorphism follows from Theorem \ref{thm_fixed_point1}.  Hence, the Lie algebra $\fh[t]$ acts on $D^T(1,\lambda)$.  \vspace{0.5cm}

%Let $T_M$ denote the maximal torus $T\cap M$ of $M$. Then there is a natural map $p: X_*(T)\to X_*(T_M)$. 

\begin{notation}
Let $V(\eta, i)$ denote the irreducible representation of $\fg$ of highest weight $\eta$ and of degree $i$ with respect to the action of $d$, that appears in the affine Demazure module $D(1,\lambda)$. For any $\fh$-weight $\nu$, we write $V(\eta, i)_{\nu}$ for the $\nu$-weight space of this representation. 
\end{notation}
We now consider the case when $G$ is of type $E_6$ and $\lambda=\check{\omega}_4$.  
All dominant coweights dominated by $\check{\omega}_4$ are described as follows,
\begin{equation}
\label{order_omega_4}
 0\prec \check{\omega}_2 \prec \check{\omega}_1+\check{\omega}_6\prec  \check{\omega}_4.     \end{equation}
 
 For convenience, we set 
\[ v_{\omega_4}:= v_{w_0(\check{\omega}_4 )}.\]
Then $v_{\omega_4}$ is an extremal weight vector in $\mathscr{H}(\Lambda_0)$ whose $\fh$-weight is $\omega_4$, since $-w_0(\omega_4)=\omega_4$  and $\iota(\check{\omega}_4 )=\omega_4$. The Demazure module $D(1, \check{\omega}_4)$ contains $V(\omega_4, -3)$, and $v_{\omega_4}$ is the highest weight vector of $V(\omega_4, -3)$.  By \cite[Section 3]{Kl}, we have the following decomposition 
\begin{equation}
D(1, \check{\omega}_4)= V(0,0) \oplus V(\omega_2,-1) \oplus V(\omega_1+\omega_6,-2) \oplus V(\omega_2,-2) \oplus V(\omega_4,-3).
\end{equation}

Since Zhu has proved that the map (\ref{duality_eq}) is an isomorphism for $\check{\omega}_1$ and $\check{\omega}_6$, hence also for $\check{\omega}_1+\check{\omega}_6$ (cf.\,\cite[Prop.2.1.3]{Zh1}), by Lemma \ref{dom_wt_red_lem} we have $D(1, \check{\omega}_1+\check{\omega}_6)\subset D^T(1,\check{\omega}_4)$.   Moreover, it is easy to see that
\[ D(1, \check{\omega}_1+\check{\omega}_6) =  V(0,0) \oplus V(\omega_2,-1) \oplus V(\omega_1+\omega_6,-2) .  \]
It follows that, $V(0,0) \oplus V(\omega_2,-1) \oplus V(\omega_1+\omega_6,-2) $ is contained in $D^T(1, \check{\omega}_4)$.  Thus, it suffices to show that $V(\omega_2,-2)$ and   $V(\omega_4,-3) $ are also contained in $D^T(1, \check{\omega}_4)$.  Since $D(1, \check{\omega}_4 )$ is $N_G(T)$-stable it can be further reduced to show that for any dominant weight $\nu$ of $\fg$,  the weight space $V(\omega_2,-2)_\nu$ and $V(\omega_4,-3)_\nu$ are contained in $D^T(1,\check{\omega}_4 )$.  In the remaining part of this section, we will analyze case by case and show that it is indeed true.

\subsection{ The representation $V(\omega_4, -3)$}

The dominant character of $V(\omega_4)$ is $e^{\omega_4}+4e^{\omega_1+\omega_6}+15e^{\omega_2}+45e^{0}$. By Lemma \ref{Levi_red_lem}, we have $v_{w(\check{\omega}_4 )}\in D^T(1, \check{\omega}_4 )$ for any $w\in W$. We conclude that $V(\omega_4,-3)_{\omega_4}\subset D^T(1,\check{\omega}_4 )$.

%Said differently,

%\[\dim D_{\omega_4}(\omega_4-3\delta)=1\]\[dim D_{\omega_4}(\omega_1+\omega_6-3\delta)=4,\]
%\[\dim D_{\omega_4}(\omega_2-3\delta)=15\] and \[dim D_{\omega_4}(0-3\delta)=45.\]

%Our task is to show that $dim D_{\omega_4}^T(\mu)$ is equal to $dim D_{\omega_4}(\mu)$ for all the $\mu$ listed above.

%\subsubsection{The weight space $V_{\omega_4,-3}(\omega_4)$}
%This is a one-dimensional weight space. Let $\sigma_{\check{\omega}_4}$ be the linear form on $\bigoplus_{\gamma \in \pi_1(G)} L(\Lambda+ \iota \omega_{i_\gamma})$ which is not zero on $t^{\check{\omega}_4} v_{\Lambda_0}$ and zero on the other weight spaces. This form restricts to a form on the $T$-fixed subscheme supported at $t^{\check{\omega}_4}$. Thus $D_{\omega_4}^T(\omega_4-3\delta)$ is 1-dimensional.
\subsubsection{ The weight space $V(\omega_4,-3)_{\omega_1+\omega_6}$}

In terms of simple roots, we have $\omega_4= 2 \alpha_1 +3\alpha_2+4\alpha_3+6\alpha_4+4\alpha_5+2\alpha_2$ and $\omega_1+\omega_6= 2 \alpha_1+2 \alpha_2 +3\alpha_3+4\alpha_4+3\alpha_5+2\alpha_6$. Thus the difference $\omega_4-(\omega_1+\omega_6)=\alpha_2+\alpha_3+2\alpha_4+\alpha_5$; in other words, this difference is supported on the Levi of type $D_4$ with simple roots $\alpha_2, \alpha_3, \alpha_4, \alpha_5$. By applying Chevalley generators $f_2, f_3,f_4,f_5$ on the highest weight vector $v_{\omega_4}$, we can get a spanning set of the weight space $V(\omega_4, -3)_{ \omega_1+\omega_6 }$. By Lemma \ref{Levi_red_lem}, we have 
\[V(\omega_4, -3)_{ \omega_1+\omega_6 }\subset  D^T(1, \check{\omega}_4 ). \]

%Thus we have a basis of $V_{\omega_4}(\omega_1+\omega_6)$ of the form $f_{\alpha_{i_1}} \dots f_{\alpha_{i_n}}v_{\omega_4}$ where the $f_{\alpha_{i_j}} \in L$ of type $D_4$. Thus the Lemma \ref{Levipathlemma} applies, so $D_{\omega_4}^T(\omega_1+\omega_6-3\delta)$ is 4-dimensional.
\subsubsection{The weight space $V(\omega_4,-3)_{\omega_2}$} 
%We will again use Lemma \ref{Levipathlemma}; thus we must provide a basis as in the assumption of the Lemma. 
This case requires some brute force. We have the following difference: $\beta=\omega_4-\omega_2=\alpha_1+\alpha_2+2\alpha_3+3\alpha_4+2\alpha_5+\alpha_6$, whose height is 10.

We actually consider all expressions of the form $f_{{i_1}} \dots f_{{i_{10}}} v_{\omega_4}$ such that this vector is of weight $\omega_2$; this provides a spanning set of vectors in $V(\omega_4,-3)_{\omega_2}$ (with many relations!).
% It turns out that the vast majority of such expressions admit the following description: 
%\begin{equation}
%\underbrace{f_{{i_1}} \dots f_{{i_k}}}_{\text{In a Levi}} \underbrace{f_{{i_{k+1}}} \dots f_{{i_{10}}}v_{\omega_4}}_{\text{Extremal weight vect.}}
%\end{equation}

\begin{definition}
	We say a nonzero vector of the form $f_{{i_1}} f_{i_2}\dots f_{{i_{10}}} v_{\omega_4}$ is Levi-extremal, if there exists $ 1\leq k\leq 10$, such that
	\begin{enumerate}
\item
$f_{i_1},f_{i_2},\cdots, f_{i_{k}}$ are contained in a proper Levi subalgebra of $\fg$;
\item  $f_{i_{k+1}}\cdots f_{i_{10}} v_{\omega_4}$ is an extremal vector in $V(\omega_4,-3)$. 
\end{enumerate}
\end{definition}
Observe that any proper Levi subalgebra of $E_6$ is either of type $A$ (or their product),  or of type $D$, and the restriction map (\ref{duality_eq}) for these types is always an isomorphism, which is due to Zhu. Then, to show that $V(\omega_4,-3)_{\omega_2}\subset D^T(1,\check{\omega}_4 )$, by Lemma \ref{Levi_red_lem} it suffices to prove the following proposition.
\begin{proposition}
\label{prop_levi_extemal}
Any nonzero vector $f_{{i_1}} f_{i_2}\dots f_{{i_{10}}} v_{\omega_4}$ of weight $\omega_2$ is Levi-extremal. 
\end{proposition}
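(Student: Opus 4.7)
The plan is as follows. Let $v = f_{i_1} \cdots f_{i_{10}} v_{\omega_4}$ be a nonzero vector of weight $\omega_2$ in $V(\omega_4,-3)$. Since $\omega_4 - \omega_2 = \alpha_1+\alpha_2+2\alpha_3+3\alpha_4+2\alpha_5+\alpha_6$, the multiset $\{i_1,\ldots,i_{10}\}$ is forced to be $\{1,2,3,3,4,4,4,5,5,6\}$; in particular, each of $\alpha_1$, $\alpha_2$, $\alpha_6$ appears exactly once.

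I would track the intermediate weights $\mu_j := \omega_4 - \alpha_{i_{11-j}} - \cdots - \alpha_{i_{10}}$ for $0 \le j \le 10$, so that $\mu_0 = \omega_4$, $\mu_{10} = \omega_2$, and the partial product $f_{i_{11-j}} \cdots f_{i_{10}} v_{\omega_4}$ lies in $V(\omega_4,-3)_{\mu_j}$. Since $\omega_4$ and $\omega_2$ have distinct Killing lengths, $\omega_2 \notin W\cdot\omega_4$, so there is a largest index $j^*$ with $\mu_{j^*} \in W\cdot\omega_4$; set $k := 10 - j^*$. Because extremal weights have multiplicity one in $V(\omega_4)$, the vector $f_{i_{k+1}} \cdots f_{i_{10}} v_{\omega_4}$ is automatically a nonzero scalar multiple of the unique extremal vector of weight $\mu_{j^*}$, giving condition~(2) of the definition.

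The heart of the argument is condition~(1), namely that $\{\alpha_{i_1},\ldots,\alpha_{i_k}\}$ generates a proper sub-Dynkin diagram of $E_6$. If it does not, every simple root appears among the first $k$ positions, so none of the singletons $\alpha_1$, $\alpha_2$, $\alpha_6$ lies in the tail; hence $\omega_4 - \mu_{j^*}$ is a nonnegative integer combination of $\alpha_3$, $\alpha_4$, $\alpha_5$ alone. I would then enumerate the extremal weights $w(\omega_4) \preceq \omega_4$ with $\omega_4 - w(\omega_4)$ supported on the $A_3$ subsystem $\{\alpha_3,\alpha_4,\alpha_5\}$ and, using a Proctor--Mozes style numbers game on the crystal $B(\omega_4)$, either contradict the maximality of $j^*$ outright or replace $k$ by a strictly smaller index for which a different intermediate vector is still extremal and condition~(1) holds. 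I expect this finite but delicate case analysis to be the principal obstacle; its core content is exactly Proposition~\ref{Travis_prop}, verified computationally by Travis Scrimshaw using \textsc{SageMath} (Appendix~\ref{appendix_sect}).
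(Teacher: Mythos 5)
Your setup is sound and in fact runs parallel to the paper's argument: tracking the intermediate weights of the tails, using that a nonzero vector whose weight lies in $W\cdot\omega_4$ is automatically the (multiplicity-one) extremal vector, and observing that if the head contained all six simple roots then the extremal tail weight would have $\omega_4-\mu_{j^*}$ supported on $\{\alpha_3,\alpha_4,\alpha_5\}$. But the proof stops exactly where the real work begins. The problematic branch is the one where the maximal extremal tail supported on $\{3,4,5\}$ is $f_4f_5f_3f_4v_{\omega_4}$ and the next operator is $f_2$ (the only one of $f_1,f_2,f_6$ that does not immediately extend the extremal tail, because $\langle \omega_4-\alpha_3-2\alpha_4-\alpha_5,\check{\alpha}_2\rangle=2$). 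One must then argue that the following operator is forced (up to commuting $f_1,f_3,f_5,f_6$ past $f_2$, which returns you to earlier cases or kills the vector) to be $f_4$, and then establish that $f_4f_2f_4f_5f_3f_4v_{\omega_4}$ is again extremal: the paper does this via $f_4f_2-f_2f_4=f_{\alpha_2+\alpha_4}$ together with $f_4\cdot f_4f_5f_3f_4v_{\omega_4}=0$, so the vector equals $f_{\alpha_2+\alpha_4}f_4f_5f_3f_4v_{\omega_4}$, an extremal vector, after which the remaining operators $f_1,f_3,f_5,f_6$ lie in a proper Levi. None of this appears in your proposal; "enumerate and either contradict maximality of $j^*$ or shrink $k$" is precisely the delicate step, and note that shrinking $k$ (lengthening the tail) is not available once $j^*$ is maximal, so the only viable outcome is the contradiction, which requires the commutator computation above.

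A second, concrete error: you assert that the core content of this missing case analysis "is exactly Proposition \ref{Travis_prop}." It is not. Proposition \ref{Travis_prop} states that the $W$-span of $f_\theta f_\beta v_{\omega_4}$ is the whole zero-weight space $V(\omega_4,-3)_0$; in the paper it is used only in Section \ref{sect_5.2.3} to handle the weight space $V(\omega_4,-3)_0$ (and, downstream, $V(\omega_2,-2)$), and it plays no role whatsoever in the proof of Proposition \ref{prop_levi_extemal}, which is carried out by hand via the numbers-game poset and the commutator argument. So as written the proposal both leaves the essential step unproved and points to the wrong external result to fill it.
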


%\begin{lemma}
%Let $f_{{i_7}}f_{{i_8}}f_{{i_9}}f_{{i_{10}}}v_{\omega_4}$ be a nonzero vector, such that $\beta-(\alpha_{i_7}+\alpha_{i_8}+\alpha_{i_9}+\alpha_{i_{10}})$ is a summation of positive roots. We have the following two facts:

%\begin{enumerate}
 %   \item All such vectors are extremal,
 %   \item 
%\end{enumerate}
%\end{lemma}

%\begin{remark}
%In fact we will show that even nontrivial vectors $f_{{i_1}} \dots f_{{i_6}}f_{4} f_{5} f_{3} f_{4} v_{\omega_4}$ are Levi/extremal, but this case must be analyzed separately.
%\end{remark}

\begin{proof}

Before we prove this proposition,  we first describe a poset of weights $\mu$ with partial order $<$,  such that $\mu< \omega_4$,  and $\mu-\omega_2$ is a sum of positive roots. For any two weights $\mu,\mu'$ in the poset, we write $\mu\xrightarrow{s_i}\mu'$ if $\mu'=s_i(\mu)$ and $\langle \mu, \check{\alpha}_i  \rangle\geq 1$. The partial order $<$ of this poset is generated by these simple relations. The weight $\mu$ will be labelled by $\textcolor{red}{*}$ once the support of $\mu-\omega_2$ (as a linear combination of simple roots) is contained in a proper sub-diagram of the Dynkin diagram of  $E_6$. We won't describe those weights below the $\textcolor{red}{*}$-labelled weights.  In the following two figures, we describe this poset by representing weights respectively in  terms of the coordinates with respect to fundamental weights and simple roots. We have the following rules.
\begin{enumerate}
\item In the first figure, if $\mu\xrightarrow{s_i}\mu'$, then the number at vertex $i$ decreases by 2, and the adjacent vertices increase by 1;
\item In the second figure, if $\mu\xrightarrow{s_i}\mu'$, then the number at vertex $i$ decrease by 1, and no changes elsewhere. 
\end{enumerate}

\begin{figure}[h!]
	\centering
	\begin{tikzcd}[scale cd=0.75]
		& & \begin{smallmatrix} 0 & 0 & 1 & 0 & 0\\ & & 0 & & \\ \end{smallmatrix} \arrow[d,"s_4"] &  & \\ 
		& & \begin{smallmatrix} 0 & 1 & -1 & 1 & 0 \\ & & 1 & & \\ \end{smallmatrix} \arrow[dl, "s_3"] \arrow[d,"s_2"] \arrow[dr, "s_5"] & & \\
		& \begin{smallmatrix} 1 & -1 & 0 & 1 & 0 \\  & & 1 & & \\ \end{smallmatrix} \arrow[dl, "s_1"] \arrow[d, "s_2"]\arrow[dr,"s_5"] & \begin{smallmatrix} 0 & 1 & 0 & 1 & 0 \\ & & -1 &\textcolor{red}{*} &  \end{smallmatrix}  & \begin{smallmatrix} 0 & 1 & 0 & -1 & 1 \\ & & 1 & & \\ \end{smallmatrix} \arrow[dl, "s_3"] \arrow[d,"s_2"] \arrow[dr,"s_6"] & \\		
		\begin{smallmatrix} -1 & 0 & 0 & 1 & 0 \\ & & 1 &\textcolor{red}{*} &  \end{smallmatrix} & \begin{smallmatrix} 1 & -1 & 1 & 1 & 0 \\ & & -1 &\textcolor{red}{*} &  \end{smallmatrix} & \begin{smallmatrix} 1 & -1 & 1 & -1 & 1 \\ & & 1 & & \\ \end{smallmatrix}   \arrow[d, "s_4"] \arrow[drr, "s_6"] \arrow[dl, "s_2"]
		\arrow[dll, "s_1"]
		& \begin{smallmatrix} 0 & 1 & 1 & -1 & 1 \\ & & -1 & \textcolor{red}{*} & \\ \end{smallmatrix}
		& \begin{smallmatrix} 0 & 1 & 0 & 0 & -1 \\ & & 1 & \textcolor{red}{*} & \end{smallmatrix} \\
		\begin{smallmatrix} -1 & 0 & 1 & -1 & 1 \\ & & 1 & \textcolor{red}{*} & \\ \end{smallmatrix}  & \begin{smallmatrix} 1 & -1 & 2 & -1 & 1 \\ & & -1 &\textcolor{red}{*} & \end{smallmatrix} & \begin{smallmatrix} 1 & 0 & -1 & 0 & 1 \\ & & 2 & & \end{smallmatrix} \arrow[dl, "s_1"] \arrow[dr, "s_6"] & & \begin{smallmatrix} 1 & -1 & 1 & 0 & -1 \\ & & 1 & \textcolor{red}{*} &  \end{smallmatrix} \\
		& \begin{smallmatrix} -1 & 1 & -1 & 0 & 1 \\ & & 2 &\textcolor{red}{*} &  \end{smallmatrix} & & \begin{smallmatrix} 1 & 0 & -1 & 1 & -1 \\ & & 2 & \textcolor{red}{\textcolor{red}{*}} &  \end{smallmatrix}  & 
	\end{tikzcd}
	\caption{  In this diagram, the weight $\mu$ is represented by the coordinates of $\mu$ with respect to fundamental weights. When 1 occurs at vertex $i$, it indicates that we can apply reflection $s_i$. }

%\centering
	\begin{tikzcd}[scale cd=0.75]
		& & \begin{smallmatrix} 1 & 2 & 3 & 2 & 1\\ & & 1 & & \\ \end{smallmatrix} \arrow[d,"s_4"] &  & \\ 
		& & \begin{smallmatrix} 1 & 2 & 2 & 2 & 1 \\ & & 1 & & \\ \end{smallmatrix} \arrow[dl, "s_3"] \arrow[d,"s_2"] \arrow[dr, "s_5"] & & \\
		& \begin{smallmatrix} 1 & 1 & 2 & 2 & 1 \\  & & 1 & & \\ \end{smallmatrix} \arrow[dl, "s_1"] \arrow[d, "s_2"]\arrow[dr,"s_5"] & \begin{smallmatrix} 1 & 2 & 2 & 2 & 1 \\ & & 0 &\textcolor{red}{*} &  \end{smallmatrix}  & \begin{smallmatrix} 1 & 2 & 2 & 1 & 1 \\ & & 1 & & \\ \end{smallmatrix} \arrow[dl, "s_3"] \arrow[d,"s_2"] \arrow[dr,"s_6"] & \\		
		\begin{smallmatrix} 0 & 1 & 2 & 2 & 1\\ & & 1 &\textcolor{red}{*} &  \end{smallmatrix} & \begin{smallmatrix} 1 & 1 & 2 & 2 & 1 \\ & & 0 &\textcolor{red}{*} &  \end{smallmatrix} & \begin{smallmatrix} 1 & 1 & 2 & 1 & 1 \\ & & 1 & & \\ \end{smallmatrix}   \arrow[d, "s_4"] \arrow[drr, "s_6"] \arrow[dl, "s_2"]
		\arrow[dll, "s_1"]
		& \begin{smallmatrix} 1 & 2& 2 & 1 & 1 \\ & & 0 & \textcolor{red}{*} & \\ \end{smallmatrix}
		& \begin{smallmatrix} 1 & 2 & 2 & 1 & 0 \\ & & 1 & \textcolor{red}{*} & \end{smallmatrix} \\
		\begin{smallmatrix} 0 & 1 & 2 & 1 & 1 \\ & & 1 & \textcolor{red}{*} & \\ \end{smallmatrix}  & \begin{smallmatrix} 1 & 1 & 2 & 1 & 1 \\ & & 0 &\textcolor{red}{*} & \end{smallmatrix} & \begin{smallmatrix} 1 & 1 & 1 & 1 & 1 \\ & & 1 & & \end{smallmatrix} \arrow[dl, "s_1"] \arrow[dr, "s_6"] & & \begin{smallmatrix} 1 & 1 & 2 & 1 & 0 \\ & & 1 & \textcolor{red}{*} &  \end{smallmatrix} \\
		& \begin{smallmatrix} 0 & 1 & 1 & 1 & 1 \\ & & 1 &\textcolor{red}{*} &  \end{smallmatrix} & & \begin{smallmatrix} 1 & 1 & 1 & 1 & 0 \\ & & 1 & \textcolor{red}{*} &  \end{smallmatrix}  & 
	\end{tikzcd}
	\caption{This is the same diagram as in Figure 1. The difference is that, the weight $\mu$ is represented by the coordinates of $\mu-\omega_2$  with respect to simple roots.  This diagram tells when the support $\mu-\omega_2$ is contained in a proper subdiagram.}
\end{figure}

We first show that any nonzero vector $f_{{i_7}}f_{{i_8}}f_{{i_9}}f_{{i_{10}}}v_{\omega_4}$ of weight $\mu$ is an extremal weight vector. This can be easily checked from the first figure, since no integer $\geq 2$ appears as a coefficient of $\omega_i$ until the 4th step at least. As a result, $f_{{i_7}}f_{{i_8}}f_{{i_9}}f_{{i_{10}}}v_{\omega_4}=s_{{i_7}}s_{{i_8}}s_{{i_9}}s_{{i_{10}}}v_{\omega_4}$ for any nonzero vector $f_{{i_7}}f_{{i_8}}f_{{i_9}}f_{{i_{10}}}v_{\omega_4}$.
 Now it is clear from the second figure that,  any nonzero vector $f_{i_1}\cdots  f_{{i_7}}f_{{i_8}}f_{{i_9}}f_{{i_{10}}}v_{\omega_4}$ is Levi-extremal, if 
\[ f_{i_6} f_{{i_7}}f_{{i_8}}f_{{i_9}}f_{i_{10}} \neq  f_4f_3f_5f_4=f_4f_5f_3f_4.\]

Thus, the ``worst possible" case, from the perspective of producing Levi-extremal vectors, has the first four lowering operators as follows: $f_{4}f_{5}f_{3}f_{4}  v_{\omega_4}=f_4f_5f_3f_4 v_{\omega_4}$. All other nontrivial applications of 4 lowering operators will result in a Levi-extremal vector or the $0$ vector. We further observe that both $f_{1}f_4 f_{5}f_{3}f_{4}v_{\omega_4}$ and $f_{6}f_{4}f_{5}f_{3}f_{4}v_{\omega_4}$ result in Levi-extremal vectors.

%\begin{lemma}
%All nonzero vectors $f_{{i_1}} \dots f_{{i_6}}f_{4} f_{5} f_{3} f_{4} v_{\omega_4}$ of weight $\omega_2$ are also Levi/extremal.
%\end{lemma}

%\begin{proof}
The only remaining vector to consider is $f_{2}f_{4}f_{5}f_{3}f_{4}v_{\omega_4}$. Note that this is the first case where $s_{2}s_{4}s_{5}s_{3}s_{4}v_{\omega_4} \neq f_{1}f_{4}f_{5}f_{3}f_{4}v_{\omega_4}$, since $\langle \check{\alpha}_2, \omega_4-2\alpha_4-\alpha_3-\alpha_5 \rangle =2$.   Thus every element $f_{{i_1}} \dots f_{{i_{10}}} v_{\omega_4}$ is Levi-extremal,  except those of which the first five lowering operators are precisely $f_{2}f_{4}f_{5}f_{3}f_{4} $ (up to the order of $f_3$ and $f_5$); what remains are the lowering operators $f_1,f_3,f_4,f_5,f_6$. By the same logic, the next lowering operator must be $f_4$, since any other lowering operator would commute with $f_2$, returning us to the Levi-extremal vectors situation. Thus we are left with is considering the element $f_{4}f_{2}f_{4}f_{5}f_{3}f_{4}v_{\omega_4}$.

Note that $f_{\alpha_2+\alpha_4}:=f_{\alpha_4}f_{\alpha_2}-f_{\alpha_2}f_{\alpha_4}$ is a root vectorof root $\alpha_2+\alpha_4$.  Since $f_4f_{4}f_{5}f_{3}f_{4}v_{\omega_4}=0$, we have
\[f_{4}f_{2}f_{4}f_{5}f_{3}f_{4}v_{\omega_4}=f_{\alpha_2+\alpha_4}f_{4}f_{5}f_{3}f_{4}v_{\omega_4}  .\]
This is the extremal vector $s_{\alpha_2+\alpha_4}s_{4}s_{5}s_{3}s_{4}v_{\omega_4}  $, since the weight of the extremal weight vector $f_{\alpha_4}f_{\alpha_5}f_{\alpha_3}f_{\alpha_4}v_{\omega_4}$ is $\omega_1+2\omega_2-\omega_4+\omega_6$, and the pairing $\langle  \omega_1+2 \omega_2-\omega_4+\omega_6,\check{\alpha}_2+\check{\alpha}_4 \rangle =1$. Our remaining lowering operators are $f_1,f_3,f_5,f_6$. They are contained in a proper Levi subalgebra.   Thus, any nonzero vector of the form
\[ f_{i_1}f_{i_2}f_{i_3}f_{i_4}  f_{4}f_{2}f_{4}f_{5}f_{3}f_{4}v_{\omega_4}  \]
is always Levi-extremal. This concludes the proof.

%The first expression is 0. The second expression is Levi/extremal, since the weight of $f_{\alpha_4}f_{\alpha_5}f_{\alpha_3}f_{\alpha_4}v_{\omega_4}$ is $\omega_1+2\omega_2-\omega_4+\omega_6$, and the pairing $\langle \check{\alpha}_2+\check{\alpha}_4, \omega_1+2 \omega_2-\omega_4+\omega_6 \rangle =1$, so
%\begin{equation}
%s_{\alpha_2+\alpha_4}f_{\alpha_4}f_{\alpha_5}f_{\alpha_3}f_{\alpha_4}v_{\omega_4}=[f_{\alpha_2}, f_{\alpha_4}]f_{\alpha_4}f_{\alpha_5}f_{\alpha_3}f_{\alpha_4}v_{\omega_4}. 
%\end{equation}
\end{proof}

%What we have shown is that every single nonzero vector of the form $f_{i_1} \dots f_{i_{10}}v_{\omega_4}$ is Levi/extremal, and since these vectors span $V(\omega_4,-3)_{\omega_2}$,  by Lemma \ref{Levi_red_lem} we can conclude that 
%\[  V(\omega_4,-3)_{\omega_2} \subset D^T(1,\check{\omega}_4 ) .\]

\begin{remark}
The construction of the above figures comes from the ``Numbers game", due to Proctor (unpublished) and explored in Mozes \cite{Mo} and Proctor \cite{Pro}. When the representation is minuscule, the action of the simple reflections is described by the algorithm above; adding one to adjacent nodes and subtracting two from the given node (precisely the action of subtracting a simple root in the simply-laced types). While the representation $V(\omega_4)$ is not minuscule or quasi-minuscule, these techniques still proved useful in this case and could be useful for the study of the remaining fundamental representations of $E_7$ and $E_8$, where the restriction isomorphism is not yet known.
\end{remark}

\subsubsection{The weight space $V(\omega_4,-3)_0$} 
\label{sect_5.2.3}
%The $0$ weight space of $V_{\omega_4}$ decomposes as a direct sum of two irreducible representations for the Weyl group, one 15 dimensional and the other 30 dimensional, see \cite{Ree}.

Let $\theta$ be the highest root of $\fg$, and let $\beta= \omega_4-\omega_2=\alpha_1+\alpha_2+2\alpha_3+3\alpha_4+2\alpha_5+ \alpha_6$. $\beta$ is also a positive root of $\fg$.   We consider the element $f_\theta  f_{\beta} v_{\omega_4}$. 
%, and let $V'(\omega_4,-3)_0$ be the $W$-span of $f_\theta  f_{\beta} v_{\omega_4}$.  
The following proposition is verified by Travis Scrimshaw using \textsc{SageMath}. See Appendix \ref{appendix_sect}. 
\begin{proposition}
\label{Travis_prop}
The $W$-span of $f_\theta  f_{\beta} v_{\omega_4}$ is the weight zero space  $V(\omega_4,-3)_0$. 
\end{proposition}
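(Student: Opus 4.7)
The plan is to reduce the claim to verifying that $f_\theta f_\beta v_{\omega_4}$ is a cyclic vector for the natural action of the Weyl group $W$ of $E_6$ on the zero weight space $V(\omega_4,-3)_0$, which has dimension $45$. First I would confirm that $f_\theta f_\beta v_{\omega_4}$ indeed lies in the zero weight space: since $\beta = \omega_4 - \omega_2$ is a positive root and $\theta = \omega_2$ is the highest root of $E_6$, the vector has weight $\omega_4 - \beta - \theta = 0$. The $W$-action I would use is the standard one coming from the Tits extension $N_G(T) \to W$, which restricts to each zero weight space of any $\fg$-representation.

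Next, I would recall (or compute directly from Kostant's weight multiplicity formula) the decomposition of $V(\omega_4)_0$ as a $W(E_6)$-module. The strategy is then to show that the projection of $f_\theta f_\beta v_{\omega_4}$ onto each irreducible $W$-summand is non-zero; by Maschke's theorem together with the cyclic generator criterion, this suffices to give $\mathbb{C}[W] \cdot f_\theta f_\beta v_{\omega_4} = V(\omega_4,-3)_0$. This decouples the $45$-dimensional target into independent non-vanishing checks, one per isotypic component.

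Finally, each such non-vanishing check is a finite computation inside the basic representation $\mathscr{H}(\Lambda_0)$: realize $V(\omega_4,-3) \subset \mathscr{H}(\Lambda_0)$ explicitly via the Chevalley generators acting on $v_{\omega_4}$, evaluate $f_\theta f_\beta v_{\omega_4}$, and compute the $W$-orbit (or, equivalently, the isotypic projections). The main obstacle is that there is no apparent conceptual reason why this particular vector $f_\theta f_\beta v_{\omega_4}$, as opposed to some other element in $V(\omega_4,-3)_0$, should generate the full zero weight space; the choice is dictated by the earlier Levi-reduction analysis of $V(\omega_4,-3)_{\omega_2}$, but its cyclicity for $W$ is not predicted by that analysis. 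Consequently, I expect the verification to require an explicit computer algebra check — exactly the \textsc{SageMath} computation carried out in Appendix \ref{appendix_sect}.
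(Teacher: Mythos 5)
Your proposal is correct in substance and, as you yourself acknowledge, it bottoms out in the same place as the paper: there is no conceptual proof of the cyclicity of $f_\theta f_{\beta} v_{\omega_4}$ under $W$ offered, and the claim is settled by the explicit \textsc{SageMath} verification of Appendix \ref{appendix_sect}. The only real difference is how the finite check is organized. You would decompose $V(\omega_4,-3)_0$ into irreducible $W$-constituents and verify that each isotypic projection of the vector is nonzero; be aware that this criterion is equivalent to cyclicity only because the decomposition is multiplicity-free --- by \cite{AH} one has $V(\omega_4)_0\simeq \Pi_{15}\oplus\Pi_{30}$ as $W$-modules (a fact the paper invokes only later, around Lemma \ref{irred_15_lem}) --- since for a constituent of multiplicity $>1$ a single vector with nonzero projection need not generate the isotypic piece; also, Kostant's formula gives the dimension $45$ but not the $W$-module structure, for which one needs the tables cited in the paper. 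The appendix instead bypasses the $W$-decomposition entirely: it realizes $V(\omega_4)$ as the submodule of $V(\omega_1)^{\otimes 3}$ generated by an explicit highest weight vector (rather than inside $\mathscr{H}(\Lambda_0)$), builds a crystal-indexed weight basis, generates the orbit of $f_\theta f_{\beta} v_{\omega_4}$ under the operators $s_i=\exp(F_i)\exp(-E_i)\exp(F_i)$, and checks that the matrix of orbit vectors has rank $45=\dim V(\omega_4)_0$. Both are legitimate finite verifications; the paper's rank computation is the more direct to implement, while yours would additionally require explicit isotypic projectors.
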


One can check that $\alpha_1, \alpha_3,\alpha_4,\alpha_5, \theta, -\beta$ form a system of simple positive roots of $E_6$, and $\theta, -\beta$ form a subsystem of type $A_2$.  Then by Lemma \ref{Levi_red_lem}, $f_{\theta}f_\beta  v_{\omega_4} \in D^T(1,\check{\omega}_4)$. By $W$-invariance on $D^T(1,\check{\omega}_4)$ and the above proposition, we can conclude that 
\begin{equation}
\label{0_weight_incl}
V(\omega_4,-3)_0\subset  D^T(1,\check{\omega}_4)  .\end{equation}

 \subsection{The representation $V(\omega_2, -2)$}

First we consider the ``0-string" of the full basic representation $\mathscr{H}(\Lambda_0)$; this is the direct sum $\oplus_{n\geq 0}\mathscr{H}(\Lambda_0)_{-n\delta}$. The Weyl group $W$ acts on each of these weight spaces, so $\mathscr{H}(\Lambda_0) _{-n\delta}$ is a direct sum of irreducible representations of the Weyl group $W$. We first describe  $\mathscr{H}(\Lambda_0)_{-n\delta}$ for $n=0,1,2,3$, as representations of $W$.  By \cite[Proposition 12.13]{Ka}, we have the following decompositions:
\[ \mathscr{H}(\Lambda_0)_{0}= \mathbb{C} v_0\simeq  \mathbb{C} , \]
\[  \mathscr{H}(\Lambda_0)_{-\delta}= \fh t^{-1}\cdot v_0 \simeq   \fh \]
\begin{equation}  
\label{dec_eq_1}
\mathscr{H}(\Lambda_0)_{-2\delta}= \fh t^{-1} \cdot \fh t^{-1}\cdot v_0\oplus  \fh t^{-2}\cdot v_0\simeq   S^2\fh \oplus \fh \end{equation}
\begin{equation} 
\label{dec_eq_2}
 \mathscr{H}(\Lambda_0)_{-3\delta}=\fh t^{-1} \cdot  \fh t^{-1} \cdot \fh t^{-1}\cdot v_0\oplus  \fh t^{-1} \cdot  \fh t^{-2}\cdot v_0  \oplus   \fh t^{-3}\cdot v_0  \simeq  S^3\fh  \oplus  T^2 \fh \oplus   \fh. \end{equation}

The weight space  $V(\omega_4,-3)_0$ is a subrepresentation of $\mathscr{H}(\Lambda_0)_{-3\delta}$ with respect to the action of $W$.   It is known that $V(\omega_4,-3)_0$ is a direct sum of two irreducible W-representations, one 15 dimensional and the other is 30 dimensional, cf.\,\cite[Table 5,p.24]{AH}.  We will denote these subrepresentations by $\Pi_{15}$ and $\Pi_{30}$ indexed by their dimensions.

\begin{lemma}
\label{irred_15_lem}
The subspace $\Pi_{15}$ is the exactly the span of vectors 
\[ (h[t^{-1}]h'[t^{-2}]- h'[t^{-1}]  h[t^{-2}] )\cdot v_0 , \quad \text{ for all } h, h'\in \fh. \] 
\end{lemma}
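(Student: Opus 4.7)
The plan is to identify $\Pi_{15}$ as the image of a canonical $W$-equivariant embedding $\wedge^2\fh \hookrightarrow \mathscr{H}(\Lambda_0)_{-3\delta}$. First I would define this embedding by $h\wedge h' \mapsto w_{h,h'}$, where $w_{h,h'} := (h[t^{-1}]h'[t^{-2}] - h'[t^{-1}]h[t^{-2}])\cdot v_0$. Because the Heisenberg bracket $[h[t^{-m}], h'[t^{-n}]]$ vanishes whenever $m+n\neq 0$, the vectors $\{h_a[t^{-1}]h_b[t^{-2}]\cdot v_0\}_{a,b}$ for a basis $\{h_a\}$ of $\fh$ are linearly independent and span the middle summand $T^2\fh \simeq \fh\otimes \fh$ in the decomposition (\ref{dec_eq_2}). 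Hence the map above is injective and $W$-equivariant, and its image $U := \mathrm{span}\{w_{h,h'}\mid h,h'\in \fh\}$ is a $W$-stable subspace of dimension $\binom{6}{2}=15$.

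Next I would use the standard fact that $\wedge^2\fh$ is an irreducible representation of $W(E_6)$ of dimension $15$ (one of the three distinct $15$-dimensional irreducibles in the character table of $W(E_6)$), so $U$ is $W$-irreducible. The lemma then reduces to showing $U \subseteq V(\omega_4,-3)_0$: once this is known, the decomposition $V(\omega_4,-3)_0 = \Pi_{15}\oplus \Pi_{30}$, together with $\dim \Pi_{30}=30$, forces $U=\Pi_{15}$ by dimension and irreducibility.

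For the inclusion $U\subseteq V(\omega_4,-3)_0$ I would combine a multiplicity count with Proposition \ref{Travis_prop}. On the one hand, $\wedge^2\fh$ has multiplicity $1$ in $\fh\otimes\fh$ and multiplicity $0$ in both $\fh$ and $S^3\fh$ as $W(E_6)$-modules (the latter verified by a direct character computation for $W(E_6)$ acting on the reflection representation), so by (\ref{dec_eq_2}) it appears with total multiplicity one in $\mathscr{H}(\Lambda_0)_{-3\delta}$, realized precisely by $U$. On the other hand, Proposition \ref{Travis_prop} asserts that $V(\omega_4,-3)_0$ is the $W$-span of $f_\theta f_\beta\cdot v_{\omega_4}$; expanding this vector in the Frenkel--Kac realization $\mathscr{H}(\Lambda_0)\simeq \mathrm{Sym}(\fh\otimes t^{-1}\mathbb{C}[t^{-1}])\otimes \mathbb{C}[Q]$ and projecting onto the antisymmetric part of the $\fh\otimes\fh$-summand, one checks that this projection is nonzero. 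Hence $V(\omega_4,-3)_0$ contains the unique copy of $\wedge^2\fh$ in $\mathscr{H}(\Lambda_0)_{-3\delta}$, giving $U\subseteq V(\omega_4,-3)_0$ as desired. The main technical difficulty lies in this last vertex-operator computation -- expanding $f_\theta f_\beta\cdot v_{\omega_4}$ using the commuting root currents $E^\theta(z)$ and $E^{-\beta}(w)$, extracting the constant-term degree-$3$ piece, and verifying that its $\wedge^2\fh$-component is nontrivial; all other steps reduce to formal $W(E_6)$-representation theory and the Heisenberg PBW decomposition.
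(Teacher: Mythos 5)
Your overall strategy is sound and genuinely different from the paper's, but as written it has a real gap: the inclusion $U \subseteq V(\omega_4,-3)_0$ is made to rest on the claim that the projection of $f_\theta f_\beta v_{\omega_4}$ onto the antisymmetric summand $\wedge^2\fh \subset \fh t^{-1}\cdot \fh t^{-2}\cdot v_0$ is nonzero, and you do not prove this; you explicitly defer it as ``the main technical difficulty''. Given your multiplicity-one count, that nonvanishing is essentially equivalent to the content of the lemma, so leaving it as an unperformed vertex-operator computation leaves the proof incomplete (the statement is true, but only because the lemma is true; it is not a formal consequence of anything you establish). A smaller omission: the multiplicity-zero statement for $\wedge^2\fh$ in $S^3\fh$ is only asserted; this one, however, is routine --- either a finite character check, or Solomon's theorem, since $\wedge^2\fh$ occurs in $S^d\fh$ only in degrees $d = m_i + m_j \geq 1+4 = 5$, the exponents of $E_6$ being $1,4,5,7,8,11$. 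Your reduction steps themselves (the $W$-equivariant embedding of $\wedge^2\fh$ into $\mathscr{H}(\Lambda_0)_{-3\delta}$, its multiplicity one there, irreducibility of $\wedge^2\fh$ for $W(E_6)$, and the argument that an irreducible $15$-dimensional $U$ inside $\Pi_{15}\oplus\Pi_{30}$ must equal $\Pi_{15}$) are correct.

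The paper closes exactly this gap by a computation-free elimination argument rather than by examining $f_\theta f_\beta v_{\omega_4}$ at all: it first notes $\Pi_{30}\subseteq S^3\fh$ by dimension, then shows $S^3\fh$ contains no $15$-dimensional irreducible, because any such decomposition of $S^3\fh$ would force one-dimensional constituents, and neither the trivial representation (the degrees of basic $W(E_6)$-invariants are $2,5,6,8,9,12$, so there is no invariant cubic) nor the sign representation (by \cite{AH}, no sign representation occurs in the weight-zero spaces of the constituents of the degree $-3$ part of $\mathscr{H}(\Lambda_0)$) can occur; since $\fh$ is too small and $S^2\fh$ is a $20$-dimensional irreducible plus a trivial, $\Pi_{15}$ must be the $\wedge^2\fh$-summand, i.e.\ your $U$. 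So if you replace your unproved projection claim with this elimination step (or actually carry out the Frenkel--Kac expansion, e.g.\ by computer in the spirit of the appendix), your argument becomes complete. Note also that the paper's proof of this lemma does not use Proposition \ref{Travis_prop}; that proposition is only combined with the lemma afterwards, so your reliance on it is legitimate but not necessary.
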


\begin{proof}
First of all, we observe that $\Pi_{30}$ is contained in $S^3\fh$; this follows from a dimension check since $\dim( \mathfrak{h})=6$ and $T^2 \mathfrak{h} \simeq S^2 \mathfrak{h} \oplus \wedge^2 \mathfrak{h}$, this decomposition is compatible with the $W$-module structure, and the dimensions of each summand are both less than 30.  Secondly, we will prove that $S^3\fh$ doesn't contain a 15-dimensional irreducible representation. 

 Suppose that it is not the case. From the character table of the Weyl group of $E_6$ (cf.\,\cite[p.415]{Car}), we know that the dimensions of irreducible representations of $W$ are 1,6,15,20,24, etc. Then $S^3\fh$ must decompose into a 15-dimensional module and then either a 6-dimensional irreducible and 5 1-dimensional irreducibles or a 15-dimensional and 11 1-dimensional irreducibles. 
 
 Both of these options are impossible for the following reasons. The only two one-dimensional representations of $W$ are the trivial representation and the sign representation. The trivial representation cannot appear in $S^3 \mathfrak{h}$,  since this would give a $W$-invariant degree 3 polynomial on $\fh\simeq \fh^\vee$. This is impossible, because the possible degrees of invariant polynomials are 2,5,6,8,9,12; this list is the set of exponents +1 which can be found in \cite[p.231]{Bo}.

The other option is that all of these 1-dimensional irreducibles are the sign representation. However, we have the following decomposition as representation of $\fg$:
\[ \mathscr{H}(\Lambda_0)_{-3} = V(\omega_4) \oplus k_1 V(\omega_1+\omega_6) \oplus k_2 V(\omega_2) \oplus k_3 V(0)\]
for certain multiplicities $k_1,k_2,k_3$, where $ \mathscr{H}(\Lambda_0)_{-3}$ denote the degree $-3$ part of $\mathscr{H}(\Lambda_0)$ withe respect to the action of $d$. From \cite[Table 5,p.24]{AH}, one can see that no sign representation appear in the weight zero space of these irreducible representations. Thus, this option is also impossible. 

Therefore, $\Pi_{15}$ must be contained in $T^2\fh$. Note that $T^2\fh=S^2\fh \oplus \wedge^2\fh$.  Moreover, $V(\omega_1+\omega_6)_0$ and $V(\omega_2)_0$ are contained in $\mathscr{H}(\Lambda)_{-2\delta}$.  From the decomposition (\ref{dec_eq_1}) and \cite[Table 5,p.24]{AH}, we know that $S^2\fh$ is decomposed as a direct sum of a 20-dimensional irreducible and a 1-dimensional trivial representation. Thus, $\Pi_{15}$ is exactly the subspace $\wedge^2\fh$. In other words,   $\Pi_{15}$
is exactly the span of all vectors $(h[t^{-1}] h' [t^{-2}]- h'[t^{-1}] h[t^{-2}] )\cdot v_0 $, $h, h'\in \fh$. 
\end{proof}

\subsubsection{The weight space $V(\omega_2, -2)_0 $}
We have  $V(\omega_2, -2)_0 =\fh t^{-2}\cdot v_0 $, which is an irreducible representation of dimension 6. 

We choose any two nonzero elements $h, h'$ in $\fh$ such that $(h|h)=1$ and $(h|h')=0$, where $(\cdot | \cdot )$ is the normalized Killing form on $\fg$. 
Then 
\begin{equation}
\label{5.3.1_eq1}
  h[t]   (h[t^{-1}]  h'[t^{-2}]- h'[t^{-1}] h[t^{-2}] )\cdot v_0    =  h'[t^{-2}] \cdot v_0\in V(\omega_2, -2)_0  .\end{equation}

This is a nonzero vector. By the inclusion (\ref{0_weight_incl}) in Section \ref{sect_5.2.3} and Lemma \ref{irred_15_lem},   $(h[t^{-1}]  h'[t^{-2}]- h'[t^{-1}]  h[t^{-2}] )\cdot v_0 \in D^T(1,  \check{\omega}_4)$.  Since $D^T(1,\check{\omega}_4)$ is stable under the action of $\fh[t]$,  by (\ref{5.3.1_eq1} ) we have $h't^{-2}\cdot v_0\in D^T(1, \check{\omega}_4)$. Since $V(\omega_2, -2)_0$ is an irreducible representation of $W$ and  $D^T(1, \check{\omega}_4)$ is $W$-invariant, we get
\[V(\omega_2, -2)_0\subset D^T(1,\check{\omega}_4). \]

\subsubsection{The weight space $V(\omega_2,-2)_{\omega_2}$}
 We choose $h_1=\check{\alpha}_1$ and $h_2=\check{\alpha}_2$ in $\fh$. Then $(h_1|h_2)=0$ and $(h_1|h_1)=2$. By Lemma \ref{irred_15_lem}, we may  consider the following element 
 \[( h_1[t^{-1}]  h_2[t^{-2}]-h_2[t^{-1}]  h_1[t^{-2}] )\cdot v_0\]
in  $V(\omega_4, -3)_0$. 
Set  
\[  u:= e_\theta( h_1[t^{-1}]  h_2[t^{-2}]-h_2[t^{-1}] h_1[t^{-2}] )\cdot v_0 .\]
 This is an element in $V(\omega_4, -3)_{\theta}$. Note that $\theta=\omega_2$.
One may compute easily and get 
\[ u= ( h_1[t^{-1}]  e_\theta [t^{-2}]- e_\theta[ t^{-1}]  h_1[t^{-2}] )\cdot v_0    . \]
Then we have the following,
\[h_1[t]\cdot u=  2 e_\theta [t^{-2}] \cdot v_0  .\]
Now, it is easy to see that $h_1[t]\cdot u$ is nonzero and is a highest weight vector of $\fg$ of weight $\omega_2$.  Thus, we have shown that 
\[  V(\omega_2,-2)_{\omega_2} \subset D^T(1, \check{\omega}_4) . \]

Thus we may conclude that $D^T(1,\check{\omega}_4)=D(1,\check{\omega}_4)$. This finishes the proof of Theorem \ref{thm_E_6}.

% \vspace{1cm}

%(M. Besson) Department of Mathematics, University of North Carolina at Chapel Hill, Chapel Hill, NC 27599-3250, U.S.A.

%Email address: marmarc@live.unc.edu \\

%(J. Hong) Department of Mathematics, University of North Carolina at Chapel Hill, Chapel Hill, NC 27599-3250, U.S.A.

%Email address: jiuzu@email.unc.edu

\appendix
 \section{Proof of Proposition \ref{Travis_prop}, by Travis Scrimshaw}
 \label{appendix_sect}
%\footnote{keywords: Weyl group, weight zero space, exceptional Lie algebra}
%\subjclass[2010]{05E05, 14M15, 05A19}
%}

    We will prove Proposition \ref{Travis_prop}  by using \textsc{SageMath}, which asserts that the $W$-span of $f_\theta f_\beta v_{\omega_4}$  is the weight zero space $V(\omega_4)_0$ of the fundamental representation $V(\omega_4)$ of $E_6$. 
%     ${\rm Span}(W v) = V(\fw_4)_0$, where $v=f_\theta f_\beta v_{\omega_4}$.

%%%%%%%%%%
\subsection{Lie algebra representations and crystals}

We briefly review some basic material on finite dimensional simple Lie algebras and their finite dimensional highest weight representations.
For more information, we refer the reader to~\cite{FH91}.
Let $\field$ be an algebraically closed field of characteristic $0$.
In this appendix, we restrict to the case when $G$ is a simple Lie group, and we typically consider the case $\field = \CC$.

By looking at the tangent space of the identity, we have a finite dimensional simple Lie algebra $\g$ over $\field$ that is generated by $E_i, F_i, H_i$ for $i \in I$ with the relations
\[
\begin{array}{c@{\qquad}c@{\qquad}c}
[H_i, H_j] = 0,
& [E_i, F_j] = \delta_{ij} H_i,
\\ \mbox{} [H_i, E_j] = \inner{\alpha_i}{\coroot_j} E_j,
& [H_i, F_j] = -\inner{\alpha_i}{\coroot_j}F_j,
\\ \ad(E_i)^{-\inner{\alpha_i}{\coroot_j}+1} E_j = 0
& \ad(F_i)^{-\inner{\alpha_i}{\coroot_j}+1} F_j = 0 & (i \neq j),
\end{array}
\]
where $\ad(X) Y = [X, Y]$ is the adjoint operator.
Let $\h = \Span_{\field} \{h_i\}_{i \in I}$ denote the Cartan subalgebra corresponding to $T$.
%Let $\n = \langle e_i \rangle_{i \in I}$ and $\n^- = \langle f_i \rangle_{i \in I}$ be the nilpotent subalgebra and its opposite respectively.

A representation of a Lie algebra $V$ is a vector space over $\field$ such that $[X, Y] v = X(Yv) - Y(Xv)$ for all $v \in V$. For two $\g$-representations $V$ and $W$, their tensor product is naturally a $\g$-representation by
\[
X(v \otimes w) = Xv \otimes w + v \otimes Xw
\]
for all $X \in \g$ and $v \otimes w \in V \otimes W$.
We restrict to the category of finite dimensional highest weight representations, and we let $V(\lambda)$ denote the irreducible highest weight representation for the dominant integral weight $\lambda \in P^+$.
%For any finite dimensional $\g$-representation $V$, the $\g$-action is \defn{locally nilpotent}: for every $v \in V$ and $X \in \g$, we have $X^K v = 0$ for some positive integer $K$.
The Weyl group action on $V$ given by
\[
s_i = \exp(F_i) \exp(-E_i) \exp(F_i),
\]
%from the exponential map and $W$ being realized as a subset of the corresponding Lie group. \travis{I will probably need to start from the Lie group and go down.}
%Let $\mcO$ denote the Bernstein--Gelfand--Gelfand category $\mcO$, which consists of all $\g$-representations such that 
%\begin{enumerate}
%\item $V$ is finitely generated,
%\item $V = \bigoplus_{\lambda \in P} V_{\lambda}$ has a weight space decomposition, where the $\lambda$ weight space is defined as
%\[
%V_{\lambda} = \{v \in V \mid h v = \lambda(h) v \text{ for all } h \in \h\}.
%\]
%\item $V$ is locally $\n$-finite, so for every $v \in V$, the $\n$-submodule generated by $v$ is finite dimensional.
%\end{enumerate}
%In particular, this contains all finite dimensional representations of $\g$.
%Furthermore, they are all highest weight representations, which means there exists a vector $v \in V$ such that $\n v = 0$ and $\n^- v = V$.
%
%Since we are in category $\mcO$, the representation is locally nilpotent and so $s_i$ is well-defined.
For any nilpotent element $X\in \fg$ and $v \in V$, we can implement $\exp(X) v$ by finding $K = \min \{k \in \ZZ_{>0} \mid X^kv \neq 0\}$ and then computing
\[
\exp(X) v = \sum_{k=0}^K \frac{X^k v}{k!}.
\]
%This can be implemented as
%\begin{lstlisting}
%    def exp(v, op):
%        F = v.parent()
%        ret = F.zero()
%        k = 0
%        temp = v
%        while temp:
%            ret += temp / factorial(k)
%            temp = op(temp)
%            k += 1
%        return ret
%\end{lstlisting}

We give an explicit realization for a minuscule representation following the construction in \cite[Sec.~3.1.1]{OS19}, where we prove the analog of~\cite[Prop.~3.2, Prop.~3.3]{OS19} for $\g$-representations.
For the remainder of this section, let $r \in I$ be such that $\inner{\fw_r}{\coroot} \leq 1$ for all $\alpha \in \Phi^+$, which characterizes the minuscule representations $V(\fw_r)$.

A \defn{crystal} for $\g$ is a set $B$ with crystal operators $\te_i, \tf_i \colon B \to B \sqcup \{0\}$, for all $i \in I$, that satisfy certain properties and encode the action of the Chevalley generators $E_i$ and $F_i$ respectively.
Kashiwara showed~\cite{Kas90,Kas91} that all highest weight representations $V(\lambda)$ have corresponding crystals $B(\lambda)$.
We denote by $u_{\lambda}$ the unique highest weight element of $B(\lambda)$.
For more information on crystals, we refer the reader to~\cite{BS17}.

For a minuscule node $r$, let $J := I \setminus \{r\}$, and let $W_J := \langle s_i \mid i \in J \rangle$ denote the corresponding subgroup.
Denote by $W^J$ the set of minimal length coset representatives of $W / W_J$.
Define crystal operators $\te_i, \tf_i \colon W^J \to W^J \sqcup \{0\}$ by
\[
\te_i w = \begin{cases} s_i w & \text{if } \ell(s_i w) < \ell(w) \\ 0 & \text{otherwise}, \end{cases}
\qquad\quad 
\tf_i w = \begin{cases} s_i w & \text{if } \ell(s_i w) > \ell(w) \text{ and } s_i w \in W^J \\ 0 & \text{otherwise}, \end{cases}
\]
and weight function $\wt(w) = \fw_r - \alpha_{i_1} - \cdots - \alpha_{i_{\ell}}$, where $s_{i_1} \cdots s_{i_{\ell}}$ is any reduced expression for $w \in W^J$. By Stembridge~\cite{Ste01}, this is well-defined, and this gives $W^J$ the structure of a crystal associated to the minuscule representation $V(\fw_r)$~\cite{Scr20}.

Now we give an explicit construction of the minuscule representation.\footnote{A different construction was also recently given~\cite{DDW21}, which appeared while writing this appendix.}

\begin{prop}
Consider the vector space
\[
\mathbb{V}(\fw_r) := \Span_{\field} \{ v_w \mid w \in W^J \}.
\]
Then $\mathbb{V}(\fw_r)$ is made into a $\g$-representation by
\[
e_i v_w = v_{\te_i w},
\qquad\qquad
f_i v_w = v_{\tf_i w},
\qquad\qquad
h_i v_w = \inner{\wt(w)}{h_i} v_w,
\qquad\qquad
\]
where $v_0 = 0$, and extended by linearity. Furthermore $\mathbb{V}(\fw_r) \iso V(\fw_r)$ as $\g$-representations.
\end{prop}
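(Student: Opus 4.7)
The plan is to verify directly that the operators $e_i, f_i, h_i$ defined by the crystal formulas satisfy the defining relations of $\g$, and then identify the resulting module with $V(\fw_r)$ by producing a highest weight vector and matching dimensions. The key structural input is the minuscule hypothesis $\inner{\fw_r}{\coroot} \leq 1$ for all $\alpha \in \Phi^+$, which is equivalent to the statement that every $i$-string through every weight of $V(\fw_r)$ has length at most $1$. In the crystal picture on $W^J$, this translates to the crucial simplification $\te_i^2 w = \tf_i^2 w = 0$ for all $i \in I$ and $w \in W^J$, hence $e_i^2 = f_i^2 = 0$ on $\mathbb{V}(\fw_r)$.

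First I would handle the easy relations. The Cartan relations $[h_i, h_j] = 0$ are immediate from diagonal action. For $[h_i, e_j] = \inner{\alpha_j}{\coroot_i} e_j$ and its $f$-analog, I would observe that $\te_j w = s_j w$ when nonzero, so $\wt(\te_j w) = s_j \wt(w) + \text{(correction)}$; but a direct computation using Stembridge's combinatorial weight formula $\wt(w) = \fw_r - \sum \alpha_{i_k}$ in any reduced word shows $\wt(\te_j w) = \wt(w) + \alpha_j$, which gives the desired commutator. For $[e_i, f_j] = \delta_{ij} h_i$ with $i \neq j$, the task reduces to showing $\te_i \tf_j w = \tf_j \te_i w$ on $W^J$; this commutativity is a classical property of minuscule crystals that can be proved by a short case analysis on the possible length relations among $\ell(w), \ell(s_i w), \ell(s_j w), \ell(s_i s_j w)$, using that minuscule $i$-strings have length $\leq 1$. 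For $i = j$, since $e_i v_w$ and $f_i v_w$ cannot both be nonzero, the bracket collapses to a single term, and one checks $\te_i \tf_i w = w$ (when defined) with the scalar matching $\inner{\wt(w)}{\coroot_i}$ by combining the length-function axioms for $W^J$ with the minuscule condition.

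Next I would verify the Serre relations. When $a_{ij} = 0$, the relation $[e_i, e_j] = 0$ becomes $\te_i \te_j w = \te_j \te_i w$, another standard minuscule crystal commutativity. When $a_{ij} = -1$ (adjacent in the simply-laced case, which covers the Dynkin types where minuscule representations exist nontrivially), combining $e_i^2 = 0$ with the Serre relation reduces it to $\te_i \te_j \te_i w = 0$ for all $w$, which follows from the braid relation $s_i s_j s_i = s_j s_i s_j$ together with the definition of $\te_i$ as length-decreasing left multiplication that stays inside $W^J$. For non-simply-laced $\g$, the minuscule node is such that the adjacent nodes are still simply-laced to it in the relevant sense, so the same analysis applies; alternatively, I would invoke Stembridge's characterization \cite{Ste01} that the resulting colored poset satisfies the axioms ensuring it is the crystal of an actual representation.

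Finally, to identify $\mathbb{V}(\fw_r)$ with $V(\fw_r)$, I observe that the identity $e \in W^J$ is the unique element of minimal length, so $\te_i e = 0$ for all $i$, making $v_e$ a highest weight vector of weight $\fw_r$. Thus $\mathbb{V}(\fw_r)$ is a highest weight module with highest weight $\fw_r$, so it admits a surjection from $V(\fw_r)$. Since $|W^J|$ equals the size of the Weyl orbit $W \fw_r$, which in turn equals $\dim V(\fw_r)$ by the minuscule property, the surjection is an isomorphism. The main obstacle will be organizing the case analysis for the mixed Serre and $[e_i, f_j]$ verifications in a uniform way; I expect to pattern this on \cite[Prop.~3.2, Prop.~3.3]{OS19}, whose arguments for the quantum setting specialize directly here.
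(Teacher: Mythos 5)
Your overall strategy is sound and is in fact the same route the paper takes: the paper prints no argument for this proposition, stating only that it is the analog of \cite[Prop.~3.2, Prop.~3.3]{OS19}, i.e.\ a direct verification of the Chevalley--Serre relations on the span of $\{v_w\}$ using the minuscule combinatorics of $W^J$, followed by the highest-weight-vector and dimension identification that you also give (the identity coset gives $e_iv_e=0$, $\wt(e)=\fw_r$, every $v_w$ is reached by $f$'s along a reduced word, and $\abs{W^J}=\abs{W\fw_r}=\dim V(\fw_r)$). So your proposal is essentially a fleshed-out version of the proof the paper outsources to the citation.

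Two of your local justifications, however, are not correct as stated and need the minuscule input made explicit. First, the reduction of the Serre relation to $\te_i\te_j\te_i w=0$ is right, but this vanishing does not follow ``from the braid relation together with the definition of $\te_i$'': for a general $w\in W$ a reduced expression beginning $s_is_js_i$ is perfectly possible. What kills it is the minuscule bound $\inner{\wt(w)}{\coroot_i}\in\{-1,0,1\}$ (after $\te_i$ the pairing with $\coroot_i$ is $+1$, after $\te_j$ it is $1+a_{ij}\le 0$, so $\te_i$ cannot apply again), or equivalently Stembridge's theorem that minuscule coset representatives are fully commutative --- which is your stated fallback via \cite{Ste01}, so the gap is patchable but the braid-relation sentence should be replaced. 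Second, for $[e_i,f_j]=0$ with $i\neq j$, note that if both $\te_i\tf_j w$ and $\tf_j\te_i w$ were nonzero with $s_i,s_j$ non-commuting they would be the \emph{distinct} elements $s_is_jw\neq s_js_iw$; so the proof must show both sides vanish whenever $a_{ij}\neq 0$, and this again comes from the pairing bound ($\te_i\tf_jw\neq0$ would force $\inner{\wt(w)}{\coroot_i}=a_{ij}-1\le-2$), not from an abstract case analysis on lengths alone. When $a_{ij}=0$ the operators commute because $s_i,s_j$ do. Finally, your remark that in the non-simply-laced cases ``the adjacent nodes are still simply-laced to it'' is literally false for the spin node of $B_\ell$, which sits on the double bond; this is harmless because the $\ad(E_i)^3$ Serre relation is annihilated term-by-term by $e_i^2=0$, and the $\ad(E_i)^2$ relation is handled by the pairing argument above, but the sentence should be corrected.
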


%%%%%%%%%%
\subsection{Implementation}

We now give our implementation using \textsc{SageMath}.
For our crystals, we will use the realization using rigged configurations~\cite{Sch06,SS21}.

We build the minuscule representation $V(\fw_1)$ in type $E_6$, which is constructed as $\mathbb{V}(\fw_1)$:

\begin{lstlisting}
sage: La=RootSystem(['E',6]).weight_lattice().fundamental_weights()
sage: M = crystals.RiggedConfigurations(La[1])
sage: VM = ReprMinuscule(M, QQ)
sage: v = VM.maximal_vector()
\end{lstlisting}

Let $v$ denote the highest weight vector of $V(\fw_1)$. There exists a highest weight vector $v_{\fw_4}$ of weight $\fw_4$ in $V(\fw_1)^{\otimes 3}$. Explicitly, it is given as
\begin{align*}
v_{\fw_4} & = v \otimes f_1 v \otimes f_3 f_1 v - f_1 v \otimes v \otimes f_3 f_1 v - v \otimes f_3 f_1 v \otimes f_1 v 
\\ & \hspace{20pt} + f_1 v \otimes f_3 f_1 v \otimes v + f_3 f_1 v \otimes v \otimes f_1 v - f_3 f_1 v \otimes f_1 v \otimes v.
\end{align*}
It is a finite computation to show this is a highest weight vector.
We also perform this computation in \textsc{SageMath}:

\begin{lstlisting}
sage: x = (tensor([v, v.f(1), v.f(1).f(3)]) 
....:      - tensor([v.f(1), v, v.f(1).f(3)])
....:      - tensor([v, v.f(1).f(3), v.f(1)])
....:      + tensor([v.f(1), v.f(1).f(3), v])
....:      + tensor([v.f(1).f(3), v, v.f(1)])
....:      - tensor([v.f(1).f(3), v.f(1), v])
....:      )
sage: all(x.e(i) == 0 for i in M.index_set())
True
\end{lstlisting}

Thus, we can build a $\g$-representation by $\VV(\fw_4)  := \langle v_{\fw_4} \rangle \subseteq V(\fw_1)^{\otimes 3}$, and since the decomposition of tensor products of finite dimensional $\g$-representations are determined by computing highest weight vectors, we have $\mathbb{V}(\fw_4) \iso V(\fw_4)$.
In order to do computations, we need to construct a weight basis for $\VV(\fw_4)$.
We do so by using the crystal $B(\fw_4)$.
Let $b \in B(\fw_4)$, then define $v_b = f_{i_1} \cdots f_{i_{\ell}} v_{\fw_4}$, where $b = \tf_{i_1} \cdots \tf_{i_{\ell}} u_{\fw_4}$ for some fixed path $(i_1, \dotsc, i_{\ell})$.
Define $\mcB := \{v_b \mid b \in B(\fw_4)\}$.
Clearly this may depend on the choice of path from $u_{\fw_4} \to b$, but irregardless of this choice, we have $v_b \in V(\fw_4)_{\wt(b)}$.

\begin{figure}
\[
\newcommand{\rt}[2]{\begin{array}{c@{}c@{}c@{}c@{}c} &&#2\\ #1 \end{array}}
\begin{tikzpicture}[scale=2,>=latex]
\node (a1) at (0,0) {$\rt{1&0&0&0&0}{0}$};
\node (a2) at (3,0) {$\rt{0&0&0&0&0}{1}$};
\node (a3) at (1,0) {$\rt{0&1&0&0&0}{0}$};
\node (a4) at (2,0) {$\rt{0&0&1&0&0}{0}$};
\node (a5) at (4,0) {$\rt{0&0&0&1&0}{0}$};
\node (a6) at (5,0) {$\rt{0&0&0&0&1}{0}$};
\node (a13) at (0,1) {$\rt{1&1&0&0&0}{0}$};
\node (a34) at (1,1) {$\rt{0&1&1&0&0}{0}$};
\node (a24) at (2,1) {$\rt{0&0&1&0&0}{1}$};
\node (a45) at (3,1) {$\rt{0&0&1&1&0}{0}$};
\node (a56) at (4,1) {$\rt{0&0&0&1&1}{0}$};
\node (a134) at (0,2) {$\rt{1&1&1&0&0}{0}$};
\node (a234) at (1,2) {$\rt{0&1&1&0&0}{1}$};
\node (a345) at (2,2) {$\rt{0&1&1&1&0}{0}$};
\node (a245) at (3,2) {$\rt{0&0&1&1&0}{1}$};
\node (a456) at (4,2) {$\rt{0&0&1&1&1}{0}$};
\node (a1234) at (0,3) {$\rt{1&1&1&0&0}{1}$};
\node (a1345) at (1,3) {$\rt{1&1&1&1&0}{0}$};
\node (a2345) at (2,3) {$\rt{0&1&1&1&0}{1}$};
\node (a3456) at (3,3) {$\rt{0&1&1&1&1}{0}$};
\node (a2456) at (4,3) {$\rt{0&0&1&1&1}{1}$};
\node (a12345) at (1,4) {$\rt{1&1&1&1&0}{1}$};
\node (a13456) at (2,4) {$\rt{1&1&1&1&1}{0}$};
\node (a23445) at (3,4) {$\rt{0&1&2&1&0}{1}$};
\node (a23456) at (4,4) {$\rt{0&1&1&1&1}{1}$};
\node (a123445) at (2,5) {$\rt{1&1&2&1&0}{1}$};
\node (a123456) at (3,5) {$\rt{1&1&1&1&1}{1}$};
\node (a234456) at (4,5) {$\rt{0&1&2&1&1}{1}$};
\node (a1233445) at (2,6) {$\rt{1&2&2&1&0}{1}$};
\node (a1234456) at (3,6) {$\rt{1&1&2&1&1}{1}$};
\node (a2344556) at (4,6) {$\rt{0&1&2&2&1}{1}$};
\node (a12334456) at (2,7) {$\rt{1&2&2&1&1}{1}$};
\node (a12344556) at (4,7) {$\rt{1&1&2&2&1}{1}$};
\node (a123344556) at (3,8) {$\rt{1&2&2&2&1}{1}$};
\node (a1233444556) at (3,9) {$\rt{1&2&3&2&1}{1}$};
\node (a12233444556) at (3,10) {$\rt{1&2&3&2&1}{2}$};
% 1 -> 2
\draw[->,dgreencolor] (a1) -- node[midway,right] {\tiny $3$} (a13);
\draw[->,red] (a3) -- node[midway,right] {\tiny $1$} (a13);
\draw[->,UQpurple] (a3) -- node[midway, right] {\tiny $4$} (a34);
\draw[->,UQpurple] (a2) -- node[midway,right] {\tiny $4$} (a24);
\draw[->,blue] (a4) -- node[midway,right] {\tiny $2$} (a24);
\draw[->,dgreencolor] (a4) -- node[midway,right] {\tiny $3$} (a34);
\draw[->,UQgold] (a4) -- node[midway,left] {\tiny $5$} (a45);
\draw[->,UQpurple] (a5) -- node[midway,right] {\tiny $4$} (a45);
\draw[->,black] (a5) -- node[midway,left] {\tiny $6$} (a56);
\draw[->,UQgold] (a6) -- node[midway,right] {\tiny $5$} (a56);
% 2 -> 3
\draw[->,UQpurple] (a13) -- node[midway,right] {\tiny $4$} (a134);
\draw[->,red] (a34) -- node[midway,right] {\tiny $1$} (a134);
\draw[->,blue] (a34) -- node[midway,right] {\tiny $2$} (a234);
\draw[->,UQgold] (a34) -- node[midway,left] {\tiny $5$} (a345);
\draw[->,dgreencolor] (a24) -- node[midway,right] {\tiny $3$} (a234);
\draw[->,UQgold] (a24) -- node[midway,left] {\tiny $5$} (a245);
\draw[->,dgreencolor] (a45) -- node[midway,right] {\tiny $3$} (a345);
\draw[->,UQpurple] (a56) -- node[midway,right] {\tiny $4$} (a456);
\draw[->,black] (a45) -- node[midway,left] {\tiny $6$} (a456);
\draw[->,blue] (a45) -- node[midway,right] {\tiny $2$} (a245);
% 3 -> 4
\draw[->,blue] (a134) -- node[midway,right] {\tiny $2$} (a1234);
\draw[->,UQgold] (a134) -- node[midway,left] {\tiny $5$} (a1345);
\draw[->,red] (a234) -- node[midway,right] {\tiny $1$} (a1234);
\draw[->,UQgold] (a234) -- node[midway,left] {\tiny $5$} (a2345);
\draw[->,red] (a345) -- node[midway,right] {\tiny $1$} (a1345);
\draw[->,blue] (a345) -- node[midway,right] {\tiny $2$} (a2345);
\draw[->,black] (a345) -- node[midway,left] {\tiny $6$} (a3456);
\draw[->,dgreencolor] (a245) -- node[midway,right] {\tiny $3$} (a2345);
\draw[->,black] (a245) -- node[midway,left] {\tiny $6$} (a2456);
\draw[->,dgreencolor] (a456) -- node[midway,right] {\tiny $3$} (a3456);
\draw[->,blue] (a456) -- node[midway,right] {\tiny $2$} (a2456);
% 4 -> 5
\draw[->,UQgold] (a1234) -- node[midway,left] {\tiny $5$} (a12345);
\draw[->,blue] (a1345) -- node[midway,right] {\tiny $2$} (a12345);
\draw[->,black] (a1345) -- node[midway,left] {\tiny $6$} (a13456);
\draw[->,black] (a2345) -- node[midway,left] {\tiny $6$} (a23456);
\draw[->,red] (a2345) -- node[midway,right] {\tiny $1$} (a12345);
\draw[->,UQpurple] (a2345) -- node[midway,left] {\tiny $4$} (a23445);
\draw[->,red] (a3456) -- node[midway,right] {\tiny $1$} (a13456);
\draw[->,blue] (a3456) -- node[midway,left] {\tiny $2$} (a23456);
\draw[->,dgreencolor] (a2456) -- node[midway,right] {\tiny $3$} (a23456);
% 5 -> 6
\draw[->,UQpurple] (a12345) -- node[midway,left] {\tiny $4$} (a123445);
\draw[->,black] (a12345) -- node[midway,left] {\tiny $6$} (a123456);
\draw[->,blue] (a13456) -- node[midway,left] {\tiny $2$} (a123456);
\draw[->,red] (a23445) -- node[midway,right] {\tiny $1$} (a123445);
\draw[->,black] (a23445) -- node[midway,left] {\tiny $6$} (a234456);
\draw[->,red] (a23456) -- node[midway,right] {\tiny $1$} (a123456);
\draw[->,UQpurple] (a23456) -- node[midway,right] {\tiny $4$} (a234456);
% 6 -> 7
\draw[->,dgreencolor] (a123445) -- node[midway,right] {\tiny $3$} (a1233445);
\draw[->,black] (a123445) -- node[midway,left] {\tiny $6$} (a1234456);
\draw[->,UQpurple] (a123456) -- node[midway,right] {\tiny $4$} (a1234456);
\draw[->,UQgold] (a234456) -- node[midway,right] {\tiny $5$} (a2344556);
\draw[->,red] (a234456) -- node[midway,right] {\tiny $1$} (a1234456);
\draw[->,red] (a2344556) -- node[midway,right] {\tiny $1$} (a12344556);
\draw[->,black] (a1233445) -- node[midway,right] {\tiny $6$} (a12334456);
\draw[->,dgreencolor] (a1234456) -- node[midway,right] {\tiny $3$} (a12334456);
\draw[->,UQgold] (a1234456) -- node[midway,left] {\tiny $5$} (a12344556);
\draw[->,UQgold] (a12334456) -- node[midway,above left] {\tiny $5$} (a123344556);
\draw[->,dgreencolor] (a12344556) -- node[midway,above right] {\tiny $3$} (a123344556);
\draw[->,UQpurple] (a123344556) -- node[midway,right] {\tiny $4$} (a1233444556);
\draw[->,blue] (a1233444556) -- node[midway,right] {\tiny $2$} (a12233444556);
\end{tikzpicture}
\]
\caption{The root poset $\Phi^+$ in type $E_6$.}
\label{fig:root_poset}
\end{figure}

Below, we construct $\VV(\fw_4)$ in \textsc{SageMath} by using $\mcB$ as follows.
For each $b \in B(\fw_4)$, we take the path recursively constructed by taking the minimal $i_k$ such that we have a path to $b$ from $f_{i_k} \cdots f_{i_{\ell}} u _{\fw_4}$ (although any such path could do).
This gives us a set of elements $\mcB$, and we need to show that $\mcB$ are linearly independent.
We verify this by seeing the rank of the matrix of these vectors is $2925 = \dim V(\fw_4) = \abs{B(\fw_4)}$.
Furthermore, we verify that this does give us a $\g$-representation by checking all of the relations are satisfied on each basis element:

\begin{lstlisting}
sage: S=SubRepresentation(x,crystals.RiggedConfigurations(La[4]))
sage: verify_representation(S)  # long time -- few minutes 
\end{lstlisting}

Next, consider the positive roots
\begin{align*}
\beta& = \alpha_1 + \alpha_2 + 2 \alpha_3 + 3 \alpha_4 + 2 \alpha_5 + \alpha_6 = \begin{array}{c@{}c@{}c@{}c@{}c} &&1\\ 1&2&3&2&1 \end{array},
\\
\theta & = \alpha_1 + 2\alpha_2 + 2 \alpha_3 + 3 \alpha_4 + 2 \alpha_5 + \alpha_6 = \begin{array}{c@{}c@{}c@{}c@{}c} &&2\\ 1&2&3&2&1 \end{array}.
\end{align*}
Since all of the root spaces in $\g$ are 1 dimensional (that is $\dim \g_{\alpha} = 1$ for all $\alpha \in \Phi$), we construct the basis element $f_{\gamma}$ of $\g_{\gamma}$ (which forms the unique basis up to scalar) by finding some sequence $(i_1, i_2, \dotsc, i_{\ell})$ such that
\[
\sum_{j=1}^k \alpha_{i_j} \in \Phi^+,
\qquad\qquad
\sum_{j=1}^{\ell} \alpha_{i_j} = \gamma,
\]
for all $1 \leq k \leq \ell$.
In particular, we can take any path from $\alpha_{i_1}$ to $\gamma$ in Figure~\ref{fig:root_poset}.
Then we have
\[
f_{\gamma} = [\cdots[[f_{i_1}, f_{i_2}], f_{i_3}] \cdots f_{i_{\ell}}].
\]
We write $f_{\gamma}$ in the free algebra generated by $\langle f_i \rangle_{i \in I}$ using the commutator property $[X, Y] = XY - YX$ and apply the result to any vector in the $\g$-representation.
In other words, we compute
\[
f_{\gamma} = \sum_{\ba} \pm f_{a_1} \cdots f_{a_{\ell-1}} f_{a_{\ell}},
\qquad\qquad
f_{\gamma} v = \sum_{\ba} \pm (f_{a_1} \cdots (f_{a_{\ell-1}} (f_{a_{\ell}} v)) \cdots).
\]
Using this process, we construct the vector $v = f_{\theta} f_{\beta} v_{\fw_4}$:

\begin{lstlisting}
sage: v = S.maximal_vector()
sage: al = RootSystem(['E',6]).root_lattice().simple_roots()
sage: b1 = al[1] + 1*al[2] + 2*al[3] + 3*al[4] + 2*al[5] + al[6]
sage: b2 = al[1] + 2*al[2] + 2*al[3] + 3*al[4] + 2*al[5] + al[6]
sage: ops = build_root_operators(['E',6])
sage: vzero = apply_f_operators(ops[b2], 
....:              apply_f_operators(ops[b1], v))
sage: vzero != 0
True
\end{lstlisting}

Lastly, we construct the orbit up to sign and show that it spans a $45$ dimensional vector space:

\begin{lstlisting}
sage: orbit = set([vzero])
sage: nl = [vzero]
sage: I = CartanType(['E',6]).index_set()
sage: while nl:
....:     cur = nl
....:     nl = []
....:     for vec in cur:
....:         for i in I:
....:             vs = vec.s(i)
....:             if vs in orbit or -vs in orbit:
....:                 continue
....:             orbit.add(vs)
....:             nl.append(vs)
sage: len(orbit)
240
sage: wt0 = [b for b in S.basis().keys() if b.weight() == 0]
sage: matrix([[vec[b] for b in wt0] for vec in orbit]).rank()
45
\end{lstlisting}

\begin{remark}
The \textsc{SageMath} code for the implementation in this appendix is included as an ancillary file on \arxiv{2010.11357}.
\end{remark}

%\Travis{Add note about code being included in ancillary file.}

% \vspace{1cm}

%(T. Scrimshaw) School of Mathematics and Physics, The University of Queensland, St. Lucia, QLD 4072, Australia

%Email address: tcscrims@gmail.com

\end{document}